\newcommand{\R}{\mathbb{R}}
\newcommand{\N}{\mathbb{N}}
\newcommand{\Z}{\mathbb{Z}}
\newcommand{\SL}{{\rm SL}}
\newcommand{\GL}{\rm GL}
\newcommand{\Sym}{{\rm Sym}}
\newcommand{\Mat}{{\rm Mat}}
\newcommand{\V}{\mathscr{V}}
\newcommand{\Fscr}{\mathscr{F}}
\newcommand{\Escr}{\mathscr{E}}
\newcommand{\abs}[1]{\bigl| #1 \bigr|} 
\newcommand{\norm}[1]{\lVert#1\rVert} 
\newcommand{\normtwo}[1]{
	{\left\vert\kern-0.25ex\left\vert\kern-0.25ex\left\vert #1
		\right\vert\kern-0.25ex\right\vert\kern-0.25ex\right\vert} }
\newcommand{\Proj}{\mathbb{P}(\R^ 2)}
\newcommand{\Gr}{{\rm Gr}}
\newcommand{\Pp}{\mathbb{P}}
\newcommand{\hatv}{\hat{v}}
\newcommand{\Projm}{\mathbb{P}(\R^m)}
\newcommand{\Bscr}{\mathscr{B}}
\newcommand\restr[2]{{
		\left.\kern-\nulldelimiterspace 
		#1 
		\vphantom{\big|} 
		\right|_{#2} 
}}
\newcommand{\Mscr}{\mathscr{M}}
\newcommand{\Mcal}{\mathcal{M}^p_C}
\newcommand{\Scal}{\mathcal{S}}	
\newcommand{\Lip}{\mathrm{Lip}}
\newcommand{\Hol}{\mathscr{H}_p}
\newcommand{\Holal}{\mathscr{H}_{\alpha}}
\newcommand{\Sp}{\mathrm{Sp}}
\theoremstyle{plain}
\newtheorem{theorem}{Theorem}[section]
\newtheorem{proposition}{Proposition}[section]
\newtheorem{corollary}[proposition]{Corollary}
\newtheorem{lemma}[proposition]{Lemma}
\theoremstyle{definition}
\newtheorem{definition}{Definition}[section]
\theoremstyle{definition}
\newtheorem{remark}{Remark}[section]
\numberwithin{equation}{section}
\newcommand{\Prob}{\mathrm{Prob}}
\newcommand{\EE}{\mathbb{E}}
\newcommand{\supp}{\mathrm{supp}}
\newcommand{\Qop}{\mathcal{Q}}
\newcommand{\Rop}{\mathcal{R}}
\newcommand{\Mea}{\mathrm{Meas}}
\title[Non-invertible and Non-compact random cocycles] {H\"older continuity of Lyapunov exponents for non-invertible and non-compact random cocycles}
\begin{document}

\author[P. Duarte]{Pedro Duarte}
\address{Departamento de Matem\'atica and CEMS.UL\\
Faculdade de Ci\^encias\\
Universidade de Lisboa\\
Portugal
}
\email{pmduarte@fc.ul.pt}

\author[T. Graxinha]{Tom\'e  Graxinha}
\address{Departamento de Matem\'atica and CEMS.UL\\
	Faculdade de Ci\^encias\\
	Universidade de Lisboa\\
	Portugal}
\email{graxinhatome@hotmail.com }

\begin{abstract}


We study the regularity of Lyapunov exponents for random linear cocycles taking values in $\Mat_m(\R)$ and driven by i.i.d.\ processes. Under three natural conditions—finite exponential moments, a spectral gap between the top two Lyapunov exponents, and quasi-irreducibility of the associated semigroup—we prove that the top Lyapunov exponent is H\"older continuous with respect to the Wasserstein distance.
In the final section, we apply the main results to Schr\"odinger cocycles with unbounded potentials.
\end{abstract}

\maketitle

\tableofcontents

\section{Introduction}\label{intro}

A \emph{linear cocycle} is a bundle map $F:X \times \R^m \to X \times \R^m$ of the form
	\[
	F(\omega, v) = (f(\omega), A(\omega)\, v),
	\]
	where $f:X \to X$ is the \emph{base map}, and $A:X \to \Mat_m(\R)$ governs the fiber dynamics. The $n$-th iterate is
	\[
	F^n(\omega, v) = (f^n(\omega), A^n(\omega)\, v), \quad A^n(\omega) := A(f^{n-1}(\omega)) \cdots A(\omega).
	\]
	The \emph{Lyapunov exponent} at $\omega \in X$ along $v \in \R^m$ is defined by
	\[
	L(F;\omega, v) := \lim_{n \to \infty} \frac{1}{n} \log \norm{A^n(\omega) v},
	\]
	when the limit exists. The \emph{top Lyapunov exponent} is
	\[
	L_1(F;\omega) := \lim_{n \to \infty} \frac{1}{n} \log \norm{A^n(\omega)}.
	\]
	
	Under a probability measure $\mu$ preserved by $f$, Furstenberg and Kesten~\cite{FK60} proved the almost sure existence and finiteness of $L_1$ under the \emph{finite moment condition}:
	\[
	\int \log \norm{A(\omega)^{\pm 1}}\, d\mu(\omega) < \infty.
	\]
	Oseledets' theorem~\cite{Ose68} extends this, yielding a full \emph{Lyapunov spectrum} of exponents $L_1 > L_2 > \cdots$ that are invariant under $f$ and almost surely constant if $f$ is ergodic. The second exponent is given by
	\[
	L_2(F;\omega) := \lim_{n \to \infty} \frac{1}{n} \log s_2(A^n(\omega)),
	\]
	where $s_2$ is the second singular value. More generally, exterior powers reduce the study of all exponents to the top exponent of suitable induced cocycles.
	
	Lyapunov exponents are fundamental in smooth dynamics and ergodic theory, encoding sensitivity to initial conditions and connecting to entropy. In mathematical physics, they play a key role in determining the spectral nature of Schr\"odinger operators.

 \bigskip

 In~\cite{Ru82}, Ruelle proved that projectively uniformly hyperbolic cocycles\footnote{cocycles with dominated splitting and one-dimensional expanding direction}  have analytically dependent top Lyapunov exponents.
 
 Beyond this, two main frameworks have emerged: the \emph{random setting} (e.g., Bernoulli or Markov shifts, or hyperbolic maps) and the \emph{quasi-periodic setting} (e.g., torus translations). This paper focuses on the random setting, particularly on the regularity of Lyapunov exponents.
 
 A probability measure $\mu$ on $\SL_m(\R)$, $\GL_m(\R)$, or $\Mat_m(\R)$ defines a random cocycle
 \[
 F(\omega, v) := (\sigma \omega, A(\omega) v),
 \]
 over the Bernoulli shift $\sigma$ on $X = \Mat_m(\R)^\N$, with $A(\omega) = \omega_0$. The corresponding top Lyapunov exponents are denoted $L_1(\mu), L_2(\mu)$.
 
 Furstenberg~\cite{Fur63} established a formula and positivity criterion for $L_1(\mu)$ in the $\SL_m(\R)$ case. Furstenberg and Kifer~\cite{FK83} later proved its generic continuity under  uniform moment bounds. Continuity for $\GL_2(\R)$ cocycles with compact support was shown by Bocker-Neto and Viana ~\cite{BV}, extended to Markov shifts by Malheiro and Viana~\cite{Viana-M}, and generalized to fiber-bunched cocycles by Backes, Brown and Butler~\cite{BBB}. For random $\GL_m(\R)$ cocycles, continuity was proved by Avila, Eskin and Viana~\cite{AEV}, and upper semi-continuity for cocycles with unbounded support by S\'anchez and Viana~\cite{SV2020}. See also~\cite{Jamerson-M, BarrientosMalicet2025} for further developments.

 Regarding quantitative continuity, 
 Le Page~\cite{LP89} proved H\"older continuity of $L_1$ under finite exponential moments, irreducibility, and a spectral gap between the first and second Lyapunov exponents. Extensions to Markov shifts appear in~\cite{DK-book, CDKM22}. For finitely supported $\GL_2(\R)$ cocycles with a  a spectral gap, weak-H\"older regularity was proved by Duarte and Klein~\cite{DK-Holder}. Still in the $\GL_2(\R)$ case, the minimum pointwise  log-H\"older continuity was established by Tall and Viana~\cite{Tall-Viana}. H\"older regularity under generic conditions has also been shown by Poletti et al.~\cite{DKP2022}  for fiber bunched $\GL_m(\R)$ cocycles, and by Barrientos and Malicet~\cite{BarrientosMalicet2025}  for random compositions of mostly contracting Lipschitz maps.
 
 Peres~\cite{Pe91} showed analyticity of $L_1$ with respect to the probability weights, for finitely supported $\GL_m(\R)$ cocycles; see also~\cite{BST2023, ADM25} for recent extensions.
 
 The optimality of Le Page's result is illustrated by Halperin's example~\cite{Halperin67}, streamlined by Simon and Taylor~\cite{ST85}, which shows that $L_1$ need not be smoother than H\"older. The necessity of Le Page's irreducibility and spectral gap assumptions is demonstrated by Klein et al.~\cite{DKS19}. Further bounds on H\"older exponents appear in~\cite{BeDu23, BCDFK}, bounding the regularity in terms of entropy and Lyapunov exponents.
 
 For non-invertible cocycles, stability and continuity of Lyapunov data under perturbations were studied by Froyland et al.~\cite{FLQ2010, DF2018}. Freijo and Duarte~\cite{DF2024} analyzed continuity for constant-rank cocycles, and Klein et  al.~\cite{DDKG25} established a dichotomy between analyticity and discontinuity in the finitely supported $\Mat_2(\R)$ setting.

 \bigskip

The main result of this paper (Theorem~\ref{thm non invertible Holder LE }) is the local H\"older continuity of the top Lyapunov exponent for random cocycles generated by probability measures on $\Mat_m(\R)$, under three key assumptions:
	(a) finite exponential moments;
	(b) quasi-irreducibility of the generated semigroup; and
	(c) a gap between the first two Lyapunov exponents.
	
	Assumption (a) also appears in Le Page~\cite{LP89}; its necessity will be illustrated in future work. The main novelties are the treatment of non-invertible and non-compact matrix cocycles and the continuity result in the Wasserstein topology -- contrasting with Le Page's parametric setting~\cite{LP89}.
	
	H\"older continuity of higher-order exponents follows via exterior powers (Corollaries~\ref{cont1} and~\ref{cont2}). As another consequence, we show H\"older continuity of the top exponent with respect to the $L^1$ distance for cocycles over Bernoulli shifts (Corollary~\ref{thm L1 Holder LE}), a result independently obtained by Barrientos and Malicet~\cite{BarrientosMalicet2025}.
	
	Finally, we obtain stable exponential-type large deviation estimates (Theorem~\ref{thm large deviation estimates}) as a byproduct of our method.

\bigskip

Our approach follows the classical spectral method of S.~V.~Nagaev~\cite{Nagaev}, adapted to random cocycles by Bougerol~\cite{Bou} and Le Page~\cite{LP82,LP89}. Le Page first used this approach to prove H\"older continuity of the top Lyapunov exponent, later simplified by Duarte and Baraviera~\cite{BD}.
	
	The key step combines quasi-irreducibility with a spectral gap to establish uniform convergence of the expected logarithmic growth:
	\begin{equation}
		\label{Expected log growth rate}
		\lim_{n\to \infty} \EE\left( \frac{1}{n} \log \norm{A^n v} \right) = L_1(\mu),
	\end{equation}
	uniformly over unit vectors $v \in \R^m$. This implies uniform ergodicity of the associated Markov operator: for suitable observables $\phi$ and $\mu$-stationary measures $\eta$, the iterates $\Qop_\mu^n \phi$ converge uniformly to $\int \phi\, d\eta$. Following~\cite{BD}, this yields H\"older regularity.
	
	Non-invertibility introduces additional complications: the projective action $\hat{g} \hat{v}$ is undefined when $v$ lies in $\ker(g)$, obstructing the extension of Furstenberg's theory. A minimal requirement is the finite moment condition:
	\[
	\sup_{\norm{v} = 1} \int \abs{ \log \norm{g v} }\, d\mu(g) < \infty,
	\]
	ensuring $g v \neq 0$ for $\mu$-almost every $g$ and all $v$. Otherwise, discontinuities may occur and must be handled separately as in~\cite{DDKG25}. For H\"older continuity, stronger finite exponential moments are required; a forthcoming work will justify this with counterexamples.
	
	The almost sure convergence
	\[
	\lim_{n \to \infty} \frac{1}{n} \log \norm{A^n v} = L_1(\mu) \quad \Pp_\mu\text{-a.s.}
	\]
	follows from a result by Furstenberg and Kifer~\cite{FK83}. Extending it to our setting involves subtle issues due to singularities in the projective action (see Section~\ref{FK}).
	
	Finally, obtaining an explicit H\"older modulus is technically demanding, contributing to the length of this work.

\bigskip


The paper is organized as follows. Section~\ref{statements} presents the main results. Section~\ref{Wasserstein} introduces the Wasserstein metric and key tools for measures and H\"older functions. Section~\ref{Setting} establishes core inequalities used throughout. In Section~\ref{FK}, we extend Furstenberg-Kifer theory to non-compact, non-invertible settings under exponential moment assumptions. Section~\ref{Markov} proves uniform ergodicity of the associated Markov operator. Section~\ref{Holder} establishes H\"older continuity of Lyapunov exponents. Section~\ref{Large} derives stable exponential large deviation estimates. Finally, Section~\ref{Schro} applies the theory to Schr\"odinger cocycles.

\bigskip

\section{Statement of results}\label{statements}
Let $\Scal=\Mat_m(\R)$ be the (locally compact) space of square $m\times m$ matrices.
Given  $0<p\leq 1$, consider the (exponential) moments of order $p$  of a measure $\mu\in \Prob(\Scal)$,
\begin{equation}
	\label{Theta p}
	{\bar \Theta}_p(\mu):= \int \norm{g}^p\, d\mu(g) ,
\end{equation}
and
\begin{equation}
	\label{under Theta p}
	{\underline\Theta}_p(\mu):= \sup_{\norm{v}=1} \int \norm{g\, v}^{-p}\, d\mu(g) .
\end{equation}
The space
$$\Prob_p(\Scal):=\left\{\mu\in \Prob(\Scal) \, \colon\,    \int \norm{g}^p \ d\mu(g)<+\infty\right\} $$
was extensively studied by C. Villani in the context of Optimal Transport, see~\cite{CV}. This space admits a natural metric known as the \textit{Wasserstein distance}, whose main properties are explained in Section~\ref{Wasserstein}.
 
A coupling between two measures $\mu, \nu\in \Prob(\Scal)$ is any probability $\pi\in \Prob(\Scal \times \Scal)$ with marginal measures $\mu$ and $\nu$. We denote by  $\Pi(\mu,\nu)$ the set of all couplings between $\mu$ and $\nu$.
The Wasserstein metric of order $p$ is 
$W_p: \Prob_p(\Scal)\times \Prob_p(\Scal)\to [0, +\infty]$, defined by
$$W_p(\mu,\nu):=\inf_{\pi\in \Pi(\mu,\nu)} \,  \iint_{\Scal\times \Scal} \norm{g-g'}^p \ d\pi(g,g') . $$

Given $C\geq1$ and $0<p \leq 1$ we consider  the space of probability measures with finite exponential  moments
\begin{align*}
	\Mcal:=\left\{\mu\in  \Prob(\Scal) \colon\,    \bar \Theta_p(\mu) \leq C \, \text{ and }\, \underline \Theta_p(\mu) \leq C\; \right\}. 
\end{align*} 
By Proposition~\ref{MCp completeness}, this space is complete with respect to the Wasserstein distance $W_p$.

%

\begin{definition}
	\label{def quasi-irreducibility}
	We say that a measure $\mu\in \Mcal$ is quasi-irreducible if for any proper subspace $E\subset \R^m$ such that $g\, E\subseteq E$ for $\mu$-almost every $g\in \Mat_m(\R)$, the  cocycle  induced on the subspace $E$ has first Lyapunov exponent equal to $L_1(\mu)$.
\end{definition}

\begin{definition}
	\label{def mu stationary}
	We say that a measure $\eta\in \Prob(\Pp(\R^m))$ is $\mu$-stationary if $\eta(\{ \hat v\in \Prob(\Pp(\R^m)) \colon g\, v=0 \})=0$ for $\mu$ -almost every $g$ and for every Borel set $B\subset \Pp(\R^m)$, writing
	$$g^{-1}(B):=\{ \hat v\in\Pp(\R^m))\colon g\, v\neq 0 \, \text{ and }\, \hat g\, \hat v\in B \} $$
	we have
	$$ \eta(B)=\int \eta(g^{-1}(B))\, d\mu(g) $$
\end{definition}

\begin{theorem}
	\label{thm non invertible Holder LE }
	Given $\mu\in \Mcal$ such that
	\begin{enumerate}
		\item $\mu$ is quasi-irreducible;
		\item $L_1(\mu)>L_2(\mu)$;
	\end{enumerate}
	 the function $\Mcal\ni \nu \mapsto L_1(\nu)$ is H\"older continuous in a small neighborhood of $\mu$ with respect to $W_p$.
\end{theorem}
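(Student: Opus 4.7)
The plan is to follow the spectral method of Nagaev--Le Page, in the form developed by Baraviera--Duarte, adapted to accommodate non-invertibility and non-compact support of $\mu$. The argument will proceed in three layers: (i) prove that
$$\frac{1}{n}\,\EE\log\norm{A^n v}\to L_1(\mu) \quad \text{as }n\to\infty,$$
uniformly over unit vectors $v\in\R^m$; (ii) translate this into a spectral gap on a suitable Hölder space for the Markov operator associated to $\mu$, and show that the gap is stable under $W_p$-perturbations; (iii) derive Hölder continuity of $\nu\mapsto L_1(\nu)$ from the persistence of the gap.

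For step (i), I would study the Markov operator
$$(\Qop_\mu \phi)(\hat v) := \int \phi(\hat g\,\hat v)\,d\mu(g),$$
which is well-defined $\mu$-almost surely thanks to the finite moment condition $\underline\Theta_p(\mu)<\infty$, and then invoke the quasi-irreducibility and spectral-gap hypotheses to extend Furstenberg--Kifer theory (per Section~\ref{FK}) to the non-invertible, non-compact setting. The target is the Furstenberg formula
$$L_1(\mu) = \iint \log\frac{\norm{g v}}{\norm{v}}\,d\mu(g)\,d\eta(\hat v),$$
valid for every $\mu$-stationary probability $\eta\in\Prob(\Pp(\R^m))$, together with the uniform convergence announced above.

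For step (ii), I would work on the space $\Holal$ of $\alpha$-Hölder functions on $\Pp(\R^m)$ for $\alpha\leq p$ chosen small. The uniform ergodicity obtained in step (i) should upgrade, in the Baraviera--Duarte style, to a spectral gap for $\Qop_\mu$ on $\Holal$: the leading eigenvalue $1$ is simple with one-dimensional eigenspace (the constants), and the rest of the spectrum sits strictly inside the unit disk. The key continuity ingredient is that $\nu\mapsto\Qop_\nu$ is Hölder continuous in operator norm on $\Holal$ with modulus a power of $W_p(\mu,\nu)$ (essentially by Kantorovich duality), so that standard analytic perturbation theory makes the spectral gap, the leading eigenvalue, and the leading eigenprojector depend Hölder-continuously on $\nu$ in a $W_p$-neighborhood of $\mu$.

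For step (iii), writing the Furstenberg formula for every nearby $\nu$,
$$L_1(\nu) = \iint \log\frac{\norm{g v}}{\norm{v}}\,d\nu(g)\,d\eta_\nu(\hat v),$$
with $\eta_\nu$ the unique $\nu$-stationary measure produced by the spectral gap, I would split $L_1(\nu)-L_1(\mu)$ into a contribution from $(\nu-\mu)\otimes\eta_\mu$ and one from $\mu\otimes(\eta_\nu-\eta_\mu)$; the first is controlled by $W_p(\mu,\nu)$ together with the exponential moment bound, while the second is controlled by the Hölder dependence of $\eta_\nu$ on $\nu$ obtained in step (ii). The main obstacle will be step (i): the function $v\mapsto\log\norm{g v}$ is singular at $v\in\Ker g$, so the passage from pointwise (Furstenberg--Kifer) convergence to uniform convergence on the unit sphere requires careful control of these singularities, and this is precisely the point where the exponential moment hypothesis $\underline\Theta_p(\mu)<\infty$ enters crucially.
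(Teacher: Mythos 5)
Your steps (i) and (iii) track the paper's argument closely (Section~\ref{FK} and Lemma~\ref{unif} for the uniform Furstenberg formula, then the split of $L_1(\nu)-L_1(\mu)$ into a ``$\mu$ moves'' piece and an ``$\eta$ moves'' piece as in the proof in Section~\ref{Holder}, with Lemma~\ref{lemma almost Holder continuity II} to handle the singularities of $g\mapsto\log\norm{gv}$). The genuine gap is in step (ii). The claim that $\nu\mapsto\Qop_\nu$ is H\"older continuous \emph{in operator norm} on $\Holal(\Projm)$ is false, and no amount of Kantorovich duality rescues it: if $\phi\in\Holal$ then $\Qop_\mu\phi-\Qop_\nu\phi$ is a double increment in $(g,\hat v)$, and its $\alpha$-H\"older seminorm does \emph{not} tend to $0$ as $W_p(\mu,\nu)\to 0$ (one can make it small in $\norm{\cdot}_\infty$ or estimate it by $2v_\alpha(\phi)$, but not both simultaneously at a H\"older-small rate). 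Consequently Kato-type analytic perturbation theory, which needs resolvent continuity in the $\norm{\cdot}_\alpha\to\norm{\cdot}_\alpha$ operator norm, simply does not apply, and your step (ii) collapses.

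What the paper does instead avoids operator-norm continuity altogether. First, it isolates a scalar quantity, the average H\"older coefficient
$\kappa_\alpha(\mu)=\sup_{\hat v\neq\hat w}\int\bigl[d(\hat g\hat v,\hat g\hat w)/d(\hat v,\hat w)\bigr]^\alpha d\mu(g)$, which is submultiplicative (Proposition~\ref{spectral property}), eventually $<1$ under the hypotheses (Proposition~\ref{8}), and, crucially, H\"older continuous in $W_p$ as a \emph{number} (Proposition~\ref{cont2} and Corollary~\ref{coro cont2}). This gives a uniform Lasota--Yorke-type contraction $v_\alpha(\Qop_\nu^{nn_0}\phi)\le\sigma_0^n v_\alpha(\phi)$ for all $\nu$ in a $W_p$-neighborhood, hence uniform strong mixing (Theorem~\ref{sm}). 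Second, it replaces your operator-norm continuity by the mixed-norm estimate of Lemma~\ref{mo}: $\norm{\Qop_\mu^n\phi-\Qop_{\mu'}^n\phi}_\infty\le K\,C^{\alpha/p}v_\alpha(\phi)\,W_p(\mu,\mu')^{\alpha/p}$, obtained via a telescoping decomposition of $\Qop_\mu^n-\Qop_{\mu'}^n$ together with the geometric decay of $v_\alpha(\Qop_\mu^{n-i-1}\phi)$. This is a Keller--Liverani-style two-norm argument, not analytic perturbation theory, and it gives the H\"older dependence $\abs{\int\phi\,d\eta-\int\phi\,d\eta'}\lesssim v_\alpha(\phi)W_p(\mu,\mu')^{\alpha/p}$ directly, without ever asserting operator-norm continuity of $\Qop_\nu$. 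To repair your step (ii) you would need to replace ``H\"older continuous in operator norm on $\Holal$'' by this weaker $\norm{\cdot}_\alpha\to\norm{\cdot}_\infty$ estimate and then invoke a Keller--Liverani type stability argument (or, as the paper does, argue quantitatively and entirely by hand using the uniform contraction of $\kappa_\alpha$).
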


\begin{remark}
	The H\"older exponent in the regularity of the Lyapunov exponent depends on $p$, $C$ and $\mu\in \Mcal$.
\end{remark}

In order to extend this result to other Lyapunov exponents, we introduce the array of measurements, for $1\leq k\leq m$,
$${\underline\Theta}_p^k(\mu):= \sup_{\norm{v\wedge \cdots \wedge v_k}=1} \int \norm{\wedge_k g\, (v_1\wedge \cdots \wedge v_k)}^{-p}\, d\mu(g) ,$$
and define
$$ \Mcal(k):=\{\mu\in \Prob(\Mat_m(\R))\, \colon\, \bar \Theta_p(\mu)\leq C\, \text{  and }\, \underline \Theta_p^k(\mu)\leq C\, \}.$$

The following straightforward result, whose proof we omit, characterizes these moments in terms of the Grassmannian $\Gr_k(\R^m)$  of $k$-dimensional linear subspaces of $\R^m$.

\begin{proposition} 
	For any measure $\mu\in \Prob(\Mat_m(\R)$,
	$$ \underline{\Theta}_p^k(\mu) =  \sup_{F\in \Gr_k(\R^m)} \int \frac{1}{|\det(g_{\vert F})|^p }\, d\mu(g) , $$	
	where  $\det(g_{\vert F})$ stands for  the volume dilation factor  of
	the linear map  $g_{\vert F}:F\to g(F)$, i.e.,  the product of its singular values.
\end{proposition}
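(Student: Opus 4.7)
The plan is to reduce the supremum over unit decomposable $k$-vectors to a supremum over $k$-planes, by exploiting the fact that the integrand is basis-independent once we fix the spanned subspace.

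First I would unpack the norm on $\wedge^k \R^m$: for linearly independent $v_1,\ldots,v_k$ spanning a subspace $F \in \Gr_k(\R^m)$, the quantity $\norm{v_1 \wedge \cdots \wedge v_k}$ equals the $k$-dimensional volume of the parallelepiped they span in $F$. Applying $g$ gives $\wedge_k g\,(v_1 \wedge \cdots \wedge v_k) = g v_1 \wedge \cdots \wedge g v_k$, whose norm is the $k$-volume of the image parallelepiped in $g(F)$. By the definition of the volume dilation factor of the linear map $g_{|F}:F\to g(F)$, which is the product of its singular values (equal to $\sqrt{\det((g_{|F})^{\!\top} g_{|F})}$), one has
\[
\norm{g v_1 \wedge \cdots \wedge g v_k} \; = \; \abs{\det(g_{|F})}\cdot \norm{v_1 \wedge \cdots \wedge v_k}.
\]
If $v_1,\ldots,v_k$ are linearly dependent then both sides are zero, and the identity remains valid with the convention $\abs{\det(g_{|F})} = 0$ when $g(F)$ has dimension less than $k$.

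Next, for any unit decomposable $k$-vector $v_1 \wedge \cdots \wedge v_k$ (so $\norm{v_1 \wedge \cdots \wedge v_k} = 1$, forcing linear independence), the spanned subspace $F := \mathrm{span}(v_1,\ldots,v_k)$ lies in $\Gr_k(\R^m)$ and the display above gives
\[
\norm{\wedge_k g\,(v_1 \wedge \cdots \wedge v_k)}^{-p} \; = \; \abs{\det(g_{|F})}^{-p}.
\]
Integrating against $\mu$, the integrand depends only on $F$, not on the chosen basis. Conversely, every $F \in \Gr_k(\R^m)$ is spanned by an orthonormal basis $e_1,\ldots,e_k$, which yields a unit decomposable $k$-vector $e_1 \wedge \cdots \wedge e_k$. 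Hence the two suprema coincide, which is exactly the claimed identity.

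The only subtlety to check is the case where $\mu$ charges matrices with $\abs{\det(g_{|F})} = 0$ for some $F$; in that case both suprema equal $+\infty$, but the definition of $\Mcal(k)$ (finite $\underline{\Theta}_p^k(\mu) \leq C$) already excludes this degeneracy. There is no serious obstacle here: the result is a change-of-variables statement on the Plücker embedding $\Gr_k(\R^m) \hookrightarrow \Pp(\wedge^k \R^m)$, and the short verification above is essentially the whole content.
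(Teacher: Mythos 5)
Your proof is correct and gives the argument the paper calls "straightforward" and omits: the identity $\norm{g v_1 \wedge \cdots \wedge g v_k} = \abs{\det(g_{\vert F})}\, \norm{v_1 \wedge \cdots \wedge v_k}$ reduces the supremum over unit decomposable $k$-vectors to a supremum over $k$-planes, with the degenerate case $g_{\vert F}$ non-injective handled consistently on both sides (both integrands become $+\infty$). One cosmetic remark: the final sentence about $\Mcal(k)$ is not needed — the proposition is stated for arbitrary $\mu \in \Prob(\Mat_m(\R))$, and the identity of suprema holds in $[0, +\infty]$ regardless of whether $\underline{\Theta}_p^k(\mu)$ is finite.
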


In Section~\ref{Setting}, we will show that the bounds on $\bar{\Theta}_p(\mu)$ and $\underline{\Theta}_{p'}^k(\mu)$ -- which are guaranteed for $\mu \in \Mcal(k)$ -- imply corresponding bounds on all intermediate moments $\underline{\Theta}_{p'}^j(\mu)$ for $1 \leq j < k$, possibly for smaller exponents $0 < p' < p$, see Lemma~\ref{bounds on Theta j}.

\begin{definition}
	\label{def k quasi-irreducibility}
	We say that a measure $\mu\in \Mcal(k)$ is $k$-quasi-irreducible if for all $1\leq j\leq k$ the measures $\wedge_j\mu=\int \delta_{\wedge_j g}\, d\mu(g)$ are quasi-irreducible.
\end{definition}

\begin{remark}
If $\mu$ generates a Zariski-dense sub-semigroup then
irreducibility of $\wedge_j\mu$, for all $1\leq j\leq k$,
is automatic. In particular, $\mu$ is
$k$-quasi-irreducible.
\end{remark}

\begin{corollary}
	\label{non-invertible FK coro}
	Given $1\leq k\leq m$, $\mu\in \Mcal(k)$ such that
	\begin{enumerate}
		\item $\mu$ is $k$-quasi-irreducible;
		\item $L_1(\mu)>\cdots >L_k(\mu)$;
	\end{enumerate}
	the function $\Mcal(k)\ni \nu \mapsto (L_1(\nu) \ldots, L_k(\nu))$ is H\"older continuous in a small neighborhood of $\mu$.
\end{corollary}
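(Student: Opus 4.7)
The plan is the standard exterior-power reduction to Theorem~\ref{thm non invertible Holder LE }. For each $1\le j\le k$, consider the pushforward measure $\wedge_j\mu$ on $\Mat_{\binom{m}{j}}(\R)$ under the $j$-th exterior power representation. The classical identities
\[
L_1(\wedge_j\mu)=\sum_{i=1}^{j}L_i(\mu)\quad\text{and}\quad L_2(\wedge_j\mu)=\sum_{i=1}^{j-1}L_i(\mu)+L_{j+1}(\mu)
\]
show that the spectral gap $L_1(\wedge_j\mu)-L_2(\wedge_j\mu)=L_j(\mu)-L_{j+1}(\mu)$ is positive for each $j=1,\dots,k$ under hypothesis~(2) (read to include $L_k(\mu)>L_{k+1}(\mu)$). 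Quasi-irreducibility of each $\wedge_j\mu$ is precisely the content of Definition~\ref{def k quasi-irreducibility}.

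Next I would check that $\wedge_j\mu$ belongs to a class of probability measures on $\Mat_{\binom{m}{j}}(\R)$ of the same type as $\Mcal$, with some exponent $p'\leq p/j$ and constant $C'$. The upper bound $\bar\Theta_{p/j}(\wedge_j\mu)\leq\bar\Theta_p(\mu)\leq C$ follows from $\|\wedge_j g\|\leq\|g\|^j$. The lower bound $\underline\Theta_{p'}(\wedge_j\mu)\leq C'$ is obtained by combining the minimum-singular-value estimate
\[
\|(\wedge_j g)\,w\|\;\ge\;s_{m-j+1}(g)\cdots s_m(g)\,\|w\|\qquad\forall\,w\in\wedge_j\R^m,
\]
with the intermediate moment bounds on $\underline\Theta^i_{p''}(\mu)$ for $1\leq i\leq k$ guaranteed for $\mu\in\Mcal(k)$ by Lemma~\ref{bounds on Theta j}, possibly after further decreasing the exponent.

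I would then show that the pushforward $\nu\mapsto\wedge_j\nu$ is locally H\"older continuous from $(\Mcal(k),W_p)$ into the corresponding Wasserstein space $(\Mcal',W_{p'})$. Given any coupling $\pi\in\Pi(\mu,\nu)$, the map $(\wedge_j,\wedge_j)_*\pi$ couples $\wedge_j\mu$ and $\wedge_j\nu$, and its transport cost is controlled via the elementary estimate
\[
\|\wedge_j g-\wedge_j g'\|\;\le\; j\,\max(\|g\|,\|g'\|)^{j-1}\,\|g-g'\|,
\]
combined with H\"older's inequality and the exponential moment bound $\bar\Theta_p\leq C$. Applying Theorem~\ref{thm non invertible Holder LE } to each $\wedge_j\mu$ then yields local H\"older continuity of the map $\nu\mapsto L_1(\wedge_j\nu)=L_1(\nu)+\cdots+L_j(\nu)$ for every $j=1,\dots,k$; inverting the resulting triangular linear system expresses each individual $L_j(\nu)$ as a H\"older-continuous function of $\nu$ near $\mu$.

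The main technical obstacle is the moment verification above. Theorem~\ref{thm non invertible Holder LE } applied to $\wedge_j\mu$ requires the lower moment $\underline\Theta_{p'}(\wedge_j\mu)$ as a supremum over \emph{all} unit vectors in $\wedge_j\R^m$, whereas $\Mcal(k)$ provides direct control only on decomposable wedges via $\underline\Theta^j_{p}(\mu)$. Bridging this gap with the singular-value inequality above and Lemma~\ref{bounds on Theta j} requires some care and forces a smaller H\"older exponent in the final estimate.
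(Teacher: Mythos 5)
Your proposal takes the expected exterior-power route, which is certainly what the corollary has in mind (the very definition of $k$-quasi-irreducibility is phrased in terms of $\wedge_j\mu$), and the bookkeeping in your first paragraph — partial sums $L_1(\wedge_j\nu)=L_1(\nu)+\cdots+L_j(\nu)$, spectral gap $L_1(\wedge_j\mu)-L_2(\wedge_j\mu)=L_j(\mu)-L_{j+1}(\mu)$, inverting the triangular system — is correct. Two points deserve scrutiny.

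First, the spectral-gap count. As you note parenthetically, the hypothesis $L_1(\mu)>\cdots>L_k(\mu)$ furnishes $k-1$ gaps, enough for $\wedge_1,\dots,\wedge_{k-1}$; to make $\wedge_k$ fall under Theorem~\ref{thm non invertible Holder LE } one needs $L_k(\mu)>L_{k+1}(\mu)$, which is only automatic when $k=m$ (then $\wedge_m$ is a scalar cocycle). You are right to flag this; absent $L_k>L_{k+1}$, only $(L_1,\dots,L_{k-1})$ can be recovered this way.

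Second, and more importantly, the bridge from $\underline\Theta^j_p(\mu)$ to $\underline\Theta_p(\wedge_j\mu)$ does not close as written. The minimum-singular-value estimate
\[
\|(\wedge_j g)w\|\;\ge\; s_{m-j+1}(g)\cdots s_m(g)\,\|w\|\;=\;\min_{F\in\Gr_j}\abs{\det(g_{\vert F})}\,\|w\|
\]
gives
\[
\int \|(\wedge_j g)w\|^{-p}\,d\mu(g)\;\le\;\int \Bigl(\max_{F\in\Gr_j}\abs{\det(g_{\vert F})}^{-p}\Bigr)\,d\mu(g),
\]
but the right-hand side is an integral of a supremum over $F$, whereas $\underline\Theta^j_p(\mu)$ (and likewise the bounds produced by Lemma~\ref{bounds on Theta j}) is a supremum over $F$ of integrals. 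The inequality goes the wrong way; $\mu\in\Mcal(k)$ does not control $\int\max_F\abs{\det(g_{\vert F})}^{-p}\,d\mu$. So the black-box application of Theorem~\ref{thm non invertible Holder LE } to the full projective space $\Pp(\wedge_j\R^m)$ is not justified.

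The fix, and almost certainly the paper's intention, is to restrict the entire argument to the Grassmannian. Since $\wedge_j g$ maps decomposable wedges to decomposable wedges, the Markov chain generated by $\wedge_j\mu$ leaves the Pl\"ucker-embedded $\Gr_j(\R^m)\subset\Pp(\wedge_j\R^m)$ invariant, and every $\wedge_j\mu$-stationary measure is supported there. Running the proofs of Sections~\ref{FK}--\ref{Holder} with $\Gr_j(\R^m)$ in place of $\Pp(\R^m)$, the only lower moment that ever enters is the supremum over decomposable wedges, i.e.\ precisely $\underline\Theta^j_p(\mu)$, which Lemma~\ref{bounds on Theta j} does control for $\mu\in\Mcal(k)$ (at the cost of shrinking $p$). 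Your Lipschitz estimate for $\nu\mapsto\wedge_j\nu$ and the upper-moment bound via $\|\wedge_j g\|\le\|g\|^j$ are then the correct remaining ingredients. In short: right reduction, but the moment verification needs the Grassmannian restriction rather than the singular-value comparison you propose.
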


\begin{corollary}
\label{coro main them det moment}
 $\Mcal(m)$ consists of  $\mu\in \Prob_p(\Scal)$ such that
$$ \bar \Theta_p(\mu)\leq C\; \text{ and }\; \underline \Theta_p^m(\mu)= \int \frac{1}{|\det (g)|^p}\, d\mu(g) \leq C .$$
In this case, under the same simplicity and $m$-quasi irreducibility hypothesis, all Lyapunov exponents are finite and locally H\"older.
\end{corollary}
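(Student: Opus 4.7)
The plan is to deduce this as a direct specialization of the preceding proposition and of Corollary~\ref{non-invertible FK coro}, supplemented by a short a priori bound that yields finiteness of the spectrum.

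First I would identify $\Mcal(m)$ with the set described in the statement. Applying the proposition characterizing $\underline\Theta_p^k$ at $k=m$, the Grassmannian $\Gr_m(\R^m)$ is the singleton $\{\R^m\}$ and the restriction $g_{\vert \R^m}=g$ has volume dilation factor $\abs{\det g}$. Hence the supremum collapses to $\underline\Theta_p^m(\mu)=\int \abs{\det g}^{-p}\, d\mu(g)$, which reproduces exactly the bound in the statement.

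Next I would invoke Corollary~\ref{non-invertible FK coro} with $k=m$. The hypotheses ($m$-quasi-irreducibility of $\mu$ and strict simplicity $L_1(\mu)>\cdots>L_m(\mu)$) match verbatim, and the conclusion gives H\"older continuity of the whole tuple $(L_1(\nu),\ldots,L_m(\nu))$ in a $W_p$-neighborhood of $\mu$ inside $\Mcal(m)$, hence of each coordinate separately.

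Finally I would record finiteness of every $L_j(\mu)$. From $\bar\Theta_p(\mu)\leq C$ and Jensen's inequality, $L_1(\mu)\leq \int\log\norm{g}\, d\mu\leq p^{-1}\log C<\infty$, so every $L_j(\mu)$ is bounded above. The hypothesis $\underline\Theta_p^m(\mu)\leq C$ gives $\int \log^{-}\abs{\det g}\, d\mu\leq p^{-1}\log C$, and since $\log^{+}\abs{\det g}\leq m\log^{+}\norm{g}$ is integrable, $\int \log\abs{\det g}\, d\mu$ is a finite real number. Because $\log\abs{\det A^n}$ is an additive cocycle, Birkhoff's ergodic theorem combined with Furstenberg--Kesten applied on the top exterior power identifies $\sum_{j=1}^m L_j(\mu)$ with this integral, so the sum is finite; combined with the individual upper bounds this forces $L_m(\mu)>-\infty$, and therefore all $L_j(\mu)$ are finite. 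No serious obstacle is expected here: once the identification of $\Mcal(m)$ is made, the argument reduces to bookkeeping on top of the preceding corollary and standard moment estimates.
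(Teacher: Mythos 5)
Your argument is correct and carries out the deduction the paper leaves implicit: identify $\underline\Theta_p^m$ through the singleton Grassmannian $\Gr_m(\R^m)=\{\R^m\}$ so that $\det(g_{\vert\R^m})=\det g$, specialize Corollary~\ref{non-invertible FK coro} to $k=m$ for the H\"older continuity, and obtain finiteness of the full spectrum from the two moment bounds together with $\sum_{j=1}^m L_j(\mu)=\int\log|\det g|\,d\mu$. One small bookkeeping slip: Jensen applied to $|\det g|^{-p}$ gives the lower bound $\int\log|\det g|\,d\mu\geq -p^{-1}\log C$, not directly the bound $\int\log^-|\det g|\,d\mu\leq p^{-1}\log C$ you wrote (a direct estimate instead gives $\int\log^-|\det g|\,d\mu\leq C/p$), but combined with the integrability of $\log^+|\det g|\leq m\log^+\norm{g}$ this still yields $\sum_j L_j(\mu)\in\R$, so your conclusion stands unchanged.
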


 Let $\Sigma$ be a compact metric space and $\mu$ a probability measure on $\Sigma$.
 Consider the space of sequences $\Omega:=\Sigma^\Z$ endowed
 with the Bernoulli measure $\Pp_\mu:=\mu^\Z$.
 A function $A:\Omega\to \SL_m(\R)$ is said to be \textit{locally constant}  if $A(\omega)=A(\omega_0)$, i.e., it depends only on the $0$-th coordinate  of the argument.
 Given $p>0$, we say that $A$ has a $p$-exponential moment if
\begin{equation}
\label{Theta p (A)}
 \Theta_p(A):= \int_\omega \norm{A(\omega)}^p\, d\Pp_\mu(\omega)<+\infty .
\end{equation}
Let $\mathscr{C}_p(\Omega, \SL_m(\R))$ denote 
 the space of locally constant functions with finite $p$-exponential moment.
 A function $A\in \mathscr{C}_p(\Omega, \SL_m(\R))$ determines a linear cocycle $F_A:\Omega\times\R^m\to \Omega\times\R^m$,
 $F_A(\omega, v):=(\sigma\omega, A(\omega)\, v)$,
 with finite moment. This holds because
 $\log \norm{A(\omega)}\lesssim \norm{A(\omega)}^p$.
 The cocycle $F_A$ can be naturally identified with the cocycle determined by the probability measure 
 $\nu:= A_\ast\Pp_\mu\in \Prob(\SL_m(\R))$. In fact
 we have the following commutative diagram 
 $$\begin{CD}
 \Omega \times\R^m  @>F_A>> \Omega \times\R^m \\
 @V h VV @VV h V\\
 X \times\R^m @>F >> X \times\R^m
 \end{CD}$$
 where $X=\SL_m(\R)^\N$, $F(\omega, v)=(\sigma\omega, \omega_0\, v)$, and $h:\Omega \times\R^m \to X \times\R^m$ is the map $h(\omega, v):=\left( (A(\sigma^n\omega))_{n\in \Z}, v\right) $.

We endow the space $\mathscr{C}_p(\Omega, \SL_m(\R))$ with the $L^1$-distance
$$ d(A,B):=\int_\Omega \norm{A-B}\, d\Pp_\mu .$$ 
 \begin{corollary}
 	\label{thm L1 Holder LE}
 	Given $A\in \mathscr{C}_p(\Omega, \SL_m(\R))$ such that
 	\begin{enumerate}
 		\item $A$ is quasi-irreducible;
 		\item $L_1(A)>L_2(A)$;
 	\end{enumerate}
 	the function $\mathscr{C}_p(\Omega, \SL_m(\R))\ni B \mapsto L_1(B)$ is H\"older continuous in a small neighborhood of $A$ w.r.t. the $L^1$-distance above.
 \end{corollary}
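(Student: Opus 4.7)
The plan is to reduce Corollary~\ref{thm L1 Holder LE} to the main theorem (Theorem~\ref{thm non invertible Holder LE }) via the pushforward measure construction $\nu_A := A_\ast \Pp_\mu \in \Prob(\SL_m(\R))$. The commutative diagram in the excerpt already exhibits $F_A$ as a factor of the random cocycle generated by $\nu_A$, so the two share the same Lyapunov spectrum. Moreover, a subspace $E \subset \R^m$ is $A(\omega)$-invariant for $\Pp_\mu$-a.e.\ $\omega$ if and only if it is $g$-invariant for $\nu_A$-a.e.\ $g$, so the quasi-irreducibility assumption and the spectral gap $L_1(A) > L_2(A)$ transfer verbatim from $A$ to $\nu_A$, and likewise from $B$ to $\nu_B$.

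First, I would verify that $\nu_A$ and nearby $\nu_B$ lie in a common $\Mcal$ class. Fix an auxiliary exponent $p' \leq p/(m-1)$. For $g \in \SL_m(\R)$ the singular value identity $\prod_i s_i(g) = 1$ gives $s_m(g) \geq \norm{g}^{-(m-1)}$, so $\norm{g v} \geq \norm{v}\, \norm{g}^{-(m-1)}$ for every unit $v$. Consequently
$${\underline\Theta}_{p'}(\nu_A) \leq \int \norm{A(\omega)}^{p'(m-1)}\, d\Pp_\mu(\omega) \leq \Theta_p(A)^{p'(m-1)/p},$$
and by Jensen $ {\bar\Theta}_{p'}(\nu_A) \leq \Theta_p(A)^{p'/p}$, so both moments are finite and $\nu_A \in \Mcal$ for some constant $C$ depending on $\Theta_p(A)$.

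Next, I would bound the Wasserstein distance by the $L^1$-distance. The diagonal coupling $\pi := (A, B)_\ast \Pp_\mu$ lies in $\Pi(\nu_A, \nu_B)$, and since $p' \leq 1$ the map $x \mapsto x^{p'}$ is concave, so Jensen's inequality yields
$$W_{p'}(\nu_A, \nu_B) \leq \int \norm{A(\omega) - B(\omega)}^{p'}\, d\Pp_\mu(\omega) \leq d(A, B)^{p'}.$$
For $B$ in a small $L^1$-neighborhood of $A$ on which $\Theta_p(B)$ stays bounded, $\nu_B$ belongs to the same $\Mcal$ as $\nu_A$, and $W_{p'}(\nu_A, \nu_B)$ is small. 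Theorem~\ref{thm non invertible Holder LE } then provides $\alpha > 0$ with
$$|L_1(B) - L_1(A)| = |L_1(\nu_B) - L_1(\nu_A)| \lesssim W_{p'}(\nu_A, \nu_B)^\alpha \lesssim d(A, B)^{\alpha p'},$$
which is the asserted H\"older continuity, with effective exponent $\alpha p / (m-1)$.

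The main obstacle is the uniform control of $\Theta_p(B)$ in an $L^1$-neighborhood: an $L^1$-small perturbation of $A$ can, in principle, inflate $\Theta_p$ by concentrating mass on a small set of large-norm values, pushing $\nu_B$ out of the $\Mcal$ ball in which the main theorem applies. This is handled by interpreting ``small neighborhood'' as a set on which both $d(A, B)$ is small and $\Theta_p(B)$ is uniformly bounded -- either explicitly restricting to $\{B : \Theta_p(B) \leq K\}$ for some $K > \Theta_p(A)$, or allowing the H\"older constant to depend on an ambient $\Theta_p$-bound valid on the neighborhood. Making this dependence quantitatively transparent is the only step requiring technical care; the remainder is a direct application of the main theorem.
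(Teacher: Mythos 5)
Your proof is correct and takes essentially the same route as the paper: push forward $\nu_A := A_\ast\Pp_\mu$, use the diagonal coupling $\pi = (A,B)_\ast\Pp_\mu$ to dominate the Wasserstein distance by the $L^1$-distance, and invoke Theorem~\ref{thm non invertible Holder LE }. In fact you supply a detail the paper's own argument glosses over: the paper only records that the upper moment $\bar\Theta_p(\nu_A)$ coincides with $\Theta_p(A)$, whereas applying the main theorem requires $\nu_A \in \Mcal$, i.e.\ a bound on the lower moment $\underline\Theta_{p'}$ as well. Your singular-value estimate $s_m(g)\geq \norm{g}^{-(m-1)}$ for $g\in\SL_m(\R)$, giving $\underline\Theta_{p'}(\nu_A)\leq\Theta_p(A)^{p'(m-1)/p}$ after lowering the exponent to $p'\leq p/(m-1)$, is exactly the missing step and is the right argument.

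Your final paragraph identifying uniform control of $\Theta_p(B)$ as an obstacle and proposing to restrict to a slab $\{B:\Theta_p(B)\leq K\}$ is, however, unnecessary caution. The elementary chain the paper records,
\[
\abs{\Theta_p(A)-\Theta_p(B)}\leq \int\abs{\norm{A}^p-\norm{B}^p}\,d\Pp_\mu\leq \int\norm{A-B}^p\,d\Pp_\mu\leq d(A,B)^p,
\]
shows $B\mapsto\Theta_p(B)$ is continuous for the $L^1$-metric, hence automatically bounded on any small $L^1$-ball around $A$. Combined with your $\underline\Theta_{p'}$ estimate this places all $\nu_B$ with $B$ in a small $L^1$-neighborhood in a fixed $\Mcal$, and no additional hypothesis or explicit slab restriction is needed.
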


 This result was proved by  P. Barrientos and D. Malicet in a recent article~\cite[Proposition X]{BarrientosMalicet2025}.

 Under the same hypothesis, stable large deviation estimates hold.
 
 \begin{theorem}
 	\label{thm large deviation estimates}
 	Given $\mu\in \Mcal$ such that
 	\begin{enumerate}
 		\item $\mu$ is quasi-irreducible;
 		\item $L_1(\mu)>L_2(\mu)$;
 	\end{enumerate}
 	there are positive constants $c$, $C$ and $\delta$ such that for every measure $\nu\in \Mcal$ with $W_p(\nu, \mu)<\delta$, all $n\geq 1$ and every $\varepsilon>0$ and $\hat v\in \Pp(\R^m)$,
 	$$ \nu^\N\left\{\, \omega\in \Mat_m(\R)^\N\colon \, \left\vert
 	\frac{1}{n}\, \log \norm{A^n(\omega)\, v} - L_1(\nu)  \right\vert >\varepsilon  \, \right\}<C\, e^{-c(\varepsilon)\, n} ,$$
 	with $c(\varepsilon)\sim \varepsilon^2/(\log(1/\varepsilon))$.
 \end{theorem}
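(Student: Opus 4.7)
The plan is to apply the Nagaev--Le Page spectral method to a parameterized family of twisted transfer operators, combining the uniform spectral gap established in Section~\ref{Markov} with Kato's analytic perturbation theory, and extracting the deviation bound via a Chernoff--Markov inequality with both signs of the parameter. I introduce, for each $\nu\in \Mcal$ close to $\mu$ and each real $t$ with $|t|<p$, the twisted Markov operator acting on the Banach space $\Holal(\Pp(\R^m))$ of $\alpha$-H\"older functions on projective space,
$$(\Qop_{\nu,t}\phi)(\hat v):=\int \norm{g\,v}^{t}\, \phi(\widehat{g\,v})\, d\nu(g) ,$$
which reduces at $t=0$ to the ordinary Markov operator $\Qop_\nu$. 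The bounds $\bar\Theta_p(\nu)\leq C$ and $\underline\Theta_p(\nu)\leq C$ guarantee that $\Qop_{\nu,t}$ is bounded and depends analytically on $t$ in a neighborhood of $0$, while Section~\ref{Markov} provides a spectral gap for $\Qop_\nu$ that is uniform over a $W_p$-neighborhood of $\mu$.

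Analytic perturbation theory then furnishes a simple dominant eigenvalue $\lambda_\nu(t)$ with $\lambda_\nu(0)=1$, analytic in $t$ for $|t|<t_0$ and continuous in $\nu$ for $W_p(\nu,\mu)<\delta$, with $t_0$ and $\delta$ independent of $\nu$. Setting $\psi_\nu(t):=\log\lambda_\nu(t)$, the integral formula for the Lyapunov exponent established in Section~\ref{FK} for the quasi-irreducible non-invertible case yields
$$\psi_\nu'(0)=\iint \log\norm{g\,v}\, d\nu(g)\, d\eta_\nu(\hat v)=L_1(\nu),$$
and hence $\psi_\nu(t)=t\,L_1(\nu)+O(t^{2})$ uniformly. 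The spectral decomposition of $\Qop_{\nu,t}$ then gives the uniform bound
$$\int \norm{A^n(\omega)\,v}^{t}\, d\nu^\N(\omega)=(\Qop_{\nu,t}^n\,\ind)(\hat v)\leq C_1\,\lambda_\nu(t)^n .$$
For $t\in(0,t_0)$, Markov's inequality yields
$$\nu^\N\bigl\{\tfrac{1}{n}\log\norm{A^n v}-L_1(\nu)>\varepsilon\bigr\}\leq C_1\,e^{n(\psi_\nu(t)-tL_1(\nu)-t\varepsilon)}\leq C_1\,e^{n(-t\varepsilon+K\,t^{2})},$$
and a symmetric argument with $t\in(-t_0,0)$---using $\underline\Theta_p(\nu)\leq C$ to keep $\Qop_{\nu,t}$ bounded for negative $t$---handles the lower tail. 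Optimizing $t$ produces the announced rate $c(\varepsilon)\sim \varepsilon^{2}/\log(1/\varepsilon)$, the logarithmic correction arising because the constant $K$ in the remainder can be controlled uniformly only as long as $|t|$ stays well inside the analyticity radius $t_0$, which forces a truncation when $\varepsilon$ is small.

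The main obstacle I anticipate is the joint uniformity in $(t,\nu)$ of the perturbation set-up, together with the identification $\psi_\nu'(0)=L_1(\nu)$. The latter requires that the $\mu$-stationary measure $\eta_\nu$ avoid the singular locus $\{v:g\,v=0\}$ for $\nu$-a.e.\ $g$, as in Definition~\ref{def mu stationary}; this is precisely the non-trivial extension of Furstenberg--Kifer carried out in Section~\ref{FK} under quasi-irreducibility. The former requires quantifying, as an operator on $\Holal$, the norm $\norm{\Qop_{\nu,t}-\Qop_{\mu,0}}$ in terms of $|t|+W_p(\nu,\mu)^{\beta}$ for some $\beta>0$, and showing that the simple eigenvalue $1$ of $\Qop_\mu$ persists throughout a \emph{uniform} neighborhood, so that the constants $C_1$, $t_0$, $\delta$ and $K$ above can be chosen independently of $\nu$ and of $\hat v$.
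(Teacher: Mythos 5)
Your proposal is a genuine rival route, not a reconstruction of the paper's argument. The paper does not introduce twisted transfer operators at all. Instead it observes that $(\Qop_k^n\psi)(g_0,\hat v_0) = (\Qop_\mu^{n-1}\Psi)(\widehat{g_0 v_0})$ with $\Psi(\hat v) := \int\log\norm{g\,v}\,d\mu(g)$, so that the exponential mixing of the \emph{untwisted} Markov operator established in Theorem~\ref{sm} yields $\norm{\Qop_k^n\psi - L_1(\mu)}_\infty \leq C\sigma^n\,\Lip(\Psi)$, and then invokes an abstract large-deviation theorem for Markov processes with exponentially decaying correlations (Theorem~2.1 and Remark~2.4 of~\cite{CDK-paper3}). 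The stability in $W_p$ is inherited directly from the stability of Theorem~\ref{sm}. This is a ``verify the hypotheses of a black box'' proof; all the heavy lifting is already done in Section~\ref{Markov}.

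Your direct Nagaev--Le~Page argument is classical and could in principle deliver the result, but your sketch has two genuine issues. First, the heart of the difficulty in the non-invertible setting is precisely the step you flag as an ``anticipated obstacle'' but do not resolve: establishing that $t\mapsto \Qop_{\nu,t}$ is operator-norm analytic on $\Holal(\Projm)$ in a uniform complex neighborhood of $t=0$, uniformly over $\nu$ near $\mu$. The non-removable singularities of $\log\norm{g\,v}$ along $\{v\in\ker g\}$ mean that bounding the $\alpha$-H\"older seminorm of $\hat v\mapsto\int\norm{g\,v}^t\phi(\widehat{g\,v})\,d\nu(g)$ requires a truncation/splitting argument in the spirit of Lemmas~\ref{lemma almost Holder continuity I}--\ref{lemma almost Holder continuity II}, not just the bound $\underline\Theta_p(\nu)\le C$. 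The paper sidesteps this entirely because only the untwisted operator enters its argument. Second, the optimization step as you describe it does not produce the stated rate: with $\psi_\nu(t)=tL_1(\nu)+O(t^2)$ uniformly on $|t|<t_0$, the minimizer of $-t\varepsilon+Kt^2$ is $t=\varepsilon/(2K)$, which lies \emph{inside} $(0,t_0)$ precisely when $\varepsilon$ is small, giving $c(\varepsilon)\sim\varepsilon^2$ with no logarithm; truncation at $t_0$ is forced only for \emph{large} $\varepsilon$ and yields a linear, not logarithmic, correction. Your heuristic for the $\log(1/\varepsilon)$ factor runs backwards. That factor in the theorem's statement is a feature of the abstract result of~\cite{CDK-paper3}, which rests on the spectral gap of the untwisted operator rather than on pressure-function analyticity; a fully rigorous twisted-operator proof would actually give the sharper rate $\varepsilon^2$, so you are not in danger of proving something false --- you have simply mislocated the source of the logarithm and overshot the claim.
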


\section{Spaces of Measures}\label{Wasserstein}
Suppose that $X$ is a Polish space, i.e., a complete and separable metric space and consider the set $\Prob(X)$ of Borel probability measures on $X$.

\begin{definition}
	We say that a sequence $\mu_n\in\Prob(X)$ converges weakly to $\mu$, and write  $\mu_n\overset{\text{weak}}{\longrightarrow}\mu$, if $$\int\varphi \ d\mu_n\longrightarrow\int\varphi \ d\mu,\quad \forall\varphi\in C_b(X), $$ 
	where $C_b(X)$ denotes the space of bounded and continuous functions on $X$. This defines a Hausdorff topology  on $\Prob(X)$ called the weak topology. Note that $\mu=\nu$ if and only if $\int\varphi \ d\mu=\int \varphi \ d\nu$ $\forall\varphi\in C_b(X)$.
\end{definition}

If $X$ is a locally compact Polish space,
let $C_0(X)$ denote the space of continuous functions going to $0$ at infinity.
In this case, via Riesz's Theorem, we can identify
the space of  finite signed measures  $\Mea(X)$, normed by the total variation, with the dual of $C_0(X)$.
We can then introduce the weak $\ast$ topology on $\Mea(X)$ and in particular in $\Prob(X)$.

\begin{definition}
	We say that $\mu_n\overset{\text{weak $\ast$}}{\longrightarrow}\mu$ if $$\int\varphi \ d\mu_n\longrightarrow\int\varphi \ d\mu,\quad \forall\varphi\in C_0(X) .$$ 
	This defines the weak $\ast$ topology on $\Prob(X)$.
\end{definition}

\begin{remark}
	When $X$ is a locally compact Polish space, weak and weak $\ast$ convergence coincide in $\Prob(X)$, see \cite{CV2}.
\end{remark}

Let $(X,d)$ be a Polish space. 
Given $p>0$   define   $$\Prob_p(X):=\left\{\mu\in \Prob(X)\, \colon\,  \exists x_0 \in X : \int_X d(x_0,x)^p \ d\mu(x)<+\infty\right\}.$$
The Wasserstein distance of order $p$ is the metric $$W_p: \Prob_p(X)\times \Prob_p(X)\to [0,+\infty] ,$$ defined as
$$W_p(\mu,\nu):=\inf_{\pi\in\Pi(\mu,\nu)}\left(\int\int_{X\times X} d(x,y)^p \ d\pi(x,y)\right)  \text{ for }   0<p<1$$ or
$$W_p(\mu,\nu):=\inf_{\pi\in\Pi(\mu,\nu)}\left(\int\int_{X\times X} d(x,y)^p \ d\pi(x,y)\right)^{1/p} \ \text{ when } \ p\geq 1 .$$ 
Here $\Pi(\mu,\nu)$ denotes the set of all the couplings between $\mu$ and $\nu$, i.e.,   probability measures on $X\times X$ with marginals $\mu$ and $\nu$, mentioned at the introduction.

The Wasserstein distance $W_1$ can also be defined, by the Kantorovich-Rubinstein duality, as 
$$W_1(\mu,\nu)=\sup_{\Lip(\varphi)\leq 1}\int_X \varphi \, d(\mu-\nu),$$ where $\Lip(\varphi)$ is the Lipschitz constant of $\varphi$ and $\int_X \varphi \, d(\mu-\nu):=\int_X \varphi \, d\mu - \int_X \varphi \, d\nu$.

\begin{remark}
	\label{remark Wp 0<p<1}
	When $0<p<1$, notice that $d^p(x,y):=d(x,y)^p$ is a distance on $X$ that is topologically equivalent to $d$. In particular $\Prob_p(X,d)$ and $\Prob_1(X,d^p)$ coincide as metric spaces, with the distances $W_p$ and $W_1$ respectively. Thus, we have 
	$$W_p(\mu,\nu) =\sup_{\varphi\in\Hol(X)}\int_X \varphi \ d(\mu-\nu),$$ where $\Hol(X)$ is the space of $p$-H\"older continuous functions $\varphi:X\to\R$.
\end{remark}

Next theorem, see \cite[Theorem 7.12]{CV2},  characterizes the convergence in the Wasserstein topology.

\begin{theorem}
	\label{thm: Wasserstein convergence}
	Given $0<p<\infty$, a sequence  $(\mu_k)_{k\in\mathbb{N}}$ and a  measure $\mu$ in $\Prob_p(X)$, the following statements are equivalent:
	\begin{enumerate}
		\item[(i)] $\lim_{k\to \infty} W_p(\mu_k,\mu)= 0$;
		\item[(ii)] $\lim_{k\to \infty} \mu_k=\mu$  weakly, and  for some  $x_0\in X$ (and thus any),
		$$\lim_{R\rightarrow\infty}\limsup_{k\rightarrow\infty}\int_{d(x_0,x)\geq R}d(x_0,x)^p \, d\mu_k(x)=0;$$
		\item[(iii)]  $\lim_{k\to \infty} \mu_k=\mu$  weakly, and for some $x_0\in X$ (and thus any), 
		$$\int d(x_0,x)^p \, d\mu_k(x)\longrightarrow\int d(x_0,x)^p \, d\mu(x)  \text{ as }   k\to\infty;$$
		\item[(iv)] For any   $\varphi\in C(X)$  satisfying the growth condition: $$\exists x_0\in X\; \exists C<\infty \; \forall x\in X \quad \left|\varphi(x)\right|\leq C(1+d(x_0,x)^p) ,$$ 
		$$\lim_{k\to \infty} \int \varphi \, d\mu_k = \int \varphi \, d\mu  .$$
		
	\end{enumerate}

\end{theorem}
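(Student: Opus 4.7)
The plan is to close a cycle of implications (i) $\Rightarrow$ (iv) $\Rightarrow$ (iii) $\Leftrightarrow$ (ii) $\Rightarrow$ (i), reducing the theorem to a handful of standard facts about optimal couplings, weak convergence, and uniform integrability.

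First I would prove (i) $\Rightarrow$ (iv). Given near-optimal couplings $\pi_k \in \Pi(\mu_k, \mu)$ with vanishing $p$-cost, for any continuous $\varphi$ satisfying the growth condition one writes
\begin{equation*}
\int \varphi \, d\mu_k - \int \varphi \, d\mu = \int_{X \times X} \bigl(\varphi(x) - \varphi(y)\bigr) \, d\pi_k(x,y)
\end{equation*}
and splits the integration into $\{d(x,y) \leq \delta\}$, where uniform continuity of $\varphi$ on large balls (tightness delivers them) yields an arbitrarily small contribution, and $\{d(x,y) > \delta\}$, where the bound $|\varphi(x) - \varphi(y)| \leq 2C(1 + d(x_0,x)^p + d(x_0,y)^p)$ combined with Markov's inequality applied to the vanishing transport cost sends the contribution to zero. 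The step (iv) $\Rightarrow$ (iii) is essentially tautological: feeding in $\varphi \in C_b(X)$ gives weak convergence, and feeding in $\varphi(x) = d(x_0,x)^p$ gives convergence of $p$-th moments. The equivalence (iii) $\Leftrightarrow$ (ii) is classical: given weak convergence, convergence of $p$-th moments is equivalent to uniform integrability of the family $\{d(x_0,\cdot)^p\}$, checked by truncating at height $R$ and applying the Portmanteau characterization to the bounded part.

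The decisive step is (ii) $\Rightarrow$ (i). Choose optimal couplings $\pi_k^* \in \Pi(\mu_k, \mu)$, whose existence follows from lower semicontinuity of the cost together with weak compactness of $\Pi(\mu_k, \mu)$. Because the marginals $\mu_k, \mu$ converge weakly on a Polish space they form a tight family (Prokhorov), so $\{\pi_k^*\}$ is tight on $X \times X$ and admits a weak subsequential limit $\pi^*$ whose marginals are both $\mu$. Lower semicontinuity gives
\begin{equation*}
\int d^p \, d\pi^* \leq \liminf_k \int d^p \, d\pi_k^* ,
\end{equation*}
but using the uniform integrability supplied by (ii), transferred to the couplings via $d(x,y)^p \lesssim d(x_0,x)^p + d(x_0,y)^p$, one also secures the matching upper limit. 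Since $\pi^*$ has both marginals equal to $\mu$, comparison with the trivial diagonal coupling forces $\int d^p \, d\pi^* = 0$; a subsequence-of-subsequence argument then promotes subsequential convergence to the full statement $W_p(\mu_k, \mu) \to 0$.

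The hard part is precisely this uniform integrability transfer in (ii) $\Rightarrow$ (i): because $d(x,y)^p$ is unbounded on $X \times X$, mere weak convergence of the couplings $\pi_k^*$ is not enough to control $\int d^p \, d\pi_k^*$ from above in the limit, and one must exploit the tail control built into condition (ii) to pass to the limit on both sides. The case distinction $0 < p < 1$ versus $p \geq 1$ is only cosmetic, as the arguments apply to the $p$-cost itself and are invariant under the monotone rescaling $t \mapsto t^{1/p}$ relating the two versions of $W_p$.
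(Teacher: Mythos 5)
The paper does not prove this statement; it simply cites \cite[Theorem 7.12]{CV2}, so there is no internal proof to compare against. Evaluated on its own terms, your cycle of implications and your appeal to tightness, lower semicontinuity and uniform integrability are the right ingredients, but the decisive step in (ii)~$\Rightarrow$~(i) contains a genuine gap. You extract a weak subsequential limit $\pi^*$ of the optimal plans $\pi_k^*$ and observe correctly that both marginals of $\pi^*$ equal $\mu$; but ``comparison with the trivial diagonal coupling'' does \emph{not} force $\int d^p\, d\pi^*=0$. Having both marginals equal to $\mu$ only tells you that the \emph{infimal} cost over all such couplings vanishes; an arbitrary coupling of $\mu$ with itself can have strictly positive cost (take $\mu=\tfrac12(\delta_0+\delta_1)$ and the off-diagonal coupling $\tfrac12(\delta_{(0,1)}+\delta_{(1,0)})$). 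What your argument actually needs is that the weak limit of optimal plans is again optimal -- i.e., the stability-of-optimality theorem, which rests on the fact that $c$-cyclical monotonicity of supports persists under weak convergence. Once $\pi^*$ is known to be optimal, its cost equals $W_p(\mu,\mu)^p=0$ and the subsequence-of-subsequence argument closes the proof. An alternative that avoids optimal plans altogether is Skorokhod's representation: weak convergence on a Polish space yields random variables $X_k\to X$ almost surely with $X_k\sim\mu_k$, $X\sim\mu$; condition (ii), via $d(X_k,X)^p\lesssim d(x_0,X_k)^p+d(x_0,X)^p$, supplies uniform integrability of $\{d(X_k,X)^p\}$, so $\mathbb{E}[d(X_k,X)^p]\to 0$ by Vitali, and this dominates $W_p(\mu_k,\mu)^p$.

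A secondary, lesser issue: in (i)~$\Rightarrow$~(iv) you invoke ``uniform continuity of $\varphi$ on large balls,'' but closed balls in a general Polish space need not be compact, so continuity of $\varphi$ alone does not give uniform continuity on a ball. Tightness delivers compact sets $K$, on which Heine--Cantor applies; one then has three regions to control (near the diagonal inside $K\times K$, the complement of $K\times K$, and the far-diagonal set), and controlling the far-diagonal contribution of the unbounded integrand requires a further truncation by the tail estimate, not just Markov's inequality. Both points are fixable, but as written the argument for (ii)~$\Rightarrow$~(i) does not close.
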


\begin{theorem}
	Given $p>0$ and a complete and separable metric space $(X,d)$,   then $(\Prob_p(X),W_p)$ is a complete and separable metric space.
\end{theorem}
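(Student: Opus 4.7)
The plan is to verify in turn that $W_p$ is a metric on $\Prob_p(X)$, that the space is separable, and finally that it is complete, which is the main point. Thanks to Remark~\ref{remark Wp 0<p<1}, if $0<p<1$ then $d^p$ is a complete separable metric on $X$ topologically equivalent to $d$ and $W_p$ on $(X,d)$ agrees with $W_1$ on $(X,d^p)$, so it suffices to treat the case $p\geq 1$. The case $p\geq 1$ is then standard material from~\cite{CV2}, which I would recall in our setting.

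For the metric axioms, non-negativity is built into the definition and symmetry is immediate by pushing a coupling forward under $(x,y)\mapsto (y,x)$. The triangle inequality requires the gluing lemma: given $\mu_1,\mu_2,\mu_3\in \Prob_p(X)$ and optimal (or near-optimal) couplings $\pi_{12}\in \Pi(\mu_1,\mu_2)$ and $\pi_{23}\in \Pi(\mu_2,\mu_3)$, one builds a measure $\pi$ on $X^3$ with these as $(1,2)$- and $(2,3)$-marginals by disintegrating along $\mu_2$, then projects to a coupling in $\Pi(\mu_1,\mu_3)$ and applies Minkowski's inequality in $L^p$. Separation ($W_p(\mu,\nu)=0 \Rightarrow \mu=\nu$) follows since an optimal $\pi$ must then be supported on the diagonal.

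For separability, fix a countable dense set $\{x_k\}_{k\in\N}\subset X$ and let $\mathcal{D}$ be the countable family of probability measures of the form $\sum_{k=1}^N q_k\,\delta_{x_k}$ with rational weights $q_k$. Given $\mu\in \Prob_p(X)$ and $\varepsilon>0$, choose $R$ so that $\int_{d(x_0,x)\geq R} d(x_0,x)^p\,d\mu<\varepsilon$, partition the ball $B(x_0,R)$ into small Borel pieces of diameter $<\varepsilon$ each containing some $x_k$, and construct an explicit coupling that transports mass from each piece to the chosen $x_k$ (with rational weights close to $\mu$'s). The resulting cost is bounded by $C\,\varepsilon$.

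The hard step is completeness. Given a $W_p$-Cauchy sequence $(\mu_n)$, I would first show it is tight and has uniformly bounded $p$-th moments: the $W_p$-Cauchy property and a truncation/Chebyshev argument show that, for every $\varepsilon>0$, the integrals $\int_{d(x_0,x)\geq R} d(x_0,x)^p\,d\mu_n(x)$ can be made $<\varepsilon$ uniformly in $n$ by choosing $R$ large, comparing each $\mu_n$ to some fixed $\mu_N$ via the coupling and exploiting the tail decay of $\mu_N$. Prokhorov's theorem then extracts a weakly convergent subsequence $\mu_{n_k}\to\mu$, and a Fatou argument gives $\mu\in \Prob_p(X)$. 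To upgrade weak convergence to $W_p$-convergence, I would apply the equivalence (ii) of Theorem~\ref{thm: Wasserstein convergence}, whose tail condition is precisely the uniform tail decay just established. Finally, the full sequence converges because every subsequential limit coincides (by the $W_p$-Cauchy property applied once more). The principal technical obstacle is rigorously deriving uniform tail decay of $d(x_0,\cdot)^p$ from the $W_p$-Cauchy hypothesis, which requires careful truncation inside the coupling definition and is where the choice of $p$ intervenes through the elementary inequality $(a+b)^p\leq 2^{p-1}(a^p+b^p)$ (or subadditivity of $t\mapsto t^p$ when $0<p<1$).
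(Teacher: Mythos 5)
Your proposal is correct and takes essentially the same route as the paper: reduce the case $0<p<1$ to $p\geq 1$ via the observation that $d^p$ is a topologically equivalent complete separable metric (Remark~\ref{remark Wp 0<p<1}), and then invoke the standard result for $p\geq 1$ from Villani. The only difference is that the paper simply cites \cite[Theorem 6.18]{CV} for the $p\geq 1$ case, whereas you additionally sketch that proof (metric axioms via the gluing lemma, separability via rational atomic measures, completeness via uniform tightness, Prokhorov, and Theorem~\ref{thm: Wasserstein convergence}(ii)); your sketch is consistent with the standard argument.
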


\begin{proof}
	The case where $p\geq 1$ is proved, see \cite[Theorem 6.18]{CV}. When $0<p<1$, the conclusion reduces, by Remark~\ref{remark Wp 0<p<1}, to the case $p=1$.
\end{proof}

\begin{proposition}
	\label{MCp completeness}
	Given $0<p\leq 1$ and $C<\infty$, \, $\Mcal$ is complete.
\end{proposition}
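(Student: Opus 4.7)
The plan is to identify $\Mcal$ as a closed subset of the already-known complete metric space $(\Prob_p(\Scal), W_p)$, and then deduce completeness by restriction. The inclusion $\Mcal \subseteq \Prob_p(\Scal)$ is immediate from the bound $\bar\Theta_p(\mu)\leq C$, which is precisely the defining integrability condition for $\Prob_p(\Scal)$ (taking the reference point $x_0 = 0 \in \Scal$, so that $d(0,g)=\norm{g}$). Hence, if I can show that $\Mcal$ is closed in $(\Prob_p(\Scal), W_p)$, I am done.

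Let $(\mu_n)$ be a sequence in $\Mcal$ converging in $W_p$ to some $\mu\in \Prob_p(\Scal)$. I must verify that both constraints defining $\Mcal$ are preserved under this limit, i.e., $\bar\Theta_p(\mu)\leq C$ and $\underline\Theta_p(\mu)\leq C$. For the first, I would apply criterion (iv) of Theorem~\ref{thm: Wasserstein convergence} to the continuous function $\varphi(g):=\norm{g}^p$, which satisfies the growth condition $\abs{\varphi(g)}\leq 1+d(0,g)^p$ with $x_0 = 0$. This gives $\int \norm{g}^p\, d\mu_n \to \int \norm{g}^p\, d\mu$, and passing the bound $C$ to the limit yields $\bar\Theta_p(\mu)\leq C$.

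The main obstacle is the second bound, since the integrand $g\mapsto \norm{g\,v}^{-p}$ is unbounded (indeed, $+\infty$ on the set $\{g : g\,v = 0\}$), and $\underline\Theta_p$ involves an additional supremum over the unit sphere. However, for each fixed unit vector $v$, the function $g\mapsto \norm{g\,v}^{-p}$ is nonnegative and lower semi-continuous on $\Scal$: at points with $g\,v\neq 0$ it is continuous, while at points with $g\,v=0$ any sequence $g'\to g$ satisfies $\norm{g'\,v}\to 0$, forcing $\norm{g'\,v}^{-p}\to +\infty$. Since $W_p$-convergence implies weak convergence (by Theorem~\ref{thm: Wasserstein convergence} (ii)), the Portmanteau theorem yields
\[
\int \norm{g\,v}^{-p}\, d\mu(g) \;\leq\; \liminf_{n\to\infty} \int \norm{g\,v}^{-p}\, d\mu_n(g) \;\leq\; C
\]
for every unit vector $v$. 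Taking the supremum over $\norm{v}=1$ preserves the uniform bound and gives $\underline\Theta_p(\mu)\leq C$.

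Combining both steps, $\mu\in \Mcal$, so $\Mcal$ is closed in $\Prob_p(\Scal)$ and therefore complete with respect to $W_p$. A minor check worth mentioning: the lower semi-continuity argument also ensures implicitly that the limit $\mu$ assigns no mass to any hyperplane of the form $\{g : g\,v = 0\}$ for $\norm{v}=1$, since otherwise the integral above would be infinite, contradicting the finite bound $C$.
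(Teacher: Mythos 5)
Your proof is correct and follows the same overall strategy as the paper: show that $\Mcal$ is closed in the complete metric space $(\Prob_p(\Scal), W_p)$ by passing both moment bounds to the $W_p$-limit. The only difference is in how you treat the unbounded integrand $g\mapsto \norm{g\,v}^{-p}$: you invoke the Portmanteau inequality for lower semi-continuous, bounded-below functions, while the paper reproves the same fact directly by testing against the bounded Lipschitz truncations $\varphi_{T,v}(g)=\min\{T,\norm{g\,v}^{-p}\}$ and then applying monotone convergence as $T\to\infty$; these are essentially the same argument.
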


\begin{proof}
	Consider the metric space $\Mat_m(\R)$ equipped with the distance $d(A,B)=\norm{A-B}$, and let $\{\mu_n\}_{n\in\N}$ be a sequence of measures in $\Mcal$ such that $\mu_n \overset{W_p}{\longrightarrow}\mu$. 
	For every $n\in\N$, the following inequalities hold:
	$$\int_{\Mat_m(\R)} \norm{g}^p\, d\mu_n(g) \leq C\quad \text{ and } \quad \sup_{\norm{v}=1} \int \frac{1}{\norm{g v}^{p}} \, d\mu_n(g) \leq C.$$
	
	We aim to prove that $\mu\in \Mcal$.
	
	First, note that the function $\varphi(g):=\norm{g}^p$  is $p$-H\"older continuous. Taking the limit as $n\to \infty$ in the first inequality, the convergence of $\mu_n$ to $\mu$ in $W_p$ implies:
	$$\int\norm{g}^p d\mu(g)\leq C.$$ 
	
	Next, consider any unit vector  representative $v$ of a point $\hat v\in \Pp(\R^m)$. For $T>0$, define the Lipschitz function
	$$\varphi_{T,v}(g):=\min\bigg\{T,\frac{1}{\norm{gv}^p}\bigg\}.$$ 
	
	Since
	$$\int \varphi_{T,v}(g)d\mu_n(g)\leq\int\frac{1}{\norm{gv}^p} d\mu_n(g)\leq C ,$$ 
	taking the limit as $n\to\infty$ and using the $W_p$-convergence of $\mu_n$ to $\mu$, we obtain:
	$$\int\varphi_{T,v}(g) d\mu(g)\leq C.$$
	
	By the monotone convergence theorem, letting $T\to \infty$, we deduce:
	$$\int\frac{1}{\norm{gv}^p} d\mu(g)=\lim_{T\rightarrow+\infty}\int\varphi_{T,v}(g) d\mu(g)\leq C.$$
	
	Finally, taking the supremum over all unit vectors $v$, we conclude:
	$$\sup_{\norm{v}=1}\int\frac{1}{\norm{gv}^p} d\mu(g)\leq C,$$
	
	This establishes that $\mu\in\Mcal$, as desired.
\end{proof}

\begin{remark}
	The boundedness condition within  $\Mcal$ is required to guarantee the modulus of continuity to be established in the subsequent sections.
	
\end{remark}

The following result is straightforward, therefore its proof is omitted.

\begin{proposition}
	$\mathcal{M}^p_C$ is convex.
\end{proposition}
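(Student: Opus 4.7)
The plan is to show directly that convex combinations preserve both defining inequalities of $\Mcal$. Given $\mu_0, \mu_1 \in \Mcal$ and $t\in[0,1]$, I form $\mu_t := (1-t)\mu_0 + t\mu_1$, which is visibly a Borel probability measure on $\Scal$, and verify that $\bar{\Theta}_p(\mu_t) \leq C$ and $\underline{\Theta}_p(\mu_t) \leq C$.

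For the first bound, linearity of the integral gives
\[
\bar{\Theta}_p(\mu_t) = \int \norm{g}^p\, d\mu_t(g) = (1-t)\int \norm{g}^p\, d\mu_0(g) + t \int \norm{g}^p\, d\mu_1(g) \leq (1-t)C + tC = C.
\]
For the second bound, I fix an arbitrary unit vector $v\in \R^m$ and apply the same linearity argument to the nonnegative (possibly infinite-valued) function $g\mapsto \norm{gv}^{-p}$:
\[
\int \norm{gv}^{-p}\, d\mu_t(g) = (1-t)\int \norm{gv}^{-p}\, d\mu_0(g) + t\int \norm{gv}^{-p}\, d\mu_1(g) \leq C.
\]
Taking the supremum over unit vectors $v$ yields $\underline{\Theta}_p(\mu_t)\leq C$, hence $\mu_t\in \Mcal$.

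There is no real obstacle here: the defining conditions of $\Mcal$ are both expressed as upper bounds on affine functionals of $\mu$ (after taking a supremum, which preserves the inequality termwise), so the conclusion is immediate from linearity and the elementary fact that the pointwise supremum of affine functions is convex. This is why the authors can legitimately omit the proof as ``straightforward.''
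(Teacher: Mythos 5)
Your proof is correct and is exactly the argument the authors had in mind when they declared the result ``straightforward'' and omitted the proof: linearity of the integral handles $\bar{\Theta}_p$, and the same linearity applied pointwise in $v$ before taking the supremum handles $\underline{\Theta}_p$. Nothing further is needed.
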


Consider the convex space
\begin{align*}
	\mathcal{M}^p:=& \left\{\mu\in  \Prob(\Mat_m(\R)) \colon\,    \int_{\Mat_m(\R)} \norm{g}^p\, d\mu(g) <\infty  , \right. \\
	&\hspace{4cm} \left.  \sup_{\norm{v}=1} \int_{\Mat_m(\R)} \frac{1}{\norm{g v}^{p}} \, d\mu(g) <\infty \; \right\}. 
\end{align*} 
endowed with the Wasserstein metric $W_p$,
where $0<p\leq 1$.

\begin{proposition}\label{cont1}
	The convolution is locally Lipschitz in $\mathcal{M}^p$ with res\-pect to the Wasserstein distance $W_p$.
\end{proposition}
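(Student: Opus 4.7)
The plan is to bound the Wasserstein distance between convolutions by coupling constructions. I would first fix two pairs $(\mu_1, \nu_1), (\mu_2, \nu_2) \in \mathcal{M}^p$ and aim to prove
\[
 W_p(\mu_1 \ast \nu_1, \mu_2 \ast \nu_2) \leq \bar\Theta_p(\nu_1)\, W_p(\mu_1, \mu_2) + \bar\Theta_p(\mu_2)\, W_p(\nu_1, \nu_2),
\]
which implies local Lipschitz continuity since by Theorem~\ref{thm: Wasserstein convergence}(iii) the map $\rho \mapsto \bar\Theta_p(\rho)$ is continuous on $(\mathcal{M}^p, W_p)$, and hence locally bounded.

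The core step is the following lemma-shaped bound: for any $\mu_1, \mu_2, \nu \in \mathcal{M}^p$,
\[
 W_p(\mu_1 \ast \nu, \mu_2 \ast \nu) \leq \bar\Theta_p(\nu)\, W_p(\mu_1, \mu_2).
\]
To prove it, I would pick a near-optimal coupling $\pi \in \Pi(\mu_1, \mu_2)$ and form the pushforward of $\pi \otimes \nu$ under the map $(g_1, g_2, h) \mapsto (g_1 h, g_2 h)$; the two marginals are exactly $\mu_1 \ast \nu$ and $\mu_2 \ast \nu$. Using submultiplicativity of the operator norm and Fubini,
\[
 \iiint \norm{g_1 h - g_2 h}^p\, d\pi(g_1, g_2)\, d\nu(h) \leq \left(\int \norm{h}^p\, d\nu(h)\right) \int \norm{g_1 - g_2}^p\, d\pi,
\]
which gives the desired inequality after letting $\pi$ approach optimality. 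The analogous estimate with the roles of $\mu$ and $\nu$ exchanged proceeds identically, using left-multiplication by a fixed matrix.

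Combining the two estimates via the triangle inequality for $W_p$ yields the displayed bound above. The use of $0 < p \leq 1$ enters only through the fact that $W_p$ (for $p < 1$) is defined without the $1/p$ exponent, so the integral inequality can be fed directly into the distance; the subadditivity $(a+b)^p \leq a^p + b^p$ is implicit in this convention. I don't foresee any substantive obstacle: the only delicate point is to verify that $\rho \mapsto \bar\Theta_p(\rho)$ is indeed continuous in $W_p$, but this follows by applying Theorem~\ref{thm: Wasserstein convergence}(iv) to the continuous test function $\varphi(g) = \norm{g}^p$, which satisfies the required growth condition with $x_0 = 0$. Local Lipschitz continuity of convolution on $\mathcal{M}^p$ follows at once.
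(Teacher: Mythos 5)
Your proof is correct and reaches the same Lipschitz estimate as the paper, namely
$W_p(\mu_1 \ast \nu_1, \mu_2 \ast \nu_2) \leq \bar\Theta_p(\nu_1)\, W_p(\mu_1, \mu_2) + \bar\Theta_p(\mu_2)\, W_p(\nu_1, \nu_2)$,
but the two arguments sit on opposite sides of the Wasserstein duality. You work with the \emph{primal} formulation: given a near-optimal coupling $\pi$ of $\mu_1$ and $\mu_2$, you push forward $\pi\otimes\nu$ under $(g_1,g_2,h)\mapsto (g_1h,g_2h)$ to obtain an explicit coupling of the convolved measures, and then estimate the transport cost via the submultiplicativity $\norm{g_1h - g_2h}\le\norm{g_1-g_2}\,\norm{h}$. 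The paper instead uses the Kantorovich--Rubinstein \emph{dual} formulation recorded in Remark~\ref{remark Wp 0<p<1}: it takes a $p$-H\"older test function $\varphi$ with constant $\le 1$, shows that $g\mapsto \int\varphi(gg')\,d\nu(g')$ is again $p$-H\"older with constant $\bar\Theta_p(\nu)$, and then takes the supremum over $\varphi$. The two routes are logically equivalent and of comparable length; your coupling construction is somewhat more explicit and avoids invoking duality, while the paper's duality argument avoids having to verify that your pushforward really has the stated marginals (a routine but nonzero check). Your remark that the continuity of $\nu\mapsto\bar\Theta_p(\nu)$ follows from Theorem~\ref{thm: Wasserstein convergence}(iv) applied to $\varphi(g)=\norm{g}^p$ is also a small variation: the paper simply notes that $\norm{\cdot}^p$ is $p$-H\"older, which gives continuity directly from the dual characterization. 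Both are fine.
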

\begin{proof}
	First, note that the mapping  $\Mat_m(\R)\ni g\mapsto \norm{g}^p$ is $p$-H\"older continuous.
	As a result, the function
	$\mathcal{M}^p\ni \nu \mapsto \bar \Theta_p(\nu):=\int\norm{g}^p \ d\nu(g)$
	is continuous with respect to the $W_p$ metric.
	Given $\nu\in \mathcal{M}^p$ and $\varphi\in \Hol(\Mat_m(\R))$ with H\"older constant $\leq 1$, consider the function
	$$g\mapsto \int \varphi(g g')d\nu(g') .$$ This function is  $p$-H\"older with constant $\bar \Theta_p(\nu)$. To verify this, observe:
	\begin{align*}
		&\left| \int \varphi(g g') \, d\nu(g') - \int \varphi(g_1 g') \, d\nu(g') \right| 
		\leq \int \left| \varphi(g g')-\varphi(g_1 g') \right| \ d\nu(g')\\
		&\hspace{3cm} \leq   \int\norm{g g' - g_1 g'}^p \ d\nu(g')   \leq \bar \Theta_p(\nu)\,\norm{g-g_1}^p.
	\end{align*}
	
	Now, consider $\mu, \mu', \nu\in \mathcal{M}^p$. To estimate $W_p(\mu\ast \nu, \mu'\ast\nu)$
	take $\varphi\in \Hol(\Mat_m(\R))$ with H\"older constant $\leq 1$.
	Using the previous result, we have:
	\begin{align*}
		&\left|\iint\varphi(g g')d\mu(g)d\nu(g')-\iint\varphi(g g')d\mu'(g)d\nu(g')\right| \leq \bar \Theta_p(\nu)\, W_p(\mu,\mu').
	\end{align*}
	Taking the supremum over all such $\varphi$,
	it follows that:
	$$W_p(\mu\ast \nu,\mu'\ast\nu)\leq \bar \Theta_p(\nu)\, W_p(\mu,\mu').$$ 
	Similarly, one can show that:
	$$W_p(\mu\ast \nu,\mu\ast\nu')\leq \bar \Theta_p(\mu)\, W_p(\nu,\nu').$$ 
	Combining these results, we obtain:
	\begin{align*}
		W_p(\mu\ast \nu, \mu'\ast \nu') & \leq W_p(\mu\ast \nu, \mu'\ast \nu)+W_p(\mu'\ast \nu, \mu'\ast \nu')\\
		& \leq \bar \Theta_p(\nu)\,  W_p(\mu, \mu')+\bar \Theta_p(\mu')\, W_p(\nu, \nu')\\
		&\leq \max\left\{\bar \Theta_p(\nu), \bar \Theta_p(\mu') \right\}\, \left( W_p(\mu, \mu')+ W_p(\nu, \nu') \right) .
	\end{align*}
	Since  $\bar \Theta_p$  is continuous and locally bounded, it follows that the convolution of measures in $\mathcal{M}^p$ is locally Lipschitz with respect to $W_p$.
\end{proof}

Let $(X,d)$ be a complete separable metric.
Given $\mu\in \Prob_p(X,d)$ and $x_0\in X$,
consider the moment
$$ \Theta_p(\mu):=\int_X d(x_0,x)^p\, d\mu(x). $$
Given a measurable subset $F\subset X$, we denote by $\mu_F$  the restriction of $\mu$ to $F$, i.e., $\mu_F(A):=\mu(A\cap F)$
for any Borel set $A\subseteq X$.
\begin{lemma}
	\label{lema W(mu bar mu)}
Given an open set $\Bscr\subset X$ with $0<\mu(\Bscr)\ll 1$, the probability measure $\bar \mu=(1-\mu(\Bscr))^{-1}\, \mu_{\Bscr^\complement}$ satisfies
$$ W_p(\mu, \bar \mu) \leq 2\, \sqrt{\mu(\Bscr)} \, \Theta_{2p}(\mu)^{1\over 2} .$$
\end{lemma}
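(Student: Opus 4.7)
\medskip

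\noindent\textbf{Plan.} I will construct an explicit coupling $\pi \in \Pi(\mu, \bar\mu)$ and estimate $\iint d(x,y)^p\, d\pi(x,y)$ directly. Since $\bar\mu$ differs from $\mu$ only by removing the mass on $\Bscr$ and redistributing it on $\Bscr^\complement$, the natural coupling is to leave the mass on $\Bscr^\complement$ in place (on the diagonal) and transport the mass on $\Bscr$ to $\Bscr^\complement$ via a product measure.

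Set $\alpha := \mu(\Bscr)$. Define
$$\pi := (\id \times \id)_{\ast}\, \mu_{\Bscr^\complement} \;+\; \frac{1}{1-\alpha}\, \mu_{\Bscr} \otimes \mu_{\Bscr^\complement}.$$
A quick check of marginals shows the first marginal is $\mu_{\Bscr^\complement} + \mu_{\Bscr} = \mu$ and the second is $\mu_{\Bscr^\complement} + \frac{\alpha}{1-\alpha}\mu_{\Bscr^\complement} = \frac{1}{1-\alpha}\mu_{\Bscr^\complement} = \bar\mu$, so $\pi \in \Pi(\mu,\bar\mu)$. The diagonal piece contributes $0$, so
$$W_p(\mu, \bar\mu) \;\leq\; \frac{1}{1-\alpha}\iint_{\Bscr \times \Bscr^\complement} d(x,y)^p\, d\mu(x)\, d\mu(y).$$

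Next, since $0 < p \le 1$, the subadditivity $(a+b)^p \le a^p + b^p$ applied to the triangle inequality through $x_0$ gives $d(x,y)^p \le d(x_0,x)^p + d(x_0,y)^p$. Splitting the double integral and using that $\mu$ is a probability measure:
$$\iint_{\Bscr\times\Bscr^\complement} d(x,y)^p\, d\mu(x)d\mu(y) \;\leq\; (1-\alpha)\!\int_{\Bscr} d(x_0,x)^p d\mu(x) + \alpha \!\int_{\Bscr^\complement} d(x_0,y)^p d\mu(y).$$
By the Cauchy--Schwarz inequality applied in $L^2(\mu)$,
$$\int_{\Bscr} d(x_0,x)^p\, d\mu(x) \;=\; \int \ind_\Bscr(x)\, d(x_0,x)^p\, d\mu(x) \;\leq\; \sqrt{\alpha}\,\Theta_{2p}(\mu)^{1/2},$$
and $\int_{\Bscr^\complement} d(x_0,y)^p\, d\mu(y) \le \int d(x_0,y)^p\, d\mu(y) \le \Theta_{2p}(\mu)^{1/2}$ by Jensen.

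Combining these bounds,
$$W_p(\mu,\bar\mu) \;\leq\; \sqrt{\alpha}\,\Theta_{2p}(\mu)^{1/2}\,\Bigl(1 + \frac{\sqrt{\alpha}}{1-\alpha}\Bigr).$$
The hypothesis $\mu(\Bscr) \ll 1$ guarantees $\sqrt{\alpha}/(1-\alpha) \leq 1$ (any $\alpha \leq (3-\sqrt{5})/2$ suffices), yielding the claimed $W_p(\mu,\bar\mu) \leq 2\sqrt{\mu(\Bscr)}\,\Theta_{2p}(\mu)^{1/2}$.

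\noindent\textbf{Expected obstacle.} No serious obstacle is anticipated: the argument is a standard transport construction followed by subadditivity and Cauchy--Schwarz. The only minor care is the accounting of the factor $2$, which depends on the quantification of $\mu(\Bscr) \ll 1$; a slightly larger constant would work for all $\alpha < 1$.
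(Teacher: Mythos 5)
Your proof is correct and follows essentially the same route as the paper: the same explicit coupling (diagonal on $\Bscr^\complement$ plus the normalized product $\mu_\Bscr \otimes \mu_{\Bscr^\complement}$), the same $p$-subadditivity of the metric through $x_0$, and the same Cauchy--Schwarz step, with the factor $2$ absorbed via $\mu(\Bscr)\ll1$. Your bookkeeping is marginally tighter (you keep the $(1-\alpha)$ and $\alpha$ factors from restricting the double integral rather than enlarging to all of $X$), which is why you get the constant $2$ for $\alpha\le(3-\sqrt5)/2$ rather than for a slightly smaller threshold; also note $(\id\times\id)_\ast\mu_{\Bscr^\complement}$ should really be the pushforward under the diagonal map $x\mapsto(x,x)$, not the map $\id\times\id$, though the meaning is clear from context.
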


\begin{proof}
Consider the measure on $X\times X$,
$$ \pi=\int_{\Bscr^\complement}\delta_{(x,x)}\, d\mu(x)
+ (1-\mu(\Bscr))^{-1}\, (\mu_{\Bscr}\times \mu_{\Bscr^\complement}). $$
The first term has mass $1-\mu(\Bscr)$, while the second has mass $\mu(B)$. Hence $\pi$ is a probability measure.
The marginal measures of $\pi$ are respectively 
$\mu_{\Bscr^\complement}+\mu_{\Bscr}=\mu$ and
$\mu_{\Bscr^\complement} +\frac{\mu(\Bscr)}{1-\mu(\Bscr)}\, \mu_{\Bscr^\complement} =\bar \mu$, which shows that $\pi$ is a coupling between $\mu$ and $\bar \mu$. Therefore, using Cauchy-Schwarz
\begin{align*}
W_p(\mu, \bar \mu) &\leq\iint d(x,y)^p\, d\pi(x,y) 
=\frac{1}{1-\mu(\Bscr)}\, \int_{\Bscr}\int_{\Bscr^\complement} d(x,y)^p\,d\mu(x)\, d\mu(y) \\
&\leq 	\frac{1}{1-\mu(\Bscr)}\, \int_{\Bscr}\int_{X} [\, \, d(x_0,x)+d(x_0,y)\, ]^p\,d\mu(x)\, d\mu(y) \\
&\leq 	\frac{1}{1-\mu(\Bscr)}\, \int_{\Bscr}\int_{X}  d(x_0,x)^p+d(x_0,y)^p\,d\mu(x)\, d\mu(y) \\
&\leq \frac{1}{1-\mu(\Bscr)}\,\left[ \, \int_{\Bscr} d(x_0,x)^p\, d\mu(x) + \mu(\Bscr)\, \Theta_p(\mu) \,\right]\\
&\leq \frac{1}{1-\mu(\Bscr)}\,\left[ \, \sqrt{\mu(\Bscr)} \, \sqrt{\Theta_{2p}(\mu))}+ \mu(\Bscr)\, \Theta_p(\mu) \,\right]\\
&\leq 2\, \sqrt{\mu(\Bscr)} \, \Theta_{2p}(\mu)^{1\over 2} ,
\end{align*}
the last inequality because
\( \mu(\Bscr) \ll \sqrt{\mu(\Bscr)} \) and
\( \Theta_p(\mu) \leq \Theta_{2p}(\mu)^{1\over 2} \).
\end{proof}

For measurable functions $\psi:X\to\R$ which are Lipschitz outside  a small set $\Bscr\subset X$, the following two lemmas provide  explicit moduli of continuity for the map $\Prob_p(X)\ni \mu \mapsto \int \psi\, d\mu$.

\begin{lemma}
\label{lemma almost Holder continuity I}
Given constants $1\leq L<\infty$, $0<p\leq \frac{1}{2}$, an open set $\Bscr\subset X$  and a measurable function $\psi:X\to\R$
such that
$$   \abs{\psi(x)-\psi(y)}\leq L\, d(x,y)^p\quad \forall\, x,y\in X\setminus \Bscr ,$$
then for any probability measures $\mu, \nu\in \Prob_p(X,d)$,
	
\begin{align*}
		\left|\int\psi\, d\mu -\int \psi\, d\nu \right|
		\leq L  \left\{ \; W_p(\mu,\nu) \right.  &+\,  4\,\sqrt{\mu(\Bscr)}\, \left( \norm{\psi}_{L^2(\mu)} + \Theta_{2p}(\mu)^{1\over 2}  \right) \\
		&   + \, 4\,\sqrt{\nu(\Bscr)}\, \left( \norm{\psi}_{L^2(\nu)} +\Theta_{2p}(\nu)^{1\over 2}  \right)\left.\phantom{W_p} \right\}.
\end{align*}
\end{lemma}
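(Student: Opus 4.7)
The plan is to decouple the contribution of the bad set $\Bscr$ (where $\psi$ is only measurable) from a genuine Wasserstein comparison on the good set. Setting $\bar\mu := (1-\mu(\Bscr))^{-1}\,\mu_{\Bscr^\complement}$ and $\bar\nu := (1-\nu(\Bscr))^{-1}\,\nu_{\Bscr^\complement}$, as in Lemma~\ref{lema W(mu bar mu)}, I would start from the triangle inequality
\[
\left|\int\psi\, d\mu -\int \psi\, d\nu \right|
\leq \left|\int\psi\, d(\mu-\bar\mu)\right|
+ \left|\int\psi\, d(\bar\mu-\bar\nu)\right|
+ \left|\int\psi\, d(\bar\nu-\nu)\right|,
\]
and bound the three pieces separately.

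For the outer terms, the identity
$\int\psi\, d(\mu-\bar\mu) = \int_\Bscr \psi\, d\mu - \frac{\mu(\Bscr)}{1-\mu(\Bscr)}\int_{\Bscr^\complement}\psi\, d\mu$
combined with Cauchy--Schwarz applied to each integral yields a bound of the form $O(\sqrt{\mu(\Bscr)})\,\norm{\psi}_{L^2(\mu)}$, with a matching bound for $\nu$. The constant $4$ in the statement loosely accommodates both the leading $\sqrt{\mu(\Bscr)}$ term and the $\frac{\mu(\Bscr)}{1-\mu(\Bscr)}$ correction.

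The middle term is where the H\"older hypothesis is used. By assumption $\psi$ restricted to $\Bscr^\complement$ is $L$-Lipschitz with respect to the snowflake metric $d^p$; applying the McShane--Whitney extension theorem in $(X,d^p)$ produces $\tilde\psi : X\to\R$ that is $L$-Lipschitz in $d^p$ (equivalently, $L$-H\"older of exponent $p$ in $d$) and that coincides with $\psi$ on $\Bscr^\complement$. Since $\bar\mu$ and $\bar\nu$ are supported in $\Bscr^\complement$, one has $\int\psi\, d\bar\mu=\int\tilde\psi\, d\bar\mu$ and similarly for $\bar\nu$, so the Kantorovich--Rubinstein duality as recalled in Remark~\ref{remark Wp 0<p<1} gives
$\left|\int\psi\, d(\bar\mu-\bar\nu)\right| \leq L\,W_p(\bar\mu,\bar\nu)$.
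A second triangle inequality $W_p(\bar\mu,\bar\nu)\leq W_p(\bar\mu,\mu)+W_p(\mu,\nu)+W_p(\nu,\bar\nu)$ together with Lemma~\ref{lema W(mu bar mu)} applied to $\mu$ and $\nu$ then bounds $W_p(\bar\mu,\bar\nu)$ by $W_p(\mu,\nu)+2\sqrt{\mu(\Bscr)}\,\Theta_{2p}(\mu)^{1/2}+2\sqrt{\nu(\Bscr)}\,\Theta_{2p}(\nu)^{1/2}$.

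Summing the three contributions and using $L\geq 1$ to pull the Cauchy--Schwarz factors into the overall coefficient delivers the stated inequality. The only substantive step in the argument is the H\"older extension $\tilde\psi$, which rests on the observation that $p$-H\"older functions in $d$ are genuinely Lipschitz in $d^p$ (so the standard McShane formula applies with the same constant); everything else is routine triangle-inequality bookkeeping, and I do not expect any real obstacle beyond keeping the constants honest.
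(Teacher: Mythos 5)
Your proof is correct and follows essentially the same route as the paper: the same decomposition through the restricted/normalized measures $\bar\mu$, $\bar\nu$, the same appeal to Lemma~\ref{lema W(mu bar mu)}, the same Cauchy--Schwarz bookkeeping for the contributions of $\Bscr$, and the same implicit smallness of $\mu(\Bscr)$, $\nu(\Bscr)$ to absorb the $\frac{\mu(\Bscr)}{1-\mu(\Bscr)}$ corrections into the factor $4$. The only point where you differ is in justifying $\left|\int\psi\,d(\bar\mu-\bar\nu)\right|\leq L\,W_p(\bar\mu,\bar\nu)$: you extend $\psi|_{\Bscr^\complement}$ to a $p$-H\"older function on all of $X$ by McShane--Whitney in the snowflake metric $d^p$ and then apply Kantorovich--Rubinstein on $X$, whereas the paper instead observes that $\Bscr^\complement$ is closed in a Polish space, hence itself a complete separable metric space on which $\bar\mu,\bar\nu$ are supported, and applies the duality there directly (noting $W_p^{\Bscr^\complement}(\bar\mu,\bar\nu)=W_p^X(\bar\mu,\bar\nu)$ since any coupling is automatically supported on $\Bscr^\complement\times\Bscr^\complement$). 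Both devices are valid and equivalent in strength; the extension is slightly more machinery but perhaps more transparent about why the duality is applicable, while the paper's restriction argument is a bit leaner. Your accounting of the constants is as honest as the paper's own.
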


\begin{proof}
In the inequality above, we refer to the term \( L\, W_p(\mu, \nu) \) as the \emph{leading term}. The constant $C$ in the remainder expression
\[
C\, L\, \left( \Xi_p(\mu, \psi) + \Xi_p(\nu, \psi) \right),
\]
where
\[
\Xi_p(\mu, \psi) := \sqrt{\mu(\Bscr)} \left( \|\psi\|_{L^2(\mu)} + \Theta_{2p}(\mu)^{1/2} \right),
\]
will be called the  \emph{remainder coefficient}.

In the proof, we make use of the probability measures \( \bar\mu \) and \( \bar\nu \) supported on \( \Bscr^\complement \), as introduced in Lemma~\ref{lema W(mu bar mu)}.

Since \( \Bscr \) is open, its complement \( \Bscr^\complement = X \setminus \Bscr \) is closed, and hence a complete metric space.

Applying the Cauchy--Schwarz inequality, we obtain:
\begin{align*}
	\left|\int\psi\, d\mu -\int \psi\, d\nu \right| \hspace{-2cm} &\hspace{2cm}\leq
	\left|\int_{\Bscr^\complement} \psi\, d \mu -\int_{\Bscr^\complement} \psi \, d \nu \right| + \left| \int_{\Bscr}\psi\, d\mu \right| +  \left| \int_{\Bscr}\psi\, d\nu \right| \\
	&\leq
	\left|\int_{\Bscr^\complement} \psi\, d \mu -\int_{\Bscr^\complement} \psi \, d \nu \right| + 
	\sqrt{\mu(\Bscr)}\,   \norm{\psi}_{L^2(\mu)} +
	\sqrt{\nu(\Bscr)}\,   \norm{\psi}_{L^2(\nu)} ,
\end{align*}
where the last two terms contribute at most a constant of size 1 each to the remainder coefficient \( C \).

Next, we turn our attention to the new leading term:
\begin{align*}
	\left|\int_{\Bscr^\complement} \psi\, d \mu \, -\int_{\Bscr^\complement} \psi \, d \nu \right| \hspace{-3cm}&\hspace{3cm} \leq
	\left|\int \psi\, d \bar \mu -\int \psi \, d \bar \nu \right|   + \left| \int_{\Bscr^\complement}\psi\, d\mu -\int \psi\, d\bar \mu \right|  \\
	&\hspace{4cm}+ 
	\left|\int_{\Bscr^\complement} \psi\, d \nu -\int \psi \, d \bar \nu \right|  \\
	&\leq L\, W_p(\bar \mu, \bar \nu) + \left(\frac{1}{1-\mu(\Bscr)}-1\right) \,  \smallint |\psi|\, d\mu  + \left(\frac{1}{1-\nu(\Bscr)}-1\right) \,  \smallint |\psi|\, d\nu\\
	&\leq L\, W_p(\bar \mu, \bar \nu) + \frac{\mu(\Bscr)}{1-\mu(\Bscr)}\, \norm{\psi}_{L^1(\mu)} +
	\frac{\nu(\Bscr)}{1-\nu(\Bscr)}\, \norm{\psi}_{L^1(\nu)} .
\end{align*}

Since \( \mu(\Bscr) \leq \sqrt{\mu(\Bscr)} \) and
\( \|\psi\|_{L^1(\mu)} \leq \|\psi\|_{L^2(\mu)} \),
these terms contribute at most a small additive constant to the remainder coefficient.

It remains to estimate the final leading term. By the triangle inequality and Lemma~\ref{lema W(mu bar mu)}, we have:
\begin{align*}
	L\, W_p(\bar\mu, \bar\nu)
	&\leq L\, W_p(\mu, \nu)
	+ L\, W_p(\mu, \bar\mu)
	+ L\, W_p(\nu, \bar\nu) \\
	&\leq L\, W_p(\mu, \nu)
	+ 2L\, \sqrt{\mu(\Bscr)}\, \Theta_{2p}(\mu)^{1/2}
	+ 2L\, \sqrt{\nu(\Bscr)}\, \Theta_{2p}(\nu)^{1/2},
\end{align*}
where the final two terms contribute an additional constant of size 2 to the remainder coefficient \( C \).

Combining all these bounds, we conclude the proof with a total remainder coefficient given by \( C = 4 \).
\end{proof}

In the following, closely related lemma, we assume that $X$ is a finite-dimensional Euclidean space. This result will be applied with $X = \Mat_m(\R)$. Notice the assumptions are stronger, but the conclusion is also sharper when $T \ll L$.

\begin{lemma}
\label{lemma almost Holder continuity II}
Consider constants $L, T <\infty$, $0<p\leq 1$ and
a measurable function $\psi:X\to  \R\cup \{-\infty\}$ which is continuous at all points $x\in X$ where $\psi(x)> -\infty$. Define $\Bscr:=\{ x\in X\colon |\psi(x)|>T \}$. \, If
$$   \abs{\psi(x)-\psi(y)}\leq L\, d(x,y)^p\quad \forall\, x,y\in X\setminus \Bscr ,$$	
then for any probability measures $\mu, \nu\in \Prob_p(X,d)$,

\begin{align*}
	\left|\int\psi\, d\mu -\int \psi\, d\nu \right|
	\leq L  \, W_p(\mu,\nu)   &   +   \sqrt{\mu(\Bscr)}\,  \norm{\psi}_{L^2(\mu)}  +  \sqrt{\nu(\Bscr)}\, \norm{\psi}_{L^2(\nu)}  \\
	&+\, T\, \left(\mu(\Bscr)+\nu(\Bscr) \right) .
\end{align*}
\end{lemma}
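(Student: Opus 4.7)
The plan is to split the difference $\int\psi\,d\mu-\int\psi\,d\nu$ according to the bad set $\Bscr$ and its complement, and on $\Bscr\comp$ to replace $\psi$ by a globally defined $p$-H\"older function of constant $\leq L$ and sup-norm $\leq T$, so that the dual characterization of $W_p$ can be applied directly on the good part while the contributions from $\Bscr$ are handled by Cauchy--Schwarz.

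Concretely, I would build an extension via the McShane--Whitney formula
\[
  \hat\psi(x) := \inf_{y\in \Bscr\comp}\bigl( \psi(y) + L\, d(x,y)^p \bigr),
\]
and then truncate to the range $[-T,T]$, setting $\tilde\psi := \max\{-T,\min\{T,\hat\psi\}\}$. Points where $\psi(x)=-\infty$ lie in $\Bscr$, so $\psi|_{\Bscr\comp}$ is real-valued and the infimum is well defined. Since $d^p$ is a metric for $0<p\leq 1$, a routine argument based on its triangle inequality shows that $\hat\psi$, and hence $\tilde\psi$, is $p$-H\"older on $X$ with constant $L$; moreover $\tilde\psi=\psi$ on $\Bscr\comp$ (since $|\psi|\leq T$ there), while $|\tilde\psi|\leq T$ everywhere.

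Next, I would exploit the telescoping identity
\[
\int\psi\,d\mu - \int\psi\,d\nu
= \int_{\Bscr}(\psi-\tilde\psi)\,d\mu
+ \Bigl( \int\tilde\psi\,d\mu - \int\tilde\psi\,d\nu \Bigr)
+ \int_{\Bscr}(\tilde\psi-\psi)\,d\nu ,
\]
which holds because $\psi=\tilde\psi$ on $\Bscr\comp$. The middle term is bounded by $L\,W_p(\mu,\nu)$ via the dual representation of $W_p$ in Remark~\ref{remark Wp 0<p<1}, applied to $\tilde\psi/L$. Each lateral term is split by the triangle inequality: the $\tilde\psi$ piece is bounded by $T\,\mu(\Bscr)$ (resp.\ $T\,\nu(\Bscr)$) using the uniform ceiling on $\tilde\psi$, while Cauchy--Schwarz yields $|\int_{\Bscr}\psi\,d\mu|\leq \sqrt{\mu(\Bscr)}\,\norm{\psi}_{L^2(\mu)}$ and its $\nu$ counterpart. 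Summing the five estimates is exactly the stated inequality.

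There is no deep obstacle here: the only mildly technical point is verifying that the McShane formula produces an $L$-H\"older extension from a possibly non-closed source set $\Bscr\comp$, which is a standard consequence of the triangle inequality for $d^p$. The sharpness improvement over Lemma~\ref{lemma almost Holder continuity I} when $T \ll L$ arises precisely from replacing the tail bound $\Theta_{2p}(\mu)^{1/2}$ on $\Bscr$ by the uniform ceiling $T$ available under the stronger present hypothesis.
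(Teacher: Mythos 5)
Your proof is correct, but it follows a genuinely different route from the paper's. The paper defines $\bar\psi$ as the \emph{direct} truncation of $\psi$ to the interval $[-T,T]$ and then argues that $\bar\psi$ is $p$-H\"older by the intermediate value theorem applied along line segments; this is precisely why the paper restricts the lemma to the case where $X$ is a finite-dimensional Euclidean space and where $\psi$ is continuous on $\{\psi>-\infty\}$ -- both hypotheses are needed to locate points $x_*, y_*$ on the segment with $\psi(x_*)=-T$, $\psi(y_*)=T$. You instead discard the values of $\psi$ on $\Bscr$ altogether and build a global $p$-H\"older function from scratch via the McShane--Whitney extension of $\psi|_{\Bscr^\complement}$, truncated to $[-T,T]$; this agrees with $\psi$ on $\Bscr^\complement$ and satisfies $|\tilde\psi|\leq T$, which is all the argument uses. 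After that, the telescoping identity, the Kantorovich duality bound $L\,W_p(\mu,\nu)$ on the middle term, the ceiling bound $T(\mu(\Bscr)+\nu(\Bscr))$ on the $\tilde\psi$ pieces, and the Cauchy--Schwarz bounds $\sqrt{\mu(\Bscr)}\,\norm{\psi}_{L^2(\mu)}$, $\sqrt{\nu(\Bscr)}\,\norm{\psi}_{L^2(\nu)}$ on the $\psi$ pieces match the paper's estimates exactly. What your approach buys is generality: the McShane extension works on any metric space and needs neither the Euclidean structure nor the continuity assumption, so in fact you have proved a slightly stronger statement than the one in the paper. What the paper's approach buys is that $\bar\psi$ is an explicit and very simple modification of $\psi$ itself, at the cost of the auxiliary geometric hypotheses.
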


\begin{proof}
Define the truncated function $\bar\psi : X \to \R$ by
\[
\bar\psi(x) := 
\begin{cases}
	\phantom{+}T & \text{if } \psi(x) > T, \\
	\psi(x) & \text{if } -T \leq \psi(x) \leq T, \\
	-T & \text{if } \psi(x) < -T,
\end{cases}
\]
and note that $\bar\psi \in \Hol(X)$ with $v_p(\bar\psi) \leq L$.

Indeed, given $x, y \in X$ such that $-T \leq \psi(x), \psi(y) \leq T$, we have $x, y \in X \setminus \Bscr$, and by assumption,
\[
|\bar\psi(x) - \bar\psi(y)| = |\psi(x) - \psi(y)| \leq L\, d(x,y)^p.
\]

If instead $\psi(x) < -T < T < \psi(y)$, then by the intermediate value theorem, there exist points $x_*$ and $y_*$ on the line segment joining $x$ and $y$ such that $\psi(x_*) = -T$ and $\psi(y_*) = T$. Hence $x_*, y_* \in X \setminus \Bscr$ and
\[
|\bar\psi(x) - \bar\psi(y)| \leq |\psi(x_*) - \psi(y_*)| \leq L\, d(x_*, y_*)^p \leq L\, d(x,y)^p.
\]
All remaining cases are treated in a similar manner.

Therefore,
\begin{align*}
	\left|\int_{\Bscr^\complement} \psi\, d\mu - \int_{\Bscr^\complement} \psi\, d\nu \right|
	&\leq \left| \int \bar\psi\, d\mu - \int \bar\psi\, d\nu \right|
	+ \int \left| \mathbf{1}_{\Bscr^\complement}\, \psi - \bar\psi \right|\, d(\mu + \nu) \\
	&\leq L\, W_p(\mu, \nu) + \int_\Bscr |\bar\psi|\, d(\mu + \nu) \\
	&\leq L\, W_p(\mu, \nu) + T\left( \mu(\Bscr) + \nu(\Bscr) \right).
\end{align*}

The conclusion follows by combining this with the first inequality in the proof of Lemma~\ref{lemma almost Holder continuity I}.
\end{proof}

\section{Some Inequalities}
\label{Setting}
In this section we have gathered several inequalities that will be used in the rest of the paper.

\begin{proposition}
\label{Theta_n(mu n)}
Given $\mu\in \Prob(\Mat_m(\R))$ and $n\in\N$,
$$   \bar{\Theta}_p(\mu^n)\leq \bar{\Theta}_p(\mu)^n \quad \text{ and } \quad  \underline{\Theta}_p(\mu^n)\leq \underline{\Theta}_p(\mu)^n.$$
\end{proposition}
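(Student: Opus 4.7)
The plan is to exploit the fact that $\mu^n$ is the law of a product $g_n g_{n-1} \cdots g_1$ of i.i.d.\ random matrices with common law $\mu$, and to handle the two inequalities by different but elementary means.

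For the bound on $\bar{\Theta}_p(\mu^n)$, I would write
\[
\bar{\Theta}_p(\mu^n) = \EE\bigl[\norm{g_n g_{n-1} \cdots g_1}^p\bigr],
\]
apply submultiplicativity $\norm{g_n \cdots g_1} \leq \prod_{i=1}^n \norm{g_i}$ together with $0 < p \le 1$ (so that $x \mapsto x^p$ preserves the inequality), and then factor the expectation using independence to obtain $\prod_{i=1}^n \EE[\norm{g_i}^p] = \bar{\Theta}_p(\mu)^n$.

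For the bound on $\underline{\Theta}_p(\mu^n)$, fix a unit vector $v \in \R^m$ and set $v_0 := v$, $v_k := g_k v_{k-1}$ for $k=1,\ldots,n$. Since $\underline{\Theta}_p(\mu) < \infty$ implies $\mu(\{g : gu = 0\}) = 0$ for every unit $u$, an easy induction over $k$ (using the independence of $g_k$ from $g_1,\ldots,g_{k-1}$) shows that $v_k \ne 0$ almost surely. On the event where no $v_k$ vanishes, writing $u_{k} := v_{k}/\norm{v_{k}}$ gives the telescoping identity
\[
\norm{g_n \cdots g_1 v}^{-p} = \norm{v_n}^{-p} = \prod_{k=0}^{n-1} \norm{g_{k+1}\, u_k}^{-p}.
\]
Now I would integrate one factor at a time starting from $k = n-1$. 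Conditional on $g_1,\ldots,g_{n-1}$, the vector $u_{n-1}$ is deterministic and $g_n$ is independent of it, so
\[
\EE\!\left[\norm{g_n u_{n-1}}^{-p} \,\Big|\, g_1,\ldots,g_{n-1}\right] = \int \norm{g\, u_{n-1}}^{-p}\, d\mu(g) \leq \underline{\Theta}_p(\mu).
\]
Plugging this into the tower property and iterating $n$ times (each step peels off one factor and contributes a factor of at most $\underline{\Theta}_p(\mu)$) yields
\[
\EE\bigl[\norm{g_n \cdots g_1 v}^{-p}\bigr] \leq \underline{\Theta}_p(\mu)^n.
\]
Taking the supremum over unit $v$ gives the claim.

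The only mild subtlety is the well-definedness of the $u_k$, i.e.\ ruling out that some $v_k$ vanishes; this is handled by the a.s.\ non-vanishing argument mentioned above, which is an immediate consequence of $\underline{\Theta}_p(\mu) < \infty$. Otherwise the proof is just submultiplicativity plus conditional independence, and I do not anticipate any real obstacle.
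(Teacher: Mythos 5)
Your proof is correct and follows essentially the same route as the paper: submultiplicativity plus independence for $\bar{\Theta}_p$, and for $\underline{\Theta}_p$ the same telescoping factorization $\norm{g_n\cdots g_1 v}^{-p}=\prod_{k}\norm{g_{k+1}u_k}^{-p}$ integrated one matrix at a time from $g_n$ down to $g_1$ (the paper phrases this as iterated integrals rather than conditional expectations, but the mechanism is identical). The one small thing you add that the paper leaves implicit is the almost-sure non-vanishing of the $v_k$ when $\underline{\Theta}_p(\mu)<\infty$; this is a legitimate point of care and you handle it correctly.
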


\begin{proof}
We have
\begin{align*}
 \bar{\Theta}_p(\mu^n)&=\int\!\cdots\!\int\norm {g_n\cdots g_1}^p\, d\mu(g_n)\, \cdots\, d\mu(g_1)\\
 &\leq \int\!\cdots\!\int\norm {g_n}^p\,\cdots\, \norm{g_1}^p\, d\mu(g_n)\, \cdots\, d\mu(g_1)\\
 &=\prod_{i=1}^n \int\norm{g_i}^p\, d\mu(g_i) =\bar \Theta_p(\mu)^n .
\end{align*}
Similarly
\begin{align*}
	\underline{\Theta}_p(\mu^n)&=\sup_{\hat v} \int\!\cdots\!\int\frac{1}{\norm {g_n\cdots g_1\, v}^p}\, d\mu(g_n)\, \cdots\, d\mu(g_1)\\
	&= \sup_{\hat v} \int\!\cdots\!\int \prod_{i=1}^n \frac{1}{\norm {g_i\, \frac{g_{i-1}\cdots g_1\, v}{\norm{g_{i-1}\cdots g_1\, v}} }^p}\, d\mu(g_n)\, \cdots\, d\mu(g_1)\leq \underline{\Theta}_p(\mu)^n .
\end{align*}
The last inequality is derived integrating from left to right, that is from $g_n$ to $g_1$, making use of the estimate
$\int \norm{g_i\, v_i}^{-p}\, d\mu(g_i)\leq \underline{\Theta}_p(\mu)$  with
$v_i= \frac{g_{i-1}\cdots g_1\, v}{\norm{g_{i-1}\cdots g_1\, v}}$.
\end{proof}

Given $T>0$ and $\hat v\in \Pp(\R^m)$, consider the  set 
\begin{equation}
\label{BT(v) defin}
\Bscr_T(\hat v):= \left\{ g\in \Mat_m(\R) \,  \colon \, \log \norm{g\, v}<-T \, \text{ or }\, \log \norm{g}>T \right\} .
\end{equation}

\begin{lemma}
	\label{bound BT(v)}
	Given $\mu\in \Mcal$ and $\hat v\in \Pp(\R^m)$,
	$\mu \left( \Bscr_T(\hat v) \right) \, \leq 2\,C\, e^{-p\, T} . $
\end{lemma}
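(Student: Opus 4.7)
The plan is a direct application of Markov's (Chebyshev's) inequality after splitting the set $\Bscr_T(\hat v)$ into its two defining parts. Write
\[
\Bscr_T(\hat v) = \Bscr_T^1(\hat v) \cup \Bscr_T^2, \quad \text{where}
\]
\[
\Bscr_T^1(\hat v) := \{g : \log \norm{gv} < -T\} = \{g : \norm{gv}^{-p} > e^{pT}\},
\]
\[
\Bscr_T^2 := \{g : \log \norm{g} > T\} = \{g : \norm{g}^{p} > e^{pT}\},
\]
so that $\mu(\Bscr_T(\hat v)) \leq \mu(\Bscr_T^1(\hat v)) + \mu(\Bscr_T^2)$.

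For the first piece, Markov's inequality gives
\[
\mu(\Bscr_T^1(\hat v)) \leq e^{-pT} \int \norm{g v}^{-p}\, d\mu(g) \leq e^{-pT}\, \underline{\Theta}_p(\mu) \leq C\, e^{-pT},
\]
using that $v$ is a unit representative of $\hat v$ and the defining bound $\underline{\Theta}_p(\mu) \leq C$ for $\mu \in \Mcal$. Analogously,
\[
\mu(\Bscr_T^2) \leq e^{-pT} \int \norm{g}^{p}\, d\mu(g) \leq e^{-pT}\, \bar{\Theta}_p(\mu) \leq C\, e^{-pT}.
\]

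Adding the two estimates yields $\mu(\Bscr_T(\hat v)) \leq 2\, C\, e^{-p T}$, which is the claim. There is no real obstacle here: the definition of $\Mcal$ via the exponential moments $\bar\Theta_p$ and $\underline\Theta_p$ is precisely tailored so that Markov's inequality produces exponential tail bounds for $\log\norm{g}$ and $\log\norm{gv}$, uniformly in the direction $\hat v$.
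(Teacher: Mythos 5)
Your proof is correct and follows essentially the same approach as the paper's: decompose $\Bscr_T(\hat v)$ into the two defining sets and apply Markov's inequality to each, using the bounds $\underline{\Theta}_p(\mu)\leq C$ and $\bar\Theta_p(\mu)\leq C$ from the definition of $\Mcal$. Nothing to add.
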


\begin{proof}
	The set $\Bscr_T(\hat v)$ is the union of two subsets
	\begin{align*}
		\Bscr^+_T &:=\{ g\in \Mat_m(\R) \colon \log \norm{g}  > T\;  \} ,\\
		\Bscr^-_T(\hat v)&:=\{  g\in \Mat_m(\R) \colon \log \norm{g\, v}<-T \; \} .
	\end{align*}
	Since
	$$ \log \norm{g\, v}<-T \; \Leftrightarrow\; \norm{g\, v}^{-p} > e^{p\, T} $$
	by Markov inequality we have
	$$ \mu\left(\Bscr^-_T(\hat v)\right) \leq
	e^{-p\, T}\, \int \norm{g\, v}^{-p}\, d\mu(g)\leq  C\, e^{-p\, T} . $$
	Similarly, because
	$$ \log \norm{g}>T \; \Leftrightarrow\; \norm{g}^{p} > e^{p\, T} $$
	by Markov inequality we have
	$$ \mu\left(\Bscr^+_T(\hat v)\right) \leq
	e^{-p\, T}\, \int \norm{g}^{p}\, d\mu(g)\leq  C\, e^{-p\, T} . $$
	These two inequalities provide the stated upper bound.
\end{proof}

\begin{corollary}
	\label{coro bound BT(v)}
	Given $\mu\in \Mcal$, $n\in \N$  and $\hat v\in \Pp(\R^m)$,
	$$\mu^n \left( \Bscr_{n\,T}(\hat v) \right) \, \leq 2\,C^n\, e^{-p\, n\, T} . $$
\end{corollary}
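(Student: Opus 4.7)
The plan is to reduce this corollary to a direct application of Lemma~\ref{bound BT(v)} applied to the convolution power $\mu^n$, using the multiplicative moment estimates from Proposition~\ref{Theta_n(mu n)}.

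More concretely, I would first observe that although $\mu^n$ need not lie in $\Mcal$ (the moment bounds $C^n$ may exceed $C$), the key inequalities in the proof of Lemma~\ref{bound BT(v)} only use Markov's inequality together with the two quantities $\bar{\Theta}_p$ and $\underline{\Theta}_p$. Specifically, writing $\Bscr_{nT}(\hat v) = \Bscr^+_{nT} \cup \Bscr^-_{nT}(\hat v)$ as in the proof of Lemma~\ref{bound BT(v)}, Markov's inequality yields
\begin{align*}
\mu^n(\Bscr^+_{nT}) &\leq e^{-p\,nT}\, \int \norm{g}^p\, d\mu^n(g) = e^{-p\,nT}\, \bar{\Theta}_p(\mu^n),\\
\mu^n(\Bscr^-_{nT}(\hat v)) &\leq e^{-p\,nT}\, \int \norm{g\, v}^{-p}\, d\mu^n(g) \leq e^{-p\,nT}\, \underline{\Theta}_p(\mu^n).
\end{align*}

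Then I would invoke Proposition~\ref{Theta_n(mu n)} to get $\bar{\Theta}_p(\mu^n) \leq \bar{\Theta}_p(\mu)^n \leq C^n$ and $\underline{\Theta}_p(\mu^n) \leq \underline{\Theta}_p(\mu)^n \leq C^n$, since $\mu\in \Mcal$ gives $\bar\Theta_p(\mu),\underline\Theta_p(\mu)\leq C$. Summing the two bounds yields the stated inequality $\mu^n(\Bscr_{nT}(\hat v)) \leq 2\, C^n\, e^{-p\, n\, T}$.

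There is no real obstacle here: the statement is essentially Lemma~\ref{bound BT(v)} applied to $\mu^n$ with the threshold $T$ replaced by $nT$ and the constant $C$ replaced by $C^n$. The only point that requires a moment's care is noting that the proof of Lemma~\ref{bound BT(v)} does not genuinely need $\mu^n\in\Mcal$; it only needs the two moment bounds, which Proposition~\ref{Theta_n(mu n)} supplies in the required multiplicative form.
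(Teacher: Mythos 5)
Your proof is correct and matches the paper's argument: the paper likewise notes via Proposition~\ref{Theta_n(mu n)} that $\mu^n\in\mathcal{M}^p_{C^n}$ and then applies Lemma~\ref{bound BT(v)} with the constant $C$ replaced by $C^n$ and the threshold $T$ replaced by $nT$. Your version simply unrolls the two Markov-inequality steps explicitly, but the substance is identical.
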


\begin{proof}
By Proposition~\ref{Theta_n(mu n)},
$\mu^n\in \mathcal{M}^p_{C^n}$. The conclusion follows by Lemma~\ref{bound BT(v)}.
\end{proof}

\begin{lemma}
	\label{bound int BT(v) log norm gv}
	Given $\mu\in \Mcal$ and $\hat v\in \Pp(\R^m)$,
\begin{align*}
\int_{\Bscr_T(\hat v)} \vert \log \norm{g\, v}\vert \, d\mu(g)   &\leq 3\, \frac{C}{p}\, e^{-\frac{p\, T}{2}}  ,  \\
\int_{\Bscr_T(\hat v)}  \log^2 \norm{g\, v}  \, d\mu(g)   &\leq 10\,\frac{ C}{p^2}\, e^{-\frac{p\, T}{2}}  
\end{align*}
\end{lemma}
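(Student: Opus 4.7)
The plan is to decompose $\Bscr_T(\hat v)$ according to its definition in~\eqref{BT(v) defin} as the union of $\Bscr^+_T=\{\log\norm{g}>T\}$ and $\Bscr^-_T(\hat v)=\{\log\norm{g\,v}<-T\}$, and then to control the two resulting integrals by the tail bounds already established in Lemma~\ref{bound BT(v)}. On $\Bscr^-_T(\hat v)$ the quantity $|\log\norm{g v}|$ equals the positive random variable $X:=-\log\norm{g v}$, while on $\Bscr^+_T\setminus \Bscr^-_T(\hat v)$ we have $-T\leq \log\norm{g v}\leq \log\norm{g}$, so $|\log\norm{g v}|\leq \max(T,\log\norm{g})=\log\norm{g}=:Y$. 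In both cases the controlled variable exceeds $T$.

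The key ingredient is the sharp Markov-type tail estimate: from the very definition of $\underline\Theta_p(\mu)$ and $\bar\Theta_p(\mu)$, and the hypothesis $\mu\in\Mcal$, one gets $\mu(X>s)\leq C\,e^{-p\,s}$ and $\mu(Y>s)\leq C\,e^{-p\,s}$ for all $s\geq 0$, exactly as in the proof of Lemma~\ref{bound BT(v)}. I will then apply the layer-cake (Cavalieri) formula
\begin{equation*}
\int_{\{X>T\}} X\, d\mu = T\,\mu(X>T) + \int_T^\infty \mu(X>s)\, ds,
\end{equation*}
which together with the tail estimate yields
\begin{equation*}
\int_{\{X>T\}} X\, d\mu\;\leq\; C\,e^{-p\,T}\left(T+\frac{1}{p}\right).
\end{equation*}
Adding the analogous estimate for $Y$ on $\Bscr^+_T$ and rewriting the right-hand side as $\frac{C}{p}\,e^{-p\,T/2}\cdot(p\,T+1)\,e^{-p\,T/2}$, a one-variable optimization shows that $(p\,T+1)\,e^{-p\,T/2}\leq 2\,e^{-1/2}$ for all $T\geq 0$; summing the two contributions gives the factor $3$ in the stated first bound.

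For the second inequality I will use the analogous layer-cake identity
\begin{equation*}
\int_{\{X>T\}} X^2\, d\mu = T^2\,\mu(X>T) + \int_T^\infty 2\,s\,\mu(X>s)\, ds,
\end{equation*}
and the same tail bound, producing $C\,e^{-p\,T}\bigl(T^2+2\,T/p+2/p^2\bigr)$. Factoring out $\frac{C}{p^2}\,e^{-p\,T/2}$ reduces the required inequality to the scalar bound $(u^2+2u+2)\,e^{-u/2}\leq 5$ for all $u\geq 0$ with $u=p\,T$, which follows by elementary calculus (the maximum is attained at $u=1+\sqrt{3}$ and does not exceed $4$). Doubling to account for both $X$ on $\Bscr^-_T(\hat v)$ and $Y$ on $\Bscr^+_T$ produces the constant $10$. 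The main (very minor) technical point is keeping track of these numerical constants and verifying the pointwise domination $|\log\norm{g v}|\leq \log\norm{g}$ on the complement of $\Bscr^-_T(\hat v)$ inside $\Bscr^+_T$; everything else is direct application of Markov's inequality and the hypotheses defining $\Mcal$.
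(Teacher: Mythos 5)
Your proof is correct, and it takes a genuinely different route from the paper. You split $\Bscr_T(\hat v)$ into the two pieces $\Bscr^-_T(\hat v)=\{X>T\}$ and $\Bscr^+_T\setminus\Bscr^-_T(\hat v)\subseteq\{Y>T\}$ (with $X=-\log\norm{g v}$, $Y=\log\norm{g}$), verify the pointwise domination $\abs{\log\norm{gv}}\leq Y$ on the second piece, and then estimate $\int_{\{X>T\}}X\,d\mu$ and $\int_{\{Y>T\}}Y\,d\mu$ directly via the layer-cake formula and the Markov tail bound $\mu(X>s)\leq C e^{-ps}$. The paper instead derives unconditional moment bounds $\int\log^2\norm{gv}\,d\mu\leq 4C/p^2$ and $\int\log^4\norm{gv}\,d\mu\leq 48C/p^4$ from the elementary inequalities $x^2\leq\frac{2}{p^2}e^{p\abs{x}}$, $x^4\leq\frac{24}{p^4}e^{p\abs{x}}$, and then applies Cauchy--Schwarz to localize to $\Bscr_T(\hat v)$ using $\mu(\Bscr_T(\hat v))\leq 2Ce^{-pT}$. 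The two routes give the same stated constants, but yours is actually sharper on the decay: you obtain a bound of order $Ce^{-pT}(T+1/p)$ (resp.\ $Ce^{-pT}(T^2+2T/p+2/p^2)$), i.e.\ essentially $e^{-pT}$ up to polynomial corrections, and then deliberately factor out $e^{-pT/2}$ and absorb the remaining $(pT+1)e^{-pT/2}$, resp.\ $(u^2+2u+2)e^{-u/2}$ with $u=pT$, into a universal constant to match the stated form. The paper's Cauchy--Schwarz route naturally produces only $e^{-pT/2}$, at the price of needing fourth moments. Your one-variable optimizations are correct: the first function peaks at $u=1$ with value $2e^{-1/2}\approx 1.21$, and the second at $u=1+\sqrt{3}$ with value $(8+4\sqrt{3})e^{-(1+\sqrt{3})/2}\approx 3.8<4$, so summing the two pieces gives totals $\approx 2.43\leq 3$ and $\approx 7.6\leq 10$, matching the lemma.
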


\begin{proof}
	Using trivial inequality $x^2\leq 2\, e^{\vert x\vert}$ we get
	that $x^2\leq \frac{2}{p^2}\, e^{p\, \vert x\vert}$
	\begin{align*}
		\int \log^2\norm{g\, v}\, d\mu(g)
		& \leq \frac{2}{p^2}\, \int e^{p\, \vert \log \norm{g\, v}\vert}\, d\mu(g)\\
		&\leq 
		\frac{2}{p^2}\, \int \max\left\{ \norm{g}^p, \frac{1}{\norm{g v}^p} \right\}\, d\mu(g)\\
		&\leq 
		\frac{2}{p^2}\, \int   \norm{g}^p  +  \frac{1}{\norm{g v}^p}  \, d\mu(g)
		\leq \frac{4 \, C}{p^2} .
	\end{align*}
Similarly, since $x^4/4!\leq e^{|x|}$,
we have that $x^4\leq \frac{24}{p^4}\, e^{p\, |x|}$, which gives
$$	\int \log^4\norm{g\, v}\, d\mu(g)
 \leq \frac{24}{p^4}\, \int \max\left\{ \norm{g}^p, \frac{1}{\norm{g v}^p} \right\}\, d\mu(g)  \leq \frac{48\, C}{p^4} . $$
Hence
	by Cauchy-Schwarz 
	\begin{align*}
		\int_{\Bscr_T(\hat v)} \vert \log \norm{g\, v}\vert \, d\mu(g)  &\leq 
		\sqrt{\mu\left(\Bscr_T(\hat v)\right) }\, \left(\int \log^2\norm{g\, v}\, d\mu(g) \right)^{1/2}\\
		&\leq \sqrt{2\, C\, }\, e^{-\frac{p\, T}{2}}\, \frac{2\, \sqrt{C}}{p} < \frac{3\, C}{p}\, e^{-\frac{p\, T}{2}}  
	\end{align*}
and in the same way
$$  \int_{\Bscr_T(\hat v)} \log^2 \norm{g\, v} \, d\mu(g)   \leq \sqrt{2\, C\, }\, e^{-\frac{p\, T}{2}}\, \sqrt{ \frac{48\, C}{p^4} } < 10\,\frac{C}{p^2}\, e^{-\frac{p\, T}{2}} . $$
\end{proof}

\begin{corollary}
	\label{coro bound int BT(v) log norm gv}
	Given $\mu\in \Mcal$, $n\in\N$ and $\hat v\in \Pp(\R^m)$,
\begin{align*}
	\int_{\Bscr_{n\, T}(\hat v)} \vert \log \norm{g\, v}\vert \, d\mu^n(g)   & \leq \frac{3\, C^n}{p}\, e^{-\frac{p\, n\, T}{2}}  , \\
	\int_{\Bscr_{n\, T}(\hat v)}  \log \norm{g\, v}^2  \, d\mu^n(g)   & \leq \frac{10\, C^n}{p^2}\, e^{-\frac{p\, n\, T}{2}}  .	
\end{align*}
\end{corollary}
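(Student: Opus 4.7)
The plan is to reduce this corollary to Lemma~\ref{bound int BT(v) log norm gv} in exactly the same way that Corollary~\ref{coro bound BT(v)} was deduced from Lemma~\ref{bound BT(v)}. The key observation is that the iterated convolution $\mu^n$ inherits the same type of moment bounds as $\mu$, but with the constant $C$ replaced by $C^n$, and that the ``bad set'' threshold scales linearly with $n$ in a matching way.

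First, I would invoke Proposition~\ref{Theta_n(mu n)}, which yields $\bar{\Theta}_p(\mu^n) \leq C^n$ and $\underline{\Theta}_p(\mu^n) \leq C^n$. Thus $\mu^n \in \mathcal{M}^p_{C^n}$, that is, $\mu^n$ satisfies the same structural assumptions as a measure in $\Mcal$, only with the constant $C$ upgraded to $C^n$. This is the sole content that must be borrowed from the earlier part of the section, and it is purely algebraic.

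Next, I would apply Lemma~\ref{bound int BT(v) log norm gv} to the measure $\mu^n$, with the threshold $T$ replaced by $n\, T$ and the constant $C$ replaced by $C^n$. Since the hypotheses of that lemma consist exactly of membership in a class $\mathcal{M}^p_C$ (they are never used for anything else), and since its statement is valid verbatim for any such $C$, the conclusions transcribe into
\begin{align*}
\int_{\Bscr_{n\,T}(\hat v)} \vert\log \norm{g\, v}\vert \, d\mu^n(g) &\leq \frac{3\, C^n}{p}\, e^{-\frac{p\, n\, T}{2}}, \\
\int_{\Bscr_{n\,T}(\hat v)}  \log^2 \norm{g\, v} \, d\mu^n(g) &\leq \frac{10\, C^n}{p^2}\, e^{-\frac{p\, n\, T}{2}},
\end{align*}
which is exactly what is claimed.

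There is no genuine obstacle in this step; all the analytic work (Markov's inequality, Cauchy--Schwarz, and the elementary bounds $x^2 \leq (2/p^2)\, e^{p|x|}$ and $x^4 \leq (24/p^4)\, e^{p|x|}$ used to control $L^2$- and $L^4$-norms of $\log\norm{g\,v}$) has already been carried out in Lemma~\ref{bound int BT(v) log norm gv}. The corollary is merely the scaling/iteration step: both the cutoff $T$ in the definition of $\Bscr_T(\hat v)$ and the moment constant $C$ scale by the factor $n$ (respectively, are raised to the power $n$) under $n$-fold convolution, and this scaling is consistent with the exponential decay factor $e^{-pT/2}$ in the one-step estimates.
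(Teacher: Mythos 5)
Your proposal is correct and matches the paper's own proof essentially verbatim: the paper likewise invokes Proposition~\ref{Theta_n(mu n)} to place $\mu^n$ in $\mathcal{M}^p_{C^n}$ and then applies Lemma~\ref{bound int BT(v) log norm gv} with $T$ and $C$ replaced by $nT$ and $C^n$. Nothing to add.
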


\begin{proof}
	By Proposition~\ref{Theta_n(mu n)},
	$\mu^n\in \mathcal{M}^p_{C^n}$. The conclusion follows by Lemma~\ref{bound int BT(v) log norm gv}.
\end{proof}

The following bounds on exponential moments hold.

\begin{lemma}
	\label{bounds on Theta j}
	Given $g\in \Mat_m(\R)$ and
	linear subspaces $E^j\subset F^k\subseteq \R^m$ of dimensions $1\leq j <k\leq m$,
	$$\det(g_{\vert E})\geq \left[ \frac{ \det(g_{\vert F})  }{\norm{g}^{j}}\right]^{\frac{j}{k-j}}.$$
	In particular for any $\mu\in \Prob(\Mat_m(\R)$,\; 
	$$ \underline\Theta^j_p(\mu)\leq
	\sqrt{ \bar\Theta_{\frac{2 j^2 p}{k-j}}(\mu) } \,  \sqrt{ \underline\Theta^k_{\frac{2 p j}{k-j}}(\mu) }.$$
\end{lemma}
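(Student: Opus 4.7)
My plan is to establish the pointwise geometric inequality first and then deduce the moment bound via Cauchy--Schwarz.

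For the pointwise inequality, the natural starting point is the orthogonal decomposition $F = E \oplus E'$ with $E' := F \cap E^{\perp}$ of dimension $k-j$. Fixing orthonormal bases of $E$ and $E'$ and forming $\omega_F = \omega_E \wedge \omega_{E'}$ in $\wedge^k \R^m$, sub-multiplicativity of the wedge product---equivalently, Fischer's inequality applied to the positive semidefinite matrix $(g|_F)^T (g|_F)$ block-decomposed along $E \oplus E'$---yields
\[ \det(g|_F) \leq \det(g|_E)\cdot \det(g|_{E'}) . \]
Since each singular value of $g|_{E'}$ is bounded by $\norm{g}$, we get $\det(g|_{E'}) \leq \norm{g}^{k-j}$, producing the baseline estimate $\det(g|_E) \geq \det(g|_F)/\norm{g}^{k-j}$. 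To reach the stated form with the asymmetric exponent $j/(k-j)$, I would refine this estimate using the full singular-value spectrum $\sigma_1 \geq \cdots \geq \sigma_k$ of $g|_F$, bounding the product $\sigma_1 \cdots \sigma_{k-j}$---which controls the worst-case ratio $\det(g|_F)/\det(g|_E)$---by a suitable combination of $\norm{g}^j$ and $\det(g|_F)$ raised to fractional powers.

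For the moment inequality, I would raise the pointwise bound to the power $-p$:
\[ \det(g|_E)^{-p} \leq \norm{g}^{p j^2/(k-j)}\cdot \det(g|_F)^{-p j/(k-j)} . \]
Integrating against $\mu$ and applying the Cauchy--Schwarz inequality separates the two factors. Taking the supremum over $j$-dimensional $E$ on the left produces $\underline{\Theta}^j_p(\mu)$, and bounding the right-hand factor by the supremum over $k$-dimensional $F \supseteq E$ produces $\underline{\Theta}^k_{2pj/(k-j)}(\mu)$; the remaining factor is precisely $\bar{\Theta}_{2pj^2/(k-j)}(\mu)$, delivering the claimed inequality.

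The main obstacle I anticipate is justifying the sharp exponent $j/(k-j)$ in the pointwise bound. The orthogonal-decomposition argument yields only the weaker ``naive'' estimate with exponent $k-j$ on $\norm{g}$, which would translate into a different distribution of moment orders after Cauchy--Schwarz. Passing from the naive bound to the stated one seems to require either a chain argument along $E = E_j \subset E_{j+1} \subset \cdots \subset E_k = F$ with weighted one-step reductions, or a refined use of the Cauchy--Binet formula expressing $\det(g|_F)$ as an average of squared sub-determinants over $j$-dimensional subspaces of $F$; either route should exploit that the reduced dimension $j$ appears multiplicatively (via the trivial bound $\det(g|_E) \leq \norm{g}^j$) rather than as a simple additive correction.
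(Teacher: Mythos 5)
Your naive bound $\det(g_{\vert E}) \geq \det(g_{\vert F})/\norm{g}^{k-j}$ via Fischer/wedge sub-multiplicativity is correct, and so is your instinct that passing from it to the stated exponent $j/(k-j)$ is the crux. You should trust that unease: the stated pointwise inequality is in fact false as written. Take $g = \diag(2, \tfrac{3}{2}, 1)$, $E = \R e_3 \subset F = \R^3$ (so $j=1$, $k=3$); then $\det(g_{\vert E}) = 1$, $\det(g_{\vert F}) = 3$, $\norm{g} = 2$, and the right-hand side is $(3/2)^{1/2} \approx 1.22 > 1$. The paper's own proof rests on the assertion that $s_l^F \leq s_j^E$ for every $j \leq l \leq k$, but already at $l=j$ this contradicts the Cauchy interlacing inequality $s_j^F \geq s_j^E$; what interlacing actually gives is $s_i^E \geq s_{i+k-j}^F$ for $1 \leq i \leq j$, which reproduces exactly your baseline estimate $\det(g_{\vert F}) \leq \norm{g}^{k-j}\det(g_{\vert E})$.

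So neither a chain argument nor a Cauchy--Binet refinement will recover the claimed exponent, because the inequality does not hold. What you should do instead is run Cauchy--Schwarz on the baseline. From $\det(g_{\vert E})^{-p} \leq \norm{g}^{p(k-j)} \det(g_{\vert F})^{-p}$ one gets, after integrating and applying Cauchy--Schwarz and then taking the supremum over $E$ (choosing $F \supset E$), the corrected moment bound
\[
\underline\Theta_p^j(\mu) \;\leq\; \sqrt{\bar\Theta_{2p(k-j)}(\mu)}\;\sqrt{\underline\Theta^k_{2p}(\mu)}.
\]
This replaces the exponents $\tfrac{2j^2p}{k-j}$ and $\tfrac{2pj}{k-j}$ in the lemma statement by $2p(k-j)$ and $2p$, and it still fulfils the role announced in Section~\ref{statements}: bounds on $\bar\Theta$ and $\underline\Theta^k$ control $\underline\Theta^j_{p'}$ for a suitably smaller $p'$. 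In short, your identified gap is not a deficiency of your proof but of the statement; finish your argument with the naive pointwise bound and record the adjusted moment exponents.
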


\begin{proof}
	If
	$s_1^F\geq s_2^F\geq \cdots \geq s_k^F\geq 0$
	are the singular values of $g_{\vert F}$ and $s_1^E\geq s_2^E\geq \cdots \geq s_j^E\geq 0$ the singular values of $g_{\vert E}$,
	for any $j\leq l\leq k$, \, 
	$s_l^F\leq s_j^E\leq (s_1^E\cdots s_j^E)^{1/j} =(\det(g_{\vert E}))^{1/j}$,
	which implies that
	$$ \det(g_{\vert F}) = s_1^F\cdots s_k^F\leq \norm{g}^j \,(\det(g_{\vert E}))^{\frac{k-j}{j}}   .$$
	This establishes the first inequality.
	
	Given $E\in \Gr_j(\R^m)$ take any $F\in \Gr_k(\R^m)$ that contains $E$. Then combining the previous with Cauchy-Schwarz inequality
	\begin{align*}
		\int \frac{1}{|\det(g_{\vert E})|^p }\, d\mu(g) &\leq 
		\int \frac{\norm{g}^{\frac{j^2 p}{k-j}} }{|\det(g_{\vert F})|^{\frac{j p}{k-j}} }\, d\mu(g)\\
		&\leq \left( \int \norm{g}^{\frac{2 j^2 p}{k-j}}\, d\mu(g)  \right)^{1/2}\, \left(\int \frac{1}{|\det(g_{\vert F})|^{\frac{2 j p}{k-j}} }\, d\mu(g) \right)^{1/2}\\
		&\leq \sqrt{ \bar\Theta_{ \frac{2 j^2 p}{k-j}}(\mu) } \,  \sqrt{ \underline\Theta^k_{\frac{2 p j}{k-j}}(\mu) } ,
	\end{align*}
	and taking the supremum in $E$, the second inequality holds.
\end{proof}

\section{Furstenberg Kifer Theory}
\label{FK}

In this section we prove an extended version of a classical result of H. Furstenberg and Y. Kifer~\cite[Theorem 3.5]{FK83}
from $\GL_m(\R)$ to $\Mat_m(\R)$-cocycles.

\begin{theorem}
	\label{non-invertible FK}
	Given $\mu\in \Mcal$ such that
	\begin{enumerate}
		\item $\mu$ is quasi-irreducible;
		\item $L_1(\mu)>L_2(\mu)$;
	\end{enumerate}
	then for $\mu^\N$-almost every $\omega\in\Mat_m(\R)^\N$ and $\hat v\in \Pp(\R^m)$,
	$$ 
	\lim_{n\to \infty} \frac{1}{n}\, \log \norm{ A^n(\omega)\, v}=L_1(\mu).$$
\end{theorem}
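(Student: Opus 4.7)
My approach is to extend the classical Furstenberg--Kifer theorem from $\GL_m(\R)$ to $\Mat_m(\R)$ and then apply quasi-irreducibility to collapse the resulting deterministic filtration to a single level. As a first observation, the upper bound
\[
\limsup_{n\to\infty}\tfrac{1}{n}\log\norm{A^n(\omega)v}\leq L_1(\mu)
\]
is immediate from Furstenberg--Kesten, which applies because $\bar\Theta_p(\mu)<\infty$ gives $\log^+\norm{A}\in L^1(\mu)$, combined with the trivial inequality $\norm{A^n(\omega)v}\leq\norm{A^n(\omega)}\,\norm{v}$. The entire content of the theorem is the matching lower bound, for \emph{every} deterministic $\hat v\in\Pp(\R^m)$.

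The first step is to make the projective dynamics well defined almost surely. The bound $\underline\Theta_p(\mu)\leq C$ yields $\int\norm{gv}^{-p}\,d\mu(g)<\infty$ for every unit $v$, so $\mu\{g:gv=0\}=0$; combined with Proposition~\ref{Theta_n(mu n)} applied to $\mu^n$ and a union bound over $n$, this gives
\[
\mu^\N\bigl\{\omega:A^n(\omega)v=0\text{ for some }n\geq 1\bigr\}=0
\]
for every fixed $\hat v$. On this full measure set the projective iterates $\hat v_n:=\widehat{A^n(\omega)v}$ are defined, and the Markov chain on $\Pp(\R^m)$ driven by $\mu$ is launched from $\hat v$ without obstruction.

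The heart of the proof is to reproduce the Furstenberg--Kifer construction in the non-invertible setting. Using weak-$*$ compactness and Cesaro averaging of the pushforwards $(\hat A^n)_*\delta_{\hat v}$, I would produce $\mu$-stationary measures $\eta$ on $\Pp(\R^m)$ in the sense of Definition~\ref{def mu stationary}; the moment hypotheses guarantee that such $\eta$ charge no direction $v$ for which $gv=0$ on a non-null set of $g$, so that Furstenberg's integral
\[
\lambda(\eta)=\iint\log\frac{\norm{gv}}{\norm{v}}\,d\mu(g)\,d\eta(\hat v)
\]
is finite. A Birkhoff-type argument applied to the skew product $\sigma\times\hat A$ on $\Mat_m(\R)^\N\times\Pp(\R^m)$ identifies the almost sure limit $\lambda(\hat v):=\lim_n\tfrac{1}{n}\log\norm{A^n(\omega)v}$ with $\lambda(\eta_{\hat v})$ for a suitable stationary $\eta_{\hat v}$, and an inductive passage to minimal $\mu$-a.s.\ invariant subspaces on which the exponent strictly drops produces a deterministic filtration $\R^m=W_1\supsetneq W_2\supsetneq\cdots\supsetneq W_{r+1}=\{0\}$ with strictly decreasing exponents $\beta_1=L_1(\mu)>\beta_2>\cdots>\beta_r$, such that $\lambda(\hat v)=\beta_i$ almost surely whenever $v\in W_i\setminus W_{i+1}$.

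To conclude, I would invoke quasi-irreducibility. If $W_2\neq\{0\}$, then $W_2$ is a proper $\mu$-a.s.\ invariant subspace, and the induced cocycle on $W_2$ has top Lyapunov exponent $\beta_2<\beta_1=L_1(\mu)$, directly contradicting Definition~\ref{def quasi-irreducibility}. Therefore $W_2=\{0\}$, the filtration is trivial, and $\lambda(\hat v)=L_1(\mu)$ for every $\hat v\in\Pp(\R^m)$. The main obstacle is the construction of the deterministic filtration itself: every step of the classical $\GL_m$ argument rests on the projective Markov chain, and each such step must be rerouted around the singular set $\{(g,v):gv=0\}$ using the exponential moment bounds of Section~\ref{Setting} as the quantitative backbone.
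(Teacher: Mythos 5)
Your high-level strategy differs from the paper's: you propose to reproduce the full Furstenberg--Kifer \emph{filtration} $\R^m = W_1 \supsetneq W_2 \supsetneq \cdots$ with strictly decreasing deterministic exponents, and then collapse it to a single level via quasi-irreducibility. The paper instead bypasses the filtration entirely: it proves directly (Lemma~\ref{lemma  all beta equal}) that quasi-irreducibility forces $\beta_\mu(\eta)=L_1(\mu)$ for \emph{every} $\mu$-stationary $\eta$, and then applies the abstract Furstenberg--Kifer Birkhoff-type theorem (Theorem~\ref{abstract FK}) to a truncated, compactified and \emph{induced} Markov system to pass from this statement about stationary measures to the pointwise conclusion for every $\hat v$.

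The genuine gap in your argument is precisely the step you flag at the end as ``the main obstacle'': the passage from Birkhoff's theorem (which gives the limit for $\eta$-a.e.\ $\hat v$, $\eta$ stationary) to a conclusion holding for \emph{every} deterministic $\hat v\in\Pp(\R^m)$. You invoke ``a Birkhoff-type argument applied to the skew product,'' but Birkhoff cannot give an ``every $\hat v$'' conclusion --- that is exactly what the abstract FK theorem is designed for, and the abstract FK theorem is stated for a \emph{compact} state space $M$ with a \emph{continuous} kernel $k$ and a \emph{continuous} observable. Here $M = \Mat_m(\R)\times\Pp(\R^m)$ is non-compact, the kernel $k_{(g,\hat v)} = \mu\times\delta_{\hat g\hat v}$ is discontinuous along the singular set $\{v\in\ker g\}$, and $\psi(g,\hat v) = \log\norm{gv}$ is unbounded with non-removable singularities. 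The paper's entire Section~\ref{FK} --- the truncation $\psi_T$, the compact set $M_T$, the induced kernel $\tilde k$, the proof that $\tilde k$ is continuous, the isospectral-reduction analysis, the estimates $\int\tilde\psi\,d\tilde\Pp_{\eta,T}\approx L_1(\mu)$ and $\tilde\psi\approx\psi_T$, and finally the $T\to\infty$ limit --- exists precisely to close this gap. Your proposal asserts the gap can be ``rerouted around the singular set using the exponential moment bounds,'' but does not say how, and the moment bounds alone do not produce the needed uniformity in $\hat v$. Separately, even in the classical $\GL_m$ setting, building the deterministic filtration you invoke requires quite a bit of the FK machinery; the paper's route of showing that all $\beta_\mu(\eta)$ coincide with $L_1(\mu)$ is shorter and avoids the induction over invariant subspaces entirely.
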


\subsection{Furstenberg Kifer's abstract theorem}

The proof relies on an abstract Birkhoff theorem for Markov processes, originally established by Furstenberg and Kifer~\cite{FK83}, which we present here without proof.

Let $M$ be a compact metric space and $k:M\to \Prob(M)$ a continuous map,
where we endow the space $\Prob(M)$ of Borel probability measures on $M$ 
with the weak * topology. The transition kernel $k$ determines a
Markov-Feller operator $\Qop_k:C(M)\to C(M)$ defined by
$$ (\Qop_k\varphi)(x):= \int \varphi(y)\, dk_x(y) .$$
A random process $\xi_n:\Omega\to M$, on some probability space $(\Omega,\Pp)$, is said to be a $k$-Markov process if for any Borel set $B\subseteq M$,
$$ \Pp[\, \xi_n \in B \, \vert\, \xi_{n-1}, \cdots, \xi_0\,]=  k_{\xi_{n-1}}(B) .$$
A probability measure $\eta\in \Prob(M)$ is called a $k$-stationary
if 
$$ \eta= \int k_x\, d\eta(x)=: \Qop_k^\ast \eta ,$$
where the right-hand-side expression stands for  the adjoint  $\Qop_k$.


\begin{theorem}[Furstenberg-Kifer]
	\label{abstract FK}
	Consider a $k$-Markov process $\{\xi_n\}_{n\geq 0}$ and a continuous function $\varphi\in C(M)$. Then
	\begin{enumerate}
		\item $\Pp$-almost surely
		$$  \limsup_{n\to \infty} \frac{1}{n}\, \sum_{j=0}^{n-1} \varphi(\xi_j(\omega))\leq \sup \left\{\, \smallint\varphi\, d\eta\colon \, \eta \text{ is } k\text{-stationary} \right\}.$$
		
		\item Given real numbers $\alpha\leq \beta$, if  for all $k$-stationary measures 
		$\eta\in\Prob(M)$, we have \, $\alpha\leq \smallint \varphi\, d\eta\leq \beta$   \, then $\Pp$-almost surely
		$$  \alpha\leq \liminf_{n\to \infty} \frac{1}{n}\, \sum_{j=0}^{n-1} \varphi(\xi_j(\omega))\leq  \limsup_{n\to \infty} \frac{1}{n}\, \sum_{j=0}^{n-1} \varphi(\xi_j(\omega))\leq \beta .$$
	\end{enumerate}
\end{theorem}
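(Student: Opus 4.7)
The strategy is to deduce Theorem~\ref{non-invertible FK} from the abstract Theorem~\ref{abstract FK} applied to the projective Markov chain on the compact metric space $M := \Pp(\R^m)$ generated by $\mu$. Define the transition kernel
\[
k_{\hat v} := \int \delta_{\widehat{g v}}\, d\mu(g),
\]
where $v$ is a unit representative of $\hat v$. This is well-defined because $\mu(\{g : g v = 0\}) = 0$, a consequence of $\underline\Theta_p(\mu)\leq C$ via Lemma~\ref{bound BT(v)} in the limit $T\to\infty$. For the Feller property, I take $\psi\in C(M)$ and a sequence $\hat v_n\to \hat v$: for $\mu$-a.e.\ $g$, one has $v\notin \ker g$, hence $v_n\notin \ker g$ eventually and $\psi(\hat g\hat v_n)\to \psi(\hat g\hat v)$; bounded convergence yields continuity of $k$. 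Next, set the observable
\[
\varphi(\hat v) := \int \log\norm{g v}\, d\mu(g).
\]
Lemma~\ref{bound int BT(v) log norm gv} gives a uniform $L^2(\mu)$ bound on $\hat v\mapsto \log\norm{gv}$, and the same dominated-convergence argument shows $\varphi\in C(M)$.

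For fixed $\hat v$, let $\xi_j(\omega) := \widehat{A^j(\omega)\, v}$, which is a $k$-Markov process on $M$ under $\Pp_\mu := \mu^\N$. Writing $X_j(\omega) := \log\norm{A(\sigma^j\omega)\, \xi_j(\omega)}$, the cocycle identity yields
\[
\log\norm{A^n(\omega)\, v} \;=\; \sum_{j=0}^{n-1} X_j(\omega),
\]
and conditioning on $\xi_j$ gives $\EE[X_j\mid \xi_0,\ldots,\xi_j] = \varphi(\xi_j)$. Hence $Y_j := X_j - \varphi(\xi_j)$ is a martingale difference sequence; its variance is uniformly bounded by Lemma~\ref{bound int BT(v) log norm gv}, so the martingale strong law yields $\frac{1}{n}\sum_{j=0}^{n-1} Y_j\to 0$ almost surely. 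It therefore suffices to prove that $\frac{1}{n}\sum_{j=0}^{n-1}\varphi(\xi_j(\omega))\to L_1(\mu)$ almost surely.

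Applying Theorem~\ref{abstract FK}(1) gives $\limsup_n \frac{1}{n}\sum \varphi(\xi_j)\leq \sup_{\eta}\int \varphi\, d\eta$, where the supremum is taken over $\mu$-stationary measures on $\Pp(\R^m)$; and Theorem~\ref{abstract FK}(2) reduces the theorem to the claim that every $\mu$-stationary $\eta$ satisfies $\int \varphi\, d\eta = L_1(\mu)$. This last step is the principal obstacle, being the substantive use of both hypotheses. The plan is: passing to an ergodic component, the value $\lambda := \int \varphi\, d\eta$ equals a Lyapunov exponent of $\mu$ by a Furstenberg-type identification (using that $\log\norm{A^n v}/n$ converges $\Pp_\mu\otimes\eta$-a.e.\ to a measurable function taking values in the Lyapunov spectrum, while the Birkhoff average of $\varphi$ along the Markov chain equals that same limit in $L^1$). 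If $\lambda < L_1(\mu)$, the gap $L_1(\mu) > L_2(\mu)$ and an Oseledets-filtration argument force $\eta$ to concentrate on the slow subspace; disintegrating with respect to the $\sigma$-algebra of $\mu$-invariant events produces a deterministic proper $\mu$-invariant subspace $E\subsetneq \R^m$ on which the induced cocycle has top exponent $\leq \lambda < L_1(\mu)$, contradicting quasi-irreducibility. Hence $\lambda = L_1(\mu)$, and combining the three steps concludes the proof.
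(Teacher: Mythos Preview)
Your proposal does not address the stated theorem. Theorem~\ref{abstract FK} is the abstract Furstenberg--Kifer result, which the paper presents \emph{without proof}, citing Theorems~1.1 and~1.4 of~\cite{FK83}. What you sketch is a proof of Theorem~\ref{non-invertible FK}, the application of Theorem~\ref{abstract FK} to the projective Markov chain.

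Viewed as a proof of Theorem~\ref{non-invertible FK}, your route differs from the paper's and is shorter. The paper applies Theorem~\ref{abstract FK} on the non-compact space $\Mat_m(\R)\times\Pp(\R^m)$ with the unbounded, discontinuous observable $\psi(g,\hat v)=\log\norm{gv}$, and therefore must truncate, restrict to the compact set $M_T$, induce the kernel to $\tilde k$, and compare the induced observable $\tilde\psi$ with the truncation $\psi_T$ via Propositions~\ref{inducing relation} and~\ref{prop tilde psi - psi T proximity}, finally letting $T\to\infty$. You instead work directly on the compact space $\Pp(\R^m)$ with the averaged observable $\varphi(\hat v)=\int\log\norm{gv}\,d\mu(g)$; here $k$ is Feller (the paper itself notes this in Proposition~\ref{space of stationary measures}) and $\varphi$ is continuous, so Theorem~\ref{abstract FK} applies with no truncation at all. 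The cost is that $\sum_j\varphi(\xi_j)\neq\log\norm{A^n v}$, which you correctly handle with the martingale-difference strong law (the conditional second moments are uniformly bounded by $\int\log^2\norm{gv}\,d\mu\leq 4C/p^2$, from the proof of Lemma~\ref{bound int BT(v) log norm gv}). Two caveats. First, continuity of $\varphi$ requires Vitali via the uniform $L^2$ bound, not dominated convergence as you write. Second, your final paragraph is not a proof: the Oseledets slow subspace is $\omega$-dependent, and ``disintegrating with respect to the $\sigma$-algebra of $\mu$-invariant events'' does not produce a deterministic $\mu$-invariant subspace. The paper's Lemma~\ref{lemma all beta equal} handles this step cleanly and without the gap hypothesis: for an ergodic minimizer $\eta$, the linear span of $\supp(\eta)$ is a deterministic $\mu$-invariant subspace on which the restricted cocycle has top exponent $\beta_\mu(\eta)$, contradicting quasi-irreducibility if $\beta_\mu(\eta)<L_1(\mu)$.
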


Notice that the first inequality in (2) is an easy corollary of (1) applied to the observable $-\varphi$.  See Theorems 1.1 and 1.4 in~\cite{FK83}.

\subsection{Proof of Theorem~\ref*{non-invertible FK} }
Fix the matrix space  $\Scal=\Mat_m(\R)$,  a probability $\mu\in \Prob(\Scal)$, set 
$M:=\Scal\times \Pp(\R^m)$ and then define
$k:M\to \Prob(M)$,
$k_{(g,v)}:=\mu\times \delta_{\hat g\, \hat v}$.
Consider also the observable
$\psi:M\to \R$, 
$\psi(g, \hat v):= \log \norm{g\, v}$.
The goal is to apply Theorem~\ref{abstract FK} to $M$, $k$, and $\psi$. Several technical difficulties arise, namely
\begin{itemize}
	\item $M$ is not compact because $\Scal$ is not;
	\item  $\hat g\, \hat v$ is undefined when
	$g$ is non-invertible and $v\in \ker(g)$. In particular the kernel
	$k_{(g,v)}=\mu\times \delta_{\hat g\, \hat v}$ is discontinuous at these points;
	\item  $\psi$  is undefined along the singularity set
	$$ \mathscr{N}:=\{(g, \hat v)\in \Scal\times \Pp(\R^m)\colon v\in \ker(g) \} .$$
	Moreover, the discontinuities of  $\psi$  are not removable because it has non trivial oscillations as $(g,\hat v)$
	approaches $\mathscr{N}$.	
\end{itemize}

Denote by $\Prob_\mu(\Pp(\R^m))$ the  space
of $\mu$-stationary measures. By definition,  $\eta\in \Prob(\Pp(\R^m))$ is $\mu$-stationary if and only if $\mu\times \eta$ is $k$-stationary.

\begin{proposition}
\label{space of stationary measures} 
 $\Prob_\mu(\Pp(\R^m))$ is  non-empty  compact and convex.\\
 Moreover, $\beta_\mu :\Prob_\mu(\Pp(\R^m)))\to \R$,
 $$\beta_\mu(\eta):= \iint  \psi(g, \hat v)\, d\eta(\hat v)\, d\mu(g) , $$
 is a bounded linear functional.
\end{proposition}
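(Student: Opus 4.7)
The plan is to first observe that the finite moment condition built into $\Mcal$ makes the singularity clause of Definition~\ref{def mu stationary} automatic, then to establish a Feller property for the induced Markov operator on $\Pp(\R^m)$, and finally to read off non-emptiness, compactness and convexity from standard Krylov--Bogolyubov arguments, together with an elementary moment bound for $\beta_\mu$.

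First, the bound $\underline{\Theta}_p(\mu) \leq C$ forces $\norm{g v} > 0$ for $\mu$-a.e.\ $g$ and every unit $v$, so $\mu(\{g : g v = 0\}) = 0$ for every $v \neq 0$. Fubini then yields, for any $\eta \in \Prob(\Pp(\R^m))$,
\[
\int \eta\bigl(\{\hat v : g v = 0\}\bigr)\, d\mu(g) \;=\; \int \mu\bigl(\{g : g v = 0\}\bigr)\, d\eta(\hat v) \;=\; 0,
\]
so the singularity clause of Definition~\ref{def mu stationary} holds unconditionally in $\eta$. Being $\mu$-stationary is therefore equivalent to being a fixed point of the adjoint of the operator
\[
(\Qop_\mu \varphi)(\hat v) := \int \varphi(\hat g\, \hat v)\, d\mu(g), \qquad \varphi \in C(\Pp(\R^m)),
\]
whose integrand is defined $\mu$-almost everywhere.

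The key step, which I expect to be the main obstacle, is verifying that $\Qop_\mu$ maps $C(\Pp(\R^m))$ into itself and is hence Feller. For $\varphi \in C(\Pp(\R^m))$ and $\hat v_n \to \hat v$, continuity of $v \mapsto \hat g\, \hat v$ at every $g$ with $g v \neq 0$ gives $\varphi(\hat g\, \hat v_n) \to \varphi(\hat g\, \hat v)$ for $\mu$-a.e.\ $g$; the constant $\norm{\varphi}_\infty$ supplies a dominating function, and dominated convergence delivers $(\Qop_\mu \varphi)(\hat v_n) \to (\Qop_\mu \varphi)(\hat v)$. This is the single point at which the non-invertibility of matrices in $\supp \mu$ could derail the argument, and it is precisely the uniform bound $\underline{\Theta}_p(\mu) \leq C$ that rescues it by making the exceptional set $\{g : g v = 0\}$ $\mu$-null. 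With $\Qop_\mu$ Feller, its adjoint $\Qop_\mu^\ast$ is weak-$\ast$ continuous on $\Prob(\Pp(\R^m))$.

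The three structural properties then follow at once. For non-emptiness, I would fix $\hat v_0 \in \Pp(\R^m)$, form the Krylov--Bogolyubov averages $\eta_n := \frac{1}{n}\sum_{k=0}^{n-1} (\Qop_\mu^\ast)^k \delta_{\hat v_0}$, extract a weak-$\ast$ cluster point $\eta$ from the compact space $\Prob(\Pp(\R^m))$, and pass to the limit in $\Qop_\mu^\ast \eta_n - \eta_n = \tfrac{1}{n}\bigl((\Qop_\mu^\ast)^n \delta_{\hat v_0} - \delta_{\hat v_0}\bigr) \to 0$ to conclude $\Qop_\mu^\ast \eta = \eta$. Compactness is immediate since $\Prob_\mu(\Pp(\R^m))$ is the fixed-point set of a weak-$\ast$ continuous map inside a weak-$\ast$ compact space. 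Convexity follows from linearity of $\Qop_\mu^\ast$. Finally, affinity of $\beta_\mu$ in $\eta$ is immediate from linearity of the integral, and the elementary inequality $|\log x| \leq p^{-1}(x^p + x^{-p})$ for $x > 0$, combined with $\norm{g v} \leq \norm{g}$ for unit $v$, gives
\[
|\beta_\mu(\eta)| \;\leq\; \iint |\log \norm{g v}|\, d\eta(\hat v)\, d\mu(g) \;\leq\; \frac{1}{p}\bigl(\bar\Theta_p(\mu) + \underline\Theta_p(\mu)\bigr) \;\leq\; \frac{2C}{p},
\]
uniformly in $\eta \in \Prob_\mu(\Pp(\R^m))$, which completes the boundedness claim.
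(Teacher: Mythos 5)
Your proposal is correct and follows essentially the same route as the paper: Feller property of $\Qop_\mu$, Krylov--Bogolyubov averaging for non-emptiness, weak-$\ast$ compactness and linearity for the rest, and the $|\log x| \leq p^{-1}(x^p + x^{-p})$ trick for boundedness of $\beta_\mu$. You are actually more careful than the paper at two points the paper only asserts: the Fubini computation showing that $\underline\Theta_p(\mu)\leq C$ makes the singularity clause of Definition~\ref{def mu stationary} vacuous for \emph{every} $\eta\in\Prob(\Pp(\R^m))$ (not just for $\Qop_\mu^\ast\eta$), and the dominated-convergence verification that the integrand $\int\varphi(\hat g\hat v)\,d\mu(g)$ is continuous in $\hat v$ despite the non-invertibility of matrices in $\supp\mu$.
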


\begin{proof}
Because $\mu\in \Mcal$, the operator
$\Prob(\Pp(\R^m))\ni \eta \mapsto \Qop_\mu^\ast \eta\in \Prob(\Mat_m(\R))$
is well defined by
$$  \int\varphi\, d (\Qop_\mu^\ast \eta) := \int \!\!\underbrace{ \int \varphi(\hat g\, \hat v)\, d\mu(g) }_{\text{continuous in } \hat v} \, d\eta(\hat v) \qquad \forall\, \varphi\in C^0(\Projm). $$
Hence, 
 $\eta'=\Qop_\mu^\ast\eta$   satisfies the pre-condition in
Definition~\ref{def mu stationary} for being stationary, for every $\eta\in \Prob(\Pp(\R^m))$.
Moreover, any sub-limit of the sequence $\eta_n=\frac{1}{n}\, \sum_{j=0}^{n-1} \Qop_\mu^{\ast j}\eta_0$ is   $\mu$-stationary.
Therefore, the space $\Prob_\mu(\Pp(\R^m))$ is  non-empty -- it contains all sub-limits of $\eta_n$.

Compactness and convexity of the space of probability measures follow. Notice that $\Qop_\mu:C^0(\Projm)\to C^0(\Projm)$ is a bounded (hence continuous) linear operator.
The second statement is ensured by the moment conditions in the definition of $\Mcal$.
\end{proof}

\begin{lemma}
	\label{lemma  all beta equal}
	If $\mu\in \Mcal$ is quasi-irreducible then\,
	$\beta_\mu(\eta) =L_1(\mu)$ 
	for every   $\eta\in \Prob_\mu(\Pp(\R^m))$.
\end{lemma}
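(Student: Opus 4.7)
The plan is to establish the two inequalities $\beta_\mu(\eta) \leq L_1(\mu)$ and $\beta_\mu(\eta) \geq L_1(\mu)$ separately; the first is easy, while the second will require quasi-irreducibility in an essential way. Setting $\hat v_j := \widehat{A^j(\omega)\, v}$ (well-defined $\mu^\N \times \eta$-a.s.\ thanks to the clause $\eta(\{\hat v : g v = 0\}) = 0$ in the definition of $\mu$-stationarity) and writing $\psi(g, \hat v) := \log \norm{g v}$ for a unit representative of $\hat v$, I would exploit the telescoping identity
\[
\log \norm{A^n(\omega)\, v} = \sum_{j=0}^{n-1} \psi(\omega_j, \hat v_j).
\]
Because $\mu$-stationarity makes each $\hat v_j$ marginally distributed as $\eta$, integration yields $\iint \log \norm{A^n v}\, d\mu^\N d\eta = n\, \beta_\mu(\eta)$, which combined with $\log \norm{A^n v} \leq \log \norm{A^n}$ and Furstenberg--Kesten gives $\beta_\mu(\eta) \leq L_1(\mu)$.

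For the reverse inequality I would argue by contradiction, assuming $\beta_\mu(\eta) < L_1(\mu)$, and after an ergodic decomposition of $\eta$ with respect to $\Qop_\mu$ reducing to the case of ergodic $\eta$. Birkhoff's theorem applied to the ergodic $\mu \times \eta$-invariant Markov system on $\Scal \times \Pp(\R^m)$ and the observable $\psi$ (which lies in $L^1(\mu \times \eta)$ by the exponential moment bounds defining $\Mcal$) then yields
\[
\tfrac{1}{n} \log \norm{A^n(\omega)\, v} \to \beta_\mu(\eta) \quad \text{as } n \to \infty
\]
for $\mu^\N \times \eta$-a.e.\ $(\omega, \hat v)$. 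I would then introduce the deterministic subspace
\[
W := \{ v \in \R^m : \limsup_n \tfrac{1}{n} \log \norm{A^n(\omega)\, v} < L_1(\mu) \text{ for } \mu^\N\text{-a.e. } \omega \}.
\]
Subadditivity of the norm makes $W$ a linear subspace; shift-invariance of $\mu^\N$ together with $A^{n+1}(\omega)\, v = A^n(\sigma \omega)\, \omega_0 v$ forces $g W \subseteq W$ for $\mu$-a.e.\ $g$; Furstenberg--Kesten forces $W \neq \R^m$, otherwise $\limsup_n \tfrac{1}{n} \log \norm{A^n}$ would be strictly below $L_1(\mu)$ on a set of positive measure, contradicting $\tfrac{1}{n} \log \norm{A^n} \to L_1(\mu)$ almost surely; and the Birkhoff conclusion above gives $\eta(\Pp(W)) = 1$, so in particular $W \neq \{0\}$.

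Thus $W$ is a proper $\mu$-invariant subspace, and quasi-irreducibility yields $L_1(\mu|_W) = L_1(\mu)$. On the other hand, fixing a basis $v_1, \ldots, v_k$ of $W$ and estimating $\norm{A^n|_W} \lesssim \max_i \norm{A^n v_i}$, one obtains $\limsup_n \tfrac{1}{n} \log \norm{A^n|_W} < L_1(\mu)$ almost surely, and Furstenberg--Kesten applied to the restricted cocycle gives $L_1(\mu|_W) < L_1(\mu)$, contradicting the previous line. The main obstacle will be the justification of Birkhoff's theorem in the present singular, non-compact setting -- the phase space $\Scal \times \Pp(\R^m)$ is not compact, the kernel $k$ is discontinuous along the singularity set $\mathscr{N}$, and $\psi$ blows up there -- but the exponential-moment hypothesis combined with the clause $\eta(\{\hat v : g v = 0\}) = 0$ keep the chain $\hat v_j$ away from the bad set and secure $\psi \in L^1(\mu \times \eta)$, which is precisely what the standard proof of Birkhoff needs.
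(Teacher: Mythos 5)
Your proposal is correct, and its skeleton matches the paper's: reduce to an ergodic stationary measure, invoke Birkhoff's ergodic theorem for the projective cocycle to obtain $\frac{1}{n}\log\|A^n(\omega)v\|\to\beta_\mu(\eta)$ almost surely, and then contradict quasi-irreducibility by exhibiting a proper, non-trivial, $\mu$-invariant subspace on which the top exponent would have to be strictly below $L_1(\mu)$. The difference is the subspace you use. The paper takes $E$ to be the linear span of $\supp(\eta)$ and proves invariance from the set relation $\hat g\,\supp(\eta)\subseteq\supp(\eta)$; you instead take the deterministic space $W$ of directions whose a.s.\ upper exponential growth is strictly less than $L_1(\mu)$. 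These are not the same space in general (one has $E\subseteq W$), but each is a proper invariant subspace of positive dimension under the contradiction hypothesis, and each supports the same appeal to Definition~\ref{def quasi-irreducibility}. Your choice demands a few extra checks -- that $W$ is a subspace, that $gW\subseteq W$ for $\mu$-a.e.\ $g$ via the shift relation $A^{n+1}(\omega)v=A^n(\sigma\omega)\,\omega_0 v$, and that $W\neq\R^m$ by comparison with Furstenberg--Kesten -- all of which you carry out correctly. A small bonus of your write-up is that you also supply a clean stationarity-plus-telescoping argument for the inequality $\beta_\mu(\eta)\le L_1(\mu)$, which the paper treats as implicit, and you explicitly rule out $W=\R^m$, whereas the paper leaves the properness of $E$ tacit. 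On the more delicate point you flag -- justifying Birkhoff in the non-compact, singular setting -- the paper's proof of this lemma is equally terse; the serious machinery (truncation, inducing on $M_T$, isospectral reduction) is deployed elsewhere in Section~\ref{FK} for Theorem~\ref{non-invertible FK}, not inside this lemma.
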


\begin{proof}
	Assume $\beta_\mu(\eta)<L_1(\mu)$ and replace $\eta$ by an extremal minimizer of the functional $\beta_\mu$ in $\Prob(\Pp(\R^m))$, which is an ergodic $\mu$-stationary measure. This implies that $\mu\times \eta$ is an ergodic $k$-stationary measure as well as an invariant ergodic measure of 
	$F:\Scal^\N\times \Pp(\R^m)\to \Scal^\N\times \Pp(\R^m)$,
	$F(\omega, \hat v):=(\sigma\omega, A(\omega)\, \hat v)$, where $A(\omega)=\omega_0$. Hence by Birkhoff's ergodic theorem, for $\eta$-almost every $\hat v\in \Pp(\R^m)$ and $\mu^\N$ almost every $\omega$,
	\begin{align} \nonumber
		\lim_{n\to \infty} \frac{1}{n}\, \log\norm{A^n(\omega)\, v} &= 
		\lim_{n\to \infty} \frac{1}{n}\, \sum_{j=0}^{n-1} \psi(F^j(\omega, \hat v)) \\
		\label{Furstenberg formula}
		&= \iint \psi\, d\mu\times \eta = \beta_\mu(\eta).
	\end{align}
	Let $E$ be the linear span of $\supp(\eta)$, the support of $\eta$.
	This space is invariant under every $g\in \supp(\mu)$ because  for a $\mu$-stationary measure  $\hat g\, \supp(\eta)\subseteq \supp(\eta)$ (see~\cite[Lemma 3.4]{FK83}).
	For the sake of completeness we reproduce the argument here.
	Setting $\hat E=\{\hat v\colon v\in E\setminus\{0\}\}$,  
	$$ 1=\eta(\hat E) = \int \underbrace{ \eta(g^{-1}\hat E) }_{\leq 1} \, d\mu(g),  $$
	which implies that $\eta(g^{-1}\hat E)=1$ for $\mu$-almost every $g\in \supp(\mu)$. For such $g$ we have $g^{-1} E\subset E$ and hence $g \, E\subset E$. By continuity this inclusion must hold for all $g\in \supp(\mu)$.
	
	Since~\eqref{Furstenberg formula} holds for $\eta$-almost every $\hat v$ we can find a basis $(v_1, \ldots, v_k)$ of $E$ such that 
	~\eqref{Furstenberg formula} holds with $\hat v=\hat v_i$ for all $1\leq i\leq  k$ and $\mu$-almost every $g$. It follows that~\eqref{Furstenberg formula} holds  for all $ v\in E\setminus\{0\}$ and $\mu^\N$-almost every $\omega$.	
	If $\beta_\mu(\eta)<L_1(\mu)$ this contradicts the quasi-irreducibility of $\mu$, 	
	see Definition~\ref{def quasi-irreducibility}. 
	Therefore  $\beta_\mu(\eta)=L_1(\mu)$ for all $\eta\in \Prob_\mu(\Pp(\R^m))$.
\end{proof}

To address the technical difficulties raised above we do three things:
first we truncate $\psi$,  then we 
restrict $M=\Mat_m(\R)\times \Pp(\R^m)$ to a compact subset and finally we  induce $k$ away from its singularities to make it continuous. Let us now introduce the main players in the proof, all of which depend on a large parameter $T$.

First we define the truncation
$\psi_T:\Mat_m(\R)\times \Pp(\R^m)\to \R$ by
$$ \psi_T(g, \hat v):=\begin{cases}
	\log \norm{g\, v} & \text{ if } -T\leq \log \norm{g\, v}\leq  \log \norm{g} \leq T \\
	\phantom{-}T & \text{ if } \;  \log \norm{g}> \phantom{-}T \\
	-T & \text{ if } \; \log \norm{g\, v}<-T	
\end{cases} . $$
Consider the compact space
$$  M_T:=\left\{ (g,\hat v)\in \Mat_m(\R)\times\Pp(\R^m)\, \colon \, -T\leq \log \norm{g\, v}\leq \log \norm{g} \leq T\, \right\}  .$$
We will also  write
$$ M_T(\hat v):=\{g\in \Mat_m(\R)\colon (g,\hat v)\in M_T \} .$$
Next we induce the kernel $k$ on $M_T$,
defining the stochastic kernel  $\tilde k:M_T\to\Prob(M_T)$ where for any Borel set $B\subset M_T$,
\begin{align}
	\label{def tilde k}
	&\tilde k_{(g_0, \hat v)}(B) := \\
	& \quad \sum_{n=1}^\infty  \int  \int_{M_T(g^{n-2}  \hat v)^\complement}  \cdots \int_{M_T(g_0  \hat v)^\complement} {\bf 1}_B(g_n, g^{n-1}  \hat v) \, d\mu(g_1)\, \cdots\, d\mu(g_n),\nonumber
\end{align} 
using the shorthand notation 
$g^j:= g_j\, \cdots \, g_1\, g_0$.

\begin{proposition}
	The kernel $\tilde k:M_T\to\Prob(M_T)$  is continuous.
\end{proposition}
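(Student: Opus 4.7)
The plan is to expand $\int\varphi\, d\tilde k_{(g_0,\hat v)}$ as a series indexed by the first return time $n$ to $M_T$, bound its tail uniformly, and establish continuity of each finite-order term. Since $M_T(\hat w)^\complement=\Bscr_T(\hat w)$, for $\varphi\in C(M_T)$ I would write
\[
\int\varphi\, d\tilde k_{(g_0,\hat v)}=\sum_{n\geq 1}I_n(g_0,\hat v),
\]
where
\[
I_n(g_0,\hat v):=\int\!\cdots\!\int \varphi(g_n,g^{n-1}\hat v)\prod_{i=1}^{n-1}\mathbf{1}_{\Bscr_T(g^{i-1}\hat v)}(g_i)\,\mathbf{1}_{M_T(g^{n-1}\hat v)}(g_n)\, d\mu^n.
\]

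For the uniform tail bound, I would integrate out $g_n$ trivially (bounding $\mathbf{1}_{M_T}\leq 1$) and then integrate the variables $g_{n-1},g_{n-2},\dots,g_1$ in turn, invoking Lemma~\ref{bound BT(v)} at each step via the uniform estimate $\mu(\Bscr_T(\hat w))\leq 2Ce^{-pT}$ valid for every $\hat w\in\Pp(\R^m)$. This yields $|I_n(g_0,\hat v)|\leq \|\varphi\|_\infty\,(2Ce^{-pT})^{n-1}$. Choosing $T$ large enough so that $\rho:=2Ce^{-pT}<1$, the series $\sum_n I_n$ converges uniformly on $M_T$.

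The main task is the continuity of each $I_n$. The iterated projective map $(g_0,\hat v,g_1,\dots,g_j)\mapsto \widehat{g_j\cdots g_0\, v}$ is jointly continuous at every point where the image vector is nonzero, and the condition $\underline{\Theta}_p(\mu)\leq C$ implies $\mu(\{g:gv=0\})=0$ for every unit $v$, so Fubini makes the projective action $\mu^n$-a.e.\ well-defined and jointly continuous in all arguments. The delicate point is the potential discontinuity of the indicator functions $\mathbf{1}_{\Bscr_T(\hat w)}$ and $\mathbf{1}_{M_T(\hat w)}$ at their common boundary $\{\norm{gw}=e^{-T}\}\cup\{\norm g=e^T\}$. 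I would sandwich each indicator between two Lipschitz approximants that agree off an $\epsilon$-shell around the boundary; the approximant integrals against $d\mu^n$ are continuous in $(g_0,\hat v)$ by joint continuity and bounded convergence, and the resulting error is controlled by the $\mu$-mass of the shell. This mass tends to zero as $\epsilon\to 0$ provided $T$ is chosen to avoid the at-most-countable set of atoms of the relevant marginal distributions $t\mapsto\mu(\{\log\norm g=t\})$ and $t\mapsto\mu(\{\log\norm{gw}=-t\})$, a restriction which is compatible with taking $T$ large.

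Combining uniform convergence of the series with continuity of each partial sum yields continuity of $(g_0,\hat v)\mapsto \int\varphi\, d\tilde k_{(g_0,\hat v)}$ for every $\varphi\in C(M_T)$, which is the sought continuity of $\tilde k$ in the weak-$*$ topology on $\Prob(M_T)$ (equivalent to the weak topology since $M_T$ is compact). The hardest step is controlling the indicator discontinuities uniformly in $(g_0,\hat v)$; the exponential moment condition built into $\Mcal$ is exactly what forces the boundary approximation error to vanish.
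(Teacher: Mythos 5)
Your proof follows essentially the same architecture as the paper's: expand $\Qop_{\tilde k}\varphi$ as the first-return-time series $\sum_n I_n$, bound the tail, and establish termwise continuity. Two points of comparison are worth noting. First, your explicit geometric bound $|I_n|\leq\|\varphi\|_\infty\,(2Ce^{-pT})^{n-1}$ is actually what one needs: the paper records only the pointwise summability $\sum_n\mu^{\N}(\Bscr_n(g_0,\hat v))\leq 1$ coming from the $\bmod\,0$ disjointness of the $\Bscr_n$, which is not uniform in $(g_0,\hat v)$ and so does not by itself license the Weierstrass M-test that the paper then invokes; your geometric estimate supplies the missing uniform majorant. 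Second, for the continuity of each $I_n$ you replace the paper's route (fix $(g_1,\ldots,g_n)$, assert that the integrand is continuous in $(g_0,\hat v)$ for $\mu^n$-a.e.\ such tuple, then apply dominated convergence) with Lipschitz sandwiching of the indicators and a boundary-shell mass estimate. Both rest on the same underlying fact, namely that the boundary set where $\|g_j w_{j-1}\|=e^{-T}$ or $\|g_j\|=e^T$ carries no $\mu^n$-mass. One small caution in your version: the set of "bad" $T$'s you must avoid is countable only when proving continuity at a fixed base point $(g_0,\hat v)$, since the atoms of $t\mapsto\mu(\{\|gw\|=t\})$ form a countable set for each fixed $w$ but the union over all $w$ appearing along trajectories need not be; since pointwise continuity at every $(g_0,\hat v)\in M_T$ is exactly what is required, you should make explicit that the argument is carried out at a fixed base point. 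The paper's a.e.\ continuity assertion has the same latent issue and is likewise left unjustified, so this is not a defect unique to your version.
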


\begin{proof}
	It is enough to prove that given a continuous function 
	$f:M_T\to\R$, the function $\Qop_{\tilde k} f:M_T\to \R$,
	$(\Qop_{\tilde k} f)(g_0, \hat v)=\int f\, d \tilde k_{(g_0, \hat v)}$, 
	is also continuous.
	
	Given $(g_0, \hat v)\in M_T$, let for each $n\geq 1$  $\Bscr_n(g_0, \hat v)\subset \Mat_m(\R)^\N$ be the subset consisting of all sequences $\{g_j\}_{j\geq 1}$ such that
	\begin{itemize}
		\item $(g_n, g_{n-1} \, \cdots \, g_1 \, g_0 \, \hat v)\in B$,
		\item $g_j\notin M_T(g_{j-1}\, \cdots \,g_1 \, g_0\,\hat v)$\,
		$\forall\, 1\leq j\leq n-1$. 
	\end{itemize}
	With respect to the Bernoulli measure $\mu^\N$, these sets are pairwise disjoint $\pmod 0$.
	In particular 
	$$ \sum_{n=1}^\infty \mu^\N(\Bscr_n(g_0, \hat v))\leq  1 .$$
	Now, by definition $\Qop_{\tilde k} f=\sum_{n=1}^\infty h_n$, where   $h_n(g_0, \hat v)$ is defined to be
	$$ \int\!\cdots\!\int 
	f(g_n, g_{n-1}  \cdots  g_1  g_0  \hat v)\,
	{\bf 1}_{\Bscr_n(g_0, \hat v)}
	(g_1, \ldots, g_n)\,   d\mu(g_1) \cdots   d\mu(g_n) 
	.$$
	Notice the following key facts about the function $h_n$:
	\begin{itemize}
		\item The above integrand 
		$ H_n(g_n, \ldots, g_1,\, g_0, \hat v)$ is measurable in all variables but continuous in $(g_0, \hat v)$ for $\mu^n$-almost every (fixed) $(g_1\, \ldots, g_n)\in \Mat_m(\R)^n$.
		\item   $H_n $ is bounded in absolute value by $\norm{f}_\infty$ and by the dominated convergence theorem
		$(g_0, \hat v)\mapsto h_n(g_0, \hat v)$ is continuous.
		
		\item   Because of the previous item,  $|h_n |\leq \norm{f}_\infty\, \mu^\N(\Bscr_n(g_0, \hat v))$.
	\end{itemize}
	Therefore by Weierstrass M-test $\sum_{n=1}^\infty h_n$ is a continuous function.
\end{proof}

Consider also:
\begin{itemize}
	\item the space $\Omega:=\Mat_m(\R)^\N\times \Pp(\R^m)$, 
	\item  the random process $\xi_n:\Omega\to \Mat_m(\R)\times \Pp(\R^m))$,  defined by
	$\xi_n(\omega, \hat v) :=  (\omega_n, A^n(\omega)\, \hat v)$,
	\item the cylinder
	$$ \Omega_T:=\left\{ (\omega, \hat v)\in \Omega \colon (\omega_0, \hat v)\in M_T \right\}, $$
	\item the \textit{induced map} $\tilde F:\Omega_T\to \Omega_T$    defined by
	$$\tilde F(\omega, \hat v):=(\sigma^{\tau(\omega, \hat v)}(\omega), A^{\tau(\omega, \hat v)}(\omega)\, \hat v ) ,$$
	where
	$\tau(\omega, \hat v)  := \min\{j>0 \colon F^j(\omega, \hat v)\in \Omega_T \}$,
	\item  the sequence sojourn times  $\tau_n:=\tau\circ \tilde F^{n-1}$, where $n\geq 1$,
	\item the \textit{induced process} $\tilde \xi_n:\Omega\to M_T$, $\tilde \xi_n(\omega):= \xi_{\tau_n(\omega)}(\omega)$,
	\item for each  $\eta\in \Prob(\Pp(\R^m))$, the normalized restricted measures
	\begin{align*}
		\Pp_{\eta,T} &:=\frac{1}{(\mu\times \eta)(M_T)}\, \mu\times \eta\vert_{M_T}\in \Prob(M_T)\\
		\tilde \Pp_{\eta,T} &:=\frac{1}{(\mu\times \eta)(M_T)}\, \mu^\N\times \eta\vert_{\Omega_T}\in \Prob(\Omega_T)\\
	\end{align*}
\end{itemize}

\begin{proposition}
	Given $\eta\in \Prob_\mu(\Pp(\R^m))$, the following hold:
	\begin{enumerate}
		\item $\mu\times \eta$ is a $k$-stationary measure, and also ergodic if $\eta$ is ergodic,  
		
		\item  $\tilde \Pp_{\eta, T}$ is  $\tilde F$-invariant, and also ergodic if $\eta$ is ergodic,  
		
		\item  $\Pp_{\eta, T}$ is  $\tilde k$-stationary, and also ergodic if $\eta$ is ergodic,  
		
		\item Every ergodic $\tilde k$-stationary measure is equal to $\Pp_{\eta, T}$ for some  ergodic $\eta\in \Prob_\mu(\Pp(\R^m))$.
	\end{enumerate}

\end{proposition}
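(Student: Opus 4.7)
The plan is to establish each item through the standard correspondence between Markov chains, skew products, and their induced (first-return) systems. All four claims reduce to two facts: $\mu$-stationarity of $\eta$ on $\Pp(\R^m)$ is equivalent to $F$-invariance of $\mu^\N\times \eta$ on $\Mat_m(\R)^\N\times \Pp(\R^m)$; and for the induced dynamics $\tilde F$, invariance and ergodicity transfer between the tower measure $\tilde \Pp_{\eta,T}$ and its base $\Pp_{\eta,T}$ via Kakutani's theorem.

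For item (1), I would verify $k$-stationarity of $\mu\times\eta$ by direct computation: using $k_{(g,\hat v)}=\mu\times \delta_{\hat g\,\hat v}$, the integral $\int k_{(g,\hat v)}\,d(\mu\times\eta)(g,\hat v)$ factors as $\mu\times \int \delta_{\hat g\,\hat v}\,d\mu(g)\,d\eta(\hat v)$, whose second marginal equals $\Qop_\mu^\ast\eta=\eta$ by the $\mu$-stationarity of $\eta$; the non-singularity precondition in Definition~\ref{def mu stationary} ensures this is well defined. Ergodicity transfers because the Markov process $\xi_n$ on $(M,k)$ is conjugate, via the tail shift, to the Bernoulli skew product $F(\omega,\hat v)=(\sigma\omega,A(\omega)\hat v)$ endowed with $\mu^\N\times \eta$, and ergodicity of a stationary Markov measure is equivalent to ergodicity of the associated skew product.

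For item (2), since $\mu^\N\times \eta$ is $F$-invariant by the equivalence above, its normalized restriction to $\Omega_T$ is $\tilde F$-invariant, provided the return time $\tau$ is $\mu^\N\times \eta$-almost surely finite. This finiteness follows because $(\omega_0,\hat v)\notin M_T$ forces $\omega_0\in \Bscr_T(\hat v)$, a set of $\mu$-mass at most $2Ce^{-pT}$ by Lemma~\ref{bound BT(v)}, uniformly in $\hat v$; iterating with the i.i.d.\ shifts and Borel--Cantelli gives almost-sure finite returns once $T$ is large. Ergodicity transfers by the usual tower argument: any $\tilde F$-invariant subset of $\Omega_T$ lifts to an $F$-invariant set in the suspension. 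For item (3), the projection $\pi:\Omega_T\to M_T$, $(\omega,\hat v)\mapsto (\omega_0,\hat v)$, pushes $\tilde \Pp_{\eta,T}$ to $\Pp_{\eta,T}$, and the sequence $\tilde \xi_n=\pi\circ \tilde F^n$ is a Markov chain whose transition kernel is exactly $\tilde k$ as in~\eqref{def tilde k}---the sum over $n$ there encoding the decomposition by first-return time. Hence $\tilde F$-invariance of $\tilde \Pp_{\eta,T}$ translates into $\tilde k$-stationarity of $\Pp_{\eta,T}$, and the same projection transfers ergodicity.

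For item (4), I would reverse this construction: given an ergodic $\tilde k$-stationary $\bar \Pp \in \Prob(M_T)$, suspend it via the Kac formula
\begin{equation*}
\mu^\N\times \eta \;:=\; c\, \sum_{n\geq 0} F^n_\ast\bigl(\bar \Pp\cdot{\bf 1}_{\{\tau>n\}}\bigr),
\end{equation*}
normalized by $c=1/\EE_{\bar\Pp}[\tau]$ so the right-hand side is a probability; the second marginal defines $\eta$. The resulting measure is $F$-invariant and ergodic, hence $\eta$ is ergodic $\mu$-stationary, and by construction the normalized restriction of $\mu^\N\times \eta$ to $\Omega_T$ is the tower over $\bar \Pp$, yielding $\bar \Pp=\Pp_{\eta,T}$. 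The main obstacle is precisely here and in step~(2): guaranteeing that the return time $\tau$ is almost surely finite and, for~(4), that $\EE_{\bar\Pp}[\tau]<\infty$ so that the Kac normalization is finite rather than $\sigma$-finite of infinite mass. This is exactly where the exponential-moment bounds of Section~\ref{Setting} become essential---both $\norm{g}$ and $\norm{gv}^{-1}$ must be controlled uniformly in $\hat v$, since a trajectory may exit $M_T$ through norm explosion or by its projective component approaching the singular set $\mathscr{N}$, and both routes must carry uniform exponential tails.
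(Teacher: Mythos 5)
Items (1) and (2) of your proposal follow the same route as the paper: direct computation for $k$-stationarity, Viana's~\cite[Propositions 5.5, 5.13]{Viana2014} for the skew-product dictionary, and standard Kakutani inducing for the tower. That part is fine.

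Items (3) and (4) are where you depart from the paper, and where the gap lies. For these the paper does not use the Markov-chain interpretation of $\tilde\xi_n$ at all (in fact it adds a remark right after this proposition saying the fact that $\{\tilde\xi_n\}$ is a $\tilde k$-Markov process ``is not needed later''). Instead it works on the dual side: it decomposes the adjoint $\Qop_k^\ast$ on the space of signed measures into the $2\times2$ block structure induced by $\Mscr=\Mscr_T\oplus\Mscr_T^\complement$, shows $\Qop^\ast_{CC}$ has spectral radius $<1$ because $\mu\in\Mcal$, identifies the isospectral reduction $\Rop=\Qop^\ast_{TT}+\Qop^\ast_{TC}(I-\Qop^\ast_{CC})^{-1}\Qop^\ast_{CT}$ with $\Qop_{\tilde k}^\ast$ term-by-term against~\eqref{def tilde k}, and then invokes items (3)--(4) of~\cite[Theorem 3]{BDT22} to get the two-way correspondence between $k$-stationary and $\tilde k$-stationary measures. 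Ergodicity is handled at the end by observing that the resulting maps between compact convex sets of stationary measures are affine isomorphisms, hence preserve extreme points.

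Your probabilistic route for (3) is plausible, but your sketch of (4) has a concrete defect: the Kac formula
$c\sum_{n\ge0}F^n_\ast(\bar\Pp\cdot\mathbf{1}_{\{\tau>n\}})$ is ill-formed as written, because $\bar\Pp$ lives on $M_T\subset\Mat_m(\R)\times\Pp(\R^m)$ while $F$ and $\tau$ are defined on $\Omega\times\Pp(\R^m)$; the indicator $\mathbf{1}_{\{\tau>n\}}$ already requires an entire future $\omega$-trajectory. To make sense of it you must first lift $\bar\Pp$ to a measure on $\Omega_T$ --- the natural candidate being $\bar\Pp$ tensored with i.i.d.\ $\mu$-distributed future coordinates --- and then verify two non-obvious things: that this lift is $\tilde F$-invariant, and that the Kac suspension projects to $\mu^\N$ on $\Omega$ so that the resulting $F$-invariant measure is of the product form $\mu^\N\times\eta$ required to invoke the skew-product dictionary. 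Neither step is automatic (note in particular that the restriction $\mu\times\eta\vert_{M_T}$ is \emph{not} a product measure, since the constraint $(\omega_0,\hat v)\in M_T$ couples the two coordinates), and neither is addressed in your outline. The paper's operator-theoretic reduction is precisely designed to avoid having to build such a lift by hand: the inverse $(I-\Qop^\ast_{CC})^{-1}$ encodes the Kac normalization and the identification with $\Qop_{\tilde k}^\ast$ does all the trajectory bookkeeping at once. You correctly diagnose that finiteness of the return time (and of its expectation) is a key issue, but you should notice that this finiteness is exactly what the invertibility of $I-\Qop^\ast_{CC}$ delivers in the paper's framework, whereas in your route it would require a separate tail estimate.
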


\begin{proof}
	If $\eta\in \Prob(\Pp(\R^m))$ is $\mu$-stationary then for every $f\in C(\Pp(\R^m))$,
	$$  \int f(\hat v)\, d\eta(\hat v)= \iint f(g\, \hat v)\, d\mu(g)\, d\eta(\hat v) .$$
	Thus for any $h\otimes f\in C(\Mat_m(\R)\times \Pp(\R^m))$, $(h\otimes f)(g,\hatv):=h(g)\, f(\hat v)$,
	\begin{align*}
		\int (h\otimes f)(g, \hat v)\, d(\mu\times \eta)(g, \hat v) &= \int h(g')\, d\mu(g')\, \int f(\hat v)\, d\eta(\hat v)\\
		&= \iiint h(g')\,f(g\, \hat v)\, d\mu(g)\, d\eta(\hat v)\,d\mu(g') \\
		&= \iiint (h\otimes f)(g', g\, \hat v)\,d\mu(g') \, d\mu(g)\, d\eta(\hat v) \\
		&= \iint (h\otimes f)(g', g\, \hat v)\,d\mu(g') \, d(\mu\times \eta)(g, \hat v).
	\end{align*} 
	Since the linear span of the  functions $h\otimes f$ is dense in $C(\Mat_m(\R)\times \Pp(\R^m))$ it follows that given $f:\Mat_m(\R)\times \Pp(\R^m)\to\R$ continuous,
	$$  \int f(g, \hat v)\, d(\mu\times \eta)(g, \hat v) =
	\iint f(g', g\, \hat v)\,d\mu(g') \, d(\mu\times \eta)(g, \hat v) , $$
	which implies that $\mu\times \eta$ is $k$-stationary.
	The converse implication, that all $k$-stationary measures have this form is also easy to verify. Hence the map
	$\eta\mapsto \mu\times \eta$ is an isomorphism that maps  $\Prob_\mu(\Pp(\R^m))$ onto the compact convex space of $k$-stationary measures. In particular this linear isomorphism  takes   extremal measures of one to extremal measures of the other compact convex space.
	Since these are the ergodic stationary measures, item (1) holds.
	
	By~\cite[Proposition 5.5]{Viana2014}, the measure $\mu\times \eta$ is invariant under the projective map
	$F:\Mat_m(\R)^\N\times \Pp(\R^m)\to \Mat_m(\R)^\N\times \Pp(\R^m)$.
	Moreover, from~\cite[Proposition 5.13]{Viana2014} we get  that $\mu\times \eta$ is ergodic w.r.t. $F$ when $\eta$ is an ergodic $\mu$-stationary measure.

	Since $F$ preserves $\mu^\N\times \eta$, the induced map $\tilde F:\Omega_T\to \Omega_T$
	preserves the normalized restricted measure 
	$\tilde \Pp_{\eta,T}$, see~\cite[Proposition 1.4.1]{OV2016}. Moreover, if $\mu\times \eta$ is ergodic w.r.t. $F$ then the induced map $\tilde F$ is also ergodic w.r.t. $\tilde \Pp_{\eta,T}$, see~\cite[Exercise 4.3.3]{Viana2014}. This proves item (2).

	To prove (3) and (4) let $\Mscr$  be the Banach space
	of finite signed measures on $M=\Mat_m(\R)\times\Pp(\R^m)$ with the total variation norm. The adjoint of the Markov operator $\Qop_k$
	acts as a bounded linear operator $\Qop_k^\ast:\Mscr\to \Mscr$, defined by $\Qop_k^\ast \tilde\eta := \int \mu\times \delta_{g\,\hat v}\, d\tilde\eta(g,\hat v) $.
	The  subspaces
	$\Mscr_T:=\{\tilde \eta\in \Mscr\colon \tilde\eta(M_T^\complement)=0 \}$ and
	$\Mscr_T^\complement:=\{\tilde \eta\in \Mscr\colon \tilde\eta(M_T)=0 \}$ are closed complementary linear subspaces, i.e., $\Mscr=\Mscr_T\oplus \Mscr_T^\complement$.
	This direct sum decomposition determines the
	projections $\pi_T:\Mscr\to \Mscr_T$ and
	$\pi_C:\Mscr\to \Mscr_T^\complement$, which act on measures by restriction. We use them to introduce the following matrix block structure for $\Qop_k^\ast$
	$$ \Qop_k^\ast = \left[ \begin{array}{c|c}
		\Qop^\ast_{TT} & \Qop^\ast_{TC}\\
		\hline
		\Qop^\ast_{CT} & \Qop^\ast_{CC}
	\end{array}\right] , $$
	where the block operators are defined by
	\begin{align*}
		\Qop^\ast_{TT}&:= \pi_T\circ \Qop^\ast_k\circ \pi_T,\\
		\Qop^\ast_{TC}&:= \pi_T\circ \Qop^\ast_k\circ \pi_C ,\\
		\Qop^\ast_{CT}&:= \pi_C\circ \Qop^\ast_k\circ \pi_T,\\
		\Qop^\ast_{CC}&:= \pi_C\circ \Qop^\ast_k\circ \pi_C  .
	\end{align*}
	The adjoint Markov operator $\Qop_k^\ast:\Mscr\to \Mscr$ has eigenvalue $1$ because there exists at least one stationary measure. On the other hand the block operator
	$\Qop^\ast_{CC}:\Mscr_T^\complement\to \Mscr_T^\complement$ has small spectral radius because  $\mu\in \Mcal$, which implies that the transition probability measures $\mu\times \delta_{g\, \hat v}$ are predominantly  concentrated in $M_T$.
	In particular $1$ lies outside the spectrum of $\Qop^\ast_{CC}$. The isospectral reduction of $\Qop_k^\ast$ at the eigenvalue $\lambda=1$, see~\cite[Equation (3.1)]{BDT22}, is defined as
	\begin{align*}
		\Rop &=\Qop^\ast_{TT} + \Qop^\ast_{TC}\, (I-\Qop^\ast_{CC})^{-1}\, \Qop^\ast_{CT} \\
		&=\Qop^\ast_{TT} + \sum_{n=0}^\infty \Qop^\ast_{TC}\, (\Qop^\ast_{CC})^{n}\, \Qop^\ast_{CT} .
	\end{align*}
	A straightforward comparison reveals that  $\tilde k_{(g, \hat v)}=\Rop(\delta_{(g, \hat v)})$. 
	Specifically, the first term in~\eqref{def tilde k} corresponds directly to the first term in the series expansion of  $\Rop(\delta_{(g, \hat v)})(B)$:
	$$  \int {\bf 1}_B(g_1, g\hat v)\, d\mu(g_1) =
	\int_B  1\, d\mu\times \delta_{g\, \hat v} = 
	\Qop^\ast_{TT}(\delta_{(g, \hat v)})(B) .$$
	Similarly, the second terms align as follows:
	$$   \int \int_{M_T(g\, \hat v)^\complement} {\bf 1}_B(g_2, g_1\,g\hat v)\, d\mu(g_1)\, d\mu(g_2) =\Qop^\ast_{TC}\Qop^\ast_{CT}(\delta_{(g, \hat v)})(B) .$$
	By continuing this process, the terms match term by term. 
	Consequently, when the measure-valued integrals below are interpreted in a weak sense, we obtain
	\begin{align*}
		\Qop^\ast_{\tilde k} \tilde \eta  &= \int \tilde k_{(g, \hat v)}\, d\tilde\eta(g, \hat v) =  \int \Rop(\delta_{(g, \hat v)}) \, d\tilde\eta(g, \hat v)\\ &=\Rop\left(\, \int \delta_{(g, \hat v)}\, d\tilde\eta(g, \hat v)\, \right)  = \Rop (\tilde \eta) .
	\end{align*} 
	Hence the isospectral reduced operator $\Rop$ is exactly the adjoint of the Markov operator $\Qop_{\tilde k}$.
	By item (3) of~\cite[Theorem 3]{BDT22}, $\Qop_{\tilde k}(\mu\times \eta\vert_{M_T})=\mu\times \eta\vert_{M_T}$,
	which implies that $\Pp_{\eta, T}=\frac{1}{(\mu\times \eta)(M_T)}\, \mu\times \eta\vert_{M_T}$ is a $\tilde k$-stationary measure. By item (4) of the same theorem,
	given a $\tilde k$-stationary measure 
	$\tilde \eta\in \Prob(M_T)$, there is a $k$-stationary measure on $M$,
	which as we have seen above must be a multiple of $\mu\times \eta$ for some $\eta\in \Prob_\mu(\Pp(\R^m))$, such that $\tilde \eta$ is (up to a scaling factor) a restriction
	of $\mu\times \eta$. This shows that $\tilde \eta=\Pp_{\eta,T}$.
	
	Denoting by $\Prob_k(M)$, resp. $\Prob_{\tilde k}(M_T)$, sets of $k$-stationary, resp. $\tilde k$-stationary measures, these two compact convex spaces are isomorphic to $\Prob_\mu(\Pp(\R^m))$ via the isomorphisms:
	\begin{itemize}
		\item $\Prob_\mu(\Pp(\R^m))\ni \eta \mapsto \mu\times \eta\in \Prob_k(M)$,
		\item $\Prob_k(M)\ni \tilde \eta \mapsto  \tilde \eta(M_T)^{-1}\,\tilde \eta\vert_{M_T}\in \Prob_{\tilde k}(M_T)$.
	\end{itemize}
	Since isomorphisms between compact convex spaces preserve extremal points, a $\mu$-stationary measure $\eta$ is ergodic if and only if $\Pp_{\eta, T}$ is an ergodic $\tilde k$-stationary measure on $M_T$. This completes the proof of items (3) and (4).
\end{proof}

\begin{remark}
	$\{\tilde \xi_n\}_{n\geq 0}$ is a stationary  $\tilde k$-Markov process on $\left( \Omega_T, \tilde \Pp_{\eta, T}\right)$.	While we omit the proof since it is not needed later, this observation provides additional insight into the induced Markov kernel  $\tilde k$.
\end{remark}

The last ingredient for the proof is the observable
$\tilde \psi:\Omega\to \R$,
$$ \tilde \psi(\omega, \hat v):= \sum_{j=0}^{\tau(\omega,\hat v)-1} \psi(F^j(\omega, \hat v)) . $$

\begin{proposition}
	\label{prop bound L1 mu}
	Given $\mu\in \Mcal$, \;  $|L_1(\mu)|\leq \frac{C-1}{p}$.	
\end{proposition}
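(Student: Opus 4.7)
The key tool is the elementary inequality $\log y \leq y-1$ for $y>0$, applied to $y = x^p$, which gives the two-sided bound
\[
\frac{1-x^{-p}}{p} \;\leq\; \log x \;\leq\; \frac{x^p-1}{p}  \qquad \forall\, x>0.
\]
I will use the upper form for the operator norm and the lower form for $\norm{g v}$, and combine with the submultiplicativity / telescoping identity for matrix products.

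\textbf{Upper bound.} Since $L_1(\mu) = \lim_n \tfrac{1}{n}\int \log \norm{A^n(\omega)}\, d\mu^\N(\omega)$ (by Kingman's theorem) and $\norm{g_n\cdots g_1}\leq \prod_{k=1}^n \norm{g_k}$, I get
\[
\frac{1}{n}\int \log\norm{A^n}\, d\mu^n \leq \int \log \norm{g}\, d\mu(g) \leq \int \frac{\norm{g}^p-1}{p}\, d\mu(g) = \frac{\bar\Theta_p(\mu)-1}{p}\leq \frac{C-1}{p}.
\]

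\textbf{Lower bound.} Fix any unit vector $v\in\R^m$. Using $\norm{A^n}\geq \norm{A^n v}$ and the telescoping identity
\[
\log \norm{A^n(\omega)\, v} = \sum_{k=1}^n \log \norm{g_k\, \hat u_{k-1}(\omega)}, \qquad \hat u_{k-1}(\omega) := \frac{A^{k-1}(\omega)\, v}{\norm{A^{k-1}(\omega)\, v}},
\]
I integrate term by term. Conditioning on $g_1,\dots,g_{k-1}$ (which determines $\hat u_{k-1}$) and applying the lower form of the inequality above, for each $k$,
\[
\int \log \norm{g_k\, \hat u_{k-1}}\, d\mu(g_k) \geq \frac{1 - \int \norm{g_k\, \hat u_{k-1}}^{-p}\, d\mu(g_k)}{p} \geq \frac{1-\underline\Theta_p(\mu)}{p}\geq \frac{1-C}{p}.
\]
Summing in $k$ and dividing by $n$ yields
\[
\frac{1}{n}\int \log\norm{A^n}\, d\mu^n \;\geq\; \frac{1}{n}\int \log\norm{A^n v}\, d\mu^n \;\geq\; \frac{1-C}{p}.
\]

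Taking $n\to\infty$ in both inequalities gives $\tfrac{1-C}{p}\leq L_1(\mu)\leq \tfrac{C-1}{p}$, i.e.\ $|L_1(\mu)|\leq \tfrac{C-1}{p}$.

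The only potentially delicate point is justifying that $L_1(\mu)$ coincides with the limit of normalized expected log-norms despite possible non-invertibility; this is handled by Furstenberg–Kesten applied to the subadditive sequence $\omega\mapsto \log\norm{A^n(\omega)}$, whose integral is finite since $\int\log\norm{g}\,d\mu(g)\leq (C-1)/p<\infty$. No other obstacle arises: the argument is essentially two lines of Jensen-type estimates together with the conditional integration trick already used in the proof of Proposition~\ref{Theta_n(mu n)}.
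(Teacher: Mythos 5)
Your proof is correct, but it takes a genuinely different and more self-contained route than the paper. The paper's proof applies the single inequality $p|x|\leq e^{p|x|}-1$ to $x=\log\norm{gv}$, bounds $e^{p|\log\norm{gv}|}\leq \max\{\norm{g}^p,\norm{gv}^{-p}\}$, and integrates against $d\mu\, d\eta$ for a $\mu$-stationary measure $\eta$, using $L_1(\mu)=\beta_\mu(\eta)$ — i.e.\ it leans on the Furstenberg-type integral formula developed in Section~\ref{FK}. You instead split the elementary inequality into $\log x\leq (x^p-1)/p$ and $\log x\geq (1-x^{-p})/p$, get the upper bound from Kingman plus submultiplicativity, and get the lower bound by telescoping $\log\norm{A^n v}$ and integrating conditionally coordinate-by-coordinate against the product $\mu^n$ (exactly the device the paper already uses in the proof of Proposition~\ref{Theta_n(mu n)}). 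The approaches buy different things: the paper's is shorter once the stationary-measure identity is available, but your argument needs nothing beyond the moment bounds themselves — in particular no stationary measure, no quasi-irreducibility, and no part of Theorem~\ref{abstract FK}. This is actually cleaner here, since the proposition is stated for arbitrary $\mu\in\Mcal$ and is later invoked (in Section~\ref{Schro}) \emph{before} quasi-irreducibility of the relevant measure is established, whereas the paper's one-line appeal to ``any $\eta\in\Prob_\mu(\Pp(\R^m))$'' implicitly uses Lemma~\ref{lemma  all beta equal}. Two cosmetic remarks: what you call ``Jensen-type'' is really just $\log y\leq y-1$; and Kingman needs $\int(\log\norm{g})^+\, d\mu<\infty$, which follows from $(\log\norm{g})^+\leq\norm{g}^p/p$ — your bound on $\int\log\norm{g}\,d\mu$ is a consequence rather than the hypothesis. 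Neither affects the validity of the argument.
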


\begin{proof}
	We are going to use the inequality $p\, |x|\leq e^{p\, |x|}-1$.
	Take any measure  $\eta\in \Prob_\mu(\Pp(\R^m))$. Then
	\begin{align*}
		p\,|L_1(\mu)| &\leq \iint p\,|\log \norm{g\, v}|\, d\mu(g)\, d\eta(\hat v) \\
		&\leq -1+ \iint e^{ p\,|\log \norm{g\, v}|}\, d\mu(g)\, d\eta(\hat v) \\
		&\leq -1+ \iint \max\left\{\norm{g}^p, \frac{1}{\norm{g\, v}^p}  \right\}\, d\mu(g)\, d\eta(\hat v) 
		\leq -1+ C .
	\end{align*}
\end{proof}	

\begin{proposition}
	\label{inducing relation}
	For all $(\omega, \hat v)\in \Omega$ and $n\geq 1$, writing $\tau_n=\tau_n(\omega, \hat v)$,
	$$ \frac{1}{\tau_n}\, \log \norm{A^{\tau_n}(\omega)\, v} = \frac{n}{\tau_n}\cdot \frac{1}{n}\,\sum_{j=0}^{n-1} \tilde \psi(\tilde F^j(\omega, \hat v)) .$$
	Moreover, for any $\eta\in \Prob_\mu(\Pp(\R^m))$, 
	$$ \left\vert \int   \tilde\psi\, d\tilde \Pp_{\eta, T}-L_1(\mu) \right\vert \leq \frac{2\, C^2}{p}\, e^{-p\, T} . $$
\end{proposition}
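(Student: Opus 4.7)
The two claims are essentially independent: the first is a pure cocycle telescoping/inducing identity, while the second combines Kac's formula with the moment estimates from Section~\ref{Setting}. For the first, I would start from the multiplicative identity $\norm{A^{k+1}(\omega)\,v}=\norm{A^{k}(\omega)\,v}\cdot \norm{\omega_k\cdot \widehat{A^{k}(\omega)\,v}}$, where the widehat denotes a unit representative. Taking logarithms and summing yields $\log\norm{A^{k}(\omega)\,v}=\sum_{j=0}^{k-1}\psi(F^j(\omega,\hat v))$ for $v$ of unit norm; applied at $k=\tau_n$, this recovers the left-hand side. For the right-hand side, I would use $\tilde F^{j}(\omega,\hat v)=F^{\tau_j}(\omega,\hat v)$ (with $\tau_0:=0$) and $\tau\circ\tilde F^{j}=\tau_{j+1}-\tau_j$ to obtain $\tilde\psi(\tilde F^j(\omega,\hat v))=\sum_{k=\tau_j}^{\tau_{j+1}-1}\psi(F^k(\omega,\hat v))$; summing over $j=0,\ldots,n-1$ telescopes to $\sum_{k=0}^{\tau_n-1}\psi(F^k)$, matching the previous expression. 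Dividing by $\tau_n$ and inserting $n/n$ proves the first identity.

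For the second identity, unwrapping $\tilde\Pp_{\eta,T}$ gives
$$\int\tilde\psi\,d\tilde\Pp_{\eta,T}=\frac{1}{(\mu\times\eta)(M_T)}\int_{\Omega_T}\tilde\psi\,d(\mu^\N\times\eta).$$
The central step is Kac's formula applied to the $F$-invariant probability $\mu^\N\times\eta$ with return set $\Omega_T$:
$$\int_{\Omega_T}\tilde\psi\,d(\mu^\N\times\eta)=\int_\Omega\psi\,d(\mu^\N\times\eta)=\beta_\mu(\eta)=L_1(\mu),$$
where the second equality uses Fubini (as $\psi$ depends only on $(\omega_0,\hat v)$) and the third uses quasi-irreducibility of $\mu$ via Lemma~\ref{lemma  all beta equal}. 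Combining, $\int\tilde\psi\,d\tilde\Pp_{\eta,T}=L_1(\mu)/(\mu\times\eta)(M_T)$, so the error equals $|L_1(\mu)|\,(\mu\times\eta)(M_T^\complement)/(\mu\times\eta)(M_T)$. Inserting $|L_1(\mu)|\leq (C-1)/p$ from Proposition~\ref{prop bound L1 mu}, $(\mu\times\eta)(M_T^\complement)\leq 2Ce^{-pT}$ from integrating Lemma~\ref{bound BT(v)}, and $(\mu\times\eta)(M_T)\geq 1-2Ce^{-pT}\geq 1/2$ for $T$ moderate, yields the claimed bound $\tfrac{2C^2}{p}e^{-pT}$ after absorbing small factors.

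The main obstacle is justifying Kac's formula in this non-invertible, non-compact Bernoulli setting. I would either prove it directly from the layer decomposition $\Omega=\bigsqcup_{j\geq 0}F^j(\Omega_T\cap\{\tau>j\})$ (up to a null set) together with $F$-invariance, or reduce to the invertible case by passing to the natural extension on $\Mat_m(\R)^\Z\times\Pp(\R^m)$. Recurrence of $\Omega_T$ ($\tau<\infty$ almost surely, so the basin has full measure) is immediate from Bernoulli independence and Lemma~\ref{bound BT(v)}: conditional on the past, $\omega_j$ lies outside $M_T(A^j(\omega)\hat v)$ with probability at most $2Ce^{-pT}$, so $(\mu^\N\times\eta)(\tau>k)\leq (2Ce^{-pT})^k\to 0$.
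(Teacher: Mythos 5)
Your first-part argument is the same telescoping identity the paper uses, just written out more explicitly; it is correct.

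For the second part you take a genuinely different route. The paper deduces the formula for $\int\tilde\psi\,d\tilde\Pp_{\eta,T}$ by comparing limits of Birkhoff averages on both sides of the first identity, while you apply Kac/Abramov directly: $\int_{\Omega_T}\tilde\psi\,d(\mu^\N\times\eta)=\int_\Omega\psi\,d(\mu^\N\times\eta)=\beta_\mu(\eta)=L_1(\mu)$, hence $\int\tilde\psi\,d\tilde\Pp_{\eta,T}=L_1(\mu)/(\mu\times\eta)(M_T)$. This is cleaner and, in fact, more reliable than the paper's derivation: the paper asserts $\tau_n/n\to\tilde\Pp_\eta(\Omega_T)$, which is the reciprocal of the correct Kac limit $\tau_n/n\to1/\tilde\Pp_\eta(\Omega_T)$, and the stated formula $\int\tilde\psi\,d\tilde\Pp_{\eta,T}=(\mu\times\eta)(M_T)\,L_1(\mu)$ should really be your $L_1(\mu)/(\mu\times\eta)(M_T)$. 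Both versions put the error at order $e^{-pT}$, so the discrepancy is immaterial for the paper's later use (everything is absorbed into $\varepsilon(T)=e^{-pT/3}$), but your formula is the correct one. Your identification of the one real technical point -- validity of Kac/Abramov for a non-invertible base -- is apt, and the recurrence estimate $\Pp[\tau>k]\leq(2Ce^{-pT})^k$ via Bernoulli conditioning and Lemma~\ref{bound BT(v)} is exactly right (note $M_T(\hat v)^\complement$ coincides with $\Bscr_T(\hat v)$ for unit $v$). The only soft spot is the phrase ``absorbing small factors'': with your formula the error is $|L_1(\mu)|\cdot\tfrac{1-(\mu\times\eta)(M_T)}{(\mu\times\eta)(M_T)}$, and the extra factor $1/(\mu\times\eta)(M_T)\leq 2$ (for $T$ with $2Ce^{-pT}\leq\tfrac12$) yields $\tfrac{4C^2}{p}e^{-pT}$ rather than the stated $\tfrac{2C^2}{p}e^{-pT}$; to recover the exact constant you would need to take $T$ a bit larger so that $2C^2e^{-pT}\leq1$. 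Since the proposition's bound is used only to feed $\varepsilon(T)$, this constant is inconsequential, but you should state the honest constant rather than waving at absorption.
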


\begin{proof}
	First notice that for all $(\omega, \hat v)\in \Omega$,
	$$
	\log \norm{A^{\tau_n(\omega, \hat v)}(\omega)\, v} = \sum_{j=0}^{\tau_n-1} \psi(F^j(\omega, \hat v))  = \sum_{j=0}^{n-1} \tilde \psi(\tilde F^j(\omega, \hat v)) .
	$$
	Dividing by $\tau_n$ we obtain the first statement.
	
	Given $\eta\in \Prob_\mu(\Pp(\R^m))$, applying Birkhoff's ergodic theorem to the projective map $F$ and the invariant measure $\tilde \Pp_\eta:=\mu^\N\times\eta$, we get
	$$ \lim_{n\to \infty} \frac{\tau_n(\omega, \hat v)}{n}=\tilde \Pp_\eta(\Omega_T)= (\mu\times \eta)(M_T) .$$
	Also, if $\eta$ is ergodic then the integral of the left-hand-side
	$$ \int_{\Omega_T}  \frac{1}{\tau_n}\, \log \norm{A^{\tau_n}(\omega)\, v}\, d\tilde \Pp_\eta(\omega, \hat v) = \int_{\Omega_T}  \frac{1}{\tau_n}\, \sum_{j=0}^{\tau_n-1}  \psi(F^j\omega, \hat v))\, d\tilde \Pp_\eta(\omega, \hat v) $$
	by Lemma~\ref{lemma  all beta equal} converges to 
	$$\int_{\Omega_T}\beta_\mu(\eta)\, d\tilde\Pp_\eta=\tilde\Pp_\eta(\Omega_T)\, \beta_\mu(\eta)=(\mu\times \eta)(M_T)\, L_1(\mu).$$
	On the other hand  
	$$ \int_{\Omega_T} \frac{n}{\tau_n}\cdot \frac{1}{n}\,\sum_{j=0}^{n-1} \tilde \psi(\tilde F^j(\omega, \hat v)) \, d\tilde\Pp_\eta(\omega, \hat v) \longrightarrow \frac{1}{(\mu\times \eta)(M_T)}\, \int_{\Omega_T}  \tilde \psi \,d\tilde\Pp_\eta(\omega, \hat v) $$
	and the limit in the right-hand-side is equal to 
	$\int\tilde \psi\, d\tilde \Pp_{\eta,T}$.
	This proves that $\int\tilde \psi\, d\tilde \Pp_{\eta,T}=(\mu\times \eta)(M_T)\, L_1(\mu)$ and
	by Proposition~\ref{prop bound L1 mu} and Lemma~~\ref{bound BT(v)}
	\begin{align*}
		\left\vert \int\tilde \psi\, d\tilde \Pp_{\eta,T} - L_1(\mu) \right\vert &\leq |L_1(\mu)|\, |(\mu\times \eta)(M_T)-1|\\
		&\leq |L_1(\mu)|\, \int \mu(M_T(\hat v)^\complement ) \,d\eta(\hat v)\leq 2\, \frac{C^2}{p}\, e^{-p\, T} .
	\end{align*}
\end{proof}

\begin{proposition}
	\label{prop tilde psi - psi T proximity}
	For $(\omega,\hat v) \in \Omega_T\cap F^{-1}(\Omega_T)$,\, 
	$\tilde \psi(\omega, \hat v) =\psi_T(\omega, \hat v) $,
	and on the complementary set 
	$\Sigma_T:=\Omega_T\setminus  F^{-1}(\Omega_T)$,
	there exists a positive number  $T_0=T_0(C,p)$ such that for all $T\geq T_0 $,
	$$ \int \vert \tilde \psi-\psi_T\vert \, d\tilde\Pp_{\eta, T}\leq
	\int_{\Sigma_T}  ( \vert \tilde \psi\vert + T )\, d\tilde\Pp_{\eta, T}  < e^{-\frac{p\, T}{3}}.$$
\end{proposition}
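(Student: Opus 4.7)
The plan is first to dispose of the easy identity on $\Omega_T\cap F^{-1}(\Omega_T)$: on this set the first-return time $\tau(\omega,\hat v)$ equals $1$, so $\tilde\psi$ collapses to the single term $\psi(\omega_0,\hat v)=\log\norm{\omega_0 v}$; since $(\omega_0,\hat v)\in M_T$ forces $-T\leq \log\norm{\omega_0 v}\leq \log\norm{\omega_0}\leq T$, this value coincides with $\psi_T(\omega,\hat v)$. For the harder estimate on $\Sigma_T$, I would begin with a telescoping observation: taking a unit representative $v$ of $\hat v$, one has $\psi(F^j(\omega,\hat v))=\log\norm{A^{j+1}(\omega)v}-\log\norm{A^j(\omega)v}$, so the sum defining $\tilde\psi$ collapses to
$$\tilde\psi(\omega,\hat v)=\log\norm{A^{\tau(\omega,\hat v)}(\omega)\,v},$$
well defined $\mu^\N$-a.s.\ since $\underline\Theta_p(\mu)<\infty$ prevents $A^j v$ from vanishing.

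Next I would slice $\Sigma_T=\bigsqcup_{n\geq 2}\{\tau=n\}$ and bound each level. The event $\{\tau\geq n\}$ requires $\omega_j\in \Bscr_T(\widehat{A^j v})$ for every $1\leq j\leq n-1$; conditioning sequentially on $\omega_0,\dots,\omega_{j-1}$ so that the direction $A^j\hat v$ is determined, and applying Lemma~\ref{bound BT(v)} at each step, yields the geometric tail
$$(\mu^\N\times\eta)\bigl(\{\tau\geq n\}\cap\Omega_T\bigr)\leq (2C\,e^{-pT})^{n-1}.$$

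The final step is Cauchy--Schwarz combined with a uniform second-moment bound. Proposition~\ref{Theta_n(mu n)} gives $\bar\Theta_p(\mu^n),\underline\Theta_p(\mu^n)\leq C^n$; plugging these into the first computation in the proof of Lemma~\ref{bound int BT(v) log norm gv} produces $\int\log^2\norm{gv}\,d\mu^n(g)\leq 4C^n/p^2$ uniformly in $\hat v$. Hence
$$\int_{\{\tau=n\}}|\tilde\psi|\,d(\mu^\N\times\eta)\leq \sqrt{(2C\,e^{-pT})^{n-1}}\,\cdot\,\frac{2\sqrt{C^n}}{p}=\frac{2\sqrt{C}}{p}\,(2C^2\,e^{-pT})^{(n-1)/2}.$$
Summing the geometric series (convergent once $2C^2\,e^{-pT}<1$), together with the easier contribution $\int_{\Sigma_T}T\,d(\mu^\N\times\eta)\leq T\sum_{n\geq 2}(2C\,e^{-pT})^{n-1}$, and then dividing by the normalizing factor $(\mu\times\eta)(M_T)\geq 1/2$, produces a total bounded by a constant times $e^{-pT/2}$. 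Choosing $T_0=T_0(C,p)$ large enough that the slack $e^{-pT/6}=e^{-pT/2}/e^{-pT/3}$ absorbs all remaining constants yields the claimed $e^{-pT/3}$.

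The delicate point, which forces the dependence $T_0=T_0(C,p)$, is the balance between the tail decay $(2C\,e^{-pT})^{n-1}$ and the exponential growth $C^n$ of the second moment: only their square-root product $(2C^2\,e^{-pT})^{(n-1)/2}$ is geometrically summable in $n$, and only for $T$ above a threshold determined by $C$ and $p$. The natural rate $e^{-pT/2}$ is then relaxed to $e^{-pT/3}$ precisely to leave slack for the constant prefactors.
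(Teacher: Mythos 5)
Your proof is correct, and it takes a genuinely different route from the paper's for the harder estimate on $\Sigma_T$. The easy identity on $\Omega_T\cap F^{-1}(\Omega_T)$ via $\tau=1$ is the same in both. For the estimate, the paper works directly with the sum $\tilde\psi=\sum_{j=0}^{\tau-1}\psi\circ F^j$: on the slice $\Bscr_n=\{\tau=n\}$ it bounds $|\tilde\psi|$ by the sum of the $n$ absolute values $|\psi(F^j(\omega,\hat v))|$, integrates each summand against the remaining geometric constraints (each step $j=1,\dots,n-1$ costs a factor $\mu(\Bscr_T(A^j\hat v))\leq 2Ce^{-pT}$ while the summand itself is controlled by Lemma~\ref{bound int BT(v) log norm gv}), and arrives at $\int_{\Bscr_n}|\tilde\psi|\,d\tilde\Pp_\eta\leq n\frac{3C}{p}e^{-pT/2}(2Ce^{-pT})^{n-1}$, summed via $\sum n x^{n-1}=(1-x)^{-2}$. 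You instead telescope first, collapsing $\tilde\psi=\log\|A^\tau v\|$, then apply Cauchy--Schwarz once per level $\{\tau=n\}$, pairing the geometric tail $\tilde\Pp_\eta(\{\tau\geq n\}\cap\Omega_T)\leq(2Ce^{-pT})^{n-1}$ against the uniform second moment $\sup_{\hat v}\int\log^2\|gv\|\,d\mu^n\leq 4C^n/p^2$ (from Proposition~\ref{Theta_n(mu n)} and the second-moment computation inside the proof of Lemma~\ref{bound int BT(v) log norm gv}). Your variant avoids the $n$-fold summand decomposition and yields a cleaner geometric series $\sum(2C^2 e^{-pT})^{(n-1)/2}$; the trade-off is that you need the $C^n$ growth in the second moment to be beaten by the tail decay, which is exactly the threshold you identify. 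Both arguments give the rate $e^{-pT/2}$, relaxed to $e^{-pT/3}$ for large $T$, and both then add the trivial $\int_{\Sigma_T}T\,d\tilde\Pp_\eta$ contribution and normalize by $(\mu\times\eta)(M_T)\geq 1/2$. Your handling of the normalization and the threshold $T_0(C,p)$ is sound.
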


\begin{proof}
	Given  $n\geq 1$ let  $\Bscr_n \subset \Omega_T$ be the  set of    $(\omega, \hat v)\in\Omega_T$ such that
	\begin{itemize}
		\item $(\omega_n, A^{n}(\omega) \, \hat v)\in M_T$,
		\item $(\omega_j, A^j(\omega)\, \hat v) \notin M_T $\;
		$\forall\, 1\leq j\leq n-1$. 
	\end{itemize}
	By construction $\Bscr_n=\left\{(\omega, \hat v)\in\Omega_T\colon \, \tau(\omega, \hat v)=n \,  \right\}$ and these sets form a partition  of $\Sigma_T$ $\pmod 0$.
	For each $n\geq 1$, $\int_{\Bscr_n} |\tilde \psi|\, d\tilde \Pp_{\eta} $ is a sum of $n$ terms of the form
	$$ \int \!\cdots \! \int_{\Bscr_n}  \left\vert \log \norm{ \omega_i\, \frac{A^i(\omega)\, v}{\norm{A^i(\omega)\, v}}}\right\vert \, d\mu(\omega_i)\, \prod_{j\neq i} d\mu(\omega_j) $$
	Using lemmas~\ref{bound BT(v)} and~\ref{bound int BT(v) log norm gv} we get the bound
	$$  \int_{\Bscr_n} |\tilde \psi|\, d\tilde \Pp_{\eta} \leq n\, \frac{3\, C}{p}\, e^{-\frac{p\, T}{2}} \, \left(2\, C\, e^{-p\, T} \right)^{n-1} .$$
	Then, since $\sum_{n=1}^\infty n\, x^{n-1}= (1-x)^{-2}$,
	adding up we get
	\begin{align*}   \int_{\Sigma_T} |\tilde \psi|\, d\tilde \Pp_{\eta}  &\leq \left(\frac{3\, C}{p}\, e^{-\frac{p\, T}{2}} \right)\, \left(1-2\, C\, e^{-p\, T} \right)^{-2} \\
		&< \frac{3\cdot 5^2\cdot C}{4^2\, p}\, e^{-\frac{p\, T}{2}} \ll  e^{-\frac{p\, T}{3}} ,
	\end{align*} 
	when $T$ is sufficiently large. Since by Lemma~\ref{bound BT(v)} ,
	$$  \int_{\Sigma_T} T\, d\tilde \Pp_\eta \leq 2\, C\, T\, e^{-p\, T} \ll e^{-\frac{p\, T}{3}} , $$
	these upper bounds imply the stated inequality.
\end{proof}

\begin{remark}
	A   threshold for Proposition~\ref{prop tilde psi - psi T proximity}  is given by
	$$T_0(C,p):=\max\left\{ \frac{\log(10)+C}{p} , \, \frac{6}{p}\, \left[  \log\left( \frac{5}{p} \right) +\log C \right] \right\}.$$
\end{remark}
%
%
%
%
%
%

We are now ready to conclude the proof. To simplify the notation, we uniformly bound all the upper estimates from Propositions~\ref{inducing relation},\ref{prop tilde psi - psi T proximity},~\ref{bound BT(v)} and ~\ref{bound int BT(v) log norm gv} by
$$ \varepsilon(T):= e^{-\frac{p\, T}{3}} . $$

From Propositions~\ref{inducing relation} and~\ref{prop tilde psi - psi T proximity}, for every
$\eta\in \Prob_\mu(\Pp(\R^m))$, we have
$$ L_1(\mu)-2\, \varepsilon(T)\leq \int \psi_T\, d\tilde \Pp_{\eta,T}\leq L_1(\mu)+2\, \varepsilon(T).$$
Thus, by item (2) of Theorem~\ref{abstract FK}, for any $\hat v\in \Pp(\R^m)$, $\mu^\N$-almost surely,
\begin{align*}
	L_1(\mu)-2\,\varepsilon(T) &\leq \liminf_{n\to\infty} \frac{1}{n}\, \sum_{j=0}^{n-1}  \psi_T(\tilde F^j(\omega, \hat v)) \\
	& \leq \limsup_{n\to\infty} \frac{1}{n}\, \sum_{j=0}^{n-1}  \psi_T(\tilde F^j(\omega, \hat v))  \leq L_1(\mu)+ 2\, \varepsilon(T) .  
\end{align*}
By Birkhoff's ergodic theorem, the difference
$$  \frac{1}{n}\, \sum_{j=0}^{n-1}\tilde\psi(\tilde F^j(\omega, \hat v))- \frac{1}{n}\, \sum_{j=0}^{n-1} \psi_T(\tilde F^j(\omega, \hat v)) $$
converges pointwise to a constant function that is bounded in absolute value by
$\varepsilon(T)$. 
Hence, we obtain
\begin{align*}
	L_1(\mu)-3\, \varepsilon(T) &\leq \liminf_{n\to\infty} \frac{1}{n}\, \sum_{j=0}^{n-1} \tilde\psi(\tilde F^j(\omega, \hat v)) \\
	& \leq \limsup_{n\to\infty} \frac{1}{n}\, \sum_{j=0}^{n-1} \tilde\psi(\tilde F^j(\omega, \hat v))  \leq L_1(\mu)+3 \, \varepsilon(T) .  
\end{align*}
Applying Proposition~\ref{inducing relation}, we further deduce that 
\begin{align*}
	L_1(\mu)-4\, \varepsilon(T) &\leq \liminf_{n\to\infty} \frac{1}{n}\, \log \norm{A^n(\omega)\, v}  \\
	& \leq \limsup_{n\to\infty} \frac{1}{n}\, \log \norm{A^n(\omega)\, v}  \leq L_1(\mu)+4\, \varepsilon(T) .  
\end{align*}
Finally, letting $T\to \infty$,  we conclude that
$$\lim_{n\to\infty} \frac{1}{n}\, \log \norm{A^n(\omega)\, v} = L_1(\mu),$$
which completes the proof of Theorem~\ref{non-invertible FK}.

\section{Spectral Theory of Markov operator}\label{Markov}
In this section we establish the uniform ergodicity (strong mixing) of the Markov operator acting on a class of $\alpha$-H\"older  observables on the projective space, where $0<\alpha < p < 1$ is   sufficiently small. Throughout the section we assume that $\mu\in \Mcal$ satisfies the hypothesis of Theorem~\ref{thm non invertible Holder LE }, i.e., (1) $\mu$ is quasi-irreducible and (2) $L_1(\mu)>L_2(\mu)$.

Consider the Markov operator $Q_\mu: L^\infty(\Projm)\rightarrow L^\infty(\Projm)$ where $$(Q_\mu\varphi)(\hat{v}):=\int\varphi(\hat{g}\hat{v}) \ d\mu(g).$$

Let us introduce a metric on $\Projm$, namely, $$d(\hat{v},\hat{w}):=\frac{\norm{v\wedge w}}{\norm{v}\norm{w}}=\sin\angle(v,w),$$ where $v\in\hat{v}$ and $w\in\hat{w}$. Given $0<\alpha\leq 1$, we introduce the Banach algebra $(\Holal(\Projm),\norm{\cdot}_\alpha)$ where $$\Holal(\Projm):=\{\varphi\in L^\infty(\Projm):v_\alpha(\varphi)<\infty\},$$
and $v_\alpha(\cdot)$ stands for the $\alpha$-H\"older seminorm on $L^\infty(\Projm)$
 $$v_\alpha(\varphi):=\sup_{\hat{v}\neq \hat{w}} \frac{\abs{\varphi(\hat{v})-\varphi(\hat{w})}}{{d(\hat{v},\hat{w})}^\alpha}\; .$$ 
 The sub-multiplicative algebra norm  $\norm{\cdot}_\alpha$ is given by $$\norm{\varphi}_\alpha:=\norm{\varphi}_\infty + v_\alpha(\varphi)$$ 
 where the first term stands for the canonical sup norm on $L^\infty(\Projm)$, $$\norm{\varphi}_\infty:=\sup_{\hat{v}\in\Projm}\abs{\varphi(\hat{v})} .$$  

Notice that $v_\alpha$ is not a norm because $v_\alpha(\varphi)=0$ only implies that $\varphi$ is constant but not necessarily equal to $0$.

We will restrict the action of the Markov operator to H\"older observables in $\Holal(\Projm)$, i.e., $ Q_\mu:\Holal(\Projm)\to \Holal(\Projm)$.
A spectral feature of this operator is the quantity $$\kappa_\alpha(\mu):=\sup_{\hat{v}\neq \hat{w}}\int_{\Mat_m(\R)}\left[{\frac{d(\hat{g}\hat{v},\hat{g}\hat{w})}{d(\hat{v},\hat{w})}}\right]^\alpha\ d\mu(g),$$ 
that measures the $\mu$-average H\"older constant of the maps $\varphi_g\colon \hat{v}\mapsto \hat{g}\hat{v}$.

\begin{proposition}\label{7} Given 
$\varphi\in\Holal(\Projm)$, $v_\alpha(Q_\mu(\varphi))\leq \kappa_\alpha(\mu)v_\alpha(\varphi)$
\end{proposition}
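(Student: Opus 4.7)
The plan is to obtain the bound by pushing the $\alpha$-H\"older estimate inside the defining integral of $Q_\mu$, and then recognizing the resulting expression as $\kappa_\alpha(\mu)$.

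First, I would fix two distinct points $\hat v, \hat w \in \Pp(\R^m)$ and write the difference
\[
(Q_\mu\varphi)(\hat v) - (Q_\mu\varphi)(\hat w) = \int_{\Mat_m(\R)} \bigl[\, \varphi(\hat g\hat v) - \varphi(\hat g\hat w)\,\bigr]\, d\mu(g).
\]
A preliminary step is to observe that the integrand is well defined $\mu$-almost everywhere: since $\mu\in\Mcal$, the moment $\underline{\Theta}_p(\mu)$ is finite, and therefore, for each fixed unit representative of $\hat v$ (respectively $\hat w$), one has $gv\neq 0$ (respectively $gw\neq 0$) for $\mu$-almost every $g$. On the complement of a $\mu$-null set, both $\hat g\hat v$ and $\hat g\hat w$ are thus well defined and lie in $\Pp(\R^m)$.

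Next, I would apply the H\"older bound on $\varphi$ pointwise in $g$:
\[
\abs{\varphi(\hat g\hat v) - \varphi(\hat g\hat w)} \leq v_\alpha(\varphi)\, d(\hat g\hat v, \hat g\hat w)^\alpha,
\]
integrate against $\mu$, divide by $d(\hat v,\hat w)^\alpha > 0$, and rearrange to obtain
\[
\frac{\abs{(Q_\mu\varphi)(\hat v) - (Q_\mu\varphi)(\hat w)}}{d(\hat v,\hat w)^\alpha} \leq v_\alpha(\varphi)\, \int \left[\frac{d(\hat g\hat v, \hat g\hat w)}{d(\hat v, \hat w)}\right]^\alpha d\mu(g).
\]
The integral on the right is bounded above by $\kappa_\alpha(\mu)$ by its very definition. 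Taking the supremum over all pairs $\hat v\neq \hat w$ yields the desired inequality $v_\alpha(Q_\mu\varphi)\leq \kappa_\alpha(\mu)\, v_\alpha(\varphi)$.

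There is no real obstacle here; the argument is a direct application of the H\"older property and Fubini. The only technical point that requires attention, as noted above, is the measurability and well-definedness of $\hat g\hat v$ and $\hat g\hat w$ on the domain of integration, which is handled by the $\underline{\Theta}_p(\mu) < \infty$ assumption built into $\Mcal$. The expression $\kappa_\alpha(\mu)$ itself is meaningful for the same reason.
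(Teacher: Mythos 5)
Your proof is correct and follows essentially the same approach as the paper: write the difference $(Q_\mu\varphi)(\hat v)-(Q_\mu\varphi)(\hat w)$ as an integral, apply the $\alpha$-H\"older bound on $\varphi$ pointwise, divide by $d(\hat v,\hat w)^\alpha$, and take the supremum to recognize $\kappa_\alpha(\mu)$. Your added remark on the $\mu$-a.e.\ well-definedness of $\hat g\hat v$, $\hat g\hat w$ via $\underline\Theta_p(\mu)<\infty$ is a small but sensible piece of extra care that the paper leaves implicit.
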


\begin{proof}
Let $\varphi\in\Holal(\Projm)$. $\forall\hat{v},\hat{w}\in\Projm$ we have 
\begin{align*}
\frac{\abs{Q_\mu(\varphi)(\hat{v})-Q_\mu(\varphi)(\hat{w})}}{d(\hat{v},\hat{w})^\alpha}&= \frac{\abs{\int \varphi(\hat{g}\hat{v})-\varphi(\hat{g}\hat{w}) \ d\mu(g)}}{d(\hat{v},\hat{w})^\alpha}\\
&\leq \frac{\int \abs{ \varphi(\hat{g}\hat{v})-\varphi(\hat{g}\hat{w})} \ d\mu(g)}{d(\hat{v},\hat{w})^\alpha}\\
&\leq \int \frac{\abs{\varphi(\hat{g}\hat{v})-\varphi(\hat{g}\hat{w})}}{d(\hat{g}\hat{v},\hat{g}\hat{w})^\alpha}\cdot\frac{d(\hat{g}\hat{v},\hat{g}\hat{w})^\alpha}{d(\hat{v},\hat{w})^\alpha} \ d\mu(g)\\
&\leq v_\alpha(\varphi)\int \frac{d(\hat{g}\hat{v},\hat{g}\hat{w})^\alpha}{d(\hat{v},\hat{w})^\alpha} \ d\mu(g)\\
\end{align*}
The result follows from taking the supremum over $\hat{v}\neq\hat{w}$ on both sides. 
\end{proof}

\begin{lemma}
	\label{prop LE(vn)}
    Let $(v_n)_n$ be a sequence in $\Projm$ which converges to a non-zero vector $v_0$. Then $$\lim_{n\rightarrow+\infty}\frac{1}{n}\log\norm{g_n\cdots g_1 v_n}=L_1(\mu)$$ for $\mu^\mathbb{N}$ almost every $\{g_n\}_{n\in\mathbb{N}}$.
\end{lemma}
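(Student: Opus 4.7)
The plan is to sandwich $\frac{1}{n}\log\norm{A^n(\omega) v_n}$ between matching upper and lower bounds, relying on Theorem~\ref{non-invertible FK} applied at the fixed direction $v_0$ together with the Furstenberg--Kesten theorem.

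For the upper bound I would invoke the submultiplicative estimate $\norm{A^n(\omega) v_n}\leq \norm{A^n(\omega)}\cdot \norm{v_n}$. Dividing by $n$ and taking $\limsup$, the first factor contributes $L_1(\mu)$ almost surely by Furstenberg--Kesten, while $\frac{1}{n}\log\norm{v_n}\to 0$ because $v_n\to v_0\neq 0$ keeps $\norm{v_n}$ bounded away from $0$ and $\infty$. Hence $\limsup_n \frac{1}{n}\log\norm{A^n(\omega) v_n}\leq L_1(\mu)$ for $\mu^\N$-a.e.\ $\omega$.

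The lower bound is the delicate half. Starting from the reverse triangle inequality
\[
\norm{A^n(\omega) v_n}\;\geq\; \norm{A^n(\omega) v_0} \,-\, \norm{A^n(\omega)}\cdot \norm{v_n - v_0},
\]
the desired $\liminf_n \tfrac{1}{n}\log\norm{A^n(\omega) v_n}\geq L_1(\mu)$ will follow once the subtracted term is eventually dominated by $\tfrac{1}{2}\norm{A^n(\omega) v_0}$. Since $\norm{v_n-v_0}\to 0$ but with no prescribed rate, this reduces to showing
\[
\liminf_{n\to\infty} \frac{\norm{A^n(\omega) v_0}}{\norm{A^n(\omega)}} \;>\; 0 \qquad \mu^\N\text{-a.s.}
\]
Granted this, pick a random $c(\omega)>0$ with $\norm{A^n(\omega) v_0}\geq c(\omega)\norm{A^n(\omega)}$ for all large $n$; then for $n$ large enough so that also $\norm{v_n-v_0}<c(\omega)/2$, one obtains $\norm{A^n(\omega) v_n}\geq \tfrac{c(\omega)}{2}\norm{A^n(\omega)}$, and the lower bound follows by taking $\frac{1}{n}\log$ and applying Furstenberg--Kesten a second time.

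The main obstacle is precisely the $\liminf$-positivity displayed above. Theorem~\ref{non-invertible FK} together with Furstenberg--Kesten only guarantees $\tfrac{1}{n}\log(\norm{A^n v_0}/\norm{A^n})\to 0$ a.s., which permits sub-exponential decay of the ratio and is \emph{not} enough. The extra input must come from the spectral gap $L_1(\mu)>L_2(\mu)$ combined with quasi-irreducibility: in the classical invertible setting, Furstenberg's theorem provides a.s.\ convergence of $(A^n)^\ast/\norm{A^n}$ to a rank-one operator, so $\norm{A^n v_0}/\norm{A^n}\to \abs{\langle v_0,\xi(\omega)\rangle}$, strictly positive a.s.\ because under the gap and quasi-irreducibility no $\mu$-stationary measure on $\Projm$ charges a proper hyperplane. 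I expect to adapt that argument to our non-invertible quasi-irreducible setting, either by restricting to a generic invariant subspace on which the cocycle acts invertibly, or by applying Theorem~\ref{non-invertible FK} to the second exterior power cocycle, exploiting $L_1(\wedge_2\mu)=L_1(\mu)+L_2(\mu)<2L_1(\mu)$ to quantify the projective contraction produced by the gap.
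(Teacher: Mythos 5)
Your reduction of the lemma to the assertion
\[
\liminf_{n\to\infty}\frac{\norm{A^n(\omega)\,v_0}}{\norm{A^n(\omega)}}>0\qquad \mu^{\N}\text{-a.s.}
\]
is the right idea and matches the paper's strategy, and your upper bound via Furstenberg--Kesten is fine. The paper actually gets at the same quantity a bit more directly: using the singular value decomposition one has $\norm{A^n v_n}/\norm{A^n}\geq \lvert\langle v_n,u_n\rangle\rvert$ where $u_n$ is the most expanded unit direction of $A^n$, so there is no need for the reverse triangle inequality or for the random $c(\omega)$ with a secondary ``$n$ large enough'' argument.

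The genuine gap is that you never actually prove the $\liminf$-positivity; you correctly flag it as the crux, but then only sketch two conjectural avenues (restricting to an invariant subspace, or passing to $\wedge_2$), neither of which is carried to completion. The paper closes the gap as follows. Because $L_1(\mu)>L_2(\mu)$, the most expanded unit directions $u_n$ of $A^n(\omega)$ converge $\mu^\N$-a.s.\ to a limit $u_\infty(\omega)$ (\cite[Proposition 4.4]{DK-book}), so the $\liminf$ above is at least $\lvert\langle v_0,u_\infty\rangle\rvert$. Moreover $u_\infty^\perp$ is exactly the direct sum of the Oseledets factors with exponents strictly below $L_1(\mu)$ (\cite[Proposition 4.15]{DK-book}). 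If $\langle v_0,u_\infty\rangle=0$ held on a set of positive measure, then on that set one would have $\limsup_n \frac{1}{n}\log\norm{A^n(\omega)v_0}\le L_2(\mu)<L_1(\mu)$, contradicting Theorem~\ref{non-invertible FK}, which (using quasi-irreducibility) gives $\frac{1}{n}\log\norm{A^n(\omega)v_0}\to L_1(\mu)$ $\mu^\N$-a.s.\ for the fixed vector $v_0$. That application of Theorem~\ref{non-invertible FK} at a single deterministic direction is the input you were missing; it is what turns ``quasi-irreducibility plus gap'' into the positivity you need, and it is quite different from either of your two proposed adaptations (in particular there is no appeal to $\wedge_2\mu$ and no restriction to an invariant subspace). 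Without some such argument the proposal does not yet constitute a proof.
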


\begin{proof}
Given $g_1, \ldots, g_n\in \Mat_m(\R)$, let $u_n$ be the most expanded unit vector by the matrix $g_n\cdots g_1$. Then, see~\cite[Proposition 2.2]{DK-book},
    $$\frac{\norm{g_n\cdots g_1 v_n}}{\norm{g_n\cdots g_1}}\geq\left|\langle v_n , u_n\rangle\right|. $$  
   Taking the liminf in both sides, $$\liminf_{n\rightarrow +\infty}\frac{\norm{g_n\cdots g_1 v_n}}{\norm{g_n\cdots g_1}}\geq\left|\langle v_0, u_\infty \rangle\right|$$ for $\mu^\mathbb{N}$ almost every $\{g_n\}_{n\in\mathbb{N}}$, where $u_\infty=\lim_{n\rightarrow+\infty} u_n$.
   By~\cite[Proposition 4.4]{DK-book} this limit exists $\mu^\N$-almost surely.
    Notice that if $\langle v_0, u_\infty \rangle=0$ then $v_0\in u^\perp_\infty$,
    which by~\cite[Proposition 4.15]{DK-book} is the direct sum of the factors in Oseledets decomposition associated with Lyapunov exponents $<L_1(\mu)$. Hence, since $\mu$ is quasi-irreducible, by Theorem~\ref{non-invertible FK} the relation
    $v_0\in u^\perp_\infty$ can only hold  for a zero measure set of sequences $\{g_n\}_{n\in\mathbb{N}}$ with respect to the probability measure $\mu^\mathbb{N}$.
    Therefore, $$\liminf_{n\rightarrow +\infty}\frac{\norm{g_n\cdots g_1 v_n}}{\norm{g_n\cdots g_1}}>0 \qquad  \text{ for } \, \mu^\mathbb{N}\text{-a.e.} \, \{g_n\}_{n\in\mathbb{N}} . $$  Furthermore $$\lim_{n\rightarrow+\infty}\frac{1}{n}\log \frac{\norm{g_n\cdots g_1 v_n}}{\norm{g_n\cdots g_1}}=0 \qquad \text{ for } \, \mu^\mathbb{N}\text{-a.e. }\, \{g_n\}_{n\in\mathbb{N}} .$$ 
    Thus, $\mu$-almost surely  $$\lim_{n\rightarrow+\infty}\frac{1}{n}\log\norm{g_n\cdots g_1 v_n}=\lim_{n\rightarrow+\infty}\frac{1}{n}\log\norm{g_n\cdots g_1}=L_1(\mu) . $$  
\end{proof}

\begin{lemma}\label{unif}
	Given $\hat v\in \Projm$,
     $$\lim_{n\to \infty}  \frac{1}{n}\int \log\norm{gv} \ d\mu^n(g) =  L_1(\mu)$$ with uniform convergence  in $\hat v\in \Projm$.
\end{lemma}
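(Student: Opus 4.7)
The plan is to deduce uniform convergence from three ingredients already at hand: (i) the pointwise a.s.\ limit provided by Lemma~\ref{prop LE(vn)}, which handles not only a fixed direction but also directions varying along a sequence $v_n \to v_\infty$; (ii) a uniform $L^2(\mu^n)$-bound on $\frac{1}{n}\log\|A^n v\|$ coming from the exponential moment hypothesis; and (iii) the compactness of $\Projm$. Combining (i) with (ii) produces $L^1$-convergence of the integrals (hence pointwise-in-$\hat v$ convergence of $\tfrac{1}{n}\int\log\|gv\|\,d\mu^n$), and (iii) upgrades this to uniform convergence by a standard subsequence extraction.

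Concretely, I would first establish the \emph{uniform $L^2$-bound}: for any unit $v$,
\[
\log\|A^n(\omega)\,v\| \;=\; \sum_{k=0}^{n-1}\log\bigl\|g_{k+1}\,u_k(\omega)\bigr\|,\quad u_k(\omega):=\frac{A^k(\omega)v}{\|A^k(\omega)v\|},
\]
where each $u_k$ is $\sigma(g_1,\dots,g_k)$-measurable and $g_{k+1}$ is independent of it. Conditioning and applying the bound $\int \log^2\|gw\|\,d\mu(g)\le 4C/p^2$ (obtained in the proof of Lemma~\ref{bound int BT(v) log norm gv}), valid uniformly for unit $w$, yields $\|\log\|g_{k+1}u_k\|\|_{L^2(\mu^\N)}\le 2\sqrt{C}/p$ for each $k$. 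Minkowski's inequality then gives the bound
\[
\left\|\tfrac{1}{n}\log\|A^n(\cdot)\,v\|\right\|_{L^2(\mu^n)}\;\le\;\frac{2\sqrt{C}}{p},
\]
uniformly in $n$ and $\hat v\in\Projm$. In particular, for any sequence $\hat v_n\in\Projm$, the random variables $X_n(\omega):=\frac{1}{n}\log\|A^n(\omega)v_n\|$ are uniformly bounded in $L^2(\mu^\N)$, hence uniformly integrable.

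The uniform-in-$\hat v$ convergence is then obtained by contradiction. Suppose there exist $\varepsilon>0$, a subsequence $n_j\to\infty$ and points $\hat v_{n_j}\in\Projm$ with
\[
\left|\tfrac{1}{n_j}\int \log\|g\,v_{n_j}\|\,d\mu^{n_j}(g)\,-\,L_1(\mu)\right|\;\ge\;\varepsilon.
\]
By compactness of $\Projm$ we may assume $\hat v_{n_j}\to \hat v_\infty$, with $v_\infty\neq 0$. Lemma~\ref{prop LE(vn)} gives $X_{n_j}(\omega)\to L_1(\mu)$ for $\mu^\N$-a.e.\ $\omega$; combined with the uniform integrability from the previous step (uniform $L^2$-boundedness implies uniform integrability), we may pass to the limit in expectation to obtain $\mathbb{E}[X_{n_j}]\to L_1(\mu)$, contradicting the assumption. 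I expect the main technical point to be the verification that Lemma~\ref{prop LE(vn)} is truly applicable along the varying sequence $v_{n_j}$ (which it is, since its statement already accommodates $v_n\to v_\infty\neq 0$), and recognizing that the compactness of $\Projm$ plus the conditional $L^2$-bound suffices to swap \emph{pointwise} with \emph{uniform} convergence without any extra equicontinuity hypothesis.
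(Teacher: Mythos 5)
Your proof is correct, and the overall architecture coincides with the paper's: reduce via compactness of $\Projm$ to a convergent subsequence $\hat v_{n_j}\to\hat v_\infty$, invoke Lemma~\ref{prop LE(vn)} for the almost-sure pointwise limit of $\frac{1}{n_j}\log\|A^{n_j}v_{n_j}\|$ along that subsequence, then pass to expectation by establishing uniform integrability and applying Vitali's theorem. The one genuine difference is the device you use for uniform integrability. You prove a uniform $L^2(\mu^\N)$-bound $\bigl\|\tfrac{1}{n}\log\|A^n v\|\bigr\|_{L^2}\le 2\sqrt{C}/p$ by writing $\log\|A^n v\|$ as a telescoping sum of increments $\log\|g_{k+1}u_k\|$, conditioning on the past (so that the one-step $L^2$ moment bound $\int\log^2\|gw\|\,d\mu\le 4C/p^2$ applies at each step), and applying Minkowski's inequality; $L^2$-boundedness then gives uniform integrability directly via Chebyshev. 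The paper instead estimates the tail integrals $\int_{\Bscr_{nT}(\hat v_n)}\bigl|\log\|gv_n\|\bigr|\,d\mu^n$ using Corollaries~\ref{coro bound BT(v)} and~\ref{coro bound int BT(v) log norm gv}, which introduces a factor $C^n$ from the bound $\mu^n\in\mathcal{M}^p_{C^n}$ that then has to be absorbed by taking $T$ large enough so that $Ce^{-pT/2}<1$. Your conditioning-plus-Minkowski argument is slightly more elementary and avoids the exponential-in-$n$ intermediate constants altogether, giving a cleaner, dimension-free and $n$-free bound; the paper's tail-estimate route has the advantage of plugging directly into the machinery from Section~\ref{Setting} that is reused elsewhere. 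Both are correct.
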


\begin{proof}
Since $\Projm$ is compact, we only need to show that if a sequence in $\Projm$, say $(v_n)_n$, converges, then $$\lim_{n\rightarrow +\infty} \frac{1}{n}\int\log\norm{gv_n}d\mu^n(g)=L_1(\mu).$$ 
In fact, if the convergence were not uniform there would exist a sequence $\hat v_n\in \Projm$,
say converging to some $\hat v_0$, such that
the above limit would be different from $L_1(\mu)$. On the other hand, by Lemma~\ref{prop LE(vn)},  for any convergent sequence $(\hat v_n)_n$,
 $$\lim_{n\rightarrow +\infty}\frac{1}{n}\log\norm{g_n\cdots g_1 v_n}=L_1(\mu)\qquad \mu^\N\text{-a.s.} $$ 
The conclusion follows by Vitali's convergence theorem, integrating both sides and taking the limit as $n\to \infty$.
For that purpose it is enough proving that the sequence $\bigg\{\frac{1}{n}\log\norm{g_n\cdots g_1v_n}\bigg\}_{n\geq 1}$ is uniformly integrable w.r.t $\mu^\N$.

Given a large  $T\gg 0$,
recall from Section~\ref{Setting} that given $\hat v\in \Projm$,
	\begin{align*}
	\Bscr_T(\hat v)&:=\{ g \in \Mat_m(\R) \colon \log \norm{g} > T \; \text{ or }\;  \log \norm{g\, v}<-T \}  .
\end{align*}
Then $\left\{\, \left|\frac{1}{n}\log\norm{gv_n}\right|\geq T\, \right\}\subseteq  	\Bscr_{n\, T}(\hat v_n)$
and by corollaries~\ref{coro bound BT(v)} and~\ref{coro bound int BT(v) log norm gv},
\begin{align*}
\int_{\left|\frac{1}{n}\log\norm{gv_n}\right|\geq T}  & \left|\frac{1}{n}\log\norm{gv_n}\right| d\mu^n(g) 
\leq \frac{1}{n}\, \int_{	\Bscr_{n\, T}(\hat v_n)} \left| \log\norm{gv_n}\right|  d\mu^n(g)\\
&\leq \frac{1}{n}\, \left(\mu^n(\Bscr_{n\, T}(\hat v_n))\right)^{1\over 2}\, \left( \int    \log^2\norm{gv_n} \, d\mu^n(g) \right)^{1\over 2} \\
&\leq \frac{1}{n}\, \sqrt{2\, C^n\, e^{-n\, p\, T}}\, \sqrt{\frac{10\, C^n}{p^2}\, e^{-\frac{n\, p\, T}{2}} } <\frac{5\, C^n}{n\, p}\, e^{-\frac{3}{4}\, n\, p\, T } .
\end{align*}
Finally, as $T\to +\infty$, we have $C\, e^{-\frac{p\, T}{2}}<1$ and
$$\sup_n\int_{\left|\frac{1}{n}\log\norm{gv_n}\right|\geq T} \left|\frac{1}{n}\log\norm{gv_n}\right| d\mu^n(g)  <\sup_n \frac{5\, C^n}{n\, p}\, e^{-\frac{n\, p\, T}{2}   } < \frac{5}{p}\, C\, e^{-\frac{p\, T}{2}}\to 0$$
which  proves the uniform integrability and concludes the proof.
\end{proof}

\begin{proposition}
	\label{spectral property}
Given $\mu\in\mathcal{M}^p_C$ and  $m,n\in\N$, $$\kappa_\alpha(\mu^{m+n})\leq \kappa_\alpha(\mu^m)\, \kappa_\alpha(\mu^n) .$$ 
\end{proposition}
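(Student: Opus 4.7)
The plan is to view $\mu^{m+n}$ as the law of a product $hg$ where $g \sim \mu^m$ and $h \sim \mu^n$ are independent, and to exploit the multiplicative cocycle property of the projective contraction factor. Concretely, whenever $\hat g \hat v$ and $\hat g \hat w$ are well defined and distinct, a direct computation with the sine formula for $d$ yields the identity
\[
\frac{d(\widehat{hg}\,\hat v,\, \widehat{hg}\,\hat w)}{d(\hat v, \hat w)} = \frac{d(\hat h(\hat g \hat v),\, \hat h(\hat g \hat w))}{d(\hat g \hat v,\, \hat g \hat w)} \cdot \frac{d(\hat g \hat v,\, \hat g \hat w)}{d(\hat v, \hat w)},
\]
so the contraction ratios compose multiplicatively.

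Given this, I would fix $\hat v \neq \hat w$, raise the identity to the power $\alpha$, and apply Fubini's theorem to integrate against $d\mu^n(h)$ first while keeping $g$ fixed. For each such $g$ with $\hat g \hat v \neq \hat g \hat w$, the inner integral
\[
\int \left[\frac{d(\hat h(\hat g \hat v),\, \hat h(\hat g \hat w))}{d(\hat g \hat v,\, \hat g \hat w)}\right]^\alpha d\mu^n(h)
\]
is bounded by $\kappa_\alpha(\mu^n)$ directly from the definition, since $\hat g \hat v$ and $\hat g \hat w$ are themselves just a pair of distinct points of $\Projm$. Pulling this uniform bound outside and integrating the remaining factor against $d\mu^m(g)$ produces $\kappa_\alpha(\mu^m)\,\kappa_\alpha(\mu^n)$. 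Taking the supremum over $(\hat v, \hat w)$ on the left-hand side then gives the claimed inequality.

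The delicate points are the singular sets where $gv=0$ or $(hg)v=0$, and the degenerate configurations where $\hat g \hat v = \hat g \hat w$. For the first, the moment bound $\underline{\Theta}_p(\mu) \leq C$, transferred to all convolution powers via Proposition~\ref{Theta_n(mu n)}, ensures that the exceptional sets have zero measure for each fixed $v$, so the chain-rule identity is valid $\mu^n \times \mu^m$-almost surely. For the second, on the set where $\hat g \hat v = \hat g \hat w$ one also has $\widehat{hg}\,\hat v = \widehat{hg}\,\hat w$, so both the left-hand ratio and the first factor on the right-hand side vanish simultaneously; one may simply restrict the integration to the complement, whose integrand is unaffected. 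Beyond this routine bookkeeping the argument is a one-line Fubini manipulation, and I do not anticipate any substantial obstacle.
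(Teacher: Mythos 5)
Your proof is correct and takes essentially the same approach as the paper's: both decompose $\mu^{m+n}$ as a convolution, factor the projective contraction ratio via the chain rule, bound the inner integral of one factor by $\kappa_\alpha$ of the corresponding power, and then integrate the remaining factor. The only differences are the (immaterial) relabeling of which power plays the outer versus inner role, and your more explicit treatment of the singular sets, which the paper passes over silently.
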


\begin{proof}
We have
\begin{align*}
\kappa_\alpha(\mu^m*\mu^n)&=\sup_{\hat{v}\neq\hat{w}}\int \left[\frac{d(\hat{g}\hat{v},\hat{g}\hat{w})}{d(\hat{v},\hat{w})}\right]^\alpha \ d(\mu^m*\mu^n)(g)\\
&=\sup_{\hat{v}\neq\hat{w}}\int \int \left[\frac{d(\hat{g}\hat{h}\hat{v},\hat{g}\hat{h}\hat{w})}{d(\hat{v},\hat{w})}\right]^\alpha \ d\mu^m(g) \ d\mu^n(h)\\
&=\sup_{\hat{v}\neq\hat{w}}\int \int \left[\frac{d(\hat{g}\hat{h}\hat{v},\hat{g}\hat{h}\hat{w})\, d(\hat{h}\hat{v},\hat{h}\hat{w})}{d(\hat{h}\hat{v},\hat{h}\hat{w})\, d(\hat{v},\hat{w})}\right]^\alpha \ d\mu^m(g) \ d\mu^n(h)\\
&\leq \sup_{\hat{v}\neq\hat{w}}\int \left[\frac{d(\hat{h}\hat{v},\hat{h}\hat{w})}{d(\hat{v},\hat{w})}\right]^\alpha \int \left[\frac{d(\hat{g}\hat{h}\hat{v},\hat{g}\hat{h}\hat{w})}{d(\hat{h}\hat{v},\hat{h}\hat{w})}\right]^\alpha  \ d\mu^m(g) \ d\mu^n(h)\\
&\leq \kappa_\alpha(\mu^m)\, \kappa_\alpha(\mu^n)
\end{align*}
\end{proof}

Because $Q_\mu:L^\infty(\Projm)\to L^\infty(\Projm)$  is a Markov operator,
\begin{proposition}\label{bounded}
$\Qop_\mu$  satisfies:
\begin{enumerate}
	\item  $(Q_\mu)^n=Q_{\mu^n}$,\, $\forall n\in\N$,
	\item $Q_\mu$ is bounded with  $\norm{Q_\mu}_\infty=1$.
\end{enumerate}
\end{proposition}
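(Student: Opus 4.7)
The plan is to prove both statements essentially by unwinding definitions, since $Q_\mu$ is a classical Markov operator. The only subtlety is the one flagged in Section~\ref{FK}: the projective action $\hat g\,\hat v$ is undefined when $v \in \ker(g)$, so some care is needed to ensure the iterated integrand is defined $\mu^n$-almost everywhere. Fortunately, the condition $\underline{\Theta}_p(\mu) \leq C < \infty$ built into $\Mcal$ guarantees that for every $\hat v \in \Projm$, $\mu\{g : g v = 0\} = 0$, and by Proposition~\ref{Theta_n(mu n)} the same holds with $\mu$ replaced by $\mu^n$.

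For item (1), I would proceed by induction on $n$. The case $n=1$ is just the definition. For the inductive step, write
\[
(Q_\mu)^{n+1}\varphi(\hat v) = Q_\mu\bigl((Q_\mu)^n \varphi\bigr)(\hat v) = \int (Q_{\mu^n}\varphi)(\hat h\, \hat v)\, d\mu(h),
\]
where the inductive hypothesis is applied at the point $\hat h \,\hat v$, which is well defined for $\mu$-a.e.\ $h$ (using $\underline{\Theta}_p(\mu) < \infty$). Expanding $Q_{\mu^n}\varphi$ and applying Fubini gives
\[
(Q_\mu)^{n+1}\varphi(\hat v) = \iint \varphi\bigl(\hat g\,\hat h\,\hat v\bigr)\, d\mu^n(g)\, d\mu(h) = \int \varphi(\widehat{gh}\,\hat v)\, d(\mu^n * \mu)(gh),
\]
where the identity $\hat g\,\hat h\,\hat v = \widehat{gh}\,\hat v$ holds on the full measure set where all the projective actions are defined. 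Since $\mu^{n+1} = \mu^n * \mu$ by definition, this equals $Q_{\mu^{n+1}}\varphi(\hat v)$, completing the induction.

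For item (2), the two defining features of $Q_\mu$ as a Markov operator are: (i) $Q_\mu$ preserves positivity, i.e.\ $\varphi \geq 0 \Rightarrow Q_\mu\varphi \geq 0$, which is immediate from the definition as integration of $\varphi \circ \hat g$ against the probability measure $\mu$; and (ii) $Q_\mu \mathbf{1} = \mathbf{1}$, since $\int d\mu(g) = 1$. Combining these, for any $\varphi \in L^\infty(\Projm)$ we get
\[
|Q_\mu \varphi(\hat v)| \leq \int |\varphi(\hat g\,\hat v)|\, d\mu(g) \leq \norm{\varphi}_\infty \int d\mu(g) = \norm{\varphi}_\infty,
\]
so $\norm{Q_\mu}_\infty \leq 1$. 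Equality is attained on the constant function $\mathbf{1}$, giving $\norm{Q_\mu}_\infty = 1$.

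Neither step presents a genuine obstacle; the proof is essentially a formality. The only item that requires mild vigilance is ensuring the projective action in the composition argument of (1) is defined $\mu^n$-a.e., which is exactly what the finite negative moment assumption in the definition of $\Mcal$ buys us. For this reason I expect the authors may simply assert the result without proof, or dispense with it in a short paragraph.
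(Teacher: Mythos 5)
Your proof is correct, and your anticipation at the end is right: the paper states this proposition with no proof at all, prefacing it only with the phrase ``Because $Q_\mu$ is a Markov operator.'' Your argument fills in exactly the standard details one would expect, and in particular your observation that $\underline{\Theta}_p(\mu)<\infty$ (and, via Proposition~\ref{Theta_n(mu n)}, $\underline{\Theta}_p(\mu^n)<\infty$) forces $\mu\{g: gv=0\}=0$ for every $\hat v$ is precisely the point that makes the induction in item~(1) meaningful in the non-invertible setting.
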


\begin{proposition}
Given $\hat{v},\hat{w}\in\Projm$, $\hat{v}\neq\hat{w}$,
$$\sup_{\hat{v}\neq\hat{w}}\int\left[\frac{d(\hat{g}\hat{v},\hat{g}\hat{w})}{d(\hat{v},\hat{w})}\right]^\alpha \, d\mu(g)\leq\sup_{\norm{v}=1}\int\left[\frac{\norm{\wedge^2g}}{\norm{gv}^2}\right]^\alpha \, d\mu(g) .$$
\end{proposition}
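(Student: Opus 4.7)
The plan is to reduce the projective distance expansion factor to a quantity involving $\wedge^2 g$ and then split the resulting integral by Cauchy--Schwarz.

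First, I would fix $\hat v \neq \hat w$ and choose unit representatives $v, w$. Using the definition $d(\hat v, \hat w) = \|v \wedge w\|$ together with the identity $gv \wedge gw = (\wedge^2 g)(v \wedge w)$, I get
\begin{equation*}
\frac{d(\hat g \hat v, \hat g \hat w)}{d(\hat v, \hat w)} \;=\; \frac{\|(\wedge^2 g)(v \wedge w)\|}{\|gv\|\,\|gw\|\,\|v \wedge w\|} \;\leq\; \frac{\|\wedge^2 g\|}{\|gv\|\,\|gw\|},
\end{equation*}
where the inequality is just the operator-norm bound $\|(\wedge^2 g)(v \wedge w)\| \leq \|\wedge^2 g\|\,\|v \wedge w\|$. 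If $gv = 0$ or $gw = 0$ for a positive measure set of $g$, the right-hand supremum in the statement is infinite, so the inequality is trivial; otherwise we may integrate pointwise.

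Next, raising to the $\alpha$-th power and integrating,
\begin{equation*}
\int \left[\frac{d(\hat g \hat v, \hat g \hat w)}{d(\hat v, \hat w)}\right]^\alpha d\mu(g) \;\leq\; \int \frac{\|\wedge^2 g\|^{\alpha/2}}{\|gv\|^\alpha} \cdot \frac{\|\wedge^2 g\|^{\alpha/2}}{\|gw\|^\alpha}\, d\mu(g).
\end{equation*}
Applying Cauchy--Schwarz to the two factors on the right, this is bounded by
\begin{equation*}
\left(\int \frac{\|\wedge^2 g\|^\alpha}{\|gv\|^{2\alpha}}\, d\mu(g)\right)^{1/2} \left(\int \frac{\|\wedge^2 g\|^\alpha}{\|gw\|^{2\alpha}}\, d\mu(g)\right)^{1/2},
\end{equation*}
and each factor is dominated by $\sup_{\|u\|=1}\int [\|\wedge^2 g\|/\|gu\|^2]^\alpha\, d\mu(g)$. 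Taking the supremum over $\hat v \neq \hat w$ on the left then yields the claim.

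There is no real obstacle here; the only thing to watch is the measure-zero exceptional set where $gv$ or $gw$ vanishes, which is harmless because of the moment assumption implicit in the framework. The main idea to highlight is the use of Cauchy--Schwarz to factor the joint denominator $\|gv\|^\alpha \|gw\|^\alpha$ into two single-vector integrals, so that the quantity on the right is uniform in a single unit vector rather than in a pair.
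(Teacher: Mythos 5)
Your argument is correct and follows essentially the same route as the paper: both proofs begin with the identity $d(\hat g\hat v,\hat g\hat w)/d(\hat v,\hat w) = \|(\wedge^2 g)(v\wedge w)\|/(\|v\wedge w\|\,\|gv\|\,\|gw\|)$ and the operator-norm bound to reduce to $\|\wedge^2 g\|/(\|gv\|\,\|gw\|)$. The only (cosmetic) difference is in how the mixed denominator is symmetrized: the paper applies the pointwise inequality $ab \leq \tfrac{1}{2}(a^2+b^2)$ to the integrand and then integrates, whereas you apply Cauchy--Schwarz at the integral level; both yield a bound by the single-vector supremum, so there is nothing to fix.
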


\begin{proof}
If $\norm{v}=\norm{w}=1$ then 
\begin{align*}
\left[\frac{d(gv,gw)}{d(v,w)}\right]^\alpha &= \left[\frac{\norm{gv\wedge gw}}{\norm{v\wedge w}\norm{gv}\norm{gw}}\right]^\alpha =\frac{\norm{\wedge^2g \frac{v\wedge w}{\norm{v\wedge w}}}^\alpha}{(\norm{gv}\norm{gw})^\alpha}\\
&\leq \left[\frac{\norm{\wedge^2 g}}{\norm{gv}\norm{gw}}\right]^\alpha
\leq\frac{1}{2}\left[\frac{\norm{\wedge^2 g}}{\norm{gv}^2}\right]^\alpha + \frac{1}{2}\left[\frac{\norm{\wedge^2 g}}{\norm{gw}^2}\right]^\alpha
\end{align*}
The result follows from taking the integral and the supremum in $\hat{v}\neq\hat{w}$ on both sides. We used Jensen's inequality and the fact that $ab\leq\frac{1}{2}(a^2+b^2)$ for $a,b\geq 0$.
\end{proof}

\begin{remark}
\label{rmk under Theta=kappa}
For $\mu\in \Prob(\SL_2(\R))$,\;  $\kappa_\alpha(\mu)=\underline\Theta_{2\alpha}(\mu)$.
\end{remark}

\begin{proof}
Since $\norm{\wedge_2 g}=1$ for $g\in \SL_2(\R)$, the previous inequality shows that $\kappa_\alpha(\mu)\leq \underline\Theta_{2\alpha}(\mu)$.
Since $\Pp(\R^2)$ is $1$-dimensional, given 
$\hat{v},\hat{w}\in\Projm$, $\hat{v}\neq\hat{w}$, 
there exists a unit vector $v_\ast $ in the convex cone generated by $v$ and $w$ such that
$$  \frac{d(\hat{g}\hat{v},\hat{g}\hat{w})}{d(\hat{v},\hat{w})}=\frac{1}{\norm{g\, v_\ast}^2} . $$
Integrating in $g$ w.r.t. $\mu$, the converse inequality follows.
\end{proof}

\begin{proposition}\label{cont2}
Given  $0<\alpha < \frac{p}{10}$, the function $\mu\mapsto \kappa_\alpha(\mu)$ is H\"older continuous with respect to $W_\alpha$.
\end{proposition}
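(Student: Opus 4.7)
The plan is to establish the H\"older regularity of $\kappa_\alpha$ by bounding, uniformly over pairs $(\hat v,\hat w)$ with $\hat v\neq\hat w$, the difference $\abs{\int\psi_{v,w}\,d\mu-\int\psi_{v,w}\,d\nu}$, where $\psi_{v,w}(g):=[d(\hat g\hat v,\hat g\hat w)/d(\hat v,\hat w)]^\alpha$; taking the supremum then controls $\abs{\kappa_\alpha(\mu)-\kappa_\alpha(\nu)}$. For each fixed pair I would apply Lemma~\ref{lemma almost Holder continuity II} to $\psi_{v,w}$ with exceptional set $\Bscr:=\Bscr_R(\hat v)\cup\Bscr_R(\hat w)$, where $R>0$ is a scale parameter to be tuned. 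Lemma~\ref{bound BT(v)} gives $\mu(\Bscr),\nu(\Bscr)\leq 4Ce^{-pR}$, while on $\Bscr^\complement$ one has $\norm{g}\leq e^R$ and $\norm{gv},\norm{gw}\geq e^{-R}$, so $\psi_{v,w}\leq [\norm{\wedge^2 g}/(\norm{gv}\norm{gw})]^\alpha\leq e^{4R\alpha}$ on $\Bscr^\complement$.

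The critical step is an $\alpha$-H\"older estimate on $\Bscr^\complement$ with constant independent of $(v,w)$. Writing $\psi_{v,w}=\phi^\alpha$ with $\phi=A/D$, $A(g):=\norm{(\wedge^2 g)\,u}$ for the unit bivector $u:=(v\wedge w)/\norm{v\wedge w}$, and $D(g):=\norm{gv}\norm{gw}$, the key is the operator-norm bound
$$\norm{\wedge^2 g-\wedge^2 g'}_{\mathrm{op}}\leq 2\sqrt{\binom{m}{2}}\,(\norm{g}+\norm{g'})\,\norm{g-g'}$$
on the \emph{entire} space $\wedge^2\R^m$, which controls $\abs{A(g)-A(g')}$ without producing any $\norm{v\wedge w}^{-1}$ factor (such a factor would appear in the naive decomposable-bivector expansion of $(\wedge^2 g-\wedge^2 g')(v\wedge w)$). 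Combined with the easy Lipschitz estimate for $D$ and the lower bound $D\geq e^{-2R}$ on $\Bscr^\complement$, this yields a Lipschitz bound for $\phi$ of order $e^{7R}$; passing to $\psi_{v,w}=\phi^\alpha$ via $\abs{a^\alpha-b^\alpha}\leq\abs{a-b}^\alpha$ gives $\abs{\psi_{v,w}(g)-\psi_{v,w}(g')}\lesssim e^{7R\alpha}\,\norm{g-g'}^\alpha$, uniformly in $(v,w)$.

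The remainder in Lemma~\ref{lemma almost Holder continuity II} also requires a uniform $L^2$ estimate on $\psi_{v,w}$. Using $\psi_{v,w}^2\leq\norm{g}^{4\alpha}/(\norm{gv}^{2\alpha}\norm{gw}^{2\alpha})$, two applications of Cauchy-Schwarz together with Jensen's inequality reduce the bound to the moments $\bar\Theta_{8\alpha}$ and $\underline\Theta_{8\alpha}$, each at most $C^{8\alpha/p}$ whenever $8\alpha<p$, amply ensured by $\alpha<p/10$. Inserting these ingredients into Lemma~\ref{lemma almost Holder continuity II} with truncation level $T:=e^{4R\alpha}$ yields, uniformly in $(v,w)$,
$$\Bigl|\int\psi_{v,w}\,d\mu-\int\psi_{v,w}\,d\nu\Bigr|\lesssim e^{7R\alpha}\,W_\alpha(\mu,\nu)+e^{-pR/2}+e^{-(p-4\alpha)R}.$$
Taking the supremum over $(\hat v,\hat w)$ and optimizing $R=R(W_\alpha(\mu,\nu))$ by balancing the leading term against the error (with $\alpha<p/8$ ensuring that $e^{-pR/2}$ dominates $e^{-(p-4\alpha)R}$) yields $\abs{\kappa_\alpha(\mu)-\kappa_\alpha(\nu)}\lesssim W_\alpha(\mu,\nu)^{p/(p+14\alpha)}$, a positive H\"older exponent depending only on $\alpha$ and $p$.

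The main obstacle is precisely the uniform-in-$(v,w)$ H\"older estimate on $\Bscr^\complement$: computing the Lipschitz constant of $\phi$ via the direct expansion $(\wedge^2 g)(v\wedge w)-(\wedge^2 g')(v\wedge w)$ produces the offending $\norm{v\wedge w}^{-1}$, unbounded as $\hat v\to\hat w$ and incompatible with the subsequent supremum. Replacing $v\wedge w$ by the normalized unit bivector $u$ and appealing to the full operator norm on $\wedge^2\R^m$ -- rather than the norm restricted to the line through $v\wedge w$ -- is the essential move that unlocks the proof.
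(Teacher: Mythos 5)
Your proposal follows the paper's general strategy (control on an exceptional set $\Bscr$, explicit H\"older constant outside it, then apply one of the two truncation lemmas), but it differs in two important ways, one of which is an improvement and the other a gap.

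\textbf{Where you diverge, for the better.} You work directly with the actual integrand $\psi_{v,w}(g)=[d(\hat g\hat v,\hat g\hat w)/d(\hat v,\hat w)]^\alpha$ and establish a modulus uniform in the pair $(\hat v,\hat w)$; taking the supremum then gives H\"older continuity of $\kappa_\alpha$ itself. The paper instead establishes H\"older continuity for the surrogate $\psi_{\alpha,\hat v}(g)=(\norm{\wedge^2g}/\norm{gv}^2)^\alpha$ (which is only an upper bound for $\psi_{v,w}$ via the AM-GM step in the unnamed proposition) and then invokes the ``equi-continuous family'' principle; that passage is looser than what you do, since $\sup_{\hat v}\int\psi_{\alpha,\hat v}\,d\mu$ is not $\kappa_\alpha(\mu)$. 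Your observation that using the full operator norm $\norm{\wedge^2g-\wedge^2g'}_{\mathrm{op}}$ on $\wedge^2\R^m$, applied to the \emph{unit} bivector $u=(v\wedge w)/\norm{v\wedge w}$, avoids the $\norm{v\wedge w}^{-1}$ blow-up is correct and is exactly the point one needs to carry out the uniform-in-$(v,w)$ argument.

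\textbf{Where there is a genuine gap.} You invoke Lemma~\ref{lemma almost Holder continuity II}, but with the exceptional set $\Bscr_R(\hat v)\cup\Bscr_R(\hat w)$ replacing the lemma's intrinsic set $\{|\psi|>T\}$. These are not the same: $\{|\psi_{v,w}|>T\}\subseteq\Bscr_R(\hat v)\cup\Bscr_R(\hat w)$, but the inclusion is strict (there are $g$ with $\norm{g}$ huge and $\norm{gv}$ tiny whose ratio $\psi_{v,w}(g)$ is nevertheless $\leq T$). The proof of Lemma~\ref{lemma almost Holder continuity II} truncates $\psi$ to $\bar\psi\in[-T,T]$ and shows $v_p(\bar\psi)\leq L$ globally \emph{precisely because} the hypothesis gives H\"older continuity on $\{|\psi|\leq T\}$; the intermediate-value argument locating $x_*,y_*\in\{|\psi|\leq T\}$ on segments is essential. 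You only verified H\"older continuity of $\psi_{v,w}$ on the strictly smaller set $\bigl(\Bscr_R(\hat v)\cup\Bscr_R(\hat w)\bigr)^\complement$, so the lemma's truncation argument does not apply, and there is no reason for $\bar\psi_{v,w}$ to be globally $\alpha$-H\"older.

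\textbf{How to repair it, and why your Lipschitz estimate then matters.} The clean fix is to use Lemma~\ref{lemma almost Holder continuity I} (as the paper does), which allows an arbitrary open $\Bscr$. But in Lemma~\ref{lemma almost Holder continuity I} the constant $L$ multiplies the $\sqrt{\mu(\Bscr)}\,\Theta_{2\alpha}(\mu)^{1/2}$ error terms, because those come from $L\,W_\alpha(\mu,\bar\mu)$ in the proof. Your Lipschitz constant for $\phi=A/D$ is $e^{7R}$, giving $L\sim e^{7R\alpha}$ and hence the error $e^{(7\alpha-\frac{p}{2})R}$, which requires $\alpha<p/14$ --- narrower than the stated $\alpha<p/10$. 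The cruder estimate comes from writing $|\phi-\phi'|=|AD'-A'D|/(DD')$ and lower-bounding $DD'\geq e^{-4R}$. The paper's split
\[
\frac{A}{D}-\frac{A'}{D'}=\frac{A-A'}{D}+A'\Bigl(\frac{1}{D}-\frac{1}{D'}\Bigr)
\]
gives $e^{3R}$ and $e^{5R}$ for the two terms, hence $L\sim e^{5R\alpha}$, error $e^{(5\alpha-\frac{p}{2})R}$, and the advertised threshold $\alpha<p/10$. So even after switching to Lemma~\ref{lemma almost Holder continuity I}, you would need this sharper decomposition of the quotient to reach the stated range.
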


\begin{proof}
 Fix $v\in\Projm$ and $T>0$ large and consider the set $\Bscr_T(\hat v)$ defined in~\eqref{BT(v) defin}.  We begin   finding the $\alpha$-H\"older constant of the following  map over the complement of this set
 $$\Bscr_T(\hat v)^\complement \ni g \mapsto  \frac{\norm{\wedge^2g}}{\norm{gv}^2} .$$ Notice that 

\begin{align*}  \left|\frac{\norm{\wedge^2g_1}}{\norm{g_1v}^2}-\frac{\norm{\wedge^2g_2}}{\norm{g_2v}^2}\right| 
\hspace{-2cm}& \hspace{2cm} \leq \left| \frac{\norm{\wedge^2g_1}\norm{g_2v}^2-\norm{\wedge^2g_2}\norm{g_1v}^2}{\norm{g_1v}^2\norm{g_2v}^2} \right|\\
& \leq  \left|\norm{\wedge^2g_1}-\norm{\wedge^2g_2}\right|\norm{g_1v}^{-2} + \norm{\wedge^2g_2} \left| \norm{g_1v}^{-2}-\norm{g_2v}^{-2} \right|  \\
&\leq e^{2 T} \bigg( \left|\norm{\wedge^2g_1}-\norm{\wedge^2g_2}\right| + \left| \norm{g_1v}^{-2}-\norm{g_2v}^{-2} \right| \bigg)\\
&\leq 3 e^{5 T}\, \norm{g_1-g_2}
\end{align*}
where in the last inequality we use  that (see~\cite[Lemma 2.8]{DK-book})
\begin{align*}
\left| \norm{\wedge^2g_1}-\norm{\wedge^2g_2}\right| &\leq \norm{\wedge^2g_1-\wedge^2g_2}=\norm{\wedge^2(g_1-g_2)}\\
&\leq 2 \max\{\norm{g_1},\norm{g_2}\}\norm{g_1-g_2}\leq  2 e^T\norm{g_1-g_2}
\end{align*}
and
\begin{align*}
\left| \norm{g_1v}^{-2}-\norm{g_2v}^{-2} \right| &= \left( \frac{1}{\norm{g_1 v} \norm{g_1 v}^2 } + \frac{1}{\norm{g_2 v} \norm{g_2 v}^2} \right)\,  \left| \norm{g_1 v}-\norm{g_1 v}\right| \\
&  \leq 2\,e^{3 T}\, \norm{g_1 v-g_2 v}  \leq 2\,e^{3 T}\, \norm{g_1 -g_2 }.
\end{align*}
This implies that the map
$$\Bscr_T(\hat v)^\complement \ni g \mapsto  \bigg(\frac{\norm{\wedge^2g}}{\norm{gv}^2}\bigg)^\alpha =:\psi_{\alpha, \hat v}(g) $$
is $\alpha$-H\"older with the constant
$3\, e^{5 \alpha T}$.

Next we use Lemma~\ref{lemma almost Holder continuity I} to derive the modulus of continuity for
$$\Mcal\ni \mu\mapsto \int \psi_{\alpha, \hat v}(g)\,  d\mu(g),$$
for any fixed $v\in\Projm$.
We  derive explicit estimates for the terms in the lemma's bound:

By Lemma~\ref{bound BT(v)}, we have
$$\max\{ \sqrt{\mu(\Bscr_T(\hat v))},\, \sqrt{\nu(\Bscr_T(\hat v))}\}\leq \sqrt{2}\, C^{1\over 2}\, e^{-\frac{p T}{2}}.$$
Since $2\alpha\leq p$ and $\mu, \nu\in \Mcal$, we have
$$ \max\{ \Theta_{2\alpha}(\mu),\, \Theta_{2\alpha}(\nu)\}\leq C. $$
Finally, since $8\alpha\leq p$,
\begin{align*}
	\norm{\psi_{\alpha, \hat v}}_{L^2(\mu)} &= \sqrt{\int  \bigg(\frac{\norm{ \wedge^2 g}^{2\alpha}}{\norm{gv}^{4\alpha}}\bigg) d\mu(g)}\leq \sqrt{\int  \bigg(\frac{\norm{g}^{4\alpha}}{\norm{gv}^{4\alpha}}\bigg) d\mu(g)}\\
	&\leq  \bigg(\int\norm{g}^{8\alpha} d\mu(g)\bigg)^{1/4}\bigg(\int\frac{1}{\norm{gv}^{8\alpha}} d\mu(g)\bigg)^{1/4} \leq   C^{1/2}    .
\end{align*}
All together, by Lemma~\ref{lemma almost Holder continuity I} we get
\begin{align*}
 \left| \int \bigg(\frac{\norm{\wedge^2g}}{\norm{gv}^2}\bigg)^\alpha d\mu(g) - \int \bigg(\frac{\norm{\wedge^2g}}{\norm{gv}^2}\bigg)^\alpha d\nu(g) \right|  \hspace{-5cm} &\\
   &\leq 3\, e^{5 \alpha T}\, W_\alpha(\mu,\nu)+ 24\, C\, e^{-(\frac{p}{2}-5\, \alpha)\,T}
\end{align*}

Next, choosing $T>0$ such that $$   W_\alpha(\mu,\nu) = 8\, C\, e^{-\frac{p}{2}\,T} $$
 we get $$e^T=\bigg(\frac{8\,C}{  W_\alpha(\mu,\nu)}\bigg)^{\frac{2}{p}},$$ which provides the explicit modulus of continuity
$$\left| \int \bigg(\frac{\norm{\wedge^2g}}{\norm{gv}^2}\bigg)^\alpha d\mu(g) - \int \bigg(\frac{\norm{\wedge^2g}}{\norm{gv}^2}\bigg)^\alpha d\nu(g) \right|\leq C_\ast W_\alpha (\mu,\nu)^{\frac{p-10\, \alpha}{p}},$$
where
$$ C_\ast =6\, \bigg( 8\, C \bigg)^{\frac{10\,\alpha}{p}} .$$
The H\"older continuity of $\mu\mapsto \kappa_\alpha(\mu)$ follows from the established equi-continuity in $\hat v\in \Projm$. More generally,  if $\{f_\lambda:X\to\R\}_{\lambda\in \Lambda}$ is  an equi-continuous family functions $f_\lambda:X\to \R$,
depending continuously on a parameter $\lambda$   ranging over a compact space $\Lambda$, then the pointwise  maximum function $f(x):=\max_{\lambda\in \Lambda} f_\lambda(x)$
 inherits the common modulus of continuity of the family.
%
%
\end{proof}

\begin{corollary}
	\label{coro cont2}
    The function $\mu\mapsto \kappa_\alpha(\mu^n)$ is continuous.
\end{corollary}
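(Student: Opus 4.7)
The plan is to obtain the continuity of $\mu\mapsto \kappa_\alpha(\mu^n)$ by writing it as a composition of two maps:
\[
\mu \;\longmapsto\; \mu^n \;\longmapsto\; \kappa_\alpha(\mu^n),
\]
and showing that each factor is continuous in a suitable Wasserstein metric. The first factor will be treated via an iterated application of Proposition~\ref{cont1} (local Lipschitz continuity of convolution), and the second via Proposition~\ref{cont2} (H\"older continuity of $\kappa_\alpha$). The natural metric to work with throughout is $W_\alpha$, since Proposition~\ref{cont2} is stated with respect to $W_\alpha$.

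First I would verify that $\mathcal{M}=\mathcal{M}^p_C$ is contained in $\mathcal{M}^\alpha_{C^{\alpha/p}}$. This is an immediate consequence of Jensen's inequality applied to the concave function $x\mapsto x^{\alpha/p}$, which gives $\bar{\Theta}_\alpha(\mu)\leq \bar{\Theta}_p(\mu)^{\alpha/p}$ and $\underline{\Theta}_\alpha(\mu)\leq \underline{\Theta}_p(\mu)^{\alpha/p}$. Hence Proposition~\ref{cont1}, applied with the parameter $\alpha$ in place of $p$, tells us that convolution is locally Lipschitz in $\mathcal{M}^\alpha$ with respect to $W_\alpha$. Writing $\mu^n=\mu\ast \mu^{n-1}$ and iterating the bound
\[
W_\alpha(\mu\ast\mu^{n-1},\nu\ast\nu^{n-1})\leq \max\{\bar{\Theta}_\alpha(\mu^{n-1}),\bar{\Theta}_\alpha(\nu)\}\,\bigl(W_\alpha(\mu,\nu)+W_\alpha(\mu^{n-1},\nu^{n-1})\bigr),
\]
together with the submultiplicativity $\bar{\Theta}_\alpha(\mu^{n-1})\leq \bar{\Theta}_\alpha(\mu)^{n-1}\leq C^{(n-1)\alpha/p}$ from Proposition~\ref{Theta_n(mu n)}, yields by induction a bound of the form
\[
W_\alpha(\mu^n,\nu^n)\leq K_n\, W_\alpha(\mu,\nu),
\]
with $K_n$ depending only on $n$, $\alpha$, $p$ and $C$. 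In particular, $\mu\mapsto \mu^n$ is Lipschitz continuous from $(\mathcal{M},W_\alpha)$ into $(\Prob_\alpha(\Scal),W_\alpha)$.

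For the second factor, Proposition~\ref{Theta_n(mu n)} ensures that $\mu^n\in \mathcal{M}^p_{C^n}$ whenever $\mu\in\mathcal{M}^p_C$. Proposition~\ref{cont2} applies to any bounded class of the form $\mathcal{M}^p_{C'}$ (the constant $C_\ast$ in its proof depends only on $\alpha$, $p$ and the moment bound), so it gives H\"older continuity of $\kappa_\alpha$ on $\mathcal{M}^p_{C^n}$ with respect to $W_\alpha$, with a constant that depends on $n$ but is finite. Composing with the previous step produces an inequality of the form
\[
\abs{\kappa_\alpha(\mu^n)-\kappa_\alpha(\nu^n)} \leq C_n\, W_\alpha(\mu,\nu)^{(p-10\alpha)/p},
\]
which establishes the desired continuity (indeed H\"older continuity, though only continuity is claimed). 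The only mild obstacle is bookkeeping the growth of the constants $K_n$ and $C_n$ with $n$, but since the statement is pointwise continuity for each fixed $n$, this growth is harmless.
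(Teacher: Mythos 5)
Your argument is correct and is essentially the same approach as the paper, which simply declares the result a consequence of Proposition~\ref{cont1} and Proposition~\ref{cont2}; you have supplied the bookkeeping (the change from $W_p$ to $W_\alpha$, the passage $\mathcal{M}^p_C\subset\mathcal{M}^\alpha_{C^{\alpha/p}}$, the iterated convolution bound, and the fact that $\mu^n\in\mathcal{M}^p_{C^n}$) that the paper leaves implicit.
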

\begin{proof}
    This is a corollary of Proposition~\ref{cont1} and Proposition~\ref{cont2}.
\end{proof}

The next proposition is key to establish the strong mixing or uniform ergodicity of the Markov operator.

\begin{proposition}\label{8}
There exist $0<\alpha<1$, $\sigma_0 <1$ and $n\in\mathbb{N}$ such that $$\kappa_\alpha(\mu^n)\leq \sigma_0.$$
\end{proposition}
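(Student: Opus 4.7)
The plan is to use the upper bound $\kappa_\alpha(\mu^n) \leq \sup_{\|v\|=1} \int [\|\wedge^2 g\|/\|gv\|^2]^\alpha\, d\mu^n(g)$ from the proposition immediately above, and to show that the right-hand side is strictly less than $1$ for an appropriate $n\in\N$ and small $\alpha>0$. Writing $f_v(g) := \|\wedge^2 g\|/\|gv\|^2$, the idea is a first-order Taylor expansion
\[
f_v^\alpha = 1 + \alpha \log f_v + O(\alpha^2),
\]
whose integrated leading term is negative of order $n(L_2-L_1)$ and whose second-order remainder is controlled uniformly in $v$ by the exponential moment bounds of $\Mcal$. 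On the set $\{\wedge^2 g = 0\}$ one has $f_v = 0$ and the defining ratio $d(\hat g\hat v,\hat g\hat w)/d(\hat v,\hat w)$ vanishes identically, so that set poses no difficulty.

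First I will establish the uniform-in-$v$ limit
\[
\frac{1}{n}\int \log f_v(g)\, d\mu^n(g) \;\xrightarrow[n\to\infty]{}\; L_2(\mu) - L_1(\mu) \;<\; 0.
\]
The contribution $\frac{1}{n}\int \log \|gv\|\, d\mu^n \to L_1(\mu)$ uniformly in $v$ is exactly Lemma~\ref{unif}, which itself encodes Theorem~\ref{non-invertible FK} and the quasi-irreducibility hypothesis. The contribution $\frac{1}{n}\int \log \|\wedge^2 g\|\, d\mu^n \to L_1(\mu) + L_2(\mu)$ does not depend on $v$ and follows from Fekete's lemma applied to the submultiplicative sequence $n\mapsto \int \log \|\wedge^2 g\|\, d\mu^n$, using $\log^+\|\wedge^2 g\| \le 2\log^+\|g\|$ and $\bar\Theta_p(\mu)\le C$ for integrability, together with the standard identification of the top Lyapunov exponent of $\wedge^2 A$ as $L_1+L_2$. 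Fix $n_0$ so large that $\int \log f_v\, d\mu^{n_0} \le -\delta n_0$ with $\delta := (L_1-L_2)/2 > 0$, uniformly in $\hat v$.

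Next I will plug this into the convex inequality $e^y \le 1 + y + \tfrac{y^2}{2}\, e^{|y|}$, valid for all $y\in\R$, with $y=\alpha \log f_v$, obtaining
\[
\int f_v^\alpha\, d\mu^{n_0} \;\le\; 1 - \alpha\delta n_0 + \frac{\alpha^2}{2}\int (\log f_v)^2\bigl(f_v^\alpha + f_v^{-\alpha}\bigr)\, d\mu^{n_0}.
\]
Combining Cauchy--Schwarz with the elementary bound $(\log y)^2 \le C_\beta(y^\beta + y^{-\beta})$ and the estimates $\bar\Theta_p(\mu^{n_0}),\underline\Theta_p(\mu^{n_0}) \le C^{n_0}$ from Proposition~\ref{Theta_n(mu n)} (along with $\|\wedge^2 g\|\le \|g\|^2$), the remainder integral is bounded by a finite constant $M(n_0)$ independent of $\hat v$, provided $\alpha$ is taken sufficiently smaller than $p$. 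Choosing $\alpha$ so small that $\alpha M(n_0) < \delta n_0$ then yields $\int f_v^\alpha\, d\mu^{n_0} \le 1 - \alpha\delta n_0/2 =: \sigma_0 < 1$, and hence $\kappa_\alpha(\mu^{n_0}) \le \sigma_0$.

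The hard step will be the uniform control of the $\alpha$-weighted second moment $\int (\log f_v)^2 f_v^{\pm\alpha}\, d\mu^{n_0}$, since $\log \|A^{n_0} v\|$ has variance growing like $n_0$ under $\mu^{n_0}$ and $\wedge^2 g$ may both vanish and explode. Extracting an explicit bound in terms of $p$, $C$ and $n_0$ is what fixes the allowed range of $\alpha$ and, through the eventual rate, the H\"older exponent exploited in later sections; however, the qualitative conclusion needed here --- that some $\alpha>0$ works --- is an immediate consequence of the first-order negativity once any such uniform second-moment bound is in hand.
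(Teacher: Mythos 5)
Your proposal follows essentially the same approach as the paper's proof: bound $\kappa_\alpha(\mu^n)$ by $\sup_{\hat v}\int (\|\wedge^2 g\|/\|gv\|^2)^\alpha\, d\mu^n$, use Lemma~\ref{unif} (together with the limit of $\frac{1}{n}\int\log\|\wedge^2 g\|\,d\mu^n$) to make the first-order coefficient $\int \log(\|\wedge^2 g\|/\|gv\|^2)\,d\mu^n$ uniformly negative for some fixed $n$, and then take $\alpha$ small so the second-order term in the expansion $e^y\le 1+y+\tfrac{y^2}{2}e^{|y|}$ is dominated, using exponential moment bounds. The only cosmetic differences are that the paper splits $\log\|\wedge^2 g\|=\log s_1+\log s_2$ and bounds each singular value separately rather than invoking Fekete/subadditivity for $\wedge^2$, and the paper bounds the remainder via $\log^2(y)\,y^{p/8}\le y^{p/4}$ rather than your $(\log y)^2\le C_\beta(y^\beta+y^{-\beta})$ together with Cauchy--Schwarz; both are implementations of the same moment-control step.
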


\begin{proof} 
By Lemma \ref{unif} we have   $$\lim_{n\to \infty} \int\frac{1}{n}\log\norm{gv}^{-2}d\mu^n(g)= -2L_1(\mu)$$ 
uniformly for every $\hat v\in\Projm$. 

Hence, given $\hat v\in \Projm$ and $\epsilon>0$, for all large enough $n$ 
$$\left| \int\frac{1}{n}\log\norm{gv}^{-2}\, d\mu^n(g) + 2L_1(\mu)\right|\leq\epsilon.$$ 
Choosing $\epsilon<\frac{L_1(\mu)-L_2(\mu)}{4}$ and  $n$ sufficiently large,  $$\int\log\norm{gv}^{-2} \ d\mu^n(g)\leq n(-2L_1(\mu)+\epsilon).$$
Denote the $i$-th singular value of $g\in \Mat_m(\R)$ by $s_i(g)$, $1\leq i\leq m$.
We also have that 
\begin{align*}
    \int\log\frac{\norm{\wedge^2 g}}{\norm{gv}^2} \ d\mu^n(g) &= \int\log\norm{\wedge^2 g} \ d\mu^n(g) + \int\log\norm{gv}^{-2} \ d\mu^n(g)\\
    &=\int \log\abs{s_1(g)} + \log\abs{s_2(g)} \ d\mu^n(g) + \int \log \norm{gv}^{-2} \ d\mu^n(g)\\
    &\leq n(L_1(\mu)+L_2(\mu)+\epsilon) + n(-2L_1(\mu)+\epsilon)\\
    &\leq  n(L_2(\mu)-L_1(\mu)+2\,\epsilon) \leq -1 ,
\end{align*}
the last step because  $L_1(\mu)-L_2(\mu)>4\epsilon>0$ and $n$ is large enough.

Now notice that $$e^{x} \leq 1 + x + \frac{x^2}{2}e^{\left|x\right|}.$$
Thus, for every $\hat v\in\Projm$
\begin{align*}
    \int\bigg({{\frac{\norm{\wedge^2 g}}{\norm{gv}^2}}}\bigg)^\alpha \ d\mu^n(g) &= \int e^{\alpha\log\frac{\norm{\wedge^2 g}}{\norm{gv}^2}} \ d\mu^n(g)\\
    &\leq \int 1 + \alpha\log{\frac{\norm{\wedge^2 g}}{\norm{gv}^2}} + \frac{{\bigg(\alpha\log{\frac{\norm{\wedge^2 g}}{\norm{gv}^2}}}\bigg)^2}{2}e^{\left| \alpha\log{\frac{\norm{\wedge^2 g}}{\norm{gv}^2}} \right|} \ d\mu^n(g)\\
    &\leq 1 - \alpha + O(\alpha^2)<1
\end{align*}
provided $\alpha>0$ is sufficiently small.
The bound $O(\alpha^2)$ holds for $\alpha<\frac{p}{8}$ by finiteness of the following moment
\begin{align*}
 \int \log^2{\frac{\norm{\wedge^2 g}}{\norm{gv}^2}} \; e^{\left| \alpha\log{\frac{\norm{\wedge^2 g}}{\norm{gv}^2}} \right| }\, d\mu^n(g) &=
  \int \log^2\bigg( {\frac{\norm{\wedge^2 g}}{\norm{gv}^2}} \bigg) \;  \bigg( \frac{\norm{\wedge^2 g}}{\norm{gv}^2} \bigg)^{\frac{p}{8}} \, d\mu^n(g)\\
  &\leq 
  \int  \bigg( \frac{\norm{\wedge^2 g}}{\norm{gv}^2} \bigg)^{\frac{p}{4}}  \, d\mu^n(g) <\infty.
\end{align*} 
See the proof of Proposition~\ref{cont2}.
\end{proof}

Finally we prove the  strong mixing property (also known as  uniform ergodicity) of the Markov operator.

\begin{theorem}[Uniform strong mixing]\label{sm}
Given $\mu\in\Mcal$, $0<\alpha<\frac{p}{8}$ as in Proposition~\ref{8}  there exist constants  $K>0$ and $0<\sigma<1$ and a neighborhood $\mathscr{V}\subset  \Mcal$ of $\mu$ such that every  $\nu\in \mathscr{V}$ 
  admits a unique stationary measure  $\eta_{\nu}\in \Prob(\Projm)$ satisfying
 $$\norm{Q^n_\nu(\varphi)-\int\varphi \ d\eta_{\nu}}_\alpha\leq K\sigma^n\norm{\varphi}_\alpha ,$$
 for all   $n\in\mathbb{N}$ and $\varphi\in\Holal(\Projm)$. 
\end{theorem}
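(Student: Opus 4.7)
The plan is to use Proposition~\ref{8} combined with the continuity and submultiplicativity of $\kappa_\alpha$ to obtain a uniform contraction on the H\"older semi-norm, and then extract the uniform ergodicity statement from this.

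First, by Proposition~\ref{8} there is an integer $n_0$ with $\kappa_\alpha(\mu^{n_0})\leq \sigma_0<1$. By Corollary~\ref{coro cont2}, the map $\nu \mapsto \kappa_\alpha(\nu^{n_0})$ is continuous on $\Mcal$, so we can choose a Wasserstein neighborhood $\Vv\subset\Mcal$ of $\mu$ and $\sigma_1\in(\sigma_0,1)$ such that $\kappa_\alpha(\nu^{n_0})\leq \sigma_1$ for every $\nu\in\Vv$. By continuity we may also assume $\kappa_\alpha(\nu)\leq K_0$ uniformly on $\Vv$ for some $K_0\geq 1$. Using Proposition~\ref{spectral property}, for any $n=kn_0+r$ with $0\leq r<n_0$ we get
\begin{equation*}
\kappa_\alpha(\nu^n)\leq \kappa_\alpha(\nu^{n_0})^k \,\kappa_\alpha(\nu^r)\leq K_0^{n_0}\, \sigma_1^{k}\leq K_1\, \sigma^n,
\end{equation*}
where $\sigma:=\sigma_1^{1/n_0}<1$ and $K_1:=K_0^{n_0}\sigma_1^{-1}$. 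Combining with Proposition~\ref{7} (applied to $\nu^n$ via Proposition~\ref{bounded}(1)) we obtain the key contraction
\begin{equation*}
v_\alpha(Q_\nu^n\varphi)\leq K_1\,\sigma^n\, v_\alpha(\varphi)\qquad \forall\, n\geq 0,\ \forall\,\nu\in\Vv.
\end{equation*}

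Next, I invoke Proposition~\ref{space of stationary measures}, which applies verbatim to any $\nu\in\Mcal\supset\Vv$, to produce at least one $\nu$-stationary measure $\eta_\nu\in\Prob(\Projm)$. The above contraction shows that the iterates $Q_\nu^n\varphi$ are equicontinuous and collapse exponentially to constants: since the diameter of $\Projm$ in the metric $d$ is bounded by some universal $D$, we have $\sup Q_\nu^n\varphi-\inf Q_\nu^n\varphi\leq D^\alpha\, v_\alpha(Q_\nu^n\varphi)\leq D^\alpha K_1\sigma^n v_\alpha(\varphi)$. Because $\eta_\nu$ is stationary, $\int Q_\nu^n\varphi\, d\eta_\nu=\int \varphi\,d\eta_\nu$, and this integral lies between $\inf Q_\nu^n\varphi$ and $\sup Q_\nu^n\varphi$. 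Hence
\begin{equation*}
\Bigl\|Q_\nu^n\varphi-\smallint\varphi\,d\eta_\nu\Bigr\|_\infty\leq \sup Q_\nu^n\varphi-\inf Q_\nu^n\varphi\leq D^\alpha K_1\,\sigma^n\, v_\alpha(\varphi).
\end{equation*}
Together with the contraction on $v_\alpha$, this yields the desired bound in the $\|\cdot\|_\alpha$ norm with $K:=(1+D^\alpha)K_1$.

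Uniqueness of $\eta_\nu$ then falls out: for any two $\nu$-stationary measures $\eta,\eta'$ and any $\varphi\in\Holal(\Projm)$ one has $\int\varphi\,d\eta=\int Q_\nu^n\varphi\,d\eta$, and by the uniform collapse to a constant established above this common value must agree with $\int\varphi\,d\eta'$; density of $\Holal(\Projm)$ in $C(\Projm)$ then forces $\eta=\eta'$. The main point requiring care is the uniformity in $\nu$: Proposition~\ref{8} is stated only at $\mu$, and it is the continuity result Corollary~\ref{coro cont2} -- which was built on the Wasserstein H\"older continuity of $\kappa_\alpha$ in Proposition~\ref{cont2} -- that allows the contraction constants $\sigma$ and $K$ to be chosen uniformly on an entire neighborhood $\Vv$ of $\mu$. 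All other ingredients (stationary measure existence, submultiplicativity, the $L^\infty$-boundedness of $Q_\nu$) are available directly for any $\nu\in\Mcal$.
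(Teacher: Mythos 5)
Your proposal is correct and follows essentially the same route as the paper's proof: Proposition~\ref{8} for the one-point contraction, Corollary~\ref{coro cont2} to propagate it to a Wasserstein neighborhood, Propositions~\ref{7}, \ref{spectral property}, and \ref{bounded} for submultiplicativity and the division argument, and Proposition~\ref{space of stationary measures} for existence. You merely spell out two details the paper leaves implicit — the passage from the $v_\alpha$-contraction to the $\|\cdot\|_\infty$-bound via the diameter of $\Projm$, and the uniqueness argument — which is a welcome clarification rather than a different approach.
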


\begin{proof}
	The existence of stationary measures follows from Proposition~\ref{space of stationary measures}, while uniqueness is a consequence of the theorem's conclusion. The proof proceeds by combining propositions~\ref{7},~\ref{spectral property},~\ref{bounded},~\ref{8} and Corollary~\ref{coro cont2} in a standard manner. Fix $\alpha>0$, $n_0\in \N$ and $\sigma_0<1$ according to Proposition \ref{8}. By Corollary~\ref{coro cont2}
	this measurement holds for all measures $\nu\in \mathscr{V}$ in a small neighborhood $\mathscr{V}$ of $\mu$. 
	Therefore, using propositions~\ref{7} and ~\ref{spectral property} we get	
	$$v_\alpha(\Qop_\nu^{n  n_0}\varphi)\leq \kappa_\alpha(\nu^{n_0})^n\, v_\alpha(\varphi) \leq \sigma_0^n\, v_\alpha(\varphi) $$
	and a simple division argument gives for some explicit constant $C_\ast<\infty$ and all $n\in \N$, 
	$$v_\alpha(\Qop_\nu^{n}\varphi)\leq  C_\ast\, \sigma^n\, v_\alpha(\varphi), $$
	with $\sigma=\sigma_0^{1\over n_0}<1$.	
	Finally, this implies that	
	$$ \norm{Q^n_\mu(\varphi)-\int\varphi \ d\eta}_\alpha\leq C_\ast\, \sigma^n\, v_\alpha(\varphi) .$$
\end{proof}

\section{H\"older continuity of the Lyapunov Exponent}\label{Holder}

In this section we prove Theorem~\ref{thm non invertible Holder LE }, i.e., the continuity of the first Lyapunov exponent
under the following hypothesis: (i) $\mu\in\Mcal$, (ii) $L_1(\mu)>L_2(\mu)$  and (iii)
$\mu$ is quasi-irreducible.
We will need the following lemmas.


\begin{lemma}
    Given $v,w\in\mathbb{R}^m\setminus\{0\}$, $$\norm{\frac{v}{\norm{v}}-\frac{w}{\norm{w}}}\leq\max\bigg\{\frac{1}{\norm{v}},\frac{1}{\norm{w}}\bigg\}\norm{v-w}.$$
\end{lemma}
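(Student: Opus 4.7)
The plan is to reduce by symmetry to the case $\norm{v}\geq\norm{w}$, so that the maximum on the right equals $1/\norm{w}$, and then establish the equivalent squared inequality $\norm{w}^2\,\norm{v/\norm{v}-w/\norm{w}}^2 \leq \norm{v-w}^2$.

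First I would expand both sides as inner products, using $\norm{v/\norm{v}-w/\norm{w}}^2 = 2-2\,\langle v,w\rangle/(\norm{v}\,\norm{w})$ and $\norm{v-w}^2 = \norm{v}^2+\norm{w}^2-2\,\langle v,w\rangle$. Subtracting the first (scaled by $\norm{w}^2$) from the second and multiplying through by the positive factor $\norm{v}$, a short calculation shows that the resulting difference factors as
\[
(\norm{v}-\norm{w})\bigl(\norm{v}^2+\norm{v}\,\norm{w}-2\,\langle v,w\rangle\bigr).
\]

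Both factors are then manifestly non-negative: the first by the reduction $\norm{v}\geq\norm{w}$, and the second because $\norm{v}^2\geq \norm{v}\,\norm{w}$ (same reduction) combined with the Cauchy--Schwarz bound $\langle v,w\rangle\leq \norm{v}\,\norm{w}$ gives
\[
\norm{v}^2+\norm{v}\,\norm{w}-2\,\langle v,w\rangle \;\geq\; 2\bigl(\norm{v}\,\norm{w}-\langle v,w\rangle\bigr)\;\geq\; 0.
\]
I do not foresee a genuine obstacle; the only care needed is the symmetry reduction at the start, since a naive attempt via the triangle inequality (e.g.\ writing $v\,\norm{w}-w\,\norm{v}=v(\norm{w}-\norm{v})+(v-w)\,\norm{v}$) would pick up an unwanted extra factor of $2$ on the right-hand side.
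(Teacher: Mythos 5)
Your argument is correct: the reduction to $\norm{v}\geq\norm{w}$, the expansion into inner products, and the factorization
\[
\norm{v}\bigl(\norm{v-w}^2-\norm{w}^2\,\norm{v/\norm{v}-w/\norm{w}}^2\bigr)
=(\norm{v}-\norm{w})\bigl(\norm{v}^2+\norm{v}\,\norm{w}-2\langle v,w\rangle\bigr)
\]
all check out, and both factors are indeed non-negative under the reduction together with Cauchy--Schwarz. Note, however, that the paper does not prove this lemma at all; it simply cites Proposition~2.26 of~\cite{DK-book}. So there is nothing in the paper to compare against line by line, and your proof provides a self-contained elementary argument where the paper only gives a pointer. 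Your closing remark about the naive triangle-inequality attempt losing a factor of $2$ is a good observation -- that is exactly the pitfall that the inner-product expansion avoids.
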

\begin{proof}
    See Proposition 2.26 of \cite{DK-book}.
\end{proof}

\begin{lemma}\label{3lemma}
    Let $g_1,g_2\in \Mat_m(\R)$ and consider their projective actions $\hat{g_i}:\Projm\rightarrow \Projm$ defined as $\hat{v}\mapsto \frac{g_i(v)}{\norm{g_i(v)}},$ $i\in\{1,2\}$. Then $$d(\hat{g_1}\hat{v},\hat{g_2}\hat{v})\leq\norm{g_1-g_2}\max\bigg\{\frac{1}{\norm{g_1(v)}},\frac{1}{\norm{g_2(v)}}\bigg\}.$$
\end{lemma}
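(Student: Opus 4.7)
The plan is to chain the two inequalities already stated: compare the projective (sine) distance with the Euclidean distance between normalized representatives, and then invoke the previous lemma applied to the vectors $g_1 v$ and $g_2 v$. Concretely, I would fix a unit representative $v \in \hat{v}$ (so $\norm{v} = 1$) and assume without loss of generality that both $g_1 v$ and $g_2 v$ are nonzero, since otherwise the right-hand side is $+\infty$ and the statement is vacuous.

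The first step is the standard comparison
\[
d(\hat{g}_1 \hat{v}, \hat{g}_2 \hat{v}) \;=\; \sin \angle(g_1 v, g_2 v) \;\leq\; \left\| \frac{g_1 v}{\norm{g_1 v}} - \frac{g_2 v}{\norm{g_2 v}} \right\|.
\]
This follows from the elementary identities $\norm{u_1 - u_2} = 2 \sin(\theta/2)$ and $\sin\theta = 2 \sin(\theta/2)\cos(\theta/2)$ for unit vectors $u_1, u_2$ at angle $\theta \in [0, \pi/2]$; since we are in projective space, we are free to flip signs of the representatives so that their inner product is nonnegative, ensuring $\theta \leq \pi/2$.

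The second step is to apply the preceding lemma with the substitutions $v \mapsto g_1 v$ and $w \mapsto g_2 v$, which yields
\[
\left\| \frac{g_1 v}{\norm{g_1 v}} - \frac{g_2 v}{\norm{g_2 v}} \right\| \;\leq\; \max\!\left\{ \frac{1}{\norm{g_1 v}}, \frac{1}{\norm{g_2 v}} \right\} \norm{g_1 v - g_2 v}.
\]
Finally, I would bound $\norm{g_1 v - g_2 v} = \norm{(g_1 - g_2) v} \leq \norm{g_1 - g_2}\,\norm{v} = \norm{g_1 - g_2}$ by operator-norm compatibility, and concatenate the three inequalities to obtain the stated bound.

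There is no real obstacle here: the lemma is a routine two-line consequence of the previous lemma on normalized vectors together with the standard inequality between the sine distance on $\Projm$ and the Euclidean distance between unit representatives. The only point requiring a word of care is the choice of signs of representatives to justify $\sin \theta \leq \norm{u_1 - u_2}$, which is immediate in projective space.
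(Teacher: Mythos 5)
Your proof is correct, and it is essentially the standard one: chain $d(\hat g_1\hat v,\hat g_2\hat v)\le\bigl\|g_1 v/\norm{g_1 v}-g_2 v/\norm{g_2 v}\bigr\|$ with the preceding lemma applied to $g_1 v$ and $g_2 v$, then absorb $\norm{(g_1-g_2)v}\le\norm{g_1-g_2}$. The paper itself gives no in-text argument here (it only cites Proposition 2.9 of the Duarte--Klein book), but that external proof proceeds by the same two-step comparison, so there is no genuine difference in approach. One small remark: the word of care about flipping signs of representatives is unnecessary. Since $\sin\theta = 2\sin(\theta/2)\cos(\theta/2)\le 2\sin(\theta/2)=\norm{u_1-u_2}$ holds for \emph{any} $\theta\in[0,\pi]$ and any unit vectors $u_1,u_2$ (because $\cos(\theta/2)\in[0,1]$ on that range), the inequality $d(\hat g_1\hat v,\hat g_2\hat v)\le\norm{u_1-u_2}$ holds with the specific representatives $u_i = g_i v/\norm{g_i v}$ regardless of the sign of $\langle g_1 v, g_2 v\rangle$; indeed, altering the sign of one representative would break the subsequent application of the previous lemma, so it is just as well that it is not needed.
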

\begin{proof}
    See Proposition 2.9 of \cite{DK-book}.
\end{proof}

Given $\mu\in \Mcal$ under this section's hypothesis,
by propositions~\ref{spectral property} and~\ref{8},
the following series converges geometrically
$$ \Sigma_\alpha(\mu):=\sum_{j=0}^\infty \kappa_\alpha(\mu^j) <+\infty .$$
By Corollary~\ref{coro cont2}, the  sum $\Sigma_\alpha(\mu)$ is also locally bounded.

\begin{lemma}\label{mo}
    Given $\mu,\mu'\in\Mcal$, assume that
    $\Sigma_\alpha(\mu)<K$   for some $0<\alpha\leq \frac{p}{2}$. Then for all $n\in\mathbb{N}$ and $\phi\in\Holal(\Projm)$, $$\norm{Q^n_{\mu}(\phi)-Q^n_{\mu'}(\phi)}_\infty\leq K\, C^{{\alpha\over p}}\,  v_\alpha(\phi)\, W_p(\mu,\mu')^\frac{\alpha}{p}.$$ Moreover, if also $\kappa_\alpha((\mu')^n)<1$, $n\in \N$, then for all $\phi\in \Holal(\Projm)$ 
    $$\left|\int_{\Projm}\phi \ d\eta-\int_{\Projm}\phi \  d\eta'\right|\leq K\, C^{\alpha\over p}\,  v_\alpha(\phi)\, W_p(\mu,\mu')^\frac{\alpha}{p},$$ where $\eta$ and $\eta'$ are the unique stationary measures associated, respectively, to $\mu$ and $\mu'$.
\end{lemma}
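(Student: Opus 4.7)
The strategy is the classical Le~Page-style combination of telescoping and coupling. First I would telescope
\[
Q_\mu^n\phi - Q_{\mu'}^n\phi \;=\; \sum_{j=0}^{n-1} Q_{\mu'}^{j}\,(Q_\mu - Q_{\mu'})\,Q_\mu^{n-1-j}\phi,
\]
take sup-norms, and use that $Q_{\mu'}^j$ is a contraction on $L^\infty$ (Proposition~\ref{bounded}) together with $v_\alpha(Q_\mu^{k}\phi) \leq \kappa_\alpha(\mu^k)\, v_\alpha(\phi)$ (Proposition~\ref{7}). This reduces everything to controlling $\norm{(Q_\mu - Q_{\mu'})\psi}_\infty$ by $v_\alpha(\psi)$; the resulting sum is then bounded by $\Sigma_\alpha(\mu) < K$.

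The core technical step is the estimate
\[
\norm{(Q_\mu - Q_{\mu'})\psi}_\infty \;\leq\; 2\, C^{\alpha/p}\, v_\alpha(\psi)\, W_p(\mu,\mu')^{\alpha/p}.
\]
For any coupling $\pi\in\Pi(\mu,\mu')$ I would write
\[
(Q_\mu - Q_{\mu'})\psi(\hat v) \;=\; \iint \bigl[\psi(\hat g\hat v)-\psi(\hat{g'}\hat v)\bigr]\,d\pi(g,g'),
\]
bound $|\psi(\hat g\hat v)-\psi(\hat{g'}\hat v)| \leq v_\alpha(\psi)\, d(\hat g\hat v,\hat{g'}\hat v)^\alpha$, and combine with Lemma~\ref{3lemma} and subadditivity of $t\mapsto t^\alpha$ to obtain
\[
d(\hat g\hat v,\hat{g'}\hat v)^\alpha \;\leq\; \norm{g-g'}^\alpha\bigl(\norm{gv}^{-\alpha}+\norm{g'v}^{-\alpha}\bigr).
\]
Then I would apply H\"older's inequality under $\pi$ with conjugate exponents $p/\alpha$ and $p/(p-\alpha)$: the first factor becomes $\bigl(\iint\norm{g-g'}^p\,d\pi\bigr)^{\alpha/p}$, and the second reduces via the marginals to $\bigl(\int\norm{gv}^{-\alpha p/(p-\alpha)}\,d\mu\bigr)^{(p-\alpha)/p}$ and its analogue for $\mu'$. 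Jensen's inequality applied to the concave map $t\mapsto t^{\alpha/(p-\alpha)}$ together with $\underline\Theta_p(\mu)\leq C$ bounds each such factor by $C^{\alpha/p}$; this step uses $\alpha p/(p-\alpha)\leq p$, which is exactly the hypothesis $\alpha\leq p/2$. Infimizing over couplings yields $W_p(\mu,\mu')^{\alpha/p}$.

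For the moreover part, the hypothesis $\kappa_\alpha((\mu')^n)<1$ lets Proposition~\ref{8} (and hence Theorem~\ref{sm}) apply to $\mu'$, giving a unique stationary $\eta'$ and uniform convergence $Q_{\mu'}^n\phi\to\int\phi\,d\eta'$. Using stationarity of $\eta$ and $\eta'$ I would decompose
\[
\int\phi\,d\eta - \int\phi\,d\eta' \;=\; \int\bigl(Q_\mu^n\phi - Q_{\mu'}^n\phi\bigr)\,d\eta \;+\; \int\Bigl(Q_{\mu'}^n\phi - \smallint\phi\,d\eta'\Bigr)\,d\eta.
\]
The first term is bounded for every $n$ by the inequality just proved, and the second vanishes as $n\to\infty$ by uniform mixing for $\mu'$, yielding the claim.

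The main obstacle is the H\"older-exponent balancing act in the coupling estimate: one needs the exponent on $\norm{g-g'}$ to match $p$ (to produce $W_p$), while the exponent on the singular factor $\norm{gv}^{-1}$ must stay $\leq p$ to be controlled by $\underline\Theta_p(\mu)\leq C$. These two constraints can be satisfied simultaneously precisely when $\alpha\leq p/2$, which is why this restriction on the H\"older regularity is sharp for the method.
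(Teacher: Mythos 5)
Your proposal is correct and follows essentially the same path as the paper: the telescoping decomposition, Lemma~\ref{3lemma} for $d(\hat g\hat v,\hat{g}'\hat v)$, the coupling/H\"older/Jensen estimate controlling $\norm{(Q_\mu-Q_{\mu'})\psi}_\infty$ by $W_p(\mu,\mu')^{\alpha/p}$, and the bound by $\Sigma_\alpha(\mu)<K$, and your identification of $\alpha\le p/2$ as the exact balancing condition is the right diagnosis. The only cosmetic deviation is that you replace the paper's $\max\{\|g v\|^{-1},\|g' v\|^{-1}\}^\alpha$ by $\|g v\|^{-\alpha}+\|g' v\|^{-\alpha}$, which introduces a harmless factor of $2$ in the constant.
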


\begin{proof}
For $n=1$ and for every $\pi\in\Pi(\mu,\mu')$
\begin{align*}
\norm{\Qop_{\mu}(\phi) - \Qop_{\mu'}(\phi)}_\infty \hspace{-2cm} &\hspace{2cm} =  \sup_{\hat{v}\in\Projm} \left| \int \phi(\hat{g_1} \hat{v}) \, d\mu(g_1)- \int \phi(\hat{g_2}\hat{v}) \, d\mu'(g_2)\right| \\
&\leq \sup_{\hat{v}\in\Projm} \int \left| \phi(\hat{g_1}\hat{v}) - \phi(\hat{g_2}\hat{v}) \right| \, d\pi(g_1, g_2)\\
&\leq \sup_{\hat{v}\in\Projm} \int \frac{ \left| \phi(\hat{g_1}\hat{v}) - \phi(\hat{g_2}\hat{v}) \right| }{d(\hat{g_1}\hat{v},\hat{g_2}\hat{v})^\alpha}\,  d(\hat{g_1}\hat{v},\hat{g_2}\hat{v})^\alpha  \, d\pi(g_1, g_2)\\
&\leq v_\alpha(\phi) \sup_{\hat{v}\in\Projm} \int d(\hat{g_1}\hat{v},\hat{g_2}\hat{v})^\alpha \, d\pi(g_1, g_2)\\
&\leq v_\alpha(\phi) \sup_{\hat{v}\in\Projm} \int \|g_1 - g_2\|^{\alpha}\cdot \max \left\{ \frac{1}{\|g_1 v\|}, \frac{1}{\|g_2 v\|} \right\}^\alpha d\pi(g_1, g_2),
\end{align*}
where the last inequality is due to Lemma \ref{3lemma}. 
Hence, by H\"older and Jensen inequalities we get
\begin{align*}
\hspace{2cm}& \hspace{-2cm} \sup_{\hat{v}\in\Projm} \int \|g_1 - g_2\|^{\alpha}\cdot \max \left\{ \frac{1}{\|g_1 v \|}, \frac{1}{\|g_2 v \|} \right\}^\alpha d\pi(g_1, g_2)\\
&\leq  C^{\alpha/p}\,   \bigg(\int\norm{g_1-g_2}^{p} \ d\pi(g_1,g_2)\bigg)^{{\alpha\over p}}\\
&\leq  C^{\alpha/p}\, W_p(\mu,\mu')^{\alpha\over p} .
\end{align*}
Now notice that the difference $\Qop_{\mu}^{n} - \Qop_{\mu'}^{n}$ can be written as a telescopic sum in the following manner:
\begin{align*}
\Qop_{\mu}^{n} - \Qop_{\mu'}^{n}=\sum_{i=0}^{n-1} \Qop_{\mu'}^{i} \circ (\Qop_{\mu} - \Qop_{\mu'})\circ Q^{n-i-1}_{\mu}.
\end{align*}
Hence, because $\Qop_\mu$ is Markov (see Proposition~\ref{bounded}),
\begin{align*}
    \norm{Q^n_{\mu}(\phi)-Q^n_{\mu'}(\phi)}_\infty&=\norm{\sum_{i=0}^{n-1} \Qop_{\mu'}^{i} \circ (\Qop_{\mu} - \Qop_{\mu'})\circ Q^{n-i-1}_{\mu}(\phi)}_\infty\\
    &=\sum_{i=0}^{n-1}\norm{\Qop_{\mu'}^{i} \circ (\Qop_{\mu} - \Qop_{\mu'})\circ Q^{n-i-1}_{\mu}(\phi)}_\infty\\
    &\leq\sum_{i=0}^{n-1}\norm{(\Qop_{\mu} - \Qop_{\mu'})\circ Q^{n-i-1}_{\mu}(\phi)}_\infty.\\
    &\leq C^{\alpha\over p} \, \sum_{i=0}^{n-1} v_\alpha(Q^{n-i-1}_\mu(\phi)) W_p(\mu,\mu')^{\alpha\over p}\\
    &\leq C^{\alpha\over p}  \, W_p(\mu,\mu')^{\alpha/p} \,\sum_{i=0}^{n-1}\kappa_\alpha(\mu^{n-i-1})\, v_\alpha(\phi)\\
    &\leq   K\,C^{\alpha\over p}  \,  
    v_\alpha(\phi)\, W_p(\mu,\mu')^{\alpha\over p} 
\end{align*}
Now, if $\kappa_\alpha((\mu')^n)<1$,  the uniform convergence in Theorem \ref{sm} applies to $\mu'$ as well and $$\left|\int \phi \ d\eta - \int \phi \ d\eta'\right|\leq\sup_{n\in\mathbb{N}}\norm{Q^n_{\mu}(\phi)-Q^n_{\mu'}(\phi)}_\infty\leq K\,C^{\alpha\over p}  \, v_\alpha(\phi)\, W_p(\mu,\mu')^{\alpha\over p}.$$
\end{proof}

\begin{proof}[Proof of Theorem~\ref{thm non invertible Holder LE }]
Let $\mathcal{U} \subset \Mcal$ denote the subset of measures $\mu \in \Mcal$ satisfying the assumptions of the theorem, namely that $\mu$ is quasi-irreducible and $L_1(\mu) > L_2(\mu)$. 

Given $\hat{v} \in \mathbb{P}^{m-1}$, define the function $\psi_{\hat{v}} : \Mat_m(\R) \to \R$ by
\[
\psi_{\hat{v}}(g) := \log \|g v\|.
\]

Let $\mu, \mu' \in \mathcal{U}$, and denote by $\eta$ and $\eta'$ their respective (unique) stationary measures. By Proposition~\ref{8}, there exist $n \in \N$, $0 < \alpha \leq \frac{p}{2}$, and $0 < \sigma_0 < 1$ such that $\kappa_\alpha((\mu')^n) \leq \sigma_0$ for all measures $\mu'$ sufficiently close to $\mu$. 

By Furstenberg's formula (see Lemma~\ref{lemma all beta equal}), we have:
\begin{align*}
	|L_1(\mu) - L_1(\mu')| 
	&= \left| \iint \psi_{\hat{v}}(g) \, d\mu(g) \, d\eta(\hat{v}) - \iint \psi_{\hat{v}}(g) \, d\mu'(g) \, d\eta'(\hat{v}) \right| \\
	&\leq \left| \iint \psi_{\hat{v}}(g) \, d\mu(g) \, d\eta(\hat{v}) - \iint \psi_{\hat{v}}(g) \, d\mu(g) \, d\eta'(\hat{v}) \right| \\
	&\quad + \left| \iint \psi_{\hat{v}}(g) \, d\mu(g) \, d\eta'(\hat{v}) - \iint \psi_{\hat{v}}(g) \, d\mu'(g) \, d\eta'(\hat{v}) \right|.
\end{align*}

We begin by estimating the second term:
\[
\left| \iint \psi_{\hat{v}} \, d\mu \, d\eta'(\hat{v}) - \iint \psi_{\hat{v}} \, d\mu' \, d\eta'(\hat{v}) \right| 
\leq \int \left| \int \psi_{\hat{v}} \, d\mu - \int \psi_{\hat{v}} \, d\mu' \right| \, d\eta'(\hat{v}).
\]

The function $\psi_{\hat{v}}$ is continuous except where $\psi_{\hat{v}} = -\infty$. Note that the set $\{ |\psi_{\hat{v}}| > T \}$ is contained in the small set $\Bscr_T(\hat{v})$ defined in~\eqref{BT(v) defin}. 

Moreover, for any $\hat{v} \in \Pp^{m-1}$, the function $g \mapsto \psi_{\hat{v}}(g)$ is $p$-H\"older continuous on $\{|\psi_{\hat v}|\leq T\}$. Indeed, if $|\psi_{\hat v}(g_1)|\leq T$ and   $|\psi_{\hat v}(g_2)|\leq T$,
\begin{align*}
	\left| \log \|g_1 v\| - \log \|g_2 v\| \right| 
	&\leq \frac{1}{p} \left| \log(\|g_1 v\|^p) - \log(\|g_2 v\|^p) \right| \\
	&\leq \frac{1}{p} e^{p T} \|g_1 v - g_2 v\|^p 
	\leq \frac{1}{p} e^{p T} \|g_1 - g_2\|^p.
\end{align*}

Applying Lemma~\ref{lemma almost Holder continuity II} to $\psi_{\hat{v}}$ with $L = \frac{1}{p} e^{p T}$, we obtain:
\begin{align*}
	\left| \iint \psi_{\hat v}\, d\mu\, d\eta'(\hat{v})- \iint \psi_{\hat v} \ d\mu'\, d\eta'(\hat{v}) \right| \hspace{-5cm} &\\
	&\leq \frac{1}{p}\, e^{p T}\, W_p(\mu,\mu') + 4\, C\, T\,  e^{-p\, T}  + 2\, \sqrt{2}\, C \, e^{-{{p T}\over 2}}\\
	&\leq \frac{1}{p}\, \left[\, e^{p\, T}\, W_p(\mu,\mu') + 8\, C\, e^{-\frac{p\, T}{2}} \right] \leq   2\, \frac{ (8\, C)^{2\over 3}}{p} \, W_p(\mu,\mu')^{\frac{1}{3}} ,
\end{align*}
provided $T$ is large enough so that $T e^{-p T} < e^{-p T / 2}$. This final bound is obtained by choosing $T$ such that  $e^{p\, T}\, W_p(\mu,\mu')= 8\, C\, e^{-\frac{p\, T}{2}}$ , which implies that $e^{p\, T}=\bigg(\frac{8\, C}{W_p(\mu,\mu')}\bigg)^\frac{2}{3}$.

Next, we estimate the first term in the bound for $|L_1(\mu) - L_1(\mu')|$. Let $\phi(\hat{v}) := \int \psi_{\hat{v}}(g) \, d\mu(g)$. Then, by Theorem~\ref{sm} and Lemma~\ref{mo},
\begin{align*}
	\left| \iint \psi_{\hat{v}} \, d\mu \, d\eta - \iint \psi_{\hat{v}} \, d\mu \, d\eta' \right|
	&= \left| \int \phi \, d\eta - \int \phi \, d\eta' \right| \\
	&\leq \sup_{n \in \mathbb{N}} \| Q_\mu^n(\phi) - Q_{\mu'}^n(\phi) \|_\infty \\
	&\leq K \, C^{\alpha/p} \, v_\alpha(\phi) \, W_p(\mu, \mu')^{\alpha/p},
\end{align*}
for some  $K = K(\mu) < \infty$ that is uniform in a neighborhood of $\mu$. 

To complete the proof, it remains to show that $v_\alpha(\phi) < \infty$. Observe:
\begin{align*}
	\left| \log \|g v\|^\alpha - \log \|g v'\|^\alpha \right| 
	&\leq \max \left\{ \frac{1}{\|g v\|^\alpha}, \frac{1}{\|g v'\|^\alpha} \right\} \left| \|g v\|^\alpha - \|g v'\|^\alpha \right|, \\
	\left| \|g v\|^\alpha - \|g v'\|^\alpha \right| 
	&\leq \left| \|g v\| - \|g v'\| \right|^\alpha 
	\lesssim \|g\|^\alpha \, d(v, v')^\alpha.
\end{align*}

Therefore,
\begin{align*}
	|\phi(v) - \phi(v')|
	&\leq \frac{1}{\alpha} \int \left| \log \|g v\|^\alpha - \log \|g v'\|^\alpha \right| d\mu(g) \\
	&\lesssim \frac{1}{\alpha} \int \max \left\{ \frac{1}{\|g v\|^\alpha}, \frac{1}{\|g v'\|^\alpha} \right\} \|g\|^\alpha \, d\mu(g) \, d(v, v')^\alpha \\
	&\lesssim \frac{d(v, v')^\alpha}{\alpha} \left( \int \max \left\{ \frac{1}{\|g v\|^{2\alpha}}, \frac{1}{\|g v'\|^{2\alpha}} \right\} d\mu(g) \right)^{1/2} \left( \int \|g\|^{2\alpha} d\mu(g) \right)^{1/2} \\
	&\lesssim \frac{1}{\alpha} \, d(v, v')^\alpha,
\end{align*}
for any $0 < \alpha < \frac{p}{2}$, completing the proof.
\end{proof}

\begin{proof}[Proof of Theorem~\ref{thm L1 Holder LE}]
Given $A\in \mathscr{C}_p(\Omega,\SL_m(\R))$,
	the $p$-exponential moment of the measure $\nu:=A_\ast \Pp_\mu$ in~\eqref{Theta p} coincides  with the corresponding  moment
	$\Theta_p(A)$ of the cocycle $A$ defined in~\eqref{Theta p (A)}. Since
	\begin{align*}
	\abs{\Theta_p(A)-\Theta_p(B)} &\leq \int \abs{\norm{A}^p-\norm{B}^p}\, d\Pp_\mu  \leq \int \norm{A-B}^p\, d\Pp_\mu \\
	&\leq \bigg(  \int \norm{A-B}\, d\Pp_\mu \bigg)^p =d(A,B)^p,
	\end{align*}
	we see that  $p$-exponential moments are locally bounded in  
$\mathscr{C}_p(\Omega, \SL_m(\R))$.
Given $A, B\in \mathscr{C}_p(\Omega, \SL_m(\R))$, making use of  the coupling $\pi=\smallint \delta_{(A,B)}\, d\Pp_\mu$ we get
\begin{align*}
W_\alpha (A_\ast\Pp_\mu, B_\ast\Pp_\mu) &\leq
\int \norm{A-B}^\alpha\, d\Pp_\mu \leq \bigg(  \int \norm{A-B}\, d\Pp_\mu \bigg)^p =d(A,B)^\alpha .
\end{align*}
Therefore, under the theorem's hypothesis, since by Theorem~\ref{thm non invertible Holder LE }
the Lyapunov exponent $L_1(A)=L_1(A_\ast\Pp_\mu)$ depends H\"older continuously on the measure $\nu=A_\ast\Pp_\mu$ w.r.t. the Wasserstein distance,
the above inequality shows that  it also depends H\"older continuously  on $A$ w.r.t. the $L^1$ distance.	
\end{proof} 

\section{Large deviations}\label{Large}

This section is devoted to proving Theorem~\ref{thm large deviation estimates}, which follows from an abstract large deviation result~\cite[Theorem 2.1]{CDK-paper3}.

Let $K:\Pp(\R^m)\to \Prob(\Pp(\R^m))$ denote the kernel defined by
\[
K_{\hat v} := \int \delta_{\hat g \hat v} \, d\mu(g),
\]
which induces the Markov operator
\[
\Qop_\mu: C^0(\Pp(\R^m)) \to C^0(\Pp(\R^m)), \quad (\Qop_\mu f)(\hat v) := \int f(\hat g \hat v) \, d\mu(g).
\]
Additionally, consider the continuous function $\Psi: \Pp(\R^m) \to \R$,
\[
\Psi(\hat v) := \int \log \|g\, v\| \, d\mu(g).
\]

By Theorem~\ref{sm}, under the assumptions of Theorem~\ref{thm large deviation estimates} (which coincide with those of Theorem~\ref{thm non invertible Holder LE }), there exist a unique invariant measure $\eta \in \Prob_\mu(\Pp(\R^m))$ and constants $C < \infty$ and $0 < \sigma < 1$ such that, for any $\varphi \in \Holal(M)$ and $n \in \mathbb{N}$,
\[
\left\| \Qop_\mu^n \varphi - \int \varphi \, d\eta \right\|_\infty \leq C \sigma^n \|\varphi\|_\alpha.
\]
Moreover, this strong mixing (uniform ergodicity) property is stable under perturbations of the measure $\mu$ within the space $\Mcal$.

Now, define the space $M := \Mat_m(\R) \times \Pp(\R^m)$ and consider the discontinuous kernel
\[
k: M \to \Prob(M), \quad k_{(g,\hat v)} := \mu \times \delta_{\hat g \hat v},
\]
along with the Markov operator
\[
\Qop_k: L^\infty(M) \to L^\infty(M), \quad (\Qop_k \varphi)(g, \hat v) := \int \varphi(g', g \hat v)\, d\mu(g'),
\]
and the discontinuous observable
\[
\psi: M \to \R, \quad \psi(g, \hat v) := \log \|g\, v\|.
\]
These objects are analyzed in detail in Section~\ref{FK}.

A straightforward computation shows that
\[
(\Qop_k^n \psi)(g_0, \hat v_0) = (\Qop_\mu^{n-1} \Psi)(g_0 \hat v_0),
\]
and therefore,
\[
\left\| \Qop_k^n \psi - L_1(\mu) \right\|_\infty \leq C \sigma^n \, \Lip(\Psi),
\]
where this exponential convergence is stable under perturbations of the generating measure $\mu \in \Mcal$.

By applying Theorem 2.1 and Remark 2.4 from~\cite{CDK-paper3} to the $k$-Markov process $\{\xi_n : \Omega \to M\}_{n \geq 0}$ introduced in Section~\ref{FK}, we obtain the following large deviation estimate: for all $n \geq 1$, $\varepsilon > 0$, and $\hat v \in \Pp(\R^m)$,
\begin{align*}
	& \mu^\N\left\{\, \omega\in \Mat_m(\R)^\N\colon \, \left\vert
	\frac{1}{n}\, \log \norm{A^n(\omega)\, v} - L_1(\mu)  \right\vert >\varepsilon  \, \right\} = \\
	&	\Pp_{\hat v} \left[ \,  \omega\in \Omega\, \colon\,  \left\vert
	\frac{1}{n}\, \sum_{j=0}^{n-1} \psi(\xi_j(\omega, \hat v)) - L_1(\mu)  \right\vert >\varepsilon  \, \right]  
	<C\, e^{-c(\varepsilon)\, n} ,
\end{align*}
where $\Pp_{\hat v}$ denotes the conditional probability given $\xi_0 = \hat v$.

By construction, this large deviation bound is robust under perturbations of the measure $\mu \in \Mcal$.

\section{Applications}\label{Schro}
In this section we give a few examples of applications of the main results to Schr\"odinger cocycles with unbounded potentials.

\subsection{Example 1} Let $\Omega=\R^\Z$ be the space of real number sequences, consider the function $\mathscr{V}:\Omega\to\R$ defined as
$\mathscr{V}(\omega):=\omega_0$ and the family of Schr\"odinger cocycles $A_E:\Omega\to \SL_2(\R)$ given by
$$  A_E(\omega):=\begin{bmatrix}
	\mathscr{V}(\omega) - E & -1\\ 1 & 0 
\end{bmatrix} .
$$

Given constants $C<\infty$ and $0<p\leq 1$, consider the space $\mathscr{L}^p_C$ (non-compactly supported) measures $\mu\in \Prob(\R)$ satisfying
$$ \int_\R |x|^p\, d\mu(x)\leq C .$$

Each choice of  $\mu\in \mathscr{L}^p_C$ determines a random Schr\"odinger cocycle over the full shift $\sigma:\Omega\to\Omega$ endowed with the Bernoulli measure
$\Pp_\mu:=\mu^\Z$.
Let $L_1(\mu,E)$  be the first Lyapunov exponent of this cocycle.

\begin{proposition} The  function  $\mathscr{L}^p_C\times \R\ni (\mu, E)\mapsto L_1(\mu, E)$ is  locally H\"older continuous with respect to the Wasserstein metric on $\mathscr{L}^p_C$.
\end{proposition}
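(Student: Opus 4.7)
The plan is to reduce this statement to the main H\"older continuity theorem, Theorem~\ref{thm non invertible Holder LE }, by pushing forward $\mu$ under the parameterized family of affine maps $\Phi_E\colon\R\to\SL_2(\R)$, $\Phi_E(x):=\begin{bmatrix} x-E & -1 \\ 1 & 0 \end{bmatrix}$. Setting $\nu_{\mu,E}:=(\Phi_E)_\ast\mu\in\Prob(\SL_2(\R))$, the Schr\"odinger cocycle generated by $\mu$ at energy $E$ is naturally identified with the random matrix cocycle driven by $\nu_{\mu,E}$, so that $L_1(\mu,E)=L_1(\nu_{\mu,E})$. It then suffices to check that (i) $\nu_{\mu,E}$ lies in an appropriate class $\Mcal$ with constants that are locally uniform in $(\mu,E)$, (ii) the hypotheses of Theorem~\ref{thm non invertible Holder LE } hold for $\nu_{\mu,E}$, and (iii) the pushforward map $(\mu,E)\mapsto \nu_{\mu,E}$ is itself H\"older with respect to $W_p$.

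For (i) and (iii), note that $\Phi_E(x)\in\SL_2(\R)$ and $\norm{\Phi_E(x)}\leq 1+|x|+|E|$, so integrating against $\mu\in\mathscr{L}^p_C$ yields an explicit bound on $\bar\Theta_p(\nu_{\mu,E})$. Since $\det \Phi_E(x)=1$ we have $\norm{\Phi_E(x)\,v}^{-1}\leq \norm{\Phi_E(x)}$ for unit $v$, and the same bound controls $\underline\Theta_p(\nu_{\mu,E})$. Thus $\nu_{\mu,E}\in\Mcal$ for some $\tilde C=\tilde C(C,|E|)$, locally uniform near any $(\mu_0,E_0)$. For the pushforward modulus, taking the image of an optimal coupling between $\mu$ and $\mu'$ (and the diagonal in the $E$-variable) together with $\norm{\Phi_E(x)-\Phi_{E'}(x')}\leq |x-x'|+|E-E'|$ yields
\begin{equation*}
W_p(\nu_{\mu,E},\nu_{\mu',E'})\leq W_p(\mu,\mu') + 2\,|E-E'|^p .
\end{equation*}

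For (ii), fix $(\mu_0,E_0)$ at which $\mu_0$ is not a Dirac mass. Quasi-irreducibility of $\nu_{\mu_0,E_0}$ in $\SL_2(\R)$ amounts to the absence of a common invariant line for the support: the eigenvectors of $\Phi_{E_0}(x)$ are $(\lambda_\pm(x),1)^T$ with $\lambda_\pm(x)$ the roots of $\lambda^2-(x-E_0)\lambda+1=0$, which vary non-trivially with $x$, so any two distinct points in $\supp(\mu_0)$ rule out a shared invariant line. The spectral gap condition $L_1(\nu_{\mu_0,E_0})>L_2(\nu_{\mu_0,E_0})$ reduces, in $\SL_2(\R)$, to $L_1(\nu_{\mu_0,E_0})>0$; Furstenberg's positivity criterion applies because irreducibility together with the non-compactness of the generated semigroup (clear from unboundedness of the traces $x-E_0$) force the exponent to be strictly positive.

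Combining the three ingredients concludes the proof: Theorem~\ref{thm non invertible Holder LE } gives, in a $W_p$-neighborhood of $\nu_{\mu_0,E_0}$ inside $\Mcal$, an estimate $|L_1(\nu)-L_1(\nu_{\mu_0,E_0})|\leq K\,W_p(\nu,\nu_{\mu_0,E_0})^\theta$, and composing with the pushforward bound yields $|L_1(\mu,E)-L_1(\mu_0,E_0)|\leq K'\,(W_p(\mu,\mu_0)+|E-E_0|^p)^\theta$. The main obstacle I anticipate is bookkeeping rather than conceptual: one must confirm that the H\"older exponent and constant in Theorem~\ref{thm non invertible Holder LE } depend only on the $\Mcal$-parameters $(p,\tilde C)$ and on the quantitative spectral gap and irreducibility constants of $\nu_{\mu_0,E_0}$, so that local uniformity of these quantities in a neighborhood of $(\mu_0,E_0)$ translates into a genuine local modulus of continuity. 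Additional care is required at degenerate base points (Dirac $\mu_0$, or $|x-E_0|\leq 2$ on $\supp(\mu_0)$ with $L_1=0$), where the hypotheses of the main theorem may fail and the statement must be interpreted as holding in the complement of this exceptional set.
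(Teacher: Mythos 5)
Your proposal is correct and follows essentially the same route as the paper's own (very terse) proof: identify the Schr\"odinger cocycle with the matrix measure $\nu_{\mu,E}=(\Phi_E)_\ast\mu$, verify that it lies in some $\Mcal$, verify quasi-irreducibility, use Furstenberg's positivity criterion to get the spectral gap $L_1>0=L_2$, and invoke Theorem~\ref{thm non invertible Holder LE }. Where you differ is in presentation and in one useful refinement: the paper treats the $E$-dependence separately by citing Le~Page~\cite{LP89}, whereas you absorb it into the Wasserstein framework via the clean bound $W_p(\nu_{\mu,E},\nu_{\mu',E'})\leq W_p(\mu,\mu')+\mathscr{O}(|E-E'|^p)$, obtaining H\"older continuity in $(\mu,E)$ jointly from a single application of the main theorem. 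Your computation of the moment bounds via $\det\Phi_E(x)=1$ (so $\underline\Theta_p$ is controlled by $\bar\Theta_p$) is the right elementary observation. You are also more careful than the paper: the paper's claim that ``these cocycles are always strongly irreducible and non-compact'' and that $L_1>0$ by Furstenberg quietly uses the standard fact that the potential takes at least two distinct values, i.e.\ that $\mu$ is not a Dirac mass; Dirac masses do lie in $\mathscr{L}^p_C$, and at such points strong irreducibility, non-compactness, and positivity can all fail, so the statement should indeed be read with the exceptional set you flag (your second degenerate case, $|x-E_0|\le 2$, is only relevant in combination with the Dirac case, since for two distinct potential values the standard lemma already yields a Zariski-dense subgroup). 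One small imprecision worth noting: quasi-irreducibility for $\SL_2(\R)$ is strictly weaker than ``no common invariant line''; your argument establishes the latter (irreducibility), which suffices.
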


\begin{proof}
The H\"older continuity of the Lyapunov exponent in $E$ follows from~\cite{LP89}. The general case follows from  Theorem~\ref{thm non invertible Holder LE } because these cocycles are always strongly irreducible and non-compact. Also because $L_1(\mu,E)>0$,
 by Furstenberg's positivity criterion.
\end{proof}

\bigskip

\subsection{Example 2}
Denote by $\Fscr^d_C$ the space of compactly supported measures
 $\mu\in \Prob_c(\R)$ such that for some $C<\infty$,  
 $$ \mu(B(x,r))\leq C\, r^d\quad \forall \, x\in \R, \; \forall \, r>0.$$
These  will be referred  to  as Frostman measures.

\begin{lemma}
\label{lemma Frostman measures}
Let $0<d\leq 1$. Given $\mu\in \Fscr^d_C$ and any $0<p<d$ there exists a constant $C'=C(d,C)<\infty$ such that
$$ \sup_{a\in \R} \, \int_{-\infty}^\infty \frac{1}{|t-a|^d}\, d\mu(t)\leq C' .$$
Conversely,  if  
$$ \sup_{a\in \R} \, \int_{-\infty}^\infty \frac{1}{\abs{t-a}^d}\, d\mu(t) \leq C.$$
then $\mu$ is a Frostman measure, with $\mu\in \Fscr^d_C$.
\end{lemma}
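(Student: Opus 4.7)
The plan is to establish both directions by classical Frostman/Riesz-energy arguments: the forward implication via the layer-cake (distribution function) representation combined with the ball estimate, and the converse via a one-line Markov inequality. Before starting, I would flag what appears to be a typo in the statement: the integrand in the forward direction should read $|t-a|^{-p}$ rather than $|t-a|^{-d}$, since otherwise the parameter $p$ plays no role in the conclusion and the integral diverges in general (e.g.\ take $\mu$ to be Lebesgue measure on $[0,1]$ with $d=1$).

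For the forward direction, I would fix $a \in \R$ and apply the layer-cake formula together with the change of variables $r = s^{-1/p}$:
\[
\int_\R \frac{d\mu(t)}{|t-a|^{p}} \;=\; \int_0^\infty \mu\bigl(B(a,s^{-1/p})\bigr)\, ds \;=\; p\int_0^\infty \mu\bigl(B(a,r)\bigr)\, r^{-p-1}\, dr .
\]
Splitting this integral at $r = 1$, on $(0,1]$ the Frostman hypothesis $\mu(B(a,r)) \leq C\, r^d$ gives a contribution bounded by $C\int_0^1 r^{d-p-1}\, dr = C/(d-p)$, which is finite precisely because $p < d$; on $[1,\infty)$ the trivial bound $\mu(B(a,r)) \leq 1$ yields $\int_1^\infty r^{-p-1}\, dr = 1/p$. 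Summing and multiplying by $p$ produces a constant $C' = Cp/(d-p) + 1$ depending only on $d$, $p$, $C$ (and in particular uniform in $a$).

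For the converse, I would fix $a \in \R$ and $r > 0$ and bound the integrand below by $r^{-d}$ on the ball $B(a,r)$:
\[
r^{-d}\, \mu\bigl(B(a,r)\bigr) \;\leq\; \int_{B(a,r)} \frac{d\mu(t)}{|t-a|^d} \;\leq\; \int_\R \frac{d\mu(t)}{|t-a|^d} \;\leq\; C .
\]
Rearranging gives $\mu(B(a,r)) \leq C\, r^d$, so $\mu \in \Fscr^d_C$. There is no genuine obstacle here, as both directions reduce to elementary calculations; the only point that deserves emphasis is that the strict inequality $p < d$ is essential for integrability at the singularity $t \to a$, corresponding to the $r \to 0^+$ behaviour of the layer-cake integral.
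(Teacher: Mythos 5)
Your proof is correct, and you are right to flag the typo: the first displayed inequality in the lemma statement should read $\int |t-a|^{-p}\,d\mu(t) \leq C'$, not $|t-a|^{-d}$; indeed, the paper's own proof manifestly bounds $\int_{\Sigma_n} |t-a|^{-p}\,d\mu(t)$ and the final display only changes the exponent back to $d$ by a slip. Your argument is essentially the same as the paper's, just packaged slightly differently: the paper uses a discrete dyadic annular decomposition, $\Sigma_0 = B(a,1)^\complement$ and $\Sigma_n = B(a,2^{-(n-1)})\setminus B(a,2^{-n})$, summing a geometric series $\sum_n 2^{np}\,C\,2^{-(n-1)d}$, whereas you use the continuous layer-cake (distribution function) representation $\int |t-a|^{-p}\,d\mu = p\int_0^\infty \mu(B(a,r))\,r^{-p-1}\,dr$ and split at $r=1$. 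Both are the same scale decomposition, both rely on $p<d$ for convergence of the small-scale part, and both give constants depending only on $C$, $d$, $p$. Your converse is identical to the paper's, bounding $r^{-d}\,\mathbf{1}_{B(a,r)}(t) \leq |t-a|^{-d}$ and integrating.
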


\begin{proof}
These facts are easy exercises. For the sake of completeness we include their proofs here. Consider the partition 
$$\Sigma_0:=B(a,1)^\complement\; \text{ and }\; \Sigma_n:= B(a,2^{-(n-1)})\setminus B(a,2^{-n}) \, \text{ for }\, n\geq 1.$$ 
Then $\smallint_{\Sigma_0} |t-a|^{-p}\, d\mu(t)\leq 1$ and for every $n\geq 1$,
\begin{align*}
	\int_{\Sigma_n} |t-a|^{-p}\, d\mu(t) &\leq  2^{n\, p}\, \mu(B(a,2^{-(n-1)}))\\
	&\leq
	2^{n\, p}\, C\, 2^{-(n-1)\, d} = 2^{d}\, C\,  2^{-(d-p)\, n} .
\end{align*}  
Adding up we get
\begin{align*}
	\int_{-\infty}^\infty \frac{1}{|t-a|^d}\, d\mu(t) &\leq 1
	+\sum_{n=1}^\infty 2^{d}\, C\,  2^{-(d-p)\, n} =1+ \frac{2^d\, C}{2^{d-p}-1} =:  C' ,
\end{align*} 
which proves the lemma's first statement.

Take $a\in \R$ and $r>0$ and denote by $\textbf{1}_{B(a,r)}$ the indicator function of the ball
$B(a,r):=\{x\in \R\colon \abs{x-a}\leq r\}$. Then 
$r^{-d}\, \textbf{1}_{B(a,r)}(t)\leq \abs{t-a}^{-d}$, which implies that
$$ r^{-d}\, \mu(B(a,r))\leq  \int_{-\infty}^\infty  \abs{t-a}^{-d}\, d\mu(t) \leq C .$$
This proves that $\mu$ is a Frostman measure.
\end{proof}

Given a Frostman measure  $\mu\in\Fscr^d_C$, $a\in \R$  
and $0<q<1$, consider the unbounded family of random potentials $\V_\lambda$    taking the value 
$\V_\lambda(\omega)=\omega_0$ with probability distribution
$$ (1-q)\, \delta_{\omega_0}\, d\mu(\omega_0) + q\, d\delta_a( \omega_0 \lambda^{-1}) , $$
where $\lambda$ is a large coupling constant.

The associated Schr\"odinger cocycle is defined as follows.
 Let $\Omega=(\R\times \{0,1\})^\Z$. Consider the Bernoulli measure 
 $\Pp_{\mu,q}:=((1-q)\, (\mu\times \delta_0) +q\,   \delta_{(a,1)})^\Z$ and the families of functions $\Lambda_\lambda, \mathscr{V}:\Omega\to\R$ defined by
$$ \Lambda_{\lambda}(\omega):=\begin{cases}
	1 &\text{ if  } \omega_0=(t_0,0)\\
	\lambda &\text{ if  } \omega_0=(t_0,1) 
\end{cases} \qquad \mathscr{V}(\omega):= t_0 \; \text{ where } \omega_0=(t_0, \cdot)  . $$
This data determines the Schr\"odinger cocycle  $A_{\lambda,E}:\Omega\to \SL_2(\R)$,
$$  A_{\lambda,E}(\omega):=\begin{bmatrix}
	\Lambda_\lambda(\omega)\, (\mathscr{V}(\omega)-E)& -1\\ 1 & 0 
\end{bmatrix} .
$$
Consider also the  normalized cocycle,
$$  \tilde A_{\lambda,E}(\omega):=\Lambda_\lambda(\omega)^{-1} A_{\lambda,E} = \begin{bmatrix}
	\mathscr{V}(\omega)-E & -\Lambda_\lambda(\omega)^{-1}\\ \Lambda_\lambda(\omega)^{-1} & 0 
\end{bmatrix} .
$$
The second cocycle  projects to the measure  $\mu_{\lambda^{-1},E,q}\in \Prob(\Mat_2(\R))$,
where for $\beta=\lambda^{-1}$,
$$  \mu_{\beta,E,q}:= (1-q)\,  \int \delta_{\begin{bmatrix}
		t-E  & -1 \\ 1 & 0
\end{bmatrix}} \, d\mu(t) \; + \; q\,  \delta_{\begin{bmatrix}
		a-E & -\beta \\ \beta  & 0
\end{bmatrix}} , $$
which converges in the Wasserstein metric, as $\lambda\to\infty$, to the measure 
$$   \mu_{0,E,q}:= (1-q)\,  \int \delta_{\begin{bmatrix}
		t-E  & -1 \\ 1 & 0
\end{bmatrix}} \, d\mu(t) \;  +\;  q\,  \delta_{\begin{bmatrix}
		a-E & 0 \\ 0  & 0
\end{bmatrix}} . $$

The map  induced  by the one sided shift $\sigma:\Omega\to\Omega$
on the cylinder 
$$ \mathscr{C}:=\left\{\omega\in\Omega\colon  \omega_0=(\cdot,0)\, \right\} . $$
can be viewed as a Bernoulli shift  $\sigma:\Mat_2(\R)^\Z\to\Mat_2(\R)^\Z$  endowed with the measure
\begin{equation}
	\label{eqdef tilde mu la E}
\tilde \mu_{\beta,E,q}:= \sum_{n=1}^\infty \sum_{m=1}^\infty q^m\, (1-q)^n\, \begin{bmatrix}
	a-E & -\beta \\ \beta  & 0
\end{bmatrix}^{m}_\ast  (\tilde \theta_{\mu,E})^{\ast n} ,	
\end{equation}  
where
\begin{align*}
\tilde \theta_{\mu, E} &:=  \int \delta_{\begin{bmatrix}
		t-E  & -1  \\ 1 & 0
\end{bmatrix}}\, d\mu(t) . 
\end{align*}

\begin{lemma}
	\label{Lemma Frostman theta}
There exist $0<p<d$ and $C'=C'(\mu, E)<\infty$ such that
$$   \sum_{n=1}^\infty 
\underline \Theta_p((\tilde \theta_{\mu,E})^{\ast n} ) \leq C' .$$
\end{lemma}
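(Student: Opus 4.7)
The plan is to apply the large deviation estimate of Theorem~\ref{thm large deviation estimates} to the measure $\tilde{\theta}_{\mu,E}$ on $\SL_2(\R)$ and deduce geometric decay of $\underline{\Theta}_p\bigl((\tilde{\theta}_{\mu,E})^{\ast n}\bigr)$. I first verify that $\tilde{\theta}_{\mu,E}$ belongs to $\Mcal$ for some constant $C_0=C_0(\mu,E,p)$ and any sufficiently small $p<d$. The moment $\bar{\Theta}_p(\tilde{\theta}_{\mu,E})$ is trivially finite since $\mu$ is compactly supported in $\R$. For the lower moment, I use the factorization $\|S_t v\|^2 = x^2[(t-a)^2+1]$ where $v=(x,y)^T$ is a unit vector with $x\neq 0$ and $a:=E+y/x$, and split cases on $|x|$. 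When $|x|$ is bounded below, I bound $\int \|S_t v\|^{-p}\,d\mu(t) \leq |x|^{-p}\int [(t-a)^2+1]^{-p/2}\,d\mu(t)$ using the Frostman--Riesz bound $\sup_a\int|t-a|^{-p}\,d\mu(t)\leq C'$ from Lemma~\ref{lemma Frostman measures}. When $|x|$ is small (so $|y|\approx 1$), the elementary inequality $|(t-E)x - y|\geq |y|/2$, valid once $|x|(M+|E|)\leq |y|/2$ with $M$ bounding $\supp\mu$, forces $\|S_tv\|\gtrsim 1$ uniformly in $t\in\supp\mu$, so the integrand is bounded.

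I next verify the remaining hypotheses of Theorem~\ref{thm large deviation estimates}. Irreducibility: if a line $\R v$ were invariant under $\{S_t\}_{t\in\supp\mu}$, then $v$ would be a common eigenvector of $S_{t_1}$ and $S_{t_2}$ for distinct $t_1,t_2\in\supp\mu$, hence an eigenvector of $S_{t_1}-S_{t_2}=(t_1-t_2)\,\mathrm{diag}(1,0)$, forcing $v\parallel e_1$; but $S_te_1=(t-E)e_1+e_2$ is not proportional to $e_1$, contradiction. The Lyapunov gap $L_1>L_2$ reduces to $L_1>0$ since $\tilde{\theta}_{\mu,E}$ is supported in $\SL_2(\R)$; this follows from Furstenberg's positivity criterion, because the semigroup contains the unipotent elements
\[
S_{t_1}S_{t_2}^{-1}=\begin{bmatrix} 1 & t_1-t_2 \\ 0 & 1 \end{bmatrix}
\]
for distinct $t_1,t_2\in\supp\mu$, which generate a non-compact subgroup, and preserves no finite union of projective lines by the irreducibility just established.

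Theorem~\ref{thm large deviation estimates} then provides constants $c,C_1>0$, uniform in $\hat v\in\Pp(\R^2)$, such that $(\tilde{\theta}_{\mu,E})^{\N}\bigl[\log\|A^n v\|<nL_1/2\bigr]\leq C_1\,e^{-cn}$. Splitting $\int\|A^n v\|^{-p}\,d(\tilde{\theta}_{\mu,E})^{\ast n}$ according to this event and applying Cauchy--Schwarz combined with Proposition~\ref{Theta_n(mu n)} gives
\[
\underline{\Theta}_p\bigl((\tilde{\theta}_{\mu,E})^{\ast n}\bigr) \leq e^{-pL_1 n/2} + C_1^{1/2}\,e^{-cn/2}\,\underline{\Theta}_{2p}(\tilde{\theta}_{\mu,E})^{n/2}.
\]
The main delicate point is that the second factor must remain below $e^c$, which requires the uniform moment expansion $\underline{\Theta}_{2p}(\tilde{\theta}_{\mu,E})=1+O(p)$ as $p\to 0^+$. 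This is precisely where the Frostman hypothesis is essential: the layer-cake identity yields $\int (\log|t-a|^{-1})^2\,d\mu(t) \leq 2C/d^2$ uniformly in $a$, which, combined with the case analysis of the first paragraph, gives $\sup_v\int(\log\|S_t v\|)^2\,d\mu(t) < \infty$, and hence controls the second-order term in the expansion uniformly in the direction $\hat v$. Choosing $0<p<d/2$ small enough so that $\underline{\Theta}_{2p}(\tilde{\theta}_{\mu,E})<e^c$ makes both terms geometric with a rate independent of $\hat v$, and summing yields the asserted finite bound $C'=C'(\mu,E)$.
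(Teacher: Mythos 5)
Your approach is valid but circuitous, and it misattributes the role of the Frostman hypothesis. The paper's proof is direct: since $\tilde\theta_{\mu,E}$ is compactly supported on $\SL_2(\R)$ and arises from a Schr\"odinger cocycle (so it is strongly irreducible with $L_1>0=L_2$), Proposition~\ref{8} yields $n_0$ and $\sigma<1$ with $\kappa_p\bigl((\tilde\theta_{\mu,E})^{\ast n_0}\bigr)\leq\sigma$; Remark~\ref{rmk under Theta=kappa} identifies this with $\underline\Theta_{2p}\bigl((\tilde\theta_{\mu,E})^{\ast n_0}\bigr)$, and submultiplicativity (Proposition~\ref{Theta_n(mu n)} or~\ref{spectral property}) makes the series geometrically convergent. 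You instead invoke Theorem~\ref{thm large deviation estimates}, split the moment integral across the large-deviation event, and control the bad part via Cauchy--Schwarz and $\underline\Theta_{2p}(\tilde\theta_{\mu,E})^{n/2}$. This works, but the LDT theorem is itself a downstream consequence of Proposition~\ref{8} (through Theorem~\ref{sm}), so you are routing through heavier machinery than the lemma needs. Your verifications of irreducibility and of $L_1>0$ via the unipotent $S_{t_1}S_{t_2}^{-1}$ do usefully spell out details the paper leaves implicit in the phrase ``as it arises from a Schr\"odinger cocycle.''

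The genuine error is the claim that the Frostman hypothesis is essential to the expansion $\underline\Theta_{2p}(\tilde\theta_{\mu,E})=1+O(p)$. It is not needed at all in this lemma: $\mu$ is compactly supported, so the matrices $S_t$ lie in a compact subset of $\SL_2(\R)$, hence $\norm{S_t v}\geq\norm{S_t^{-1}}^{-1}$ is bounded below uniformly in $t\in\supp\mu$ and $\norm{v}=1$, and $\underline\Theta_{2p}(\tilde\theta_{\mu,E})\leq\bigl(\sup_t\norm{S_t^{-1}}\bigr)^{2p}\to 1$ trivially as $p\to 0$. Your own identity $\norm{S_t v}^2=x^2\bigl[(t-a)^2+1\bigr]$ already shows the factor $\bigl[(t-a)^2+1\bigr]^{-p/2}\leq 1$, so the Frostman--Riesz estimate of Lemma~\ref{lemma Frostman measures} is never invoked where you call on it. The Frostman condition is genuinely used only in the next lemma (Lemma~\ref{exp2 moment bounds}), where the measures $\tilde\mu_{\beta,E,q}$ have unbounded support and the matrix $g_0(\beta)$ degenerates as $\beta\to 0$, so the lower moment is no longer controlled by compactness.
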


\begin{proof}
The probability measure $\tilde\theta_{\mu,E}$ on $\mathrm{SL}_2(\mathbb{R})$ has compact support. As it arises from a Schr\"odinger cocycle, it satisfies the assumptions underlying Proposition~\ref{8}. By applying this proposition along with Remark~\ref{rmk under Theta=kappa}, we conclude that there exist constants $0 < p < 1$, $0 < \sigma < 1$, and an integer $n_0 \in \mathbb{N}$ such that
\[
\underline{\Theta}_{2p}\big((\tilde\theta_{\mu,E})^{\ast n_0}\big)
= \kappa_{p}\big((\tilde\theta_{\mu,E})^{\ast n_0}\big) < \sigma < 1.
\]
The second statement then follows directly from Proposition~\ref{spectral property}.
\end{proof}

\begin{lemma}
	\label{exp2 moment bounds}
Given $E \in \mathbb{R} \setminus \{a\}$, there exists $0 < p < \frac{d}{2}$ such that the measures $\tilde\mu_{\beta,E,q}$ defined in~\eqref{eqdef tilde mu la E} have moments $\bar{\Theta}_p(\tilde\mu_{\beta,E,q})$ and $\underline{\Theta}_p(\tilde\mu_{\beta,E,q})$ uniformly bounded for all $\beta < 1$.
\end{lemma}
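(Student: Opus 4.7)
My plan is to bound both moments directly from the expansion
\[
\tilde\mu_{\beta,E,q} \,=\, \sum_{m,n\geq 1} q^m(1-q)^n\, (B_\beta^m)_\ast (\tilde\theta_{\mu,E})^{\ast n},
\]
where $B_\beta := \bigl[\begin{smallmatrix} a-E & -\beta \\ \beta & 0\end{smallmatrix}\bigr]$, by estimating each summand and summing two geometric series. For the upper moment, submultiplicativity together with Proposition~\ref{Theta_n(mu n)} immediately gives
\[
\bar\Theta_p(\tilde\mu_{\beta,E,q}) \,\leq\, \sum_{m,n\geq 1} q^m(1-q)^n \|B_\beta\|^{mp}\, \bar\Theta_p(\tilde\theta_{\mu,E})^n,
\]
and since $\|B_\beta\|\leq|a-E|+1$ uniformly in $\beta\leq 1$ while $\tilde\theta_{\mu,E}$ is compactly supported, both geometric series converge as soon as $p$ is small enough that $q(|a-E|+1)^p<1$ and $(1-q)\bar\Theta_p(\tilde\theta_{\mu,E})<1$, which is automatic for $p\to 0^+$.

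The core of the argument is the lower moment. For a unit vector $v$, I write $g = M_{t_n}\cdots M_{t_1}$ and $u = M_{t_{n-1}}\cdots M_{t_1}v$, decompose $M_{t_n}u = ((t_n{-}E)u_1{-}u_2)e_1 + u_1 e_2$, apply $B_\beta^m$, complete the square, and use $|\det B_\beta^m|=\beta^{2m}$ to obtain the key identity
\[
\|B_\beta^m M_{t_n} u\|^2 \,=\, u_1^2\bigl[\,A^2(t_n-t_\ast)^2 + \beta^{4m}/A^2\,\bigr],
\]
where $A := \|B_\beta^m e_1\|$ and $t_\ast$ depends on $u,m,\beta$. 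Dropping the second term inside the bracket and applying the Frostman estimate of Lemma~\ref{lemma Frostman measures} (valid for $p<d$) yields $\int\|B_\beta^m M_{t_n}u\|^{-p}\,d\mu(t_n) \leq C'\,A^{-p}|u_1|^{-p}$. I then iterate the same Frostman step for the integrations over $t_{n-1},\ldots,t_2$, each one replacing $|\psi_k|^{-p}$ by $C'|\psi_{k-1}|^{-p}$ with $\psi_k:=(M_{t_k}\cdots M_{t_1}v)_1$; the final integration over $t_1$ is bounded uniformly in unit $v$ by a case split on whether $|v_1|$ lies above or below $|v_2|/(2R')$, where $R' := \sup_{t\in\supp\mu}|t-E|$. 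Combining gives the summand bound
\[
\int \|B_\beta^m gv\|^{-p}\,d(\tilde\theta_{\mu,E})^{\ast n}(g) \,\leq\, K^n\, A^{-p}.
\]

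It remains to control $A^{-p}$ uniformly in $\beta\in(0,1)$, which is the main obstacle. For $\beta\leq|a-E|/4$ the eigenvalues $\lambda_\pm$ of $B_\beta$ are real with $|\lambda_-/\lambda_+|$ small, and the closed form $a_{m+1}=(\lambda_+^{m+1}-\lambda_-^{m+1})/(\lambda_+-\lambda_-) \geq \lambda_+^m$ together with $\lambda_+\geq(2+\sqrt 3)(a-E)/4$ gives $A\geq (c_0|a-E|)^m$ for a uniform $c_0 > 0$, and summing against $q^m$ converges for $p$ small. For $\beta\in[|a-E|/4,1)$, where $A$ can vanish at isolated $m$ in the complex-eigenvalue regime (there $a_{m+1}=\beta^m\sin((m+1)\theta)/\sin\theta$ with $\cos\theta=(a-E)/(2\beta)$), I bypass the refined factorization and use the naive bound $\|B_\beta^m gv\|\geq s_2(B_\beta^m)\|gv\| \geq \beta^{2m}(|a-E|+1)^{-m}/\|g\|$ (using $g\in\SL_2$), which gives a bound of the form $c_\ast^{-mp}\|g\|^p$ with $c_\ast>0$ uniform on this compact $\beta$-range. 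Harmonizing the two regimes requires choosing a single exponent $p<d/2$ small enough that all competing rates $q(|a-E|+1)^p$, $q(c_0|a-E|)^{-p}$, $(1-q)\bar\Theta_p(\tilde\theta_{\mu,E})$, and $qc_\ast^{-p}$ lie below $1$ simultaneously; this succeeds because each rate tends to a limit compatible with convergence as $p\to 0^+$.
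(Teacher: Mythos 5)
Your overall architecture (expand $\tilde\mu_{\beta,E,q}$, estimate each summand, and sum two geometric series) matches the paper's, and your upper-moment estimate and the completing-the-square identity are correct. However, the lower-moment argument diverges from the paper's in a way that leaves a genuine gap.

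After the iteration over $t_n,\dots,t_2$ followed by the final $t_1$ integration, your summand bound has the form $K^n A^{-p}$ with $K$ essentially equal to the Frostman constant $C'$ of Lemma~\ref{lemma Frostman measures}. Convergence of the small-$\beta$ branch therefore requires $(1-q)K<1$. But the constant produced by Lemma~\ref{lemma Frostman measures} is $C'=1+\tfrac{2^dC}{2^{d-p}-1}$, which does \emph{not} tend to $1$ as $p\to 0^+$ (it tends to $1+\tfrac{2^dC}{2^d-1}>1$). You list $(1-q)\bar\Theta_p(\tilde\theta_{\mu,E})$ among the ``competing rates'' but omit $(1-q)K$, so nothing in your argument forces $(1-q)K<1$. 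If $C\geq 1$ and $q$ is small, $(1-q)C'$ can exceed $1$, and the $n$-series in the small-$\beta$ regime diverges. (One could salvage this with a sharper Frostman estimate of the form $\sup_a\int|t-a|^{-p}\,d\mu\leq 1+\tfrac{Cp}{d-p}\to 1$, but that is not what Lemma~\ref{lemma Frostman measures} gives, and you do not prove it.)

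The paper sidesteps this issue entirely by not iterating the scalar Frostman bound over $n$. Instead it introduces the \emph{projective} Frostman class $\Escr^p_{\tilde C}$ and shows it is invariant under the Markov operator $\Qop_\mu^\ast$, so $(\tilde\theta_{\mu,E})^{\ast n}\ast\delta_{\hat v}\in\Escr^p_{\tilde C}$ with a constant independent of $n$. It then splits the lower moment by Cauchy--Schwarz into a factor $\sqrt{\underline\Theta_{2p}((\tilde\theta_{\mu,E})^{\ast n})}$, which decays geometrically in $n$ by the Lyapunov gap (Lemma~\ref{Lemma Frostman theta}, via Propositions~\ref{8} and~\ref{spectral property}), and a projective integral bounded by $\|g_0^m\|^{-p}\sqrt{\tilde C}$ uniformly in $n$. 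This makes the $n$-sum converge with no delicate tuning of constants. As a side effect, using $\|g_0^m\|\geq\rho(g_0)^m\gtrsim|a-E|^m$ (valid for all $\beta$, including the complex-eigenvalue range, since $\rho(g_0)\geq|a-E|/2$) also eliminates your case split on $\beta$ and the difficulty at the transition to complex eigenvalues. I would suggest replacing the scalar iteration with the paper's two-ingredient mechanism: a uniform-in-$n$ projective Frostman bound plus the spectral-gap decay of $\underline\Theta_{2p}$ of the convolution powers.
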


\begin{proof}
Assume $L$ is large enough so that $\abs{t-E}<L$ for all $t\in \supp(\mu)$.
All matrices in the support of $\tilde \theta_{\mu, E}$
have norm $\leq  L+1$. Hence $\bar \Theta_p( \tilde \theta_{\mu, E})\leq  L+1$  for any $0<p\leq 1$.

Given $C < \infty$, consider the compact convex set of projective Frostman measures (of dimension $\geq 2p$):
\[
\Escr^p_C := \left\{ \eta \in \Prob(\Proj) \colon \forall\, \hat e \in \Proj,\; \int \frac{1}{d(\hat x, \hat e)^{2p}}\, d\eta(\hat x) \leq C \right\}.
\]
Since the mapping
\[
t \mapsto \begin{bmatrix}
	t - E & -1 \\ 1 & 0 
\end{bmatrix} \cdot \hat v \in \Proj
\]
is a local diffeomorphism with derivatives uniformly bounded from above and below, and because $\mu \in \Fscr^d_C$ with $2p < d$, there exists a constant $\tilde C < \infty$ such that for all $\hat v \in \Proj$, we have
\[
\tilde \theta_{\mu,E} \ast \delta_{\hat v} \in \Escr^p_{\tilde C}.
\]
It follows that the operator
\[
\Qop_\mu^\ast \eta := \tilde\theta_{\mu,1,E} \ast \eta = \int \tilde \theta_{\mu,1,E} \ast \delta_{\hat v} \, d\eta(\hat v)
\]
preserves the space $\Escr^p_{\tilde C}$, since it maps $\eta$ to a convex combination of measures in $\Escr^p_{\tilde C}$. Thus, by iteration, we have
\[
(\tilde \theta_{\mu,E})^{\ast n} \ast \delta_{\hat v} \in \Escr^p_{\tilde C}, \quad \text{for all } n \in \mathbb{N}.
\]

Next, we estimate the moment $\underline{\Theta}_p\big((g_0^m)_\ast (\tilde\theta_{\mu,E})^{\ast n}\big)$, where $g_0$ denotes the matrix
\[
g_0 = \begin{bmatrix}
	a - E & -\beta \\
	\beta & 0 
\end{bmatrix}.
\]
Let $\hat e_u$ and $\hat e_s \in \Proj$ denote the eigendirections of $g_0$ corresponding, respectively, to its largest and smallest eigenvalues. For any unit vector $v \in \mathbb{R}^2$, the following elementary inequality holds:
\[
\|g_0^m v\| \geq \|g_0^m\| \cdot \cos \measuredangle(v, e_u) = \|g_0^m\| \cdot d(\hat v, \hat e_s).
\]
Applying Cauchy--Schwarz's inequality, together with the estimate for $\underline{\Theta}_p\big((\tilde\theta_{\mu,E})^{\ast n}\big)$ obtained in Lemma~\ref{Lemma Frostman theta}, and using the remark above that
\[
(\tilde\theta_{\mu,E})^{\ast n} \ast \delta_{\hat v} \in \Escr^p_{\tilde C},
\]
we proceed with the moment estimate.
\begin{align*}
	\int \frac{1}{\norm{g\, v}^p}\, d(g_0^m)_\ast (\tilde\theta_{\mu,E})^{\ast n}(g) \hspace{-3cm}&\hspace{3cm}=
		\int \frac{1}{\norm{g_0^m\, g\, v}^p}\, d  (\tilde\theta_{\mu,E})^{\ast n}(g) \\
		&= 	\int \frac{1}{\norm{g\, v}^p}\,  \frac{1}{\norm{g_0^m\, \frac{g\, v}{\norm{g\, v}}}^p}\, d (\tilde\theta_{\mu,E})^{\ast n}(g) \\
		&\leq \sqrt{ \underline\Theta_{2p}((\tilde\theta_{\mu,E})^{\ast n}) } \, \left( 
		\int  \frac{1}{\norm{g_0^m\, \frac{g\, v}{\norm{g\, v}}}^{2p}}\, d (\tilde\theta_{\mu,E})^{\ast n}(g) \right)^{1\over 2}\\	
		&= \sqrt{ \underline\Theta_{2p}((\tilde\theta_{\mu,E})^{\ast n}) } \, \left( 
\int  \frac{1}{\norm{g_0^m\, w}^{2p}}\, d( (\tilde\theta_{\mu,E})^{\ast n}\ast \delta_{\hat v})(\hat w) \right)^{1\over 2}\\		
		&\lesssim   \left( 
\int  \frac{1}{\norm{g_0^m}^{2p}\, d(\hat w, \hat e_s)^{2p}}\, d( (\tilde\theta_{\mu,E})^{\ast n}\ast \delta_{\hat v})(\hat w) \right)^{1\over 2}\\
&\lesssim  	 \frac{1}{\abs{a-E}^{p \, m}}\, \left( 
\int  \frac{1}{ d(\hat w, \hat e_s)^{2p}}\, d( (\tilde\theta_{\mu,E})^{\ast n}\ast \delta_{\hat v})(\hat w) \right)^{1\over 2} \leq \frac{{\tilde C}^{1\over 2}}{\abs{a-E}^{p \, m}}
\end{align*}
Therefore,
\[
\underline{\Theta}_p\big((g_0^m)_\ast (\tilde\theta_{\mu,E})^{\ast n}\big) \lesssim |a - E|^{-p m}.
\]
Summing over $n$ and $m$, we obtain
\[
\underline{\Theta}_p(\tilde\mu_{\beta,E,q}) \lesssim \sum_{n=1}^\infty \sum_{m=1}^\infty \frac{q^m}{|a - E|^{p m}} \, (1 - q)^n,
\]
which is bounded provided we choose $0 < p < d/2$ sufficiently small so that $q < |a - E|^p$.

Note that this bound is independent of $\beta$.
\end{proof}

As an application of Theorem~\ref{thm non invertible Holder LE }, we derive an asymptotic formula for the Lyapunov exponent 
$L_1(A_{\lambda, E}, \Pp_{\mu,q})=L_1(\mu_{\lambda, E, q})$.

\begin{proposition}
Given  $E\neq a$, there exists $0<p<{d\over 2}$ such that
$$ L_1(\mu_{\lambda, E, q}) =q\, \log \lambda +   L_1(\tilde \mu_{0, E,q}) + \mathscr{O}\left(\lambda^{-p} \right) \quad \text{as }\quad \lambda\to \infty .$$
\end{proposition}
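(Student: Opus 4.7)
The plan has three stages: factor the scalar diagonal out of $A_{\lambda,E}$, descend to the induced Bernoulli cocycle on $\mathscr{C}$, and apply Theorem~\ref{thm non invertible Holder LE}. First, writing $A_{\lambda,E}=\Lambda_\lambda\,\tilde A_{\lambda,E}$ and applying Birkhoff's ergodic theorem to the scalar log $\log\Lambda_\lambda$, which takes the value $\log\lambda$ with probability $q$ and $0$ with probability $1-q$, immediately produces the $q\log\lambda$ term:
\[
L_1(\mu_{\lambda,E,q})\;=\;\EE_{\Pp_{\mu,q}}[\log\Lambda_\lambda]\;+\;L_1(\tilde A_{\lambda,E})\;=\;q\log\lambda\;+\;L_1(\mu_{\lambda^{-1},E,q}).
\]
The remaining task is to show that $L_1(\mu_{\lambda^{-1},E,q})$ converges to $L_1(\tilde\mu_{0,E,q})$ at rate $\lambda^{-p}$, modulo the Abramov-Kac proportionality linking the base cocycle to the induced one on $\mathscr{C}$.

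The obstruction is that the natural limit $\mu_{0,E,q}$ places mass $q$ on the rank-one matrix $\mathrm{diag}(a-E,0)$, so $\underline\Theta_p(\mu_{0,E,q})=+\infty$ and Theorem~\ref{thm non invertible Holder LE} cannot be applied at $\mu_{0,E,q}$ directly. This is precisely what the induced block measure $\tilde\mu_{\beta,E,q}$ is designed to remedy: by Lemma~\ref{exp2 moment bounds} there is $0<p<d/2$ for which $\tilde\mu_{\beta,E,q}\in\Mcal$ uniformly in $\beta<1$. At $\beta=0$ the remaining hypotheses of Theorem~\ref{thm non invertible Holder LE} hold automatically: the support of $\tilde\mu_{0,E,q}$ consists of rank-one matrices, so $L_2(\tilde\mu_{0,E,q})=-\infty$ and the spectral gap is free; and quasi-irreducibility is immediate because the good factor $\tilde\theta_{\mu,E}$, whose support is the Schr\"odinger family $\begin{bmatrix}t-E & -1\\ 1 & 0\end{bmatrix}$ for $t\in\mathrm{supp}(\mu)$, leaves no proper subspace common-invariant.

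Granted these hypotheses, it suffices to establish the Wasserstein estimate $W_p(\tilde\mu_{\lambda^{-1},E,q},\tilde\mu_{0,E,q})=\mathscr{O}(\lambda^{-p})$. I would use the natural block-preserving coupling that, for each $(n,m)$, matches blocks with identical good draws from $\tilde\theta_{\mu,E}^{\ast n}$ so that only the bad factor $g_0^m$ differs from its $\beta=0$ limit $g_\star^m$; a telescoping bound then yields $\|g_0^m-g_\star^m\|\lesssim m\,|a-E|^{m-1}\,\lambda^{-1}$, and the Wasserstein cost is dominated by
\[
\lambda^{-p}\sum_{n,m\geq 1}q^m(1-q)^n\,m^p\,|a-E|^{(m-1)p}\,\|\tilde\theta_{\mu,E}\|_\infty^{np},
\]
which converges under the same smallness condition $q\,|a-E|^p<1$ already required in Lemma~\ref{exp2 moment bounds}. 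Theorem~\ref{thm non invertible Holder LE} then gives $|L_1(\tilde\mu_{\lambda^{-1},E,q})-L_1(\tilde\mu_{0,E,q})|=\mathscr{O}(\lambda^{-p\alpha})$ for the relevant H\"older exponent $\alpha$, and the Abramov-Kac relation transfers this bound back to $L_1(\mu_{\lambda^{-1},E,q})$, closing the argument. The hard part is precisely the Wasserstein estimate: the support of $\tilde\mu_{\beta,E,q}$ reaches arbitrarily long blocks, so the geometric weight $q^m(1-q)^n$ must tame both $\|g_0^m\|\sim|a-E|^m$ and the expansion of $\tilde\theta^{\ast n}_{\mu,E}$, and the margin forces exactly the choice of $p$ that makes Lemma~\ref{exp2 moment bounds} work --- a structural matching rather than a coincidence.
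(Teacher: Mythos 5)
Your proposal follows the same strategy as the paper's proof: factor out the scalar part $\Lambda_\lambda$, descend to the induced Bernoulli cocycle, verify the hypotheses of Theorem~\ref{thm non invertible Holder LE } at the degenerate limit $\tilde\mu_{0,E,q}$, bound $W_p(\tilde\mu_{\lambda^{-1},E,q},\tilde\mu_{0,E,q})$ by $\mathscr{O}(\lambda^{-p})$ via a block-preserving coupling, and conclude. Your telescoping bound for $\|g_0(\beta)^m - g_0(0)^m\|$ is a legitimate alternative to the paper's spectral-radius comparison, and your inclusion of the factor coming from $\bar\Theta_p\bigl((\tilde\theta_{\mu,E})^{\ast n}\bigr)$ in the Wasserstein sum is in fact more careful than the paper's $\lesssim$, though the convergence of the $n$-sum requires its own smallness condition on $p$, not just $q\,|a-E|^p<1$.

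There is, however, a genuine gap in the quasi-irreducibility step. You assert that quasi-irreducibility of $\tilde\mu_{0,E,q}$ is immediate because the Schr\"odinger factor $\tilde\theta_{\mu,E}$ leaves no proper subspace common-invariant. That is the wrong family to test. The support of $\tilde\mu_{0,E,q}$ consists of matrices $g_0(0)^m\,h$ with $g_0(0)=\mathrm{diag}(a-E,\,0)$ and $m\geq 1$, and every such matrix has image contained in $\R e_1$. Hence the horizontal axis $\R e_1$ \emph{is} a common invariant line for $\tilde\mu_{0,E,q}$, even though it is not invariant for the Schr\"odinger family; irreducibility of $\tilde\theta_{\mu,E}$ does not rule this out, because a subspace invariant for the full support of $\tilde\mu_{0,E,q}$ need not be invariant for the factor $\tilde\theta_{\mu,E}$ alone. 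What must instead be verified -- and what the paper does verify -- is that the Lyapunov exponent of the induced one-dimensional cocycle along $\R e_1$ equals $L_1(\tilde\mu_{0,E,q})$, which is precisely the content of quasi-irreducibility in this degenerate situation (it holds, e.g., because after a single iterate every orbit already lies in $\R e_1$, so the exponent along that line cannot be strictly less than the top exponent). As written, your argument would conclude (incorrectly) that no invariant subspace exists, and the hypothesis of the main theorem would not actually have been established.
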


\begin{proof}
Choose $0 < p < \frac{d}{2}$ according to Lemma~\ref{exp2 moment bounds}, and consider the family of measures $\tilde{\mu}_{\beta, E, q} \in \Mcal$ for some constant $C < \infty$. By Proposition~\ref{prop bound L1 mu}, we have
\[
L_1(\tilde{\mu}_{0, E, q}) > -\infty = L_2(\tilde{\mu}_{0, E, q}).
\]
The measure $\tilde{\mu}_{0, E, q}$ is quasi-irreducible since the horizontal axis is the only invariant line, and the top Lyapunov exponent is attained along this direction.

Moreover, the top Lyapunov exponent of the original cocycle is related to that of the induced cocycle by the formula (see~\cite[Proposition 4.18 and Exercise 4.8]{Viana2014}):
\[
L_1(\mu_{\lambda, E, q}) = q \log \lambda + L_1(\tilde{\mu}_{\lambda^{-1}, E, q}).
\]

Consider the matrix
\[
g_0(\beta) := 
\begin{bmatrix}
	a - E & -\beta \\ \beta & 0
\end{bmatrix},
\]
and let $\rho(\beta)$ denote its spectral radius. Using the inequality
\[
\abs{1 - (1 - \beta)^m} \leq m \beta, \quad \forall\, \beta > 0,
\]
and the fact that $\rho(\beta) = \rho(0)\, (1 - \mathcal{O}(\beta))$, we obtain
\begin{align*}
	\norm{g_0(\beta)^m - g_0(0)^m} 
	&\lesssim \abs{\rho(\beta)^m - \rho(0)^m} \\
	&\leq \rho(0)^m \abs{\left( \frac{\rho(\beta)}{\rho(0)} \right)^m - 1} \\
	&\leq \rho(0)^m \abs{(1 - \mathcal{O}(\beta))^m - 1} \\
	&\lesssim m\, \abs{a - E}^m\, \beta.
\end{align*}

Now, let $\psi \in \Hol(\Mat_2(\R))$ be a H\"older-continuous function with H\"older constant $\leq 1$. Then
\begin{align*}
\abs{\int \psi\, dg_0(\beta)^m_\ast (\tilde\theta_{\mu,E})^{\ast n} - \int \psi\, dg_0(0)^m_\ast (\tilde\theta_{\mu,E})^{\ast n} } \hspace{-5cm} & \\
&\leq \int  \abs{\psi( g_0(\beta)^m   h)  -  \psi(g_0(0)^m h) }\,  d(\tilde\theta_{\mu,E})^{\ast n} (h)\\
&\leq \int  \norm{ g_0(\beta)^m   h  -   g_0(0)^m h }^p\,  d(\tilde\theta_{\mu,E})^{\ast n} (h)\\
&\lesssim  \norm{ g_0(\beta)^m   -   g_0(0)^m }^p\\
&\lesssim m^p \abs{a-E}^{p\, m} \beta^p.
\end{align*}

Therefore, takin the supremum in $\psi$,
\[
W_p\left((g_0(\beta)^m)_\ast (\tilde{\theta}_{\mu,E})^{\ast n},\, (g_0(0)^m)_\ast (\tilde{\theta}_{\mu,E})^{\ast n} \right)
\lesssim m^p\, \abs{a - E}^{p m}\, \beta^p.
\]

Summing over $n$ and $m$, we obtain
\begin{align*}
	W_p(\tilde{\mu}_{\beta,E,q}, \tilde{\mu}_{0,E,q}) 
	&\lesssim \sum_{n=1}^\infty \sum_{m=1}^\infty m^p\, q^m\, \abs{a - E}^{p m}\, (1 - q)^n\, \beta^p \\
	&\lesssim \beta^p,
\end{align*}
where the series converges provided $0 < p < d$ is chosen small enough so that $q\, \abs{a - E}^p < 1$.

Therefore, by Theorem~\ref{thm non invertible Holder LE }, we conclude:
\begin{align*}
	L_1(\mu_{\lambda, E, q}) 
	&= q \log \lambda + L_1(\tilde{\mu}_{\beta, E, q}) \\
	&= q \log \lambda + L_1(\tilde{\mu}_{0, E, q}) + \mathcal{O}(\beta^p) \\
	&= q \log \lambda + L_1(\tilde{\mu}_{0, E, q}) + \mathcal{O}(\lambda^{-p}).
\end{align*}

This establishes the desired asymptotic formula.
\end{proof}

\begin{remark} The following explicit formula holds
$$L_1(\tilde{\mu}_{0, E, q}) =
 \sum_{n=1}^\infty \sum_{m=1}^\infty   q^m\, (1 - q)^n\, \left[ m  \log \abs{a - E} + \int \log\abs{\langle e_1, g\, e_1\rangle}\, d(\tilde \theta_{\mu,E})^{\ast n}(g)\right]. $$
\end{remark}

\begin{remark}
If, instead of being a Frostman measure, $\mu$ has finite support, then the Lyapunov exponent $L_1(\tilde{\mu}_{0, E, q})$ is discontinuous in $E$ within the asymptotic spectrum (as $\lambda \to \infty$) of the associated Schr\"odinger operators (see~\cite[Example~5.4]{DDKG25}).
In the general case, when $\mu$ exhibits a combination of both behaviors, a mixture of the two phenomena is expected.
\end{remark}

\begin{remark}
Since the Integrated Density of States (IDS), like the Lyapunov exponent, can be represented as an integral with respect to the stationary measure, see~\cite[Theorem 1.1 and Proposition 2.6]{BCDFK}, similar continuity properties are expected to hold for the IDS.
\end{remark}

\bigskip

\subsection{Example 3}
Let $\Sym_m(\R)$ denote the space of symmetric $m \times m$ real matrices. Consider a probability measure $\mu \in \Prob_c(\Sym_m(\R))$ such that
\begin{equation}
	\label{eq Jacobi moment}
	\sup_{E \in \R} \int \frac{1}{|\det(s - E)|^p} \, d\mu(s) \leq C.
\end{equation}

Define the product space $\Omega := \Sym_m(\R)^\Z$ and let the potential $\mathscr{V} : \Omega \to \Sym_m(\R)$ be given by $\mathscr{V}(\omega) := \omega_0$. This potential defines, for each $\omega \in \Omega$, the Jacobi operator
\[
H_{\lambda, \omega} : \ell^2(\Z, \R^m) \to \ell^2(\Z, \R^m),
\quad (H_{\lambda, \omega} \psi)_n := -(\psi_{n+1} + \psi_{n-1}) + \lambda\, \mathscr{V}(\sigma^n \omega)\, \psi_n,
\]
where $\sigma$ denotes the shift on $\Omega$.

Associated with this operator is the family of Schr\"odinger cocycles $A_{\lambda,E} : \Omega \to \Sp(\R^{2m})$ defined by
\[
A_{\lambda,E}(\omega) := \begin{bmatrix}
	\lambda (\mathscr{V}(\omega) - E I) & -I \\ I & 0
\end{bmatrix},
\]
with values in the symplectic group $\Sp(\R^{2m})$.

In this setting, Craig and Simon~\cite{CraigSimon} proved a Thouless formula that relates the sum of the first $m$ Lyapunov exponents
\[
L_1(\wedge^m A_{\lambda,E}) = L_1(A_{\lambda,E}) + \cdots + L_m(A_{\lambda,E})
\]
to the Integrated Density of States (IDS).

As an application of Corollary~\ref{coro main them det moment}, we obtain the following asymptotic formula for this sum.

\begin{proposition}
	Let $\mu \in \Prob_c(\Sym_m(\R))$ be a probability measure satisfying~\eqref{eq Jacobi moment}. Then, there exists $0 < p' < p$ such that as $\lambda \to \infty$,
	\[
	L_1(\wedge^m A_{\lambda,E}) = m \log \lambda + \int_{\Sym_m(\R)} \log |\det(s - E)| \, d\mu(s) + \mathscr{O}(\lambda^{-p'}).
	\]
\end{proposition}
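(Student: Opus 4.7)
The plan is to adapt the strategy of Example 2. Writing $A_{\lambda, E}(\omega) = \lambda\, g(\beta, \mathscr{V}(\omega))$ with $\beta := 1/\lambda$ and
\[
g(\beta, s) := \begin{bmatrix} s - E\,I & -\beta\, I \\ \beta\, I & 0 \end{bmatrix},
\]
we obtain $\wedge^m A_{\lambda,E} = \lambda^m\, \wedge^m g(\beta, \mathscr{V})$ and hence
\[
L_1(\wedge^m A_{\lambda,E}) = m\,\log\lambda + L_1(\hat\mu_\beta), \qquad \hat\mu_\beta := (\wedge^m g(\beta,\cdot))_*\mu \in \Prob(\Mat_{\binom{2m}{m}}(\R)).
\]
The task reduces to showing $L_1(\hat\mu_\beta) = \int \log|D(s)|\, d\mu(s) + \mathscr{O}(\beta^{p'})$ as $\beta \to 0$, for some $0 < p' < p$, where $D(s) := \det(s - E\,I)$.

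I would first identify the limit. Since $\wedge^m g(0,s) = D(s)\, v_0 v_0^T$ is rank one with $v_0 := e_1\wedge\cdots\wedge e_m$, iterated products collapse onto $\R\, v_0$ after a single step and act there as multiplication by $D(s)$, giving
\[
L_1(\hat\mu_0) = \int \log|D(s)|\, d\mu(s),
\]
which is finite thanks to the moment hypothesis $\int |D(s)|^{-p}\, d\mu \leq C$.

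The quantitative step is a block decomposition of $\wedge^m g(\beta,s)$ with respect to $\wedge^m \R^{2m} = \R\, v_0 \oplus v_0^\perp$. Expanding $\wedge^m g(\beta,s)\, e_I$ for each basis $m$-wedge $e_I$, and setting $I_2 := I \cap \{m+1,\dots,2m\}$, one finds that the $v_0$-component of the image scales like $\beta^{|I_2|}$ while the $v_0^\perp$-component scales like $\beta^{|I_2|+1}$. Thus the $v_0 \to v_0$ block equals $D(s)$, the two off-diagonal blocks have operator norm $\mathscr{O}(\beta)$, and the $v_0^\perp \to v_0^\perp$ block has operator norm $\mathscr{O}(\beta^2)$. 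This dominated splitting, combined with the Jacobi moment bound to control the exceptional events $\{|D(s)| \ll 1\}$, shows that the unique $\hat\mu_\beta$-stationary measure $\eta_\beta$ on $\Pp(\wedge^m \R^{2m})$ satisfies $W_p(\eta_\beta, \delta_{\hat v_0}) = \mathscr{O}(\beta^{p'})$, and that $\psi_\beta(\hat v) := \int \log\|\wedge^m g(\beta,s)\, v\|\, d\mu(s)$ is locally Hölder at $\hat v_0$ with $\psi_\beta(\hat v_0) = L_1(\hat\mu_0) + \mathscr{O}(\beta^{p'})$. Furstenberg's formula $L_1(\hat\mu_\beta) = \int \psi_\beta\, d\eta_\beta$ then delivers the desired estimate.

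The main obstacle is that Theorem~\ref{thm non invertible Holder LE } and Corollary~\ref{coro main them det moment} cannot be applied as black boxes at $\beta = 0$: the limit $\hat\mu_0$ lies outside $\Mcal$, because $\underline{\Theta}_p(\hat\mu_0) = +\infty$ along $v \in v_0^\perp$, and it fails to be quasi-irreducible since every subspace of $v_0^\perp$ is invariant with vanishing induced cocycle. Instead, I would replay the spectral argument of Section~\ref{Holder} in perturbative form, substituting the explicit block estimates above for the abstract uniform ergodicity of the Markov operator. The technical heart is a careful bookkeeping of the singular set $\{s : |D(s)| \leq \beta^\alpha\}$ via the Frostman-type Jacobi moment, which both fixes the final Hölder exponent $p' < p$ and yields the uniform control of $\eta_\beta$ near $\delta_{\hat v_0}$ needed to close the argument.
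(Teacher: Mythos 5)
The paper does not actually provide a proof of this proposition; it simply asserts it is an application of Corollary~\ref{coro main them det moment}. Your critique of that assertion is correct and important: the limit $\hat\mu_0 = (\wedge^m g(0,\cdot))_*\mu$ with $\wedge^m g(0,s) = \det(s-EI)\, v_0 v_0^T$ has $\underline\Theta_p(\hat\mu_0) = +\infty$ (indeed $\wedge^m g(0,s)$ annihilates all of $v_0^\perp$ for every $s$, so the kernel does not vary with $s$ the way it does in Example~2 after inducing), and quasi-irreducibility fails because $v_0^\perp$ is invariant with induced exponent $-\infty$. Moreover, the family $\hat\mu_\beta$ is not uniformly in $\Mcal$: for $w\in v_0^\perp$ the $(v_0^\perp\!\to v_0)$ block gives $\|\wedge^m g(\beta,s)w\|\asymp\beta$, so $\underline\Theta_p(\hat\mu_\beta)\gtrsim\beta^{-p}$. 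Equivalently, at the level of the $2m\times 2m$ cocycle $\tilde g(\beta,s)$, $\det\tilde g(\beta,s)=\beta^{2m}$, so $\underline\Theta_p^{2m}(\tilde\mu_\beta)$ blows up. Hence neither Theorem~\ref{thm non invertible Holder LE } nor Corollary~\ref{coro main them det moment} can be invoked as a black box near $\beta=0$, and the inducing device from Example~2 has no analogue here since there is no mixture parameter. Your identification of $L_1(\hat\mu_0)=\int\log|D(s)|\,d\mu(s)$ and your block bookkeeping are both correct: the $(J,I)$ entry of $\wedge^m g(\beta,s)$ is $\mathscr{O}(\beta^{|J_2|+|I_2|})$ where $J_2=J\cap\{m+1,\dots,2m\}$, giving exactly the $\left(\begin{smallmatrix}D(s)&\mathscr{O}(\beta)\\ \mathscr{O}(\beta)&\mathscr{O}(\beta^2)\end{smallmatrix}\right)$ structure you describe.

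That said, what you hand in is a plan rather than a proof, and the two load-bearing claims at the end are exactly the ones left unargued. First, the concentration $W_p(\eta_\beta,\delta_{\hat v_0})=\mathscr{O}(\beta^{p'})$ of the stationary measure requires a quantitative contraction estimate for the projective action $\wedge^m\hat g(\beta,s)$ toward $\hat v_0$, uniformly away from the exceptional set $\{|D(s)|\le\beta^\gamma\}$, together with a Markov-inequality estimate $\mu\{|D(s)|\le\beta^\gamma\}\le C\beta^{\gamma p}$ from~\eqref{eq Jacobi moment}; you gesture at this but do not produce the estimate, and it is precisely where the final exponent $p'$ comes from. Second, the claim that $\psi_\beta$ is ``locally H\"older at $\hat v_0$'' and that $\psi_\beta(\hat v_0)=L_1(\hat\mu_0)+\mathscr{O}(\beta^{p'})$ requires controlling $\int\log\|\wedge^m g(\beta,s)v\|\,d\mu(s)$ through the singularity $\{D(s)=0\}$, splitting into $\{|D(s)|>\beta^\gamma\}$ where $\log\|\cdot\|=\log|D(s)|+\mathscr{O}(\beta^{2}/D(s)^2)$ and the bad set where one falls back on~\eqref{eq Jacobi moment}; again the shape of the argument is right but the exponents are never pinned down. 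These are not cosmetic gaps: they are exactly the content the paper suppresses when it writes ``as an application of Corollary~\ref{coro main them det moment}.'' A complete write-up would in effect replay the moment-and-stationary-measure bookkeeping of Section~\ref{Holder} in the perturbative regime you set up, which is a genuine, if routine, piece of work that is still missing.
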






\medskip

\subsection*{Acknowledgments}

  P.D. was partially supported by FCT - Funda\c{c}\~{a}o para a Ci\^{e}ncia e a Tecnologia, through
 the projects   UIDB/04561/2020 and  PTDC/MAT-PUR/29126/2017.\\
 T.G. was supported by FCT - Funda\c{c}\~{a}o para a Ci\^{e}ncia e a Tecnologia, through the projects UI/BD/152275/2021 and by CEMS.UL Ref. UIDP/04561/2020, DOI 10.54499/UIDP/04561/ 2020.

\bigskip

\bigskip
\bibliographystyle{amsplain}
\bibliography{bib}

\end{document}